\DeclarePairedDelimiter\ceil{\lceil}{\rceil}
\title{Rickman rugs and intrinsic bilipschitz graphs}
\author{Tuomas Orponen}
\address{University of Jyv\"askyl\"a, Department of Mathematics and Statistics}
\email{tuomas.t.orponen@jyu.fi}
\date{\today}
\subjclass[2010]{28A75 (Primary), 28A78 (Secondary)}
\thanks{T.O. is supported by the Academy of Finland via the project \emph{Quantitative rectifiability in Euclidean and non-Euclidean spaces}, grant Nos. 309365, 314172, and via the project \emph{Incidences on Fractals}, grant No. 321896.}
\keywords{Quantitative rectifiability, Corona decompositions, Heisenberg group}
\newcommand{\R}{\mathbb{R}}
\newcommand{\W}{\mathbb{W}}
\newcommand{\He}{\mathbb{H}}
\newcommand{\N}{\mathbb{N}}
\newcommand{\Z}{\mathbb{Z}}
\newcommand{\calT}{\mathcal{T}}
\newcommand{\calD}{\mathcal{D}}
\newcommand{\calH}{\mathcal{H}}
\newcommand{\spt}{\operatorname{spt}}
\newcommand{\diam}{\operatorname{diam}}
\newcommand{\card}{\operatorname{card}}
\newcommand{\dist}{\operatorname{dist}}
\newcommand{\V}{\mathbb{V}}
\def\Barint_#1{\mathchoice
          {\mathop{\vrule width 6pt height 3 pt depth -2.5pt
                  \kern -8pt \intop}\nolimits_{#1}}%
          {\mathop{\vrule width 5pt height 3 pt depth -2.6pt
                  \kern -6pt \intop}\nolimits_{#1}}%
          {\mathop{\vrule width 5pt height 3 pt depth -2.6pt
                  \kern -6pt \intop}\nolimits_{#1}}%
          {\mathop{\vrule width 5pt height 3 pt depth -2.6pt
                  \kern -6pt \intop}\nolimits_{#1}}}
\numberwithin{equation}{section}
\theoremstyle{plain}
\newtheorem{thm}[equation]{Theorem}
\newtheorem{lemma}[equation]{Lemma}
\newtheorem{cor}[equation]{Corollary}
\newtheorem{proposition}[equation]{Proposition}
\newtheorem{question}{Question}
\newtheorem{claim}[equation]{Claim}
\theoremstyle{definition}
\newtheorem{definition}[equation]{Definition}
\newtheorem{notation}[equation]{Notation}
\theoremstyle{remark}
\newtheorem{remark}[equation]{Remark}
\newcommand{\nref}[1]{(\hyperref[#1]{#1})}
\begin{document}

\begin{abstract}  This paper studies the geometry of bilipschitz maps $f \colon \W \to \He$, where $\He$ is the first Heisenberg group, and $\W \subset \He$ is a vertical subgroup of co-dimension $1$. The images $f(\W)$ of such maps are called \emph{Rickman rugs} in the Heisenberg group.

The main theorem states that a Rickman rug in the Heisenberg group admits a corona decomposition by intrinsic bilipschitz graphs. As a corollary, Rickman rugs are countably rectifiable by intrinsic bilipschitz graphs. Here, an intrinsic bilipschitz graph is an intrinsic Lipschitz graph, which is simultaneously a Rickman rug. General intrinsic Lipschitz graphs need not be Rickman rugs, even locally, by an example of Bigolin and Vittone. \end{abstract}

\maketitle

\tableofcontents

\section{Introduction}

The purpose of this paper is to introduce the notion of \emph{intrinsic bilipschitz graphs} in the first Heisenberg group $\He$, and show that bilipschitz images of the parabolic plane inside $\He$ are countably rectifiable by intrinsic bilipschitz graphs -- in a strong quantitative sense. Before the details, I will explain why such a result might be useful, and how it is connected to the surrounding area of research on \emph{quantitative rectifiability in $\He^{n}$}.

In $\R^{n}$, Lipschitz graphs, and domains bounded by Lipschitz graphs, are basic building blocks for performing harmonic analysis on "non-smooth" surfaces and domains. To name a prototypical example, consider the problem of showing that a singular integral operator (SIO) is $L^{2}$-bounded on a surface $S \subset \R^{n}$. By the works of Calder\'on, Coifman, David, Macintosh, and Meyer \cite{Calderon,CMM,MR700980}, it is known that SIOs with odd kernels are $L^{2}$-bounded on Lipschitz graphs. With this fundamental result in hand, there are at least two distinct "transference principles" which enable one to treat more general surfaces: David \cite{MR744071,MR956767} developed the notion of \emph{big pieces of Lipschitz graphs}, and David, Jones, and Semmes \cite{DS1,MR1069238,Semmes} pioneered the notion of \emph{corona decompositions by Lipschitz graphs}. The rough idea is that surfaces with either big pieces of, or a corona decomposition by, Lipschitz graphs behave just as well as Lipschitz graphs relative to many harmonic analysis problems. For example, SIOs with odd kernels are $L^{2}$-bounded on such surfaces. This paradigm was formalised in the monographs \cite{DS1,MR1251061} of David and Semmes, which gave birth to the vibrant research area of quantitative rectifiability. 

There have been recent attempts to work out a theory of quantitative rectifiability in the Heisenberg group $\He^{n}$. Following the train of thought outlined above, one should identify the correct "building blocks" for performing harmonic analysis on surfaces, that is, the "Lipschitz graphs of the Heisenberg group", then establish transference principles to treat more general surfaces, and of course verify that interesting classes of general surfaces satisfy the hypotheses of the transference principles. 

Regarding the question of "building blocks", there is an obvious answer: the \emph{intrinsic Lipschitz graphs} (iLGs) introduced by Franchi, Serapioni, and Serra Cassano \cite{FSS} in 2006. These share many of the elementary properties of Lipschitz graphs in $\R^{n}$: for example, iLGs are Ahlfors-regular, have approximate tangents almost everywhere by \cite{FSSC2}, are invariant under group automorphisms, and can be extended. There is a growing mountain of evidence showing that iLGs are solid building blocks for a theory of \emph{qualitative rectifiability} in Heisenberg groups: for a few references, see \cite{zbMATH05015569,BCSC,2020arXiv200201433C,MR4127898,MR2789472,2019arXiv190811639M,2020arXiv200703236M,MR3194680,MR3682744,MR3587666,2020arXiv200714286V}.  

There is also evidence, but not so compelling, that intrinsic Lipschitz graphs are suitable building blocks for a theory quantitative rectifiability in Heisenberg groups. In the first Heisenberg group, natural SIOs are known to be $L^{2}$-bounded on $1$-dimensional iLGs by \cite{2019arXiv191103223F}, but the same problem remains open for $1$-codimensional iLGs. Under an "$\epsilon$" of additional regularity, the answer is positive by \cite{CFO2,2018arXiv181013122F}. Very little is known about other classical topics in harmonic analysis, such as boundary value problems for (sub-elliptic) PDEs, or the behaviour of of (sub-)harmonic measure on domains bounded by iLGs; see the introduction to \cite{2020arXiv200608293O} for a description of the current state-of-the art on these questions.

We arrive at a point which is centrally relevant to the current paper: it is conceivable that $1$-codimensional iLGs in the first Heisenberg group $\He$ are \textbf{not} suitable building blocks for a theory of quantitative rectifiability, but they are suitable in $\He^{n}$ for $n \geq 2$. The recent preprint \cite{2020arXiv200411447C} of Chousionis, Li, and Young shows that $1$-codimensional iLGs in $\He^{n}$, for $n \geq 2$, admit approximations by planes in a quantitative sense which precisely matches the Euclidean behaviour of Lipschitz graphs. In contrast, an example in the preprint \cite{2020arXiv200412522N} by Naor and Young suggests that the same is not true for $1$-codimensional iLGs in $\He$. In $\R^{n}$, it is known by \cite[(C1) $\Longleftrightarrow$ (C3)]{DS1} that the correct rate of affine approximation is \textbf{equivalent} with the $L^{2}$-boundedness of SIOs. So, in the light of current information, one may speculate that natural SIOs are $L^{2}$-bounded on $1$-codimensional iLGs in $\He^{n}$, $n \geq 2$, but not in $\He$. This remains a key open problem in the area.

\subsection{Intrinsic bilipschitz graphs} In the sequel, "iLG" refers to a $1$-codimensional iLG in the first Heisenberg group $\He$. These sets are homeomorphic to $\R^{2}$, and $3$-regular relative to the metric in $\He$. If iLGs are not suitable building blocks for a quantative theory of rectifiability in $\He$ -- this remains unclear -- which sets could take their place? In $\R^{n}$, the notions of "$m$-dimensional Lipschitz graph" and "bilipschitz image of $\R^{m}$" are fairly interchangeable from the harmonic analysis point of view, at least for the $L^{2}$-theory of SIOs. Indeed, bilipschitz images of $\R^{n - 1}$ inside $\R^{n}$ have big pieces of Lipschitz graphs by \cite[Theorem 1.14]{MR1132876} (combined with a theorem of V\"ais\"al\"a \cite{MR942829} as explained in \cite[p. 857]{MR1132876}). They also admit corona decompositions by Lipschitz graphs; in fact, big pieces imply corona decompositions, which in turn imply $($big pieces$)^{2}$, see \cite{MR2989430,2020arXiv200811544B}. Conversely, every $(n - 1)$-dimensional Lipschitz graph is trivially a bilipschitz image of $\R^{n - 1}$.

In the first Heisenberg group, analogous questions concern the relationship between iLGs and bilipschitz images of the \emph{parabolic plane} $\W := (\R^{2},d_{\mathrm{par}})$, where 
\begin{equation}\label{dpar} d_{\mathrm{par}}((x,t),(\xi,\tau)) = \max\{|x - \xi|,|t - \tau|^{1/2}\}. \end{equation} 
In the sequel, bilipschitz images of the parabolic plane in $\He$ will be called \emph{Rickman rugs} for brevity. In stark contrast to $\R^{n}$, iLGs are not Rickman rugs, not even locally. A counterexample is due to Bigolin and Vittone \cite{MR2603594}. This point cannot be over-emphasised: it is, in fact, an open problem to determine if general iLGs can even be covered by countably many Lipschitz images of compact subsets of $\W$, up to an $\mathcal{H}^{3}$ null set. The strongest partial result is \cite{2019arXiv190610215D}, which yields a positive answer under an "$\epsilon$" of extra regularity.

So, iLGs are definitely not Rickman rugs. Are Rickman rugs iLGs? Yes, almost:
\begin{proposition}\label{toy} A Rickman rug has big pieces of intrinsic Lipschitz graphs. \end{proposition}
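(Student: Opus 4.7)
The plan is to combine Pansu's differentiability theorem with a compactness argument. By the scale-invariance afforded by Heisenberg dilations and left translations, it suffices to prove the normalised statement: there exist $c = c(L) > 0$ and $M = M(L)$ such that for every $L$-bilipschitz $f \colon \W \to \He$ with $f(0) = 0$ there is an iLG $\Gamma \subset \He$, with intrinsic Lipschitz constant at most $M$, satisfying $\mathcal{H}^{3}(f(\W) \cap \Gamma \cap B(0,1)) \geq c$.

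For such $f$, Pansu's theorem produces a Pansu differentiability point $p_{0} \in f^{-1}(B(0, 1/(4L)))$. Since $f$ is bilipschitz, the differential $L_{p_{0}} \colon \W \to \He$ is a bilipschitz graded group homomorphism, and its image $\V_{0} := L_{p_{0}}(\W)$ is a vertical plane in $\He$. Set $\pi_{0} := f(p_{0}) \cdot \V_{0}$, a left-translated vertical plane. Choosing $\eta = \eta(L) > 0$ small, there is a scale $\delta_{0} = \delta_{0}(\eta, f) > 0$ such that
\[
d_{\He}\bigl(f(q),\, f(p_{0}) \cdot L_{p_{0}}(p_{0}^{-1} q)\bigr) \leq \eta \cdot d_{\W}(p_{0}, q), \qquad q \in B_{\W}(p_{0}, \delta_{0}),
\]
so $f(B_{\W}(p_{0}, \delta_{0}))$ lies in the $\eta\delta_{0}$-neighbourhood of $\pi_{0}$. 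A projection argument (discussed below) then produces an iLG $\Gamma$ over $\pi_{0}$ with intrinsic Lipschitz constant $\leq M(L)$ which captures a subset of $f(B_{\W}(p_{0}, \delta_{0}))$ of $\mathcal{H}^{3}$-measure $\gtrsim_{L} \delta_{0}^{3}$.

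The scale $\delta_{0}$ depends on $f$, so this yields only a non-uniform lower bound. To obtain the uniform $c(L)$, I argue by contradiction and compactness. If no such $c(L)$ exists, pick a sequence of $L$-bilipschitz maps $f_{n} \colon \W \to \He$ with $f_{n}(0) = 0$ so that the supremum of $\mathcal{H}^{3}(f_{n}(\W) \cap \Gamma \cap B(0,1))$ over iLGs $\Gamma$ of Lipschitz constant $\leq M$ tends to zero. By Arzel\`a--Ascoli a subsequence converges locally uniformly to an $L$-bilipschitz map $f_{\infty}$. The single-point argument applied to $f_{\infty}$ produces a point $p_{0}$, a vertical plane $\pi_{0}$, a scale $\delta_{\infty} > 0$, and an iLG $\Gamma_{\infty}$ over $\pi_{0}$ of mass $\gtrsim_{L} \delta_{\infty}^{3}$. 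For $n$ large, uniform closeness of $f_{n}$ to $f_{\infty}$ on $B_{\W}(p_{0}, \delta_{\infty})$ places $f_{n}(B_{\W}(p_{0}, \delta_{\infty}))$ in a slightly thicker tube around the same $\pi_{0}$; the same projection argument then produces an iLG of mass $\gtrsim_{L} \delta_{\infty}^{3}$ inside $f_{n}(\W) \cap B(0,1)$, contradicting the choice of $f_{n}$.

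The main obstacle is the projection lemma alluded to above: a bilipschitz image of a ball in $\W$ lying in a thin Heisenberg tube around a vertical plane $\pi_{0}$ contains a subset of comparable $\mathcal{H}^{3}$-measure that is an iLG over $\pi_{0}$, with intrinsic Lipschitz constant controlled by $L$ and the thinness of the tube. This is delicate because the iLG condition is the intrinsic Heisenberg cone condition rather than ordinary graphicality; using the splitting $\He = \pi_{0} \cdot V$ along a complementary horizontal line $V$, one must verify that distinct points of the bilipschitz image project to distinct points of $\pi_{0}$, and that the resulting graph function satisfies the intrinsic cone condition with constant depending only on $L$.
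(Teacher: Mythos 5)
There are two genuine gaps, and each by itself breaks the argument. First, the appeal to Pansu's differentiability theorem is not available here: the source space $\W$ is the parabolic plane, which is a graded homogeneous group but \emph{not} a Carnot group (its horizontal layer is the $y$-direction only, and it does not generate the $t$-direction), and Pansu's theorem is a statement about Lipschitz maps between Carnot groups. Lipschitz -- even bilipschitz -- maps out of $(\R^{2},d_{\mathrm{par}})$ need not be Pansu differentiable at a single point: for a Weierstrass-type function $h$ that is $\tfrac{1}{2}$-H\"older with small constant but has no "square-root order" expansion anywhere, the map $(y,t) \mapsto (y + h(t),t)$, viewed as a map into the subgroup $\W \subset \He$, is bilipschitz, yet the blow-ups $r^{-1}\bigl(h(t_{0} + r^{2}t) - h(t_{0})\bigr)$ converge at no point $t_{0}$. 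So the point $p_{0}$ and the approximating vertical plane $\pi_{0}$ that your whole construction hinges on may simply fail to exist. This is exactly why the paper replaces pointwise differentiability by the multi-scale statement that Rickman rugs are $\epsilon$-approximable by vertical planes at \emph{most} scales and locations (the weak geometric lemma, Corollary \ref{wgl}, proved via the ruler coefficients).

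Second, the "projection lemma" you defer to the last paragraph is not a technical afterthought; it is essentially the whole proposition, and in the form you state it (flatness in a tube of width $\eta\delta_{0}$ at the single scale $\delta_{0}$, plus the bilipschitz parametrisation, yields an intrinsic Lipschitz piece of proportional measure with constant $M(L)$) it does not follow from any soft projection argument. The tube condition controls the set only at the top scale and only relative to the centre $f(p_{0})$; the intrinsic cone condition \eqref{coneCondition2} must hold between \emph{all} pairs of points of the selected subset, in particular pairs at scales far below $\eta\delta_{0}$ and far from $f(p_{0})$, where the image can be arbitrarily non-flat, and even injectivity of the vertical projection to $\pi_{0}$ fails inside a tube of width $\eta\delta_{0}$. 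Extracting a graph piece of definite measure with a uniformly bounded constant genuinely requires multi-scale information; this is why the paper proves both big vertical projections (Proposition \ref{prop:bvp}, via a winding-number argument) and the weak geometric lemma, and then invokes \cite[Theorem 1.2]{CFO}, the Heisenberg analogue of the David--Semmes/David--Jerison big-pieces machinery. Your compactness/normalisation scheme is legitimate as a reduction (and, unavoidably, non-quantitative), but it cannot substitute for these two inputs.
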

This proposition is not difficult: one verifies that Rickman rugs have \emph{big vertical projections} (see Proposition \ref{prop:bvp}) and satisfy the \emph{weak geometric lemma} (see Corollary \ref{wgl}). Then, the proposition follows from \cite[Theorem 1.2]{CFO}, which is a Heisenberg variant of \cite[Theorem 1.14]{MR1132876}. However, the proposition is as unsatisfactory as it is easy: iLGs are not Rickman rugs, so Proposition \ref{toy} wastes information. Given the apparent difficulties in using iLGs in the theory of quantitative rectifiability, Proposition \ref{toy} looks particularly useless. The main theorem has now been sufficiently motivated:
\begin{thm}\label{main} A Rickman rug has a corona decomposition by intrinsic bilipschitz graphs. \end{thm}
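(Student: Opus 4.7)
The plan is to adapt the David--Semmes corona decomposition machinery to the Heisenberg setting, leveraging the two ingredients identified in the proof of Proposition \ref{toy}: the big vertical projections (Proposition \ref{prop:bvp}) and the weak geometric lemma (Corollary \ref{wgl}). First, I would equip the Rickman rug $E = f(\W)$ with a system of dyadic cubes $\Delta$, transferred from $\W$ via the bilipschitz parametrization $f$. For each $Q \in \Delta$, I would choose a preferred vertical subgroup $\V_{Q}$ nearly maximising the vertical projection of $E \cap Q$ (bounded below by big vertical projections), and define a flatness functional $\beta_{E}(Q)$ measuring how closely $E$ resembles an intrinsic Lipschitz graph over $\V_{Q}$ at the scale of $Q$.

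Next, I would run a stopping-time construction rooted at a top cube $Q_{0}$, stopping at a descendant $Q \subset Q_{0}$ whenever (a) $\beta_{E}(Q) > \epsilon$, (b) the vertical subgroup $\V_{Q}$ has tilted away from $\V_{Q_{0}}$ by more than $\epsilon$, or (c) the vertical projection of $E \cap Q$ onto $\V_{Q_{0}}$ has shrunk below a fixed threshold. The weak geometric lemma provides the Carleson packing for (a); stopping conditions (b) and (c) each force flatness against a competing plane, which should reduce their Carleson packing to that of (a) on ancestors. Iterating the stopping partitions $\Delta$ into a Carleson family of stopping cubes and a collection of coherent trees $\{T_{j}\}_{j}$. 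On each tree $T_{j}$ with top $Q_{j}$, I would construct an intrinsic Lipschitz graph $\Gamma_{j}$ over $\V_{Q_{j}}$ approximating $E$ at all scales in $T_{j}$, by patching together planar approximations of $E$ at the minimal cubes of $T_{j}$ via a partition of unity and vertical-projecting onto $\V_{Q_{j}}$. The coherence of the tree ensures that $\Gamma_{j}$ is an iLG with constant controlled by $\epsilon$.

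The main obstacle is to upgrade $\Gamma_{j}$ from an iLG to an intrinsic \emph{bilipschitz} graph, i.e.\ a Rickman rug; by the Bigolin--Vittone example this is not automatic for iLGs. My strategy would be to transfer the parametrization of $E$ to $\Gamma_{j}$: on the large subset of $Q_{j}$ where $E$ and $\Gamma_{j}$ are close, define a parametrization $\V_{Q_{j}} \to \Gamma_{j}$ as the composition of $f$ with vertical projection, and extend it to the rest of $\V_{Q_{j}}$ by a Whitney-type procedure. The upper bilipschitz bound is the iLG property; the lower bound on the large subset is inherited from the bilipschitzness of $f$ and the closeness of $E$ and $\Gamma_{j}$. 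The genuinely delicate step is extending the lower bound off the large subset, where the parabolic geometry enters in an essential way and the iLG theory provides no direct help. Collecting the trees $\{T_{j}\}$ with graphs $\{\Gamma_{j}\}$ then yields the corona decomposition required by Theorem \ref{main}.
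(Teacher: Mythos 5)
Your outline of the stopping-time construction is essentially the paper's Section \ref{s:treeSelection}: stop when a child has large (strong vertical) $\beta$-number or the approximating plane tilts too far, use the weak geometric lemma (Corollary \ref{wgl}) for the packing of bad cubes and the leaves of the first type, and use big vertical projections (Proposition \ref{prop:bvp}) to control the tops — in the paper this appears as ruling out trees whose leaves are mostly of the ``tilted'' type, rather than as a third stopping condition, but that is a reorganisation, not a gap. The gap is in your final paragraph, and it is exactly the point where Theorem \ref{main} differs from Proposition \ref{toy}. You first build an intrinsic Lipschitz graph $\Gamma_{j}$ by patching planar approximations, and only afterwards try to manufacture a bilipschitz parametrisation by composing $f$ with a vertical projection on a ``large subset'' and extending by a Whitney-type procedure. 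Two things go wrong. First, within a tree the approximation $\dist(f(w),\Gamma_{j})\leq\eta\,\ell(Q)$ is scale-dependent and degenerates below the leaves, so even on your good set the closeness of $E$ and $\Gamma_{j}$ does not transfer the lower bilipschitz bound of $f$ to scales finer than the tree; the obstruction (``crossing characteristics'') lives precisely at those scales. Second, there is no bilipschitz extension theorem from a subset of $\W$ into a fixed iLG in $\He$ that a Whitney procedure could invoke: an extension of a partially defined map need neither stay on $\Gamma_{j}$, nor be surjective, nor keep the lower bound, and the Bigolin--Vittone example shows that flatness plus bounded tilt of the approximating planes cannot by themselves certify that $\Gamma_{j}$ is a Rickman rug at all. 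So the ``genuinely delicate step'' you defer is the theorem, and your proposal contains no mechanism for it.

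The paper resolves this by never constructing $\Gamma_{j}$ as a bare iLG in the first place. For each $Q$ in the tree one has a normalised horizontal quasi-isometry $F_{Q}=\pi_{\V_{Q}}\circ\iota_{Q}\circ f$ (Corollary \ref{c:HorizQIE_W_W}); because $f$ is bilipschitz, these maps are vertically monotone with a signature that is constant throughout the tree (Lemma \ref{lemma4}, Proposition \ref{prop2}), which yields quantitative separation of the heights of the approximating horizontal lines on consecutive dyadic horizontal lines (the estimates \eqref{form37aa} and \eqref{form47a}). The graph is then built by inductively gluing the $(Q,\ell)$-approximate quadrics along \emph{characteristic curves} $\tau(y,t)$ (Propositions \ref{mainProp2} and \ref{t:construction}), and the separation just described becomes axiom \eqref{X2}, i.e.\ property $(\tau_{2})$: $|\tau(y,t_{2})-\tau(y,t_{1})|\sim|t_{2}-t_{1}|$ at \emph{all} scales. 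With that in hand the explicit map $\Psi(y,t)=\Phi_{\mathcal{T}}(0,y,\tau(y,t))$ is shown directly to be a bilipschitz parametrisation (Proposition \ref{bilipProp}), and the approximation \eqref{form103} follows from $(\tau_{3})$--$(\tau_{4})$. In short: the bilipschitz structure is injected into the construction of the characteristics via the bilipschitzness of $f$, rather than retrofitted onto a finished iLG, and this is the missing idea in your proposal.
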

Here, an \emph{intrinsic bilipschitz graph} is simply an intrinsic Lipschitz graph which is also a Rickman rug. So, in contrast to Proposition \ref{toy}, one cannot end up from a rock to a hard place by applying Theorem \ref{main}; one perhaps remains on the rock, but at least gains an intrinsic graph parametrisation. 

The notion of a corona decomposition is defined in Section \ref{s:corona}. It is slightly weaker than "big pieces of iLGs", but typically strong enough for applications in harmonic analysis. The recent preprint \cite{2020arXiv200811544B} shows, in general metric spaces, that corona decompositions are equivalent to $($big pieces$)^{2}$. In other words, Rickman rugs have big pieces of sets which have big pieces of iLGs. A corona decomposition implies qualitative rectifiability: every Rickman rug can be covered, up to an $\mathcal{H}^{3}$ null set, by countably many intrinsic \textbf{bi}lipschitz graphs, see Remark \ref{r:qualitativeRectifiability}. The following quantification still remains open:
\begin{question} Do Rickman rugs have big pieces of intrinsic bilipschitz graphs? \end{question}

\begin{remark} With Theorem \ref{main} in hand, an obvious question concerns the regularity of intrinsic bilipschitz graphs, but this will have to wait until another paper. For readers familiar with the theory of intrinsic Lipschitz graphs, the intrinsic \textbf{bi}lipschitz graphs are precisely those whose \emph{characteristic straightening map} $(s,t) \mapsto (s,\tau(s,t))$ is bilipschitz $\R^{2} \to \R^{2}$. (This map is commonly denoted $(s,t) \mapsto (s,\chi(s,t))$, see for example \cite[p. 3]{BCSC} or \cite[Definition 3.1]{MR3984100}). So, the \emph{characteristics} of an intrinsic bilipschitz function neither converge nor diverge: $|\tau(s,t_{1}) - \tau(s,t_{2})| \sim |t_{1} - t_{2}|$ for all $s,t_{1},t_{2} \in \R$. Maintaining this property is a challenge in proving Theorem \ref{main}, see condition $(\tau_{2})$ of Proposition \ref{mainProp2}.

Since it is not necessary for this paper, and would take a non-trivial amount of space, I omit a discussion on (the existence and uniqueness of) characteristic curves associated with general intrinsic Lipschitz functions; see \cite{BCSC} or \cite[Section 2.3]{2020arXiv200412522N}.

 \end{remark}

\begin{remark} The paper \cite{2020arXiv200412522N} is entitled "Foliated corona decompositions", and its main result, \cite[Theorem 1.17]{2020arXiv200412522N}, states that an iLG in $\He$ has a \emph{foliated corona decomposition}. As far as I understand, the definition of "corona decomposition" in \cite{2020arXiv200412522N} is different than the one used in the present paper (which is the one defined e.g. by David and Semmes in \cite{DS1}). Notably, one cannot deduce Theorem \ref{main} by concatenating Proposition \ref{toy} and \cite[Theorem 1.17]{2020arXiv200412522N}, and the arguments here have no overlap with those in \cite{2020arXiv200412522N} -- as far as this is possible for two papers on closely related questions. \end{remark}

\subsection{Corona decompositions}\label{s:corona} In this section, I define the notion of corona decompositions properly, and also fix metric notations for $\He$ and $\W$. Corona decompositions can even be meaningfully defined in metric spaces, see \cite{2020arXiv200811544B}, and the notion below is just a specialisation to Rickman rugs and intrinsic bilipschitz graphs. 

Let $\mathcal{D}$ be the family of dyadic parabolic rectangles, that is rectangles of the form $Q = I \times J \subset \W$, where $I,J \in \mathcal{D}_{\R}$ are standard dyadic intervals in $\R$, and $|J| = |I|^{2}$. In other words, dyadic parabolic rectangles have the form
\begin{displaymath} Q = [k 2^{-n},(k + 1)2^{-n}) \times [l 4^{-n},(l + 1)4^{-n}), \qquad k,l,n \in \Z. \end{displaymath}
The \emph{side-length} of a rectangle $Q = I \times J \in \mathcal{D}$ is defined as $\ell(Q) := |I|$. Note that, since the parabolic metric was defined with the $L^{\infty}$-formula \eqref{dpar}, we have $\diam(Q) = \ell(Q)$ for $Q \in \mathcal{D}$. For $Q \in \mathcal{D}$, let $c_{Q}$ be the centre of $Q$. For $\lambda > 0$, define
\begin{displaymath} \lambda Q := B_{\mathrm{par}}(c_{Q},\lambda \ell(Q)) := \{w \in \W : d_{\mathrm{par}}(w,c_{Q}) \leq \lambda \ell(Q)\}. \end{displaymath}
This notation may appear a little awkward because $1Q \neq Q$, but it has a benefit: we will often define concepts for (parabolic) balls, such as "$\beta(B(w,r))$", and then the same definitions apply without separate comment to $\lambda Q$. Also, since $d_{\mathrm{par}}$ was defined as the $L^{\infty}$-metric on $\W$, parabolic balls of radius $r$ are rectangles of (Euclidean) dimensions $2r \times 2r^{2}$, and in particular $\lambda Q$ is a rectangle of dimensions $2\lambda \ell(Q) \times 2(\lambda \ell(Q))^{2}$.

Corona decompositions involve the notion of \emph{trees} in $\mathcal{D}$, which are defined next:

\begin{definition}[Tree] A family $\mathcal{T} \subset \mathcal{D}$ is a \emph{tree} if the following conditions are met:
\begin{itemize}
\item[(T1) \phantomsection\label{T1}] There is a \emph{root} $Q(\calT) \in \mathcal{T}$ such that all rectangles in $\mathcal{T}$ are contained in $Q(\mathcal{T})$.
\item[(T2) \phantomsection\label{T2}] Trees are \emph{connected}: if $Q_{1},Q_{2},Q_{3} \in \calD$ are rectangles with $Q_{1} \subset Q_{2} \subset Q_{3}$, and $Q_{1},Q_{3} \in \mathcal{T}$, then also $Q_{2} \in \mathcal{Q}$.
\item[(T3) \phantomsection\label{T3}] Trees are \emph{coherent}: if $Q \in \mathcal{T}$, then either all the $\mathcal{D}$-children of $Q$ are contained in $\mathcal{T}$, or then none of them are.
\end{itemize}
The rectangles $Q \in \mathcal{T}$ with no $\mathcal{D}$-children in
$\mathcal{D}$ are called the \emph{leaves of $\mathcal{T}$}, denoted $\mathbf{Leaves}(\mathcal{T})$.
\end{definition}

We may then define corona decompositions:

\begin{definition}[Corona decomposition] Let $f \colon \W \to \He$ be an $M$-bilipschitz map, $M \geq 1$. We say that $f$ (or its image) has a \emph{corona decomposition by intrinsic bilipschitz graphs} if the following holds for every parameter $\eta > 0$. The family $\mathcal{D}$ can be partitioned into disjoint collections of \emph{bad rectangles} $\mathcal{B}$ and \emph{good rectangles} $\mathcal{G}$, which may depend on $\eta$, and which have the following properties.
\begin{itemize}
\item[(C1) \phantomsection\label{C1}] The bad rectangles satisfy a Carleson packing condition: there exists a constant $C_{1} = C_{1}(M,\eta) > 0$ such that
\begin{displaymath} \mathop{\sum_{Q \in \mathcal{B}}}_{Q \subset Q_{0}} |Q| \leq C_{1}|Q_{0}|, \qquad Q_{0} \in \mathcal{D}. \end{displaymath}
\item[(C2) \phantomsection\label{C2}] The good rectangles are partitioned into a countable family of \emph{trees} $\mathcal{T}_{1},\mathcal{T}_{2},\ldots \subset \mathcal{G}$. Every tree $\mathcal{T}_{j}$ is associated with an intrinsic bilipschitz graph $\Gamma_{j} \subset \He$ in such a way that
\begin{equation}\label{form143} \dist(f(w),\Gamma_{j}) \leq \eta \cdot \ell(Q), \qquad w \in 2Q, \, Q \in \mathcal{T}_{j}. \end{equation}
\item[(C3) \phantomsection\label{C3}] The top rectangles $Q(\mathcal{T}_{j})$ of the trees $\mathcal{T}_{1},\mathcal{T}_{2},\ldots$ satisfy a Carleson packing condition: there exists a constant $C_{2} = C_{2}(M,\eta) > 0$ such that 
\end{itemize}
\begin{displaymath} \mathop{\sum_{j \geq 1}}_{Q(\mathcal{T}_{j}) \subset Q_{0}} |Q(\mathcal{T}_{j})| \leq C_{2}|Q_{0}|, \qquad Q_{0} \in \mathcal{D}. \end{displaymath}
\end{definition}

The choice of "$2$" in \eqref{form143} is quite arbitrary, and could be replaced by any number $k \geq 2$. In fact, the existence of a corona decomposition with factor "$2$" formally implies the existence of corona decompositions with factor "$k \geq 2$", see \cite[p. 20]{DS1}.

\begin{notation}
The notation "$|A|$" in \nref{C1} and \nref{C3} refers to Lebesgue measure on $\W$ (or $\R^{2}$), which coincides, up to a multiplicative constant, with the $3$-dimensional Hausdorff measure $\mathcal{H}^{3} = \mathcal{H}^{3}_{d_{\mathrm{par}}}$. In particular, $\mathcal{H}^{3}(Q) \sim |Q| \sim \ell(Q)^{3}$ for all $Q \in \mathcal{D}$. The distance in \eqref{form143} is defined relative to the metric $d(p,q) := \|q^{-1} \cdot p\|$, where
\begin{displaymath} \|(x,y,t)\| := \max\{\sqrt{x^{2} + y^{2}},\sqrt{|t|}\}, \end{displaymath}
and $(x_{1},y_{1},t_{1}) \cdot (x_{2},y_{2},t_{2}) := (x_{1} + x_{2},y_{1} + y_{2},t_{1} + t_{2} + \tfrac{1}{2}(x_{1}y_{2} - x_{2} y_{1}))$ is the Heisenberg group law. This metric $d$ will always be used to measure distances in $\He$, and to define metric concepts such as "$\diam$", "$\dist$", closed balls $B(p,r) := \{q \in \He : d(p,q) \leq r\}$, and Hausdorff measures "$\mathcal{H}^{s}$", $s \geq 0$. 

Distances and metric concepts in the parabolic plane $\W$ are defined relative to the metric $d_{\mathrm{par}}$. With the choices of metrics $d$ and $d_{\mathrm{par}}$, the parabolic plane is isometric, and also group-theoretically isomorphic, to the (Abelian) subgroup $\{(0,y,t) : y,t \in \R\} \subset \He$ equipped with the metric $d$. For this reason, the parabolic plane and this subgroup can be safely identified, and the notation "$\W$" will also be used for $\W = \{(0,y,t) : y,t \in \R\}$. In particular, points in $\W$ are denoted "$(y,t)$" if $\W$ refers to the parabolic plane, and "$(0,y,t)$" if $\W$ refers to the $yt$-plane inside $\He$. \end{notation}

\begin{remark}\label{r:qualitativeRectifiability} If the reader has not seen a corona decomposition before, start by thinking that $\mathcal{B} = \emptyset$, and $\mathcal{G} = \mathcal{T} = \mathcal{D}$. In this special situation, there is only one "approximating" intrinsic bilipschitz graph $\Gamma = \Gamma_{\mathcal{D}}$, and \eqref{form143} implies that $f(w) \in \Gamma$ for all $w \in \W$. This can be seen by taking a sequence of rectangles in $\mathcal{D}$ converging to $w$. In general, the Carleson packing conditions in \nref{C1} and \nref{C3} imply the following for Lebesgue almost every $w \in \W$: there exists a tree $\mathcal{T}_{j}$ which contains an infinite "branch" of dyadic rectangles converging to $w$. It follows that $f(w) \in \Gamma_{j}$ by \eqref{form143}. Consequently, the set
\begin{displaymath} \bigcup_{j = 1}^{\infty} \{w \in \W : f(w) \notin \Gamma_{j}\} = f^{-1}\Big(f(\W) \, \setminus \, \bigcup_{j = 1}^{\infty} \Gamma_{j} \Big) \end{displaymath}
has $\mathcal{H}^{3}$ measure zero. Since $\mathcal{H}^{3}|_{f(\W)} \ll f_{\sharp}\mathcal{H}^{3}|_{\W}$ by the bilipschitz property of $f$, this implies that $f(\W)$ is countably rectifiable by intrinsic bilipschitz graphs. \end{remark}

\subsection{Outline of the paper} Section \ref{s:prelim} contains a number of preliminary result on "dyadic" lines and "corner points" in $\W$, and horizontal lines in $\He$. Section \ref{s:rulerCoefficients} focuses on the approximability of Rickman rugs by vertical planes: the main result is Corollary \ref{wgl}, which implies that Rickman rugs $f(\W)$ are approximable by vertical planes in $\He$ in a weak, but quantitative, sense (the "weak geometric lemma"). The proof is based on the notion of \emph{ruler coefficients}, which measure the linear approximability of $f$ along individual horizontal lines. If the ruler coefficients are sufficiently small -- and they often are by Proposition \ref{propCarleson} -- then $f(\W)$ is also approximable by vertical planes, see Corollary \ref{cor2}.

Section \ref{s:projections} verifies that Rickman rugs have \emph{big vertical projections}. The proof is elementary if the rug is assumed to be "$\epsilon$-flat" to begin with, see Proposition \ref{projProp}. But the results in Section \ref{s:rulerCoefficients} verify that Rickman rugs are $\epsilon$-flat "at almost all scales and locations", and these observations combined give the result.

Section \ref{s:treeSelection} constructs the partition $\mathcal{D} = \mathcal{B} \cup \mathcal{G}$, and the trees $\mathcal{T}_{j}$, which appear in the definition of the corona decomposition. This part of the proof of Theorem \ref{main} is not difficult, and follows the ideas of David and Semmes \cite{DS1} closely. The main novelty is that, unlike in \cite{DS1}, the information from Section \ref{s:projections} on big vertical projections is employed to prove the Carleson packing estimate \nref{C3} for the top rectangles $Q(\mathcal{T}_{j})$, $j \geq 1$.

The most involved, and novel, part of the paper starts in Section \ref{s:bilipschitzGraphs}, where we begin the construction of the well-approximating intrinsic bilipschitz graphs $\Gamma_{j}$ required by \nref{C2} (or \eqref{form143}) of the corona decomposition. Fix a tree $\mathcal{T} = \mathcal{T}_{j}$. The following two facts will be available by this point: (a) for $Q \in \mathcal{T}$, the image $f(Q)$ is $\epsilon$-approximable by a vertical plane $\V_{Q}$, and (b) the planes $\V_{Q}$ corresponding to various rectangles $Q \in \mathcal{T}$ form an angle $\angle(\V_{Q},\W) \leq \eta$ with some fixed vertical plane $\W$, which may be assumed to be $\W = \{(0,y,t) : y,t \in \R\}$ with no loss of generality. If the reader is familiar with the corona decomposition from \cite{DS1}, these are verbatim the same conditions required, there, to construct well-approximating Lipschitz graphs in $\R^{n}$. If the aim of the current paper was only to prove a corona decomposition with intrinsic Lipschitz (not bilipschitz) graphs, one could probably adapt the argument of \cite{DS1}.

The catch is that the conditions (a)-(b) alone are -- quite likely -- not strong enough to produce an intrinsic \textbf{bi}lipschitz graph. There are no recorded counterexamples, but quite likely one can construct an intrinsic Lipschitz graph $\Gamma \subset \He$ with the following three properties for any $\epsilon,\eta > 0$. First, $\Gamma$ is $\epsilon$-approximable by vertical planes at all scales and locations. Second, all the the best-approximating vertical planes $\V$ satisfy $\angle(\V,\W) \leq \eta$. Third, $\Gamma$ is not a Rickman rug. The first two conditions do not seem to be powerful enough to prevent a phenomenon of "crossing characteristics". So, in addition to the properties (a)-(b), we need to keep in mind that the Rickman rug, which we are trying to approximate by an intrinsic bilipschitz graph $\Gamma$, is a bilipschitz image of $\W$. Once this information is married with (a) and (b), it is possible to secure a crucial separation property for the characteristics of $\Gamma$, which guarantees that $\Gamma$ is an intrinsic bilipschitz graph over $\W$. The separation property is stated in $(\tau_{2})$ of Proposition \ref{mainProp2}.

\section{Acknowledgements} I'm grateful to Katrin F\"assler for extensive discussions during the early stages of this project; these were invaluable in bringing some rigour to the first sketches of the corona decomposition. Also, Katrin helped in proving multiple lemmas Sections \ref{s:prelim} and \ref{s:rulerCoefficients}. I am also grateful to Sebastiano Nicolussi Golo and Michele Villa for many discussions during the project; in particular, Sebastiano taught me the idea behind Lemma \ref{lemma1}.

\section{Preliminaries}\label{s:prelim}

\subsection{Intrinsic Lipschitz functions and graphs}\label{s:iLip} For more information on intrinsic Lipschitz functions and graphs, see \cite{FSS,MR3587666}. The \emph{$1$-codimensional vertical subgroups} of $\He$ are the $yt$-plane $\W = \{(0,y,t) : y,t \in \R\}$, and any rotation of $\W$ around the $t$-axis. To every $1$-codimensional vertical subgroup, associate a \emph{complementary horizontal subgroup}, which, as a subset or $\R^{3}$, is the orthogonal complement of the vertical subgroup. The complementary horizontal subgroup of the $yt$-plane is, therefore, the $x$-axis $\mathbb{L} = \{(x,0,0) : x \in \R\}$. 

Let $\W,\mathbb{L}$ be a pair of complementary subgroups, as above, and let $\phi \colon \W \to \mathbb{L}$ be a map. The \emph{intrinsic graph of $\phi$} is the set $\Gamma(\phi) := \{w \cdot \phi(w) : w \in \W\} \subset \He$. The intrinsic graph $\Gamma(\phi)$ can be parametrised by the \emph{intrinsic graph map} $\Phi(w) := w \cdot \phi(w)$. If a set $\Gamma \subset \He$ has a parametrisation $\Gamma = \Phi(\W) = \Gamma(\phi)$ as the intrinsic graph of a function $\phi \colon \W \to \mathbb{L}$, we say that $\Gamma$ is an \emph{intrinsic graph over $\W$}.

Intrinsic graphs over a vertical subgroup $\W$ can be characterised as those subsets of $\He$ for which the \emph{vertical projection} $\Pi_{\W} \colon \He \to \W$ is a bijection. The vertical projection $\Pi_{\W}$, here, is defined by $\Pi_{\W}(p) := w$, where $p = w \cdot l$ is the unique splitting of $p$ as a group product of elements $w \in \W$ and $l \in \mathbb{L}$. This splitting also gives rise to the \emph{horizontal projection} $\Pi_{\mathbb{L}}(p) := l$. When $\W$ is the $yt$-plane, one may compute that $\Pi := \Pi_{\W}$ and $\Pi_{\mathbb{L}}$ have the explicit expressions 
\begin{displaymath} \Pi(x,y,t) := (0,y,t + \tfrac{1}{2}xy) \quad \text{and} \quad \Pi_{\mathbb{L}}(x,y,t) = (x,0,0), \qquad (x,y,t) \in \He. \end{displaymath}

Let $L > 0$. An intrinsic graph $\Gamma \subset \He$ over $\W$ is an \emph{intrinsic $L$-Lipschitz graph over $\W$} if $\Gamma$ satisfies the cone condition
\begin{equation}\label{coneCondition} [p \cdot \mathcal{C}_{\alpha}(\W,\mathbb{L})] \cap \Gamma = \{p\}, \qquad p \in \Gamma, \, 0 < \alpha < \tfrac{1}{L}. \end{equation}
Here $\mathcal{C}_{\alpha}(\W,\mathbb{L}) = \{p \in \He : \|\Pi_{\W}(p)\| \leq \alpha \|\Pi_{\mathbb{L}}(p)\|\}$, so for instance $\mathcal{C}_{0}(\W,\mathbb{L}) = \mathbb{L}$. It is easy to check that cone condition \eqref{coneCondition} can be equivalently written as
\begin{equation}\label{coneCondition2} \|\Pi_{\mathbb{L}}(q^{-1} \cdot p)\| \leq L\|\Pi_{\W}(q^{-1} \cdot p)\|, \qquad p,q \in \Gamma. \end{equation}
If $\Gamma = \Gamma(\phi)$ is an intrinsic $L$-Lipschitz graph over $\W$, we say that $\phi$ is an intrinsic $L$-Lipschitz function.

As we already defined after the statement of Theorem \ref{main}, an \emph{intrinsic bilipschitz graph over $\W$} is an intrinsic Lipschitz graph over $\W$ which is bilipschitz equivalent to $\W$. It is worth emphasising that the intrinsic graph map $\Phi \colon \W \to \Gamma(\phi)$ is \textbf{almost never} a bilipschitz map. For example, if $\phi(0,y,t) \equiv (c,0,0) \neq 0$, then $\Phi(0,y,t) = (0,y,t) \cdot (c,0,0)$ is a \emph{right translation} by a non-zero vector, and right translations are not bilipschitz maps in $\He$. For example, $\Phi$ here maps every horizontal line on $\W$ to a non-horizontal line on $\Phi(\W)$ (which in this example is a vertical plane parallel to $\W$).

\subsection{Lemmas on dyadic lines and corners} We already introduced the dyadic parabolic rectangles "$\mathcal{D}$" in Section \ref{s:corona}, but we continue with a few additional pieces of notation. For $n \in \Z$ fixed, we write $\mathcal{D}_{n} := \{Q \in \mathcal{D} : \ell(Q) = 2^{-n}\}$ for the \emph{dyadic parabolic rectangles of generation $n$.} Every parabolic rectangle $Q \in \mathcal{D}_{n}$ is partitioned by $8$ elements of $\mathcal{D}_{n + 1}$, and these are known as the \emph{children of $Q$}, denoted $\mathbf{ch}(Q)$. The rectangle $Q$ is then the \emph{parent} of its children. Every rectangle $Q = I \times J \in \mathcal{D}$ has four \emph{corners}, namely $\mathcal{C}(Q) = \partial I \times \partial J$. The set of corners of all parabolic rectangles of generation $n$ will be denoted $\mathcal{C}(\mathcal{D}_{n})$. 

We will need a fairly tedious lemma, below, on the dyadic structure of the corner points $\mathcal{C}(\mathcal{D}_{n})$. For this purpose, let $\mathcal{L}$ be the set of all horizontal lines in $\W$, and let $\mathcal{L}_{n} := \{\R \times \{k \cdot 2^{-n}\} : k \in \Z\}$ be the \emph{dyadic horizontal lines of generation $n$.} 

For purposes of "ordering" the lines in $\mathcal{L}$, it will often be convenient to identify them with their second coordinates. Let
\begin{displaymath} \pi_{1}(y,t) := y \quad \text{and} \quad \pi_{2}(y,t) := t, \qquad (y,t) \in \W, \end{displaymath}
be the coordinate projections $\W \to \R$. If $\ell \in \mathcal{L}$, then $\pi_{2}(\ell)$ is a singleton, and we identify $\ell \cong t$, where $\pi_{2}(\ell) = \{t\}$. With this in mind, we will, for example, write $\ell_{1} > \ell_{2}$ if $\pi_{2}(\ell_{j}) = \{t_{j}\}$ with $t_{1} > t_{2}$. Also, for $\ell \in \mathcal{L}$ with $\pi_{2}(\ell) = \{r\}$, and for $t \in \R$, the notation $\ell + t$ will mean the line $\ell' \in \mathcal{L}$ with $\pi_{2}(\ell') = \{r + t\}$.

\begin{remark} In the next lemma, we will use the following simple observation. If $Q \in \mathcal{D}_{n}$, then the horizontal edges of $Q$ lie on consecutive lines in $\mathcal{L}_{2n}$. The horizontal edges of the $8$ rectangles in $\mathbf{ch}(Q) \subset \mathcal{D}_{n + 1}$ span altogether $5$ horizontal lines $\ell_{1} < \ell_{2} < \ell_{4} < \ell_{4} < \ell_{5}$, which satisfy $\ell_{1},\ell_{5} \in \mathcal{L}_{2n}$, $\ell_{3} \in \mathcal{L}_{2n + 1} \, \setminus \, \mathcal{L}_{2n}$, and $\ell_{2},\ell_{4} \in \mathcal{L}_{2n + 2} \, \setminus \, \mathcal{L}_{2n + 1}$. \end{remark}

\begin{lemma}\label{lemma10} Let $n \in \Z$, let $\ell \in \mathcal{L}_{n}$, and let $w \in \ell \cap \mathcal{C}(Q)$ for some $Q \in \mathcal{D}_{\ceil{n/2}}$. Let $\hat{Q} \in \mathcal{D}_{\ceil{n/2} - 1}$ be the parent of $Q$ in $\mathcal{D}$.
\begin{itemize}
\item[(a)] If $n \in 2\Z$ and $\ell \notin \mathcal{L}_{n - 1}$, then either
\begin{itemize}
\item both $\ell + 2^{-n}$ and $\ell - 3 \cdot 2^{-n}$ contain an edge of $\hat{Q}$, or
\item both $\ell - 2^{-n}$ and $\ell + 3 \cdot 2^{-n}$ contain an edge of $\hat{Q}$.
\end{itemize}
\item[(b)] If $n \in 2\Z$ and $\ell \in \mathcal{L}_{n - 1} \, \setminus \, \mathcal{L}_{n - 2}$, then both $\ell + 2^{-n + 1}$ and $\ell - 2^{-n + 1}$ contain an edge of $\hat{Q}$.
\item[(c)] If $n \in 2\Z$ and $\ell \in \mathcal{L}_{n - 2}$, then $\ell$ contains an edge of $\hat{Q}$. 
\item[(d)] If $n \in 2\Z + 1$ and $\ell \notin \mathcal{L}_{n - 1}$, then both $\ell + 2^{-n}$ and $\ell - 2^{-n}$ contain an edge of $\hat{Q}$.
\item[(e)] If $n \in 2\Z + 1$ and $\ell \in \mathcal{L}_{n - 1}$, then $\ell$ contains an edge of $\hat{Q}$.
\end{itemize}
\end{lemma}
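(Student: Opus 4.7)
The plan is to reduce all five cases to a single piece of mod-$4$ bookkeeping on the $t$-coordinate of $\ell$. Write $r := \pi_{2}(\ell)$, and let $a$, $a+h$ denote the $t$-coordinates of the lower and upper horizontal edges of $\hat Q$, where $h := \ell(\hat Q)^{2}$. By the remark preceding the lemma, each child of $\hat Q$ in $\calD_{\ceil{n/2}}$ occupies a horizontal strip of height $h/4$, so the horizontal edges of $Q$ lie on the five lines $\{t = a + m (h/4) : m \in \{0,1,2,3,4\}\}$. Since $w \in \ell \cap \calC(Q)$, necessarily $r = a + m(h/4)$ for some $m \in \{0,\ldots,4\}$, and everything reduces to classifying $m$ under the hypotheses on $\ell$.

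If $n \in 2\Z$, then $\hat Q \in \calD_{n/2 - 1}$, so $h = 4 \cdot 2^{-n}$, $h/4 = 2^{-n}$, and $a \in 4 \cdot 2^{-n}\Z$. The divisibility of $m$ determines exactly which of the hypotheses (a)--(c) holds:
\[
r \in \calL_{n-2} \iff m \in \{0,4\}, \quad r \in \calL_{n-1} \setminus \calL_{n-2} \iff m = 2, \quad r \notin \calL_{n-1} \iff m \in \{1,3\}.
\]
For (c), either $r = a$ or $r = a+h$, so $\ell$ itself contains an edge of $\hat Q$. For (b), $a = r - 2^{-n+1}$ and $a + h = r + 2^{-n+1}$, giving both listed lines. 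For (a), $m = 1$ yields $a = r - 2^{-n}$ and $a+h = r + 3 \cdot 2^{-n}$ (the second alternative), while $m = 3$ yields $a = r - 3\cdot 2^{-n}$ and $a + h = r + 2^{-n}$ (the first alternative).

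If $n \in 2\Z+1$, then $\hat Q \in \calD_{(n-1)/2}$, $h = 2 \cdot 2^{-n}$, $h/4 = 2^{-n-1}$, and $a$ is a multiple of $2 \cdot 2^{-n}$. Here the horizontal edges of $Q \in \calD_{(n+1)/2}$ generically sit in $\calL_{n+1} \setminus \calL_n$, so the mere hypothesis $\ell \in \calL_n$ (i.e.\ $r \in 2^{-n}\Z$) forces $m$ to be even, hence $m \in \{0,2,4\}$. Writing $r = a + (m/2) \cdot 2^{-n}$, one checks that $r \in \calL_{n-1}$ iff $m/2 \in \{0,2\}$ (in which case $r \in \{a, a+h\}$, yielding (e)) and that $r \notin \calL_{n-1}$ iff $m/2 = 1$ (in which case $a = r - 2^{-n}$ and $a+h = r + 2^{-n}$, yielding (d)).

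The only point requiring a moment's thought is the parity observation in the odd-$n$ case; once that is in place, the entire lemma is a short piece of arithmetic with $2^{-n}$.
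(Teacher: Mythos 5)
Your proof is correct and follows essentially the same route as the paper: both arguments reduce to the observation that $\pi_{2}(\ell)$ must be one of the five values $a + m\,\ell(\hat{Q})^{2}/4$, $m \in \{0,\ldots,4\}$, spanned by the horizontal edges of the children of $\hat{Q}$, and then classify $m$ (your mod-$4$ bookkeeping, including the parity step for odd $n$) exactly as the paper classifies which of the five lines $\ell_{1}<\cdots<\ell_{5}$ can contain $\ell$ via membership in $\mathcal{L}_{n-1}$, $\mathcal{L}_{n-2}$. The only cosmetic slip is the phrase "the horizontal edges of $Q$ lie on the five lines" (they lie on two of them; it is the children of $\hat{Q}$ collectively that span all five), but the inference you actually use is valid.
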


\begin{proof} We first prove (a), (b), and (c), which have in common the assumption $n = 2m \in 2\Z$. This implies $\ceil{n/2} = m$ and $\hat{Q} \in \mathcal{D}_{m - 1}$. The rectangles in $\mathbf{ch}(\hat{Q})$ have dimensions $2^{-m} \times 4^{-m}$. Let $\ell_{1} < \ell_{2} < \ldots < \ell_{5}$ be the five horizontal lines which are spanned by $\mathcal{C}(\mathbf{ch}(\hat{Q}))$. Thus, the lines $\ell_{j + 1} = \ell_{j} + 4^{-m}$, and $\ell_{1},\ell_{5}$ are spanned by $\mathcal{C}(\hat{Q})$. Since $w \in \ell \cap \mathcal{C}(\mathbf{ch}(\hat{Q}))$, we have $\ell \in \{\ell_{1},\ldots,\ell_{5}\}$. 

To prove (a), assume that $\ell \notin \mathcal{L}_{n - 1}$. We claim that $\ell \in \{\ell_{2},\ell_{4}\}$. Indeed, since $\hat{Q} \in \mathcal{D}_{m - 1}$, we have $\ell_{1},\ell_{5} \in \mathcal{L}_{2m - 2}$ and $\ell_{3} \in \mathcal{L}_{2m - 1}$. Since we assume that $\ell \notin \mathcal{L}_{n - 1} = \mathcal{L}_{2m - 1}$, we infer that $\ell \notin \{\ell_{1},\ell_{3},\ell_{5}\}$, and hence $\ell \in \{\ell_{2},\ell_{4}\}$. If $\ell = \ell_{2}$, then $\ell - 2^{-n} = \ell - 4^{-m}$ and $\ell + 3 \cdot 2^{-n}$ contain an edge of $\hat{Q}$. If $\ell = \ell_{4}$, then $\ell + 2^{-n}$ and $\ell - 3 \cdot 2^{-n}$ contain an edge of $\hat{Q}$.

To prove (b), assume that $\ell \in \mathcal{L}_{n - 1} \, \setminus \, \mathcal{L}_{n - 2}$. As before, $\ell \in \{\ell_{1},\ldots,\ell_{5}\}$, and since $\ell_{1},\ell_{5} \in \mathcal{L}_{2m - 2} = \mathcal{L}_{n - 2}$, we have $\ell \in \{\ell_{2},\ell_{3},\ell_{4}\}$. But $\ell_{2},\ell_{4} \notin \mathcal{L}_{2m - 1} = \mathcal{L}_{n - 1}$, so in fact $\ell = \ell_{3}$. Therefore, the lines $\ell \pm 2 \cdot 4^{-m} = 2^{-n + 1}$ contain the horizontal edges of $\hat{Q}$. To prove (c), note that the assumption $\ell \in \mathcal{L}_{n - 2} = \mathcal{L}_{2m - 2}$ forces $\ell \in \{\ell_{1},\ell_{5}\}$, which is what we wanted.

Let us then prove (d) and (e), where we assume that $n = 2m + 1$ for some $n \in \Z$. Hence $\ceil{n/2} = m + 1$, and $\hat{Q} \in \mathcal{D}_{m}$. This time the children of $\hat{Q}$ have dimensions $2^{-m - 1} \times 4^{-m - 1}$. With the previous notation, we still have $\ell \in \{\ell_{1},\ldots,\ell_{5}\}$. To prove (b), assume that $\ell \notin \mathcal{L}_{n - 1}$. Then, we claim that $\ell = \ell_{3}$. Indeed, now $\ell_{1},\ell_{5} \in \mathcal{L}_{2m}$, so $\ell \notin \{\ell_{1},\ell_{5}\}$ by the assumption $\ell \notin \mathcal{L}_{n - 1} = \mathcal{L}_{2m}$. Moreover, $\ell_{2},\ell_{4} \notin \mathcal{L}_{2m + 1} = \mathcal{L}_{n}$, so $\ell \notin \{\ell_{2},\ell_{4}\}$. Hence $\ell = \ell_{3}$, and it follows that the horizontal edges of $\hat{Q}$ are contained on $\ell \pm 2 \cdot 4^{-m - 1} = \ell \pm 2^{-n}$.

To prove (d), assume finally that $\ell \in \mathcal{L}_{n - 1} = \mathcal{L}_{2m}$. Since $\ell_{1},\ell_{5}$ are the only lines in $\mathcal{L}_{2m} \cap \{\ell_{1},\ldots,\ell_{5}\}$, we infer that $\ell \in \{\ell_{1},\ell_{5}\}$, as claimed. \end{proof}

We will use the lemma only via the following corollary:

\begin{cor}\label{cor5} Let $\ell_{1} < \ell_{2} < \ell_{3} < \ell_{4} < \ell_{5}$ be consecutive lines in $\mathcal{L}_{n}$ such that $\ell_{2},\ell_{4} \in \mathcal{L}_{n - 1}$ and hence $\ell_{1},\ell_{3},\ell_{5} \in \mathcal{L}_{n} \, \setminus \, \mathcal{L}_{n - 1}$.  Assume that one of the lines $\ell_{j}$ contains the corner of a rectangle $Q \in \mathcal{D}_{\ceil{n/2}}$. Then, the dyadic parent $\hat{Q} \in \mathcal{D}_{\ceil{n/2} - 1}$ has a corner on $\ell_{2} \cup \ell_{4}$. \end{cor}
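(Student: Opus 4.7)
The strategy is a direct case analysis, applying Lemma \ref{lemma10} to the line $\ell_j$ that contains the corner of $Q$. Lemma \ref{lemma10} pinpoints, in each of its five scenarios, specific lines offset from $\ell = \ell_j$ by $0$, $\pm 2^{-n}$, $\pm 2^{-n+1}$, or $\pm 3\cdot 2^{-n}$ that are guaranteed to contain a horizontal edge of $\hat{Q}$. Since every edge contains corners, the task reduces to checking that, for each allowed $j \in \{1,\ldots,5\}$ and each parity of $n$, at least one such line lies in $\{\ell_2,\ell_4\}$. Note that the hypothesis already forces the pattern $\ell_1,\ell_3,\ell_5 \in \mathcal{L}_n \setminus \mathcal{L}_{n-1}$ automatically, since consecutive lines in $\mathcal{L}_n$ alternate between $\mathcal{L}_{n-1}$ and its complement, and the five lines sit at heights $\ell_3 + k \cdot 2^{-n}$ for $k \in \{-2,-1,0,1,2\}$.

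For $n$ odd the analysis is clean. If $j \in \{2,4\}$ then $\ell_j \in \mathcal{L}_{n-1}$ and Lemma \ref{lemma10}(e) gives that $\ell_j$ itself carries an edge of $\hat{Q}$. If $j \in \{1,3,5\}$ then $\ell_j \notin \mathcal{L}_{n-1}$ and Lemma \ref{lemma10}(d) produces edges on both $\ell_j \pm 2^{-n}$; since an offset of $2^{-n}$ moves the index by $\pm 1$, at least one of $\ell_j \pm 2^{-n}$ lies in $\{\ell_2,\ell_4\}$ (the end-cases $j=1$ and $j=5$ only use one of the two offsets, but it is exactly the correct one).

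For $n$ even the idea is identical but the bookkeeping is slightly longer. If $j \in \{1,3,5\}$, Lemma \ref{lemma10}(a) supplies edges on one of the two pairs $\{\ell_j + 2^{-n},\ell_j - 3\cdot 2^{-n}\}$ or $\{\ell_j - 2^{-n},\ell_j + 3\cdot 2^{-n}\}$; checking each $j$ separately, the offsets $\pm 2^{-n}$ and $\pm 3\cdot 2^{-n}$ move the index by $\pm 1$ and $\pm 3$ respectively, so each of the two pairs contains an element of $\{\ell_2,\ell_4\}$ (e.g.\ for $j=1$ the first pair contains $\ell_2$ and the second contains $\ell_4$). If $j \in \{2,4\}$, split according to whether $\ell_j \in \mathcal{L}_{n-2}$: Lemma \ref{lemma10}(c) directly places an edge on $\ell_j$ in the former case, and Lemma \ref{lemma10}(b) supplies edges on $\ell_j \pm 2^{-n+1}$ in the latter; here $2^{-n+1}$ is exactly the distance from $\ell_2$ to $\ell_4$, so one of $\ell_j \pm 2^{-n+1}$ is the other element of $\{\ell_2,\ell_4\}$.

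The corollary is essentially a bookkeeping exercise; the genuine content is already packaged in Lemma \ref{lemma10}. The only real risk is missing a sub-case, so I would write the proof by explicitly enumerating the two parities of $n$ and, within each, the membership of $\ell_j$ in $\mathcal{L}_{n-1}$ (and possibly $\mathcal{L}_{n-2}$), making the small index arithmetic transparent in each of the roughly half a dozen resulting subcases.
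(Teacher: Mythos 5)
Your proposal is correct and follows essentially the same route as the paper: a case analysis on the parity of $n$ and on which $\ell_{j}$ carries the corner, feeding each case into the appropriate part (a)--(e) of Lemma \ref{lemma10} and checking that the resulting offsets $0$, $\pm 2^{-n}$, $\pm 2^{-n+1}$, $\pm 3\cdot 2^{-n}$ land on $\ell_{2}$ or $\ell_{4}$. The only cosmetic difference is that the paper shortens the bookkeeping by a symmetry reduction to $w \in \ell_{1} \cup \ell_{2} \cup \ell_{3}$, whereas you enumerate all five indices.
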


\begin{proof} We may assume that the corner "$w$" of $Q \in \mathcal{D}_{\ceil{n/2}}$ is contained on $\ell_{1} \cup \ell_{2} \cup \ell_{3}$ (the cases $w \in \ell_{4} \cup \ell_{5}$ are symmetric to $w \in \ell_{1} \cup \ell_{2}$). We then consider the following cases:
\begin{itemize}
\item Assume $n \in 2\Z$ and $w \in \ell_{1}$. Since $\ell_{1} \in \mathcal{L}_{n} \, \setminus \, \mathcal{L}_{n - 1}$, we are in case (a) of Lemma \ref{lemma10}. We infer that either $\ell_{1} + 2^{-n} = \ell_{2}$ or $\ell_{1} + 3 \cdot 2^{-n} = \ell_{4}$ contains an edge of $\hat{Q}$.
\item Assume $n \in 2\Z$ and $w \in \ell_{2}$. Since $\ell_{2} \in \mathcal{L}_{n - 1}$, we are in case (b) or (c) of Lemma \ref{lemma10}, depending on whether $\ell_{2} \notin \mathcal{L}_{n - 2}$ or $\ell_{2} \in \mathcal{L}_{n - 2}$. In both cases, either $\ell_{2}$ or $\ell_{2} + 2^{-n + 1} = \ell_{4}$ contains an edge, hence a corner, of $\hat{Q}$.
\item Assume $n \in 2\Z$ and $w \in \ell_{3}$. Since $\ell_{3} \in \mathcal{L}_{n} \, \setminus \, \mathcal{L}_{n - 1}$, we are in case (a) of Lemma \ref{lemma10}. Consequently, either $\ell_{3} + 2^{-n} = \ell_{4}$ or $\ell_{3} - 2^{-n} = \ell_{2}$ contains an edge of $\hat{Q}$.
\item Assume $n \in 2\Z + 1$, and $w \in \ell_{1} \in \mathcal{L}_{n} \, \setminus \, \mathcal{L}_{n - 1}$. Then we are in case (d) of Lemma \ref{lemma10}, and we infer that $\ell_{1} + 2^{-n} = \ell_{2}$ contains an edge of $\hat{Q}$.
\item Assume $n \in 2\Z + 1$ and $w \in \ell_{2}$. Then we are in case (e) of Lemma \ref{lemma10}, and $\ell_{2}$ contains an edge of $\hat{Q}$.
\item Assume $n \in 2\Z + 1$, and $w \in \ell_{3} \in \mathcal{L}_{n} \, \setminus \, \mathcal{L}_{n - 1}$. Then we are in case (d) of Lemma \ref{lemma10}, and $\ell_{3} \pm 2^{-n} = \{\ell_{2},\ell_{4}\}$ both contain an edge of $\hat{Q}$.
\end{itemize}
We have covered all the cases, so the proof is complete. \end{proof}

\subsection{Lemmas on horizontal lines}

We start with an elementary lemma on horizontal lines, which quantifies the fact that if two horizontal $L_{1},L_{2} \subset \He$ are "close" to each other, then the coefficients in their parametrisations do not differ very much.
\begin{lemma}\label{lemma0} Let $L_{1},L_{2} \subset \He$ be horizontal lines which are not contained in any translate of the $xt$-plane. Then $L_{1}$ and $L_{2}$ can be written as
\begin{displaymath} L_{j} = \{(a_{j}y + b_{j},y,\tfrac{1}{2}b_{j}y + c_{j}) : y \in \R\}, \qquad j \in \{1,2\}, \end{displaymath}
for a unique triple $a_{j},b_{j},c_{j} \in \R$. Assume that
$|a_{1}| \leq \Sigma$ for some $\Sigma >
0$. Assume, moreover, that there exist two points $p,q \in \He$ with $d(p,q) = r$
 such that $\dist(p,L_{j}) \leq \epsilon r$ and $\dist(q,L_{j}) \leq \epsilon r$ for some $\epsilon \in (0,\tfrac{1}{64(1+\Sigma)})$. Then, $|a_{2}| \leq 2\Sigma$,
\begin{displaymath} |a_{1} - a_{2}| \lesssim (1+\Sigma)^2\epsilon, \quad |b_{1} - b_{2}| \lesssim (1+\Sigma)\,r\epsilon,
\quad \text{and} \quad |c_{1} - c_{2}| \lesssim
(1+\Sigma)(r\epsilon)^{2}.
\end{displaymath}
\end{lemma}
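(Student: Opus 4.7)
My plan is to exploit the invariance of the statement under Heisenberg operations. Left translations preserve $d$ and preserve each slope $a_j$, while the Heisenberg dilation $\delta_\lambda$ preserves $a_j$ and scales $(b_j,c_j,r)\mapsto(\lambda b_j,\lambda^2 c_j,\lambda r)$. After left-translating $p$ to $0$ and dilating by $1/r$, I may therefore assume $p=0$ and $r=1$, noting that the claimed bounds scale correctly. I then pick $p_j\in L_j$ with $d(0,p_j)\le\epsilon$ and $q_j\in L_j$ with $d(q,q_j)\le\epsilon$, so that $d(p_1,p_2),d(q_1,q_2)\le 2\epsilon$.

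Two points on $L_j$ with distinct $y$-coordinates uniquely determine $(a_j,b_j,c_j)$. Writing $p_j=(a_j u_j+b_j,u_j,\tfrac12 b_j u_j+c_j)$ and, analogously, $q_j$ with $y$-coordinate $v_j$, one has the explicit expressions $a_j=(X_j^q-X_j^p)/(v_j-u_j)$ and $b_j=X_j^p-a_j u_j$, where $X^{\bullet}$ denotes the $x$-coordinate. From $\|p_1\|\le\epsilon$ and $|a_1|\le\Sigma$ one reads off $|u_1|\le\epsilon$, $|b_1|\lesssim(1+\Sigma)\epsilon$, and (using that the Heisenberg norm controls $|t|^{1/2}$) $|c_1|\lesssim(1+\Sigma)\epsilon^2$. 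Because $p_j,q_j$ lie on the \emph{horizontal} line $L_j$, a direct computation of $p_j^{-1}\cdot q_j$ gives the distance formula $d(p_j,q_j)=|v_j-u_j|\sqrt{1+a_j^2}$. Combined with $d(p_j,q_j)\in[1-2\epsilon,1+2\epsilon]$, this yields $|v_1-u_1|\gtrsim 1/(1+\Sigma)$; propagating to $L_2$ via $|u_1-u_2|,|v_1-v_2|\le 2\epsilon$ gives $|v_2-u_2|\gtrsim 1/(1+\Sigma)$ by the smallness of $\epsilon$, hence a preliminary bound $|a_2|\lesssim 1+\Sigma$.

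The crux is the comparison of coefficients. For the slopes I would apply the identity $A/\alpha-B/\beta=(A-B)/\alpha+B(\beta-\alpha)/(\alpha\beta)$ with $A=X_1^q-X_1^p$, $B=X_2^q-X_2^p$, $\alpha=v_1-u_1$, $\beta=v_2-u_2$. The estimates $d(p_1,p_2),d(q_1,q_2)\le 2\epsilon$ give $|A-B|,|\beta-\alpha|\lesssim\epsilon$; combined with $|\alpha|,|\beta|\gtrsim 1/(1+\Sigma)$ and $|B|=|a_2||\beta|\lesssim 1+\Sigma$ from the preliminary step, this delivers $|a_1-a_2|\lesssim(1+\Sigma)^2\epsilon$. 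The sharper $|a_2|\le 2\Sigma$ then follows from $|a_2|\le\Sigma+O((1+\Sigma)^2\epsilon)$ together with $\epsilon<1/(64(1+\Sigma))$. The $b$-estimate drops out of $b_1-b_2=(X_1^p-X_2^p)-(a_1-a_2)u_1-a_2(u_1-u_2)$, with $|u_1|,|u_1-u_2|\lesssim\epsilon$.

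The main technical hurdle I foresee is the $c$-estimate. Since $c_j=T_j^p-\tfrac12 b_j u_j$, the task reduces to bounding $|T_1^p-T_2^p|$, but $\|p_2^{-1}\cdot p_1\|\le 2\epsilon$ only controls the combination $T_1^p-T_2^p+\tfrac12(X_1^p u_2-X_2^p u_1)$ up to $4\epsilon^2$, because of the quadratic cross term in the Heisenberg group law. The rescue is that the preparatory bounds $|X_j^p|,|u_j|\lesssim(1+\Sigma)\epsilon$ render that cross term $O((1+\Sigma)\epsilon^2)$, so $|T_1^p-T_2^p|\lesssim(1+\Sigma)\epsilon^2$; feeding this into $c_j=T_j^p-\tfrac12 b_j u_j$ and invoking the already proven $|b_1-b_2|\lesssim(1+\Sigma)\epsilon$ and $|u_1|,|u_1-u_2|\lesssim\epsilon$ yields $|c_1-c_2|\lesssim(1+\Sigma)\epsilon^2$. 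Undoing the normalization reintroduces the factors of $r$ and $r^2$ in the $b$- and $c$-bounds respectively, completing the proof.
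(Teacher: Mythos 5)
Your argument is correct and follows essentially the same route as the paper: normalize to $r=1$, $p=0$ by left translation and dilation, use that distances along a horizontal line are seen in the planar coordinates (your formula $d(p_j,q_j)=|v_j-u_j|\sqrt{1+a_j^2}$ is the paper's projection-isometry fact), extract the lower bound $|v_j-u_j|\gtrsim (1+\Sigma)^{-1}$ from $d(p,q)=1$, and then read off the coefficient comparisons, the only differences being cosmetic (you obtain a crude bound $|a_2|\lesssim 1+\Sigma$ first and deduce $|a_2|\le 2\Sigma$ after the slope-difference estimate, whereas the paper argues by contradiction before estimating $|a_1-a_2|$; and you compare $b_1-b_2$, $c_1-c_2$ directly instead of bounding each $|b_j|,|c_j|$ near the origin). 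One caveat, which you share with the paper's own proof: passing from $|a_2|\le \Sigma+O\big((1+\Sigma)^2\epsilon\big)$ to the clean inequality $|a_2|\le 2\Sigma$ requires tracking the implicit constant and effectively needs $\Sigma$ bounded below by an absolute constant, which is the regime in which the lemma is applied.
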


\begin{proof} We may assume without loss of generality that $r = 1$, since dilating the lines $L_{1},L_{2}$ does not change the slopes $a_{j}$, and changes the parameters $b_{j}$ and $c_{j}$ by multiplicative factors $r$ and $r^{2}$, respectively. After left translation (which notably does not affect the hypothesis $|a_{1}| \leq \Sigma$), we may also arrange that $p = 0$. We use the following well-known fact: if $p_{1},p_{2} \in L$ are points on a common horizontal line, then $|\pi(p_{1}) - \pi(p_{2})| = d(p_{1},p_{2})$, where $\pi(z,t) := z$ is the $1$-Lipschitz coordinate projection to $\R^{2}$.

The first priority will be to show $|a_{2}| \leq 2\Sigma$. To this end, let $p_{j},q_{j} \in L_{j}$, $j \in \{1,2\}$, be the closest points of $p,q$ on $L_{j}$, so $\|p_{j}\| \leq \epsilon$ and $d(q,q_{j}) \leq \epsilon$ for $j \in \{1,2\}$. Then,
\begin{displaymath} |\pi(p) - \pi(q)| \geq |\pi(p_{1}) - \pi(q_{1})| - |\pi(p) - \pi(p_{1})| - |\pi(q) - \pi(q_{1})| = d(p_{1},q_{1}) - 2\epsilon \geq d(p,q) - 4\epsilon \geq \tfrac{1}{2}.  \end{displaymath}
Further,
\begin{displaymath} |\pi(p) - \pi(p_{j})| \leq \epsilon \quad \text{and} \quad |\pi(q) - \pi(q_{j})| \leq \epsilon, \qquad j \in \{1,2\}. \end{displaymath}
Since $\epsilon \leq \tfrac{1}{4}$, it follows that
\begin{equation}\label{form137} |\pi(p_{2}) - \pi(q_{2})| \geq \tfrac{1}{4} \quad \text{and} \quad \max\{|\pi(p_{1}) - \pi(p_{2})|,|\pi(q_{1}) - \pi(q_{2})|\} \leq 2\epsilon. \end{equation}
Note that the point $p$ no longer appears in \eqref{form137}. To use \eqref{form137} to infer the estimate $|a_{2}| \leq 2|a_{1}|$, we need to write the points $\pi(p_{j}),\pi(q_{j}) \in \pi(L_{j})$ in coordinates:
\begin{displaymath} \pi(p_{j}) =: (a_{j}x_{j} + b_{j},x_{j}) \quad \text{and} \quad \pi(q_{j}) := (a_{j}y_{j} + b_{j},y_{j}), \qquad j \in \{1,2\}. \end{displaymath}
Then, two applications of the second estimate in \eqref{form137} implies 
\begin{align} 4\epsilon & \geq |[(a_{2}x_{2} + b_{2}) - (a_{1}x_{1} + b_{1})] - [(a_{2}y_{2} + b_{2}) - (a_{1}y_{1} + b_{1})]| \notag\\
& = |(a_{2} - a_{1})x_{2} - a_{1}(x_{1} - x_{2}) - (a_{2} - a_{1})y_{2} + a_{1}(y_{1} - y_{2})| \notag\\
& \geq |a_{2} - a_{1}||x_{2} - y_{2}| - |a_{1}||(x_{1} - x_{2}) + (y_{1} - y_{2})| \notag\\
&\label{form140} \geq |a_{2} - a_{1}||x_{2} - y_{2}| - 4\epsilon \Sigma. \end{align} 
So, if -- contrary to what we claim -- it were the case that $|a_{2}| \geq 2\Sigma \geq 2|a_{1}|$, then
\begin{equation}\label{form138} 4\epsilon \geq \tfrac{1}{2}|a_{2}||x_{2} - y_{2}| - 2\epsilon \Sigma. \end{equation}
On the other hand, by the first estimate in \eqref{form137}, we have
\begin{equation}\label{form139} \max\{|x_{2} - y_{2}|,|a_{2}||x_{2} - y_{2}|\} \geq \tfrac{1}{4}. \end{equation} 
If $|x_{2} - y_{2}| \geq \tfrac{1}{4}$, then \eqref{form138} and the assumption $64 \epsilon (1 + \Sigma) < 1$ yields $4\epsilon \geq \tfrac{1}{4}\Sigma - 2\epsilon \Sigma \geq \tfrac{1}{8}\Sigma$, a contradiction. And if $|a_{2}||x_{2} - y_{2}| \geq \tfrac{1}{4}$ instead, then \eqref{form138} yields $4 \epsilon \geq \tfrac{1}{8} - 2\epsilon \Sigma \geq \tfrac{1}{16}$, another contradiction. So, we conclude that $|a_{2}| \leq 2\Sigma$, as claimed.

The estimates for $|a_{2} - a_{1}|$ and $|b_{2} - b_{1}|$ are now straightforward. First, a combination of \eqref{form140}, \eqref{form139}, and $|a_{2}| \leq 2\Sigma$, yields
\begin{displaymath} |a_{2} - a_{1}|  \leq \frac{4\epsilon(1 + \Sigma)}{|x_{2} - y_{2}|} \leq 4\epsilon(1 + \Sigma) \cdot (4 + 8\Sigma) \leq 64(1 + \Sigma)^{2}\epsilon. \end{displaymath}
Next, using \eqref{form137}, $|a_{1}|,|a_{2}| \leq 2\Sigma$, and $|x_{j}| \leq |\pi(p_{j})| \leq |\pi(p)| + \epsilon = \epsilon$ by $p = 0$, we have
\begin{equation}\label{form141} |b_{j}| \leq |a_{j}x_{j} + b_{j}| + |a_{j}x_{j}| \leq \epsilon + 2\Sigma \epsilon \leq 2(1 + \Sigma)\epsilon, \qquad j \in \{1,2\}, \end{equation} 
so in particular $|b_{2} - b_{1}| \leq 4(1 + \Sigma)\epsilon$. It remains to check that $|c_{2} - c_{1}| \leq \epsilon^{2}$. This is based on $\max\{\|p_{1}\|,\|p_{2}\|\} \leq \epsilon$. Since
\begin{displaymath} p_{j} = (a_{j}x_{j} + b_{j},x_{j},\tfrac{1}{2}b_{j}x_{j} + c_{j}), \qquad j \in \{1,2\}, \end{displaymath}
it follows that $|\tfrac{1}{2}b_{j}x_{j} + c_{j}| \lesssim \epsilon^{2}$ and
$|x_{j}| \leq \epsilon$ for $j \in \{1,2\}$. Combining this with \eqref{form141}, we find $|c_{j}|
\lesssim \epsilon^{2} + |b_{j}x_{j}| \lesssim
(1+\Sigma)\epsilon^{2}$, and finally $|c_{1} - c_{2}| \lesssim
(1+\Sigma) \epsilon^{2}$, as claimed. \end{proof}

The next lemma states that two (nearly) equidistant horizontal line segments are roughly contained in a small neighbourhood of a single vertical plane. For $E \subset \He$ and $\delta > 0$, we use the notation $N(E,\delta) := \{p \in \He : \dist(p,E) \leq \delta\}$ for the closed $\delta$-neighbourhood of $E$.

\begin{lemma}\label{lemma1} There exists an absolute constant $A \geq 10$ such that the following holds. Let $0 < \epsilon < 1$, $H \geq 1$, let $L_{1},L_{2} \subset \He$ be horizontal lines, let $p \in L_{1}$, and let $r > 0$. Assume that
\begin{equation}\label{form1} B(p,AH^{3}\epsilon^{-2}r) \cap L_{2} \subset N(L_{1},Hr). \end{equation}
Then, there exists a horizontal line $L \subset \V(L_{1})$ (the vertical plane spanned by $L_{1}$) with the property
\begin{equation}\label{form6} B(p,Hr) \cap L_{2} \subset N(L,\epsilon r). \end{equation}
\end{lemma}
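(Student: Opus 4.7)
If $B(p, Hr) \cap L_{2} = \emptyset$ the conclusion holds vacuously, so assume $q_{0} \in L_{2} \cap B(p, Hr)$. After a left-translation by $p^{-1}$ and a rotation about the $t$-axis (Heisenberg isometries preserving both the hypothesis and conclusion), one may assume $p = 0$ and $L_{1}$ is the $y$-axis; then $\V(L_{1}) = \W$, and the candidate horizontal lines in $\V(L_{1})$ are precisely the sets $L_{t_{0}} := \{(0,\tilde{y},t_{0}) : \tilde{y} \in \R\}$. Writing $R := AH^{3}\epsilon^{-2}r$, the horizontality of $L_{2}$ and the triangle inequality imply that $L_{2} \cap B(0, R)$ has Heisenberg arclength at least $2(R - Hr)$, and the Euclidean lower bound $\dist(\cdot, L_{1}) \geq |x|$ forces the $xy$-direction of $L_{2}$ to be nearly parallel to $L_{1}$; in particular $L_{2}$ is not contained in a translate of the $xt$-plane, so by Lemma \ref{lemma0} we may write $L_{2} = \{(a_{2}y + b_{2}, y, \tfrac{1}{2}b_{2}y + c_{2})\}$ for a unique triple $(a_{2},b_{2},c_{2}) \in \R^{3}$.

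The key step is to extract the estimates $|a_{2}| \lesssim Hr/R$ and, crucially, $|b_{2}| \lesssim (Hr)^{2}/R$. Parametrize $L_{2}$ by Heisenberg arclength $s$ from $q_{0} = (x_{0}, y_{0}, t_{0})$, so that $L_{2}(s) = (x_{0} + \alpha s, y_{0} + \beta s, t_{0} + \tfrac{1}{2}(x_{0}\beta - \alpha y_{0})s)$ with $\alpha^{2} + \beta^{2} = 1$. For $|s| \leq R - Hr$ we have $L_{2}(s) \in B(0, R)$, so the hypothesis gives $\dist(L_{2}(s), L_{1}) \leq Hr$. Minimizing $\|(0,\tilde{y},0)^{-1} \cdot L_{2}(s)\|$ over $\tilde{y}$ and separating the Euclidean and $t$-parts yields
\[
 |x_{0} + \alpha s| \leq Hr \quad\text{and}\quad |P(s)| \lesssim (Hr)^{2}, \qquad P(s) := \tfrac{\alpha\beta}{2}s^{2} + x_{0}\beta\, s + \bigl(t_{0} + \tfrac{1}{2}x_{0}y_{0}\bigr).
\]
Finite differences of the first bound at $s = \pm(R - Hr)$ yield $|\alpha| \lesssim Hr/R$ and hence $|\beta| \gtrsim 1$. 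Extracting the odd part of $P$ at $s = \pm(R - Hr)$ gives $|x_{0}\beta(R - Hr)| \lesssim (Hr)^{2}$, and therefore $|x_{0}| \lesssim (Hr)^{2}/R$. The identities $a_{2} = \alpha/\beta$ and $b_{2} = x_{0} - a_{2}y_{0}$, combined with $|y_{0}| \leq Hr$, then deliver the advertised bounds on $a_{2}$ and $b_{2}$.

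Setting $L := L_{c_{2}}$, any $q(y) \in L_{2} \cap B(0, Hr)$ satisfies $|y| \leq Hr$, and choosing $\tilde{y} = y$ in the Heisenberg distance gives
\[
 d(q(y), L) \leq \max\Bigl\{\,|a_{2}y + b_{2}|,\ \sqrt{|\tfrac{1}{2}a_{2}y^{2} + b_{2}y|}\,\Bigr\}.
\]
Substituting the bounds on $a_{2}, b_{2}$ and $R = AH^{3}\epsilon^{-2}r$ makes both terms at most a constant multiple of $\epsilon r/\sqrt{A}$, and the conclusion follows for any absolute constant $A \geq 10$. The main obstacle is precisely the sharp bound $|b_{2}| \lesssim (Hr)^{2}/R$: a direct application of Lemma \ref{lemma0} to two distant common points of $L_{1}$ and $L_{2}$ only gives $|b_{2}| \lesssim Hr$, which is off by a factor of $R/(Hr)$ and would produce an approximation no better than $Hr$ inside $B(0, Hr)$. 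Recovering the improved rate requires the quadratic $t$-estimate on $P(s)$ above, obtained by genuinely minimizing the Heisenberg distance over $\tilde{y}$ (not merely using $\tilde{y} = y(s)$) so that the $t$-contribution can be balanced against the $xy$-contribution.
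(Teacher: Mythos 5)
Your proof is correct and follows essentially the same route as the paper's: after normalizing so that $p = 0$ and $L_{1}$ is a coordinate axis, both arguments use the Euclidean part of the hypothesis to bound the slope of $L_{2}$ by $\lesssim Hr/R$ and, crucially, the $t$-part of the distance at two points of $L_{2}$ near $s = \pm(R - Hr)$ to upgrade the offset bound to $|b_{2}| \lesssim (Hr)^{2}/R$, after which the horizontal line at height $c_{2}$ inside $\V(L_{1})$ approximates $L_{2} \cap B(p,Hr)$ to error $\sim \sqrt{(Hr)^{3}/R} \sim \epsilon r/\sqrt{A}$. The only cosmetic differences are that the paper first rescales to $H = 1$, $r = 1$ and writes $L_{2}$ as a graph over the $x$-axis, and your closing phrase should read ``for a sufficiently large absolute constant $A$'' rather than ``for any $A \geq 10$,'' since the implicit constants must be absorbed into the choice of $A$.
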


\begin{proof} We first reduce to the case $H = 1$. Indeed, assume that this case is already known. Then, let $H \geq 1$ be arbitrary, and write $r' := Hr$ and $\epsilon' := \epsilon/H$. With this notation, the hypothesis \eqref{form1} can be rephrased as $\dist(q,L_{1}) \leq Hr = r'$ for all $q \in B(p,AH^{3}\epsilon^{-2}r) \cap L_{2} = B(p,A(\epsilon')^{-2}r')$. Consequently, the version of the lemma with "$H = 1$" implies the existence of $L \subset \V(L_{1})$ with the property $\dist(q,L) \leq \epsilon'r' = \epsilon r$ for all $q \in B(p,r') \cap L_{2} = B(p,Hr) \cap L_{2}$, as desired. So, we may assume $H = 1$. Translating, dilating, and rotating, we may also assume with no loss of generality that $B(p,r) = B(0,1)$, $L_{1} = \{(x,0,0) : x \in \R\}$, and $B(0,1) \cap L_{2} \neq \emptyset$ (otherwise there is nothing to prove).

Write $\hat{L}_{j} := \pi(L_{j})$ for $j \in \{1,2\}$, where $\pi(x,y,t) := (x,y)$ is the $1$-Lipschitz coordinate projection to $\R^{2}$. Write $R := A\epsilon^{-2} \geq 10$, where the value of the constant $A \geq 10$ will become apparent during the proof. Then, it follows from \eqref{form1} that $\hat{L}_{2} \cap B_{\R^{2}}(0,R)$ is contained in the $1$-neighbourhood of $\hat{L}_{1} = \{(x,0) : x \in \R\}$. Since $R \geq 10$, and $\hat{L}_{2} \cap B_{\R^{2}}(0,1) \neq \emptyset$, it follows that $\hat{L}_{2}$ can be written as a graph over the $x$-axis $\hat{L}_{1}$:
\begin{displaymath} \hat{L}_{2} = \{(x,ax + b) : x \in \R\}. \end{displaymath}
Moreover, $|ax + b| \leq 1$ for all $|x| \leq R$. In particular, $|b| \leq 1$, and $|a| \leq 2/R$. Since $L_{2}$ is a horizontal line with projection $\hat{L}_{2}$, there exists $c \in \R$ such that
\begin{equation}\label{form4} L_{2} = \{(x,ax + b,c - \tfrac{1}{2}bx) : x \in \R\}. \end{equation}
We will eventually show that \eqref{form6} holds with $L := \{(x,0,c) : x \in \R\}$. For this purpose, we first claim that $|c| \lesssim 1$ and $|b| \lesssim 1/R$, improving on the previous "trivial" bound $|b| \leq 1$. Since $L_{2} \cap B(0,1) \neq \emptyset$, there exists a point $(x,ax + b,c - \tfrac{1}{2}bx) \in B(0,1)$, whence $|x| \leq 1$, and $|c| \lesssim 1 + |bx| \lesssim 1$.

From the estimates $|a| \lesssim R^{-1}$, $|b| \leq 1$ and $|c| \lesssim 1$, we can now deduce that, for a suitable small constant $\theta > 0$, we have
\begin{displaymath} |x| \leq \theta R \quad \Longrightarrow \quad (x,ax + b,c - \tfrac{1}{2}bx) \in L_{2} \cap B(0,R). \end{displaymath}
Motivated by this, we define the two points
\begin{displaymath} q_{-} := (-\theta R,-\theta aR + b, c + \tfrac{\theta}{2}bR) \quad \text{and} \quad q_{+} := (\theta R,\theta aR + b, c - \tfrac{\theta}{2}bR), \end{displaymath}
both lying in $L_{2} \cap B(0,R)$. By the assumption \eqref{form1} that $\dist(q_{\pm},L_{1}) \leq 1$, we now find $\epsilon_{-},\epsilon_{+} \in \R$ such that $d((\pm \theta R + \epsilon_{\pm},0,0),q_{\pm}) \leq 1$. Expanding what this means,
\begin{equation}\label{form2} 1 \geq d((\theta R + \epsilon_{+},0,0),q_{+}) \gtrsim |\epsilon_{+}| + \sqrt{|c -  \theta b R - \tfrac{1}{2}\theta^{2}aR^{2} - \tfrac{1}{2}\theta a R\epsilon_{+} - \tfrac{1}{2}\epsilon_{+}b|} \end{equation}
and
\begin{equation}\label{form3} 1 \geq d((-\theta R + \epsilon_{-},0,0),q_{-}) \gtrsim |\epsilon_{-}| + \sqrt{|c +  \theta b R - \tfrac{1}{2}\theta^{2}aR^{2} + \tfrac{1}{2}\theta a R\epsilon_{-} - \tfrac{1}{2}\epsilon_{-}b|}. \end{equation}
It follows immediately that $|\epsilon_{\pm}| \leq 1$. Consequently (using also the previous bounds $|a| \lesssim R^{-1}$ and $|b| \leq 1$), we find that
\begin{displaymath} |\tfrac{1}{2}\theta a R\epsilon_{+} \pm \tfrac{1}{2}\epsilon_{\pm}b| \lesssim 1. \end{displaymath}
Combining this information with the bounds from \eqref{form2}-\eqref{form3}, we deduce that also
\begin{displaymath} 2\theta |b R| \leq |\theta bR + c - \tfrac{1}{2}\theta^{2}aR^{2}| + |\theta bR - c + \tfrac{1}{2}\theta^{2}aR^{2}| \lesssim 1, \end{displaymath}
and hence finally $|b| \lesssim R^{-1}$, as claimed.

Now, we complete the proof of \eqref{form6} for $L = \{(x,0,c) : x \in \R\}$. Note that
\begin{displaymath} d((x,0,c),(x,ax + b,c - \tfrac{1}{2}bx)) \sim |ax + b| + \sqrt{|bx + \tfrac{1}{2}ax^{2}|}, \qquad x \in \R. \end{displaymath}
For $|x| \lesssim 1$, the bound $\max\{|a|,|b|\} \lesssim 1/R$ shows that $d((x,0,c),(x,ax + b,c - \tfrac{1}{2}bx)) \lesssim 1/\sqrt{R}$. In particular, if the constant "$A$" in $R = A\epsilon^{-2}$ was chosen large enough, and recalling the parametrisation \eqref{form4} of the line $L_{2}$, we see that $d(q,L) \leq \epsilon$ for all $q \in L_{2} \cap B(0,1)$, as claimed. \end{proof}


\subsection{Quasi-isometries on $\R$ and $\W$}\label{s:QIE} Quasi-isometric embeddings are maps between metric spaces which behave like bilipschitz maps down to a fixed (typically small) resolution "$\epsilon$". These maps arise in the present paper, roughly speaking, as compositions $\pi_{\V} \circ f$, where $f \colon B(w,r) \to \He$ is a bilipschitz map defined a parabolic ball $B(w,r) \subset \W$,  and $\pi_{\V} \colon \He \to \V$ is the "closest point projection" to some vertical plane $\V = \V_{x,r} \subset \He$, which approximates $f(B(w,r))$ up to error "$\epsilon r$". 

\begin{definition}[$(M,\epsilon)$-quasi-isometric embedding]\label{QIE} A map $f \colon (X,d) \to (Y,d')$ between two metric spaces is an \emph{$(M,\epsilon)$-quasi-isometric embedding} (abbreviated $(M,\epsilon)$-QIE) if
\begin{displaymath} M^{-1}d(x,y) - \epsilon \leq d'(f(x),f(y)) \leq Md(x,y) + \epsilon, \qquad x,y \in X. \end{displaymath}
\end{definition}
\begin{remark}\label{rem1} Note that if $f \colon (X,d) \to (Y,d')$ is an $(M,\epsilon)$-QIE, and $x,x' \in X$ are two points with $d(x,x') \geq 2M\epsilon$, then $d'(f(x),f(x')) \geq \epsilon$. In particular, $f(x) \neq f(x')$.
\end{remark}
\begin{definition}[Orderly metric] A metric $d$ on $\R$ is \emph{orderly} if $\mathrm{Id} \colon (\R,|\cdot|) \to (\R,d)$ is continuous, and $\max\{d(x,y),d(y,z)\} \leq d(x,z)$ for all triples $x,y,z \in \R$ with $x \leq y \leq z$. \end{definition}
We will mainly use the definition in the form of the following corollary: if $[a,b] \subset [c,d]$, then $d(a,b) \leq d(c,d)$. This follows by chaining $d(a,b) \leq d(c,b) \leq d(c,d)$. The only metrics on $\R$ we will actually encounter are the Euclidean one, and the square root metric $\|x - y\| := \sqrt{|x - y|}$. Both are evidently orderly.
\begin{definition} Let $E \subset \R$, let $d$ be a metric on $\R$, let $f \colon E \to \R$ be a map, and let $\delta > 0$. We say that $f$ is \emph{$\delta$-monotone relative to $d$} if either $f(a) < f(b)$ for all $a,b \in E$ with $a < b$ and $d(a,b) > \delta$, or then $f(a) > f(b)$ for all $a,b \in E$ with $a < b$ and $d(a,b) > \delta$.
\end{definition}

The definition can be posed for arbitrary metrics, but most likely is not very useful if $d$ is not orderly. We start with a simple lemma on $(M,\epsilon)$-QIEs on the real line.

\begin{lemma}\label{lemma2} Let $I \subset \R$ be an interval (either open or closed), and let $f \colon (I,d) \to (\R,d)$ be an $(M,\epsilon)$-QIE, where $d$ is an orderly metric. Then $f$ is $2\epsilon M$-monotone relative to $d$, and $f([a,b])$ is $\epsilon$-dense in $[f(a),f(b)]$ for all $a,b \in I$.
\end{lemma}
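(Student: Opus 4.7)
The plan is to establish the two conclusions separately. The density statement admits a direct discretisation argument, while the monotonicity statement is handled by contradiction combined with a short case analysis.

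For density, fix $a < b$ in $I$. Given an auxiliary $\eta > 0$, use continuity of $\mathrm{Id}\colon (\R,|\cdot|) \to (\R,d)$ to choose a partition $a = t_{0} < t_{1} < \ldots < t_{n} = b$ of $[a,b]$ with $d(t_{i},t_{i+1}) < \eta$ for every $i$. The QIE upper bound yields $d(f(t_{i}),f(t_{i+1})) \leq M\eta + \epsilon$. For any $y$ in the order-interval between $f(a)$ and $f(b)$, walking through the sequence $f(t_{0}) = f(a), \ldots, f(t_{n}) = f(b)$ produces an index $i$ for which $y$ lies (in the $\leq$-sense) between $f(t_{i})$ and $f(t_{i+1})$; orderliness of $d$ on the codomain then gives
\begin{displaymath} d(f(t_{i}),y) \leq d(f(t_{i}),f(t_{i+1})) \leq M\eta + \epsilon. \end{displaymath}
Since $\eta > 0$ was arbitrary, $\inf_{x \in [a,b]} d(f(x),y) \leq \epsilon$, which is the required $\epsilon$-density.

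For monotonicity, suppose for contradiction that there exist pairs $(a_{1},b_{1})$ and $(a_{2},b_{2})$ in $I$ with $d(a_{i},b_{i}) > 2M\epsilon$, $f(a_{1}) < f(b_{1})$, but $f(a_{2}) > f(b_{2})$. Remark \ref{rem1} gives $d(f(a_{i}),f(b_{i})) > \epsilon$ in both cases. Assume WLOG $a_{1} \leq a_{2}$; three configurations exhaust the possibilities, namely \emph{disjoint} ($b_{1} \leq a_{2}$), \emph{overlapping} ($a_{2} < b_{1} < b_{2}$), and \emph{nested} ($a_{2} < b_{2} \leq b_{1}$). In each of them, the algebraic identity
\begin{displaymath} [f(b_{1}) - f(a_{1})] + [f(a_{2}) - f(b_{2})] = [f(b_{1}) - f(b_{2})] + [f(a_{2}) - f(a_{1})], \end{displaymath}
together with the QIE lower bound applied to the (positive) summands on the left and the QIE upper bound applied (in absolute value) to the summands on the right, yields
\begin{displaymath} M^{-1}\bigl[d(a_{1},b_{1}) + d(a_{2},b_{2})\bigr] - 2\epsilon \leq M\bigl[d(a_{1},a_{2}) + d(b_{1},b_{2})\bigr] + 2\epsilon. \end{displaymath}
In the disjoint and overlapping cases, orderliness controls the cross-distance sum on the right by a quantity forcing at least one of the $d(a_{i},b_{i})$ to be $\leq 2M\epsilon$, contradicting the hypothesis.

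The main obstacle is the nested case, in which the orderly computation only bounds the cross-distance sum by $d(a_{1},b_{1}) - d(a_{2},b_{2})$, and the above chain degrades to $(M^{2}+1)d(a_{2},b_{2}) \leq (M^{2}-1)d(a_{1},b_{1}) + 4M\epsilon$, which is consistent with $d(a_{2},b_{2}) > 2M\epsilon$. To close this case, I would invoke the just-proved density assertion on $[a_{1},b_{1}]$: it furnishes a point $c \in [a_{1},b_{1}]$ with $d(f(c),f(b_{2})) \leq \epsilon$, and the hypothesis $f(a_{2}) > f(b_{2})$ combined with the strict lower bound $d(f(a_{2}),f(b_{2})) > \epsilon$ from Remark \ref{rem1} forces $c \notin [a_{2},b_{2}]$. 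Depending on whether $c < a_{2}$ or $c > b_{2}$, the pair $(c,a_{2})$ or $(b_{2},c)$ together with $(a_{1},b_{1})$ (after swapping roles as needed) gives a disjoint or overlapping reversal covered by the already-treated cases, producing the required contradiction.
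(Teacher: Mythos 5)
Your density argument is correct; it replaces the paper's sup/inf construction by a partition of $[a,b]$ with small $d$-mesh, but both rest on the continuity of $\mathrm{Id}\colon(\R,|\cdot|)\to(\R,d)$ and on orderliness, and either works. The monotonicity half, however, has a genuine gap, beginning with the central displayed inequality. Your algebraic identity is an identity between \emph{Euclidean} differences of values of $f$, whereas the QIE hypotheses control the distances $d(f(x),f(y))$ in the orderly metric. For a general orderly metric these are not comparable in either direction: for the square-root metric $d(s,t)=\sqrt{|s-t|}$ (precisely the case this lemma is applied to, via Lemma \ref{lemma3} and Corollary \ref{cor3}) the Euclidean difference can be much smaller or much larger than the $d$-distance, so neither $f(b_{1})-f(a_{1})\geq M^{-1}d(a_{1},b_{1})-\epsilon$ nor $|f(b_{1})-f(b_{2})|\leq Md(b_{1},b_{2})+\epsilon$ follows from the QIE bounds, and your displayed inequality is not available.

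Even in the special case $d=|\cdot|$, where that inequality is valid, the claimed contradictions in the disjoint and overlapping cases do not appear: orderliness yields \emph{lower} bounds on the cross-distances there (in the disjoint case $[a_{1},b_{1}]\subset[a_{1},a_{2}]$ gives $d(a_{1},a_{2})\geq d(a_{1},b_{1})$, and the gap between the intervals can be arbitrarily large), and in the overlapping case the best orderliness gives is $d(a_{1},a_{2})+d(b_{1},b_{2})\leq d(a_{1},b_{1})+d(a_{2},b_{2})$, which turns your inequality into $M^{-1}S-2\epsilon\leq MS+2\epsilon$ --- trivially true for every $M\geq1$ and forcing nothing. (Concretely, $M=100$, $\epsilon=0.05$, $(a_{1},b_{1})=(0,11)$, $(a_{2},b_{2})=(10,21)$ satisfies your inequality with room to spare.) The nested-case patch therefore rests on cases that were never closed, and it also fails on its own terms: in the nested case $b_{2}\in[a_{1},b_{1}]$, so the point $c$ produced by density may simply be $b_{2}$ itself (or a point next to it), and neither $f(a_{2})>f(b_{2})$ nor $d(f(a_{2}),f(b_{2}))>\epsilon$ excludes $c\in[a_{2},b_{2}]$. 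What your scheme is missing is the mechanism the paper uses: assuming a reversal, some value $f(x_{0})$ lies in the order interval between two values whose preimages all lie on the far side of a full pair-interval from $x_{0}$; density then produces $x$ with $d(f(x),f(x_{0}))\leq\epsilon'$, while orderliness of the \emph{domain} metric gives $d(x,x_{0})\geq\min\{d(a,b),d(c,d)\}>2M\epsilon$, so the QIE lower bound forces $d(f(x),f(x_{0}))>\epsilon'$, a contradiction. Without a step of this kind the monotonicity claim is not established.
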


The last statement means that $\dist(y,f([a,b])) \leq \epsilon$ for all $y \in [f(a),f(b)]$.

\begin{proof}[Proof of Lemma \ref{lemma2}] We start by proving the second assertion, which is then used to establish the first. Fix $a,b \in I$ with $a < b$, and assume without loss of generality that $f(a) < f(b)$ (if $f(a) = f(b)$, there is nothing to prove). Pick $f(a) < y < f(b)$, and fix $\epsilon' > \epsilon$. Consider the points
\begin{displaymath} c := \sup \{x \in [a,b] : f(x) \leq y\} \quad \text{and} \quad d := \inf \{x \in [c,b] : f(x) \geq y\}. \end{displaymath}
Clearly $c \leq d$ by definition, and in fact $c = d$; namely, if $c < d$, any point $x \in (c,d)$ would satisfy $f(x) < y$ by the definition of $d$, and then $c \geq x > c$ by the definition of $c$.

Next, by definitions of $c$ and $d$, there are points $x_{1} \leq
c = d \leq x_{2}$ with $|x_{1} - x_{2}|$ arbitrarily small such
that $f(x_{1}) \leq y \leq f(x_{2})$. Since $\mathrm{Id} \colon
(\R,|\cdot|) \to (\R,d)$ is continuous, such points can also be
chosen with $d(x_{1},x_{2}) < (\epsilon' - \epsilon)/M$. Then,
using the assumption that $d$ is orderly, we find that
$d(f(x_{1}),y) \leq d(f(x_{1}),f(x_{2})) \leq Md(x_{1},x_{2}) +
\epsilon < \epsilon'$, as desired.

We then proceed to the first statement of the lemma. Let $a,b,c,d \in I$ with $a < b$, $c < d$, and $\min\{d(a,b),d(c,d)\} > 2\epsilon M$. Therefore $\min\{d(a,b)/M - \epsilon,d(c,d)/M - \epsilon\} > \epsilon'$ for some $\epsilon' > \epsilon$. By Remark \ref{rem1}, we have $f(a) \neq f(b)$ and $f(c) \neq f(d)$, and we may assume that $f(a) < f(b)$. Our task is to prove that also $f(c) < f(d)$. The argument splits into two cases: either $[c,d] \subset [a,b]$ or $[c,d] \not\subset [a,b]$.

Assume first that $[c,d] \subset [a,b]$. We first check that $f(d) > f(a)$. If this were not the case, then $f(a) \in [f(d),f(b)]$. Since $f([d,b])$ is $\epsilon$-dense in $[f(d),f(b)]$, this implies that there exists a point $x \in [d,b]$ with $d(f(x),f(a)) \leq \epsilon'$. However, $d(a,x) \geq d(c,d)$, hence
\begin{equation}\label{form17} d(f(x),f(a)) \geq \frac{d(a,x)}{M} - \epsilon \geq \frac{d(c,d)}{M} - \epsilon > \epsilon', \end{equation}
a contradiction. So, we already know that $f(d) > f(a)$. Assume, then, to reach another contradiction, that $f(d) < f(c)$. Consequently $f(d) \in [f(a),f(c)]$, and by the reasoning above, there exists a point $x \in [a,c]$ with $d(f(x),f(d)) \leq \epsilon'$. However, $d(x,d) \geq d(c,d)$, which leads to $d(f(x),f(d)) > \epsilon'$, a contradiction. This settles the case $[c,d] \subset [a,b]$.

Consider then the case $[c,d] \not\subset [a,b]$. Then, either $c < a$, or $d > b$. These sub-cases are similar, and we only treat $c < a$. We first claim that, in this case, $f(c) < f(b)$. If this failed, then $f(b) \in [f(a),f(c)]$, and we may find a point $x \in [c,a]$ with $d(f(x),f(b)) \leq \epsilon'$. However, $d(x,b) \geq d(a,b)$, so $d(f(x),f(b)) > \epsilon'$ by the computation in \eqref{form17}, a contradiction. Finally, we check that $f(d) > f(c)$ by a fourth iteration of the same argument: if this fails, then $f(c) \in [f(d),f(b)]$. Consequently, there exists a point $x \in [d,b]$ or $x \in [b,d]$ such that $d(f(x),f(c)) \leq \epsilon'$. If here $d \leq b$, then $d(c,x) \geq d(c,d)$, leading to the familiar contradiction. And if $b \leq d$, then $d(c,x) \geq d(a,b)$, leading once more to the same contradiction. The proof of the lemma is complete. \end{proof}

\begin{cor}\label{cor1} Let $f \colon I := [a,c] \to \R$ be an $(M,\epsilon)$-QIE, and assume that $b \in (a + 2M\epsilon,c - 2M\epsilon)$. Then $f(b) \in (f(a),f(c))$ or $f(b) \in (f(c),f(a))$.
\end{cor}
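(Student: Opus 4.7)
The plan is to reduce everything to the monotonicity conclusion already delivered by Lemma \ref{lemma2}, applied with the Euclidean metric $d(x,y) = |x - y|$, which is orderly. There is essentially no further work.

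First I would verify the separation hypotheses needed to invoke $2M\epsilon$-monotonicity. Since $b \in (a + 2M\epsilon, c - 2M\epsilon)$ with $a < b < c$, we get at once $|b - a| > 2M\epsilon$ and $|c - b| > 2M\epsilon$, and therefore also $|c - a| > 4M\epsilon > 2M\epsilon$. So each of the three pairs $(a,b)$, $(b,c)$, $(a,c)$ is admissible for the monotonicity statement.

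Next I would apply Lemma \ref{lemma2}: $f$ is $2M\epsilon$-monotone relative to $|\cdot|$. By definition, this yields one of two alternatives:
\begin{itemize}
\item either $f(x) < f(y)$ whenever $x < y$ and $|x - y| > 2M\epsilon$, giving $f(a) < f(b)$ and $f(b) < f(c)$, hence $f(b) \in (f(a), f(c))$;
\item or $f(x) > f(y)$ whenever $x < y$ and $|x - y| > 2M\epsilon$, giving $f(a) > f(b) > f(c)$, hence $f(b) \in (f(c), f(a))$.
\end{itemize}
Note the strict inequalities are ensured by Remark \ref{rem1} (or directly by the definition of $\delta$-monotonicity with strict ordering). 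Either alternative gives exactly the conclusion of the corollary, which completes the proof.

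There is no real obstacle here: the corollary is a packaged consequence of Lemma \ref{lemma2} designed for convenient later use. The only thing to be careful about is making sure the buffer $2M\epsilon$ in the assumption matches the monotonicity threshold of Lemma \ref{lemma2}, which it does by construction.
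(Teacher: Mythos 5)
Your proof is correct and follows essentially the same route as the paper: both deduce the conclusion from the $2M\epsilon$-monotonicity supplied by Lemma \ref{lemma2}, with the separations $|b-a|, |c-b| > 2M\epsilon$ coming straight from the hypothesis on $b$ (the paper phrases the dichotomy via Remark \ref{rem1} plus a case split on $f(a)<f(b)$, but this is only a cosmetic difference from invoking the strict dichotomy in the definition of $\delta$-monotonicity, as you do).
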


\begin{proof} By Remark \ref{rem1}, we have $f(a) \neq f(b) \neq f(c)$. If $f(a) < f(b)$, then Lemma \ref{lemma2} implies that also $f(b) < f(c)$, and hence $f(b) \in (f(a),f(c))$. Otherwise, $f(b) \in (f(c),f(a))$ by the same argument. \end{proof}

We next prove a similar statement concerning \emph{horizontal
$(M,\epsilon)$-QIEs} defined on rectangles in the parabolic plane
$\W$; keep in mind that parabolic balls, in particular, are rectangles, since $d_{\mathrm{par}}$ was defined as the $L^{\infty}$-metric.
\begin{definition}[Horizontal $(M,\epsilon)$-QIE]\label{hQIE} Let $Q := I \times J \subset \W$ be a rectangle. A map $F \colon Q \to \W$ is called a \emph{horizontal $(M,\epsilon)$-QIE} if $F$ is an $(M,\epsilon)$-QIE in the sense of Definition \ref{QIE}, and moreover, for every $t \in J$ there exists a horizontal line $\ell = \ell_{t} \subset \W$ such that $F(I \times \{t\}) \subset \ell$.  \end{definition}

Every horizontal $(M,\epsilon)$-QIE $F \colon I \times J \to \R$ \emph{induces} the following map $f \colon J \to \R$:
\begin{displaymath} f(t) := \pi_{2}(F(y,t)), \end{displaymath}
where $\pi_{2}$ is the projection to the second coordinate, and $y \in J$ is arbitrary (the value of $y \mapsto \pi_{2}(F(y,t))$ does not depend on $y \in J$ by the horizontality assumption). The projection $\pi_{2}$ is a $1$-Lipschitz map $(\W,d_{\mathrm{par}}) \to (\R,\|\cdot\|)$, where $\|x - y\| := \sqrt{|x - y|}$ is the square root metric. From this, it follows quite easily that $f$ is also a QIE $(I,\|\cdot\|) \to (\R,\|\cdot\|)$ if the interval $I \subset \R$ is long enough relative to $\sqrt{|J|}$. Details follow:

\begin{lemma}\label{lemma3} Let $\epsilon > 0$, $M \geq 1$, and let $I,J \subset \R$ be intervals with
\begin{displaymath} |I| > 2M^{2}\sqrt{|J|} + 4M\epsilon. \end{displaymath}
If $F \colon Q := I \times J \to \W$ is a horizontal $(M,\epsilon)$-QIE, then the induced map $f \colon (J,\|\cdot\|) \to (\R,\|\cdot\|)$ is an $(M,2\epsilon)$-QIE.
\end{lemma}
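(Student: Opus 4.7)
The goal is to show $M^{-1}\sqrt{|t_1-t_2|} - 2\epsilon \leq \sqrt{|f(t_1)-f(t_2)|} \leq M\sqrt{|t_1-t_2|} + 2\epsilon$ for all $t_1,t_2 \in J$. The upper bound is immediate: since $\pi_2 \colon (\W,d_{\mathrm{par}}) \to (\R,\|\cdot\|)$ is $1$-Lipschitz (because $\sqrt{|t_1-t_2|} \leq d_{\mathrm{par}}((y,t_1),(y,t_2))$), for any $y \in I$ we have $\sqrt{|f(t_1)-f(t_2)|} = \|\pi_2 F(y,t_1) - \pi_2 F(y,t_2)\| \leq d_{\mathrm{par}}(F(y,t_1),F(y,t_2)) \leq M\sqrt{|t_1-t_2|}+\epsilon$.

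For the lower bound, write $I = [a,b]$. By horizontality, one may write $F(y,t) = (g_t(y),f(t))$ for each $t \in J$ and $y \in I$. Since $d_{\mathrm{par}}(F(y,t),F(y',t)) = |g_t(y)-g_t(y')|$, each slice $g_t \colon I \to \R$ is itself a standard $(M,\epsilon)$-QIE in the Euclidean metric. Lemma~\ref{lemma2} therefore says $g_t$ is $2M\epsilon$-monotone on $I$ and that $g_t(I)$ is $\epsilon$-dense in the segment joining $g_t(a)$ and $g_t(b)$.

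Fix $t_1,t_2 \in J$ and write $g_i := g_{t_i}$. The strategy is to produce $y_0,y' \in I$ such that $|g_1(y_0) - g_2(y')| \leq \epsilon$: then
\[ d_{\mathrm{par}}(F(y_0,t_1),F(y',t_2)) = \max\{|g_1(y_0)-g_2(y')|,\sqrt{|f(t_1)-f(t_2)|}\} \leq \max\{\epsilon,\sqrt{|f(t_1)-f(t_2)|}\}, \]
while the QIE lower bound for $F$ gives $d_{\mathrm{par}}(F(y_0,t_1),F(y',t_2)) \geq M^{-1}\sqrt{|t_1-t_2|}-\epsilon$. A trivial case split on whether $\sqrt{|f(t_1)-f(t_2)|} \geq \epsilon$ then yields $\sqrt{|f(t_1)-f(t_2)|} \geq M^{-1}\sqrt{|t_1-t_2|}-2\epsilon$.

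The principal obstacle, where the size hypothesis $|I| > 2M^{2}\sqrt{|J|} + 4M\epsilon$ is essential, is to exhibit such a $y'$. Take $y_0$ to be the midpoint of $I$, so $|y_0 - a| = |y_0 - b| = |I|/2 > M^{2}\sqrt{|J|}+2M\epsilon$. Since $|I| > 2M\epsilon$, Lemma~\ref{lemma2} implies $g_2$ is monotone on $I$; assume WLOG it is increasing. The QIE lower bound for $g_2$ then gives
\[ g_2(y_0) - g_2(a) \geq M^{-1}|y_0-a| - \epsilon > M\sqrt{|J|}+\epsilon, \qquad g_2(b) - g_2(y_0) > M\sqrt{|J|}+\epsilon. \]
On the other hand, applying the QIE upper bound for $F$ to $(y_0,t_1)$ and $(y_0,t_2)$ gives $|g_1(y_0)-g_2(y_0)| \leq M\sqrt{|t_1-t_2|}+\epsilon \leq M\sqrt{|J|}+\epsilon$. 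Combining the two estimates shows $g_1(y_0) \in (g_2(a), g_2(b))$, so the $\epsilon$-density statement in Lemma~\ref{lemma2} produces $y' \in I$ with $|g_2(y')-g_1(y_0)| \leq \epsilon$, completing the argument.
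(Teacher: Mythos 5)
Your proof is correct and follows essentially the same route as the paper's: horizontality makes each horizontal slice a Euclidean $(M,\epsilon)$-QIE, Lemma \ref{lemma2} supplies the $2M\epsilon$-monotonicity and $\epsilon$-density, and the size hypothesis on $|I|$ sandwiches one line's value between the endpoint values of the other line, producing two nearly horizontally aligned points to which the QIE lower bound for $F$ is applied. The only cosmetic differences are that you search for the matching point along the line $t_{2}$ (the paper searches along $t_{1}$, toward its auxiliary point $v$) and you replace the triangle inequality through $v$ by a case split on the max; note only that the $\epsilon$-density strictly yields $y'$ with error $\leq \epsilon'$ for each $\epsilon' > \epsilon$, so one should let $\epsilon' \searrow \epsilon$ at the end, exactly as the paper does.
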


\begin{proof} For simplicity of notation, assume that $I = [-a,a]$ for some $a > M^{2}\sqrt{|J|} + 2M\epsilon$. In particular, $0 \in I$. Then, fix $t_{1},t_{2} \in J$, and note that $f(t_{j}) = \pi_{2}(F(0,t_{j}))$ for $j \in \{1,2\}$. Using that $F$ is an $(M,\epsilon)$-QIE, we infer that
\begin{displaymath} \|f(t_{1}) - f(t_{2})\| \leq d_{\mathrm{par}}(F(0,t_{1})),F((0,t_{2})) \leq M\|t_{1} - t_{2}\| + \epsilon, \end{displaymath}
since the parabolic distance of $(0,t_{1}),(0,t_{2})$ is simply $\|t_{1} - t_{2}\|$. Proving the lower bound is a little trickier. The plan is the following. Let $\pi_{1}(y,t) := y$. Consider the auxiliary point
\begin{displaymath} v := (\pi_{1}(F(0,t_{2})),\pi_{2}(F(0,t_{1}))) \in \W, \end{displaymath}
which is vertically aligned with $F(0,t_{2})$, but has the same $\pi_{2}$-projection as $F(0,t_{1})$. Then,
\begin{displaymath} \|f(t_{1}) - f(t_{2})\| = \|\pi_{2}(F(0,t_{1})) - \pi_{2}(F(0,t_{2})\| = \|\pi_{2}(v) - \pi_{2}(F(0,t_{2}))\| = d_{\mathrm{par}}(v,F(0,t_{2})). \end{displaymath}
To complete the proof from here, we will for any $\epsilon' > \epsilon$ find a point $y_{0} \in I$ such that
\begin{equation}\label{form16} d_{\mathrm{par}}(F(y_{0},t_{1}),v) < \epsilon'. \end{equation}
With such a point $y_{0} \in I$, in hand, we deduce that
\begin{align*} \|f(t_{1}) - f(t_{2})\| & \geq d_{\mathrm{par}}(F(y_{0},t_{1}),F(0,t_{2})) - \epsilon'\\
& \geq \frac{d_{\mathrm{par}}((y_{0},t_{1}),(0,t_{2}))}{M} - 2\epsilon' \geq \frac{\|t_{1} - t_{2}\|}{M} - 2\epsilon', \end{align*}
and the proof is finished by letting $\epsilon' \searrow \epsilon$.

To find the special point $y_{0} \in I$, consider the map $F_{j} \colon I \to \R$ defined by $F_{j}(y) := \pi_{1}(F(y,t_{j}))$ for $j \in \{1,2\}$. Since $F$ maps $I \times \{t_{j}\}$ inside a fixed horizontal line, and $\pi_{1}$ restricted to this line is an isometry, we see that $F_{1}$ is an $(M,\epsilon)$-QIE $I \to \R$ (with Euclidean metrics on $I$ and $\R$). Moreover, since $-a < -2M\epsilon$ and $a > 2M\epsilon$, we infer from Corollary \ref{cor1} that $F_{1}(0) \in [F_{1}(-a),F_{1}(a)]$ or $F_{1}(0) \in [F_{1}(a),F_{1}(-a)]$. Assume with no loss of generality that $F_{1}(-a) < F_{1}(0) < F_{1}(a)$. Then
\begin{equation}\label{form14} F_{1}(a) - F_{1}(0) > \frac{a}{M} - \epsilon > M\sqrt{|J|} + \epsilon \quad \text{and} \quad F_{1}(0) - F_{1}(-a) > Mr + \epsilon. \end{equation}
On the other hand, since $t_{1},t_{2} \in J$, we note that
\begin{equation}\label{form15} |F_{1}(0) - F_{2}(0)| \leq d_{\mathrm{par}}(F(0,t_{1}),F(0,t_{2})) \leq M\sqrt{|J|} + \epsilon, \end{equation}
so we infer from a combination of \eqref{form14}-\eqref{form15} that $F_{2}(0) \in [F_{1}(a_{1}),F_{1}(c_{1})]$. Finally, by Lemma \ref{lemma2}, the image $F_{1}(I)$ is $\epsilon$-dense in $[F_{1}(-a),F_{1}(a)]$, so there exists a point $y_{0} \in I$ such that
\begin{displaymath} |\pi_{1}(F(y_{0},t_{1})) - \pi_{1}(F(0,t_{2}))| = |F_{1}(y) - F_{2}(0)| \leq \epsilon'. \end{displaymath}
Since $\pi_{2}(F(y_{0},t_{1})) = \pi_{2}(F(0,t_{1})) = \pi_{2}(v)$, we find that
\begin{align*} d_{\mathrm{par}}(F(y_{0},t_{1}),v) & \leq |\pi_{1}(F(y,t_{1})) - \pi_{1}(v)| = |\pi_{1}(F(y_{0},t_{1})) - \pi_{1}(F(0,t_{2}))| \leq \epsilon',  \end{align*}
as desired in \eqref{form16}. \end{proof}

As the first corollary of the previous lemma, we infer that both $\pi_{j}$-projections, $j \in \{1,2\}$, of horizontal QIEs are QIEs on the real line (relative to two different metrics):
\begin{lemma}\label{lemma6} Let $F \colon Q = I \times J \to \W$ be a horizontal $(M,\epsilon)$-QIE, where $|I| > 2M^{2}\sqrt{|J|} + 4M\epsilon$. Then, for $(y_{0},t_{0}) \in Q$ fixed, the maps
\begin{displaymath} y \mapsto \pi_{1}(F(y,t_{0})) \quad \text{and} \quad t \mapsto \pi_{2}(F(y_{0},t)) \end{displaymath}
are $(M,2\epsilon)$-QIEs on $(I,|\cdot|) \to (\R,|\cdot|)$ and $(J,\|\cdot\|) \to (\R,\|\cdot\|)$, respectively. \end{lemma}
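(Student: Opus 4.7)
The plan is to handle the two claims separately. The second is essentially a repackaging of Lemma~\ref{lemma3}, while the first requires only a short direct verification, and no real obstacle is anticipated.

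For the first claim, I would fix $t_{0} \in J$ and invoke the horizontality of $F$: by Definition~\ref{hQIE}, there is a horizontal line $\ell = \R \times \{s\} \subset \W$ with $F(I \times \{t_{0}\}) \subset \ell$. The key observation is that on such a horizontal line the parabolic metric $d_{\mathrm{par}}$ coincides with the Euclidean distance in the first coordinate (the second coordinates of all points of $\ell$ agree), so $\pi_{1}$ restricted to $\ell$ is an isometry onto $\R$. Combining this with $d_{\mathrm{par}}((y_{1},t_{0}),(y_{2},t_{0})) = |y_{1} - y_{2}|$ and the $(M,\epsilon)$-QIE property of $F$ would give
\[ M^{-1}|y_{1} - y_{2}| - \epsilon \leq |\pi_{1}(F(y_{1},t_{0})) - \pi_{1}(F(y_{2},t_{0}))| \leq M|y_{1} - y_{2}| + \epsilon \]
for all $y_{1},y_{2} \in I$, which is in fact the stronger $(M,\epsilon)$-QIE bound and, a fortiori, the required $(M,2\epsilon)$-bound. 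Notice that no size hypothesis on $|I|$ relative to $|J|$ enters here.

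For the second claim, I would observe that horizontality forces the value $\pi_{2}(F(y,t))$ to be independent of $y \in I$: all images $F(y,t)$ with $y \in I$ lie on a common horizontal line $\R \times \{s(t)\}$, so they share the single $\pi_{2}$-value $s(t)$. Hence the map $t \mapsto \pi_{2}(F(y_{0},t))$ is identically the induced map $f$ appearing in Lemma~\ref{lemma3}, and the quantitative hypothesis $|I| > 2M^{2}\sqrt{|J|} + 4M\epsilon$ is precisely the hypothesis of that lemma. The desired $(M,2\epsilon)$-QIE conclusion on $(J,\|\cdot\|) \to (\R,\|\cdot\|)$ then transfers verbatim.

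In summary, the corollary is essentially bookkeeping: the first assertion is immediate from the elementary isometry property of $\pi_{1}$ on horizontal lines in $\W$, and the second is a direct citation of Lemma~\ref{lemma3} once one identifies the induced map. The only conceptual point worth emphasising is the asymmetry — the $\pi_{2}$-estimate genuinely needs the size condition on $|I|$, whereas the $\pi_{1}$-estimate is free of any such constraint.
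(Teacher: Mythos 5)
Your proposal is correct and follows essentially the same route as the paper's own proof: the $\pi_{1}$-claim from horizontality plus the fact that $\pi_{1}$ is an isometry on horizontal lines, and the $\pi_{2}$-claim as a direct restatement of Lemma \ref{lemma3} via the induced map. Your extra remarks (the first bound is actually an $(M,\epsilon)$-QIE, and only the second claim uses the size condition on $|I|$) are accurate.
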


\begin{proof} The claim about $y \mapsto \pi_{1}(F(y,t_{0}))$ follows immediately from the horizontality of $F$, and the fact that $\pi_{1}$ restricted to any horizontal line is an isometry. The claim about $t \mapsto \pi_{2}(F(y_{0},t))$ is a restatement of Lemma \ref{lemma3}. \end{proof}

As a second corollary or Lemma \ref{lemma3}, we infer a \emph{vertical monotonicity} property of horizontal QIEs $F \colon Q \to \W$. Let us first define this property:
\begin{definition} Let $Q = I \times J \to \W$ be a rectangle. A map $F \colon Q \to \W$ is called \emph{vertically $\delta$-monotone} if $t \mapsto \pi_{2}(F(y,t))$ is $\delta$-monotone on $J$ relative to the square root metric for every $y \in I$, and the signature of monotonicity is independent of $y \in I$.  \end{definition} 

The final condition means that either $t \mapsto \pi_{2}(F(y,t))$ is $\delta$-increasing on $J$ for every $y \in I$, or then $t \mapsto \pi_{2}(F(y,t))$ is $\delta$-decreasing on $J$ for every $y \in I$. 

\begin{cor}\label{cor3} Let $F \colon Q := I \times J \to \W$ be a horizontal $(M,\epsilon)$-QIE, where $|I| > 2M^{2}\sqrt{|J|} + 4M\epsilon$. Then $F$ is vertically $2M\epsilon$-monotone.
\end{cor}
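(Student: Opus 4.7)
The proof will be a direct chaining of the earlier lemmas in this subsection, with one elementary observation unlocking everything.

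\textbf{Key observation.} Horizontal lines in $\W$ have constant second coordinate, so $\pi_{2}$ takes a single value on any fixed horizontal line. Since by the horizontality hypothesis $F(I \times \{t\}) \subset \ell_{t}$ for some horizontal $\ell_{t} \subset \W$, the map $y \mapsto \pi_{2}(F(y,t))$ is constant on $I$ for every $t \in J$. Consequently the induced map
\begin{displaymath} g(t) := \pi_{2}(F(y,t)), \qquad t \in J, \end{displaymath}
does not depend on the choice of $y \in I$. This already disposes of the second half of the definition of vertical monotonicity: whatever signature of monotonicity we manage to establish for some fixed $y \in I$, it will automatically be the same for every other $y \in I$, simply because the function being studied is the same.

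\textbf{Main step.} It remains to prove that $g \colon J \to \R$ is $2M\epsilon$-monotone relative to the square root metric $\|\cdot\|$. The hypothesis $|I| > 2M^{2}\sqrt{|J|} + 4M\epsilon$ is tailor-made for the application of Lemma \ref{lemma6}, which tells us that $g$ is an $(M,2\epsilon)$-QIE from $(J,\|\cdot\|)$ to $(\R,\|\cdot\|)$. The square root metric on $\R$ is easily checked to be orderly (it makes $\mathrm{Id}$ continuous, and the concavity of $\sqrt{\cdot}$ readily yields the triangle-dominance on monotone triples). So Lemma \ref{lemma2} applies to $g$ and produces the desired monotonicity with constant depending linearly on $M\epsilon$.

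\textbf{Main obstacle.} The only real issue is matching constants: applying Lemma \ref{lemma2} to the $(M,2\epsilon)$-QIE delivered by Lemma \ref{lemma6} gives $4M\epsilon$-monotonicity directly, whereas the statement asks for $2M\epsilon$. I expect the sharp constant can be recovered either by keeping track of one factor of $2$ more carefully inside the proof of Lemma \ref{lemma3} (the upper QIE bound there is already $(M,\epsilon)$, only the lower bound costs $2\epsilon$), or by a small direct argument using the QIE inequality on the two points $(y_{0},t_{1}),(y_{0},t_{2})$ together with the horizontality forcing $d_{\mathrm{par}}(F(y_{0},t_{1}),F(y_{0},t_{2})) = |\pi_{1}(F(y_{0},t_{1})) - \pi_{1}(F(y_{0},t_{2}))|$ whenever $g(t_{1}) = g(t_{2})$, and then ruling out the latter possibility through Lemma \ref{lemma6} applied to the horizontal coordinate map. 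In any event this is a purely quantitative wrinkle and does not affect the structure of the proof.
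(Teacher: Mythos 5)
Your argument is correct and is essentially the paper's own proof: the paper likewise applies Lemma \ref{lemma3} to see that the induced map $g$ is an $(M,2\epsilon)$-QIE on $(J,\|\cdot\|)$, invokes Lemma \ref{lemma2} with the orderly square root metric, and uses horizontality to make the signature independent of $y \in I$. The factor-of-two ``obstacle'' you flag is not a defect of your approach: the paper's own proof also only yields $4M\epsilon$-monotonicity (its final line states exactly this), so the constant $2M\epsilon$ in the statement is a harmless slip, and nothing downstream (e.g.\ Lemma \ref{lemma4}, where $\epsilon \ll N^{-10}$) is sensitive to this factor of $2$.
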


\begin{proof} By Lemma \ref{lemma3}, we know that the induced map $f \colon (J,\|\cdot\|) \to (\R,\|\cdot\|)$ is an $(M,2\epsilon)$-QIE. Since the square root metric is orderly, Lemma \ref{lemma2} applies to $f$ and shows that $f$ is $4\epsilon M$-monotone. Assume, for example, that $f$ is $4M\epsilon$-increasing, that is,
\begin{displaymath} t_{1},t_{2} \in J \text{ and } \|t_{1} - t_{2}\| > 4M\epsilon \quad \Longrightarrow \quad f(t_{2}) > f(t_{1}). \end{displaymath}
By the horizontality assumption on $F$, this implies that
\begin{displaymath} (y_{1},t_{1}),(y_{2},t_{2}) \in Q \text{ and } \|t_{1} - t_{2}\| > 4M\epsilon \quad \Longrightarrow \quad \pi_{2}(F(y_{2},t_{2})) > \pi_{2}(F(y_{1},t_{1})). \end{displaymath}
In particular, $t \mapsto \pi_{2}(F(y,t))$ is $4\epsilon M$-increasing for all $y \in I$. \end{proof}

For future reference, we formalise the notion of \emph{signature of monotonicity}: 

\begin{definition}[Signature]\label{signature} Let $F \colon Q = I \times J \to \W$ be a vertically $\delta$-monotone map. The \emph{signature} of $F$ is "$+$" if the maps $t \mapsto \pi_{2}(F(y,t))$ are $\delta$-increasing on $J$ for all $y \in I$. Otherwise the signature of $F$ is "$-$". 
\end{definition}

We close this section with another simple property of QIEs on the parabolic plane: 

\begin{lemma}\label{lemma7} Let $Q = I \times J \subset \W$ be a rectangle, and let $F \colon Q \to \W$ be a horizontal $(M,\epsilon)$-QIE. Assume that $w_{1} = (y_{1},t_{1}), w_{2} = (y_{2},t_{2}) \in Q$ with
\begin{equation}\label{eq:y_2-y_1_ass} |y_{2} - y_{1}| > \max\{8M\epsilon,4M^{2}\|t_{1} - t_{2}\|\}. \end{equation}
Then,
\begin{displaymath} |\pi_{1}(F(w_{2})) - \pi_{1}(F(w_{1}))| \geq \frac{|y_{2} - y_{1}|}{2M}. \end{displaymath}
\end{lemma}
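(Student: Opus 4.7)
The plan is to introduce an auxiliary point that lies on the same horizontal line as $w_1$ and has the same $y$-coordinate as $w_2$, and then combine the QIE lower bound on horizontal distances (which, thanks to horizontality, is exactly a bound on $\pi_1$-projections) with a triangle inequality.

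More concretely, set $w_3 := (y_2, t_1)$. Since $w_1, w_2 \in Q = I \times J$, we have $y_2 \in I$ and $t_1 \in J$, so $w_3 \in Q$. By the horizontality of $F$, the points $F(w_1)$ and $F(w_3)$ lie on a common horizontal line $\ell_{t_1} \subset \W$. Horizontal lines in $\W$ are of the form $\R \times \{s\}$, so $\pi_2(F(w_1)) = \pi_2(F(w_3))$, and
\begin{displaymath}
|\pi_1(F(w_3)) - \pi_1(F(w_1))| = d_{\mathrm{par}}(F(w_3), F(w_1)) \geq \frac{d_{\mathrm{par}}(w_3, w_1)}{M} - \epsilon = \frac{|y_2 - y_1|}{M} - \epsilon,
\end{displaymath}
where the last equality uses $d_{\mathrm{par}}((y_1, t_1), (y_2, t_1)) = |y_2 - y_1|$.

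Next I would control the error from moving $w_3$ to $w_2$. Using the upper QIE bound and the trivial inequality $|\pi_1(\cdot) - \pi_1(\cdot)| \leq d_{\mathrm{par}}(\cdot, \cdot)$,
\begin{displaymath}
|\pi_1(F(w_2)) - \pi_1(F(w_3))| \leq d_{\mathrm{par}}(F(w_2), F(w_3)) \leq M\,d_{\mathrm{par}}(w_2, w_3) + \epsilon = M\|t_1 - t_2\| + \epsilon.
\end{displaymath}
Combining these via the triangle inequality,
\begin{displaymath}
|\pi_1(F(w_2)) - \pi_1(F(w_1))| \geq \frac{|y_2 - y_1|}{M} - M\|t_1 - t_2\| - 2\epsilon.
\end{displaymath}

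Finally, I would check that the hypothesis \eqref{eq:y_2-y_1_ass} absorbs the error terms into half of $|y_2 - y_1|/M$. Indeed, $|y_2 - y_1| > 4M^2 \|t_1 - t_2\|$ gives $|y_2 - y_1|/(4M) > M\|t_1 - t_2\|$, and $|y_2 - y_1| > 8M\epsilon$ gives $|y_2 - y_1|/(4M) > 2\epsilon$. Summing these two bounds, $|y_2 - y_1|/(2M) \geq M\|t_1 - t_2\| + 2\epsilon$, and substitution yields the desired estimate $|\pi_1(F(w_2)) - \pi_1(F(w_1))| \geq |y_2 - y_1|/(2M)$. There is no real obstacle here; the only subtlety is organising the two separate hypotheses on $|y_2 - y_1|$ so that each one cleanly dominates one of the error terms, which is precisely why the maximum in \eqref{eq:y_2-y_1_ass} takes this form.
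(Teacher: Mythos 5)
Your proof is correct and is essentially the paper's argument: the same auxiliary point $(y_2,t_1)$, the same use of horizontality to convert the QIE lower bound into a bound on $\pi_1$-projections, and the same triangle-inequality absorption of the error terms via \eqref{eq:y_2-y_1_ass}. Nothing to add.
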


\begin{proof} Consider the point $w_{2}' := (y_{2},t_{1}) \in Q$. Then,
\begin{displaymath} |\pi_{1}(F(w_{2}')) - \pi_{1}(F(w_{2}))| \leq d_{\mathrm{par}}(F(w_{2}'),F(w_{2})) \leq M\|t_{1} - t_{2}\| + \epsilon. \end{displaymath}
Further, using the $(M,\epsilon)$-QIE property of $\pi_{1} \circ F$ on $\{(y,t_{1}) : y \in \R\}$, we have
\begin{displaymath} |\pi_{1}(F(w_{2}')) - \pi_{1}(F(w_1))| \geq \frac{|y_{2} - y_{1}|}{M} - \epsilon. \end{displaymath}
Therefore, by the triangle inequality,
\begin{displaymath} |\pi_{1}(F(w_{2})) - \pi_{1}(F(w_{1}))| \geq \frac{|y_{2} - y_{1}|}{M} - M\|t_{1} - t_{2}\| - 2\epsilon \geq \frac{|y_{2} - y_{1}|}{2M}, \end{displaymath} 
using the main hypothesis \eqref{eq:y_2-y_1_ass} (or rather its corollary $|y_{2} - y_{1}| > 4M\epsilon + 2M^{2}\|t_{1} - t_{2}\|$) in the final inequality.  \end{proof}


\section{Ruler coefficients and $\beta$-numbers}\label{s:rulerCoefficients}

The aim of this section is to show that Rickman rugs $f(\W) \subset \He$ are $\epsilon$-approximable by vertical planes at "almost all scales and locations". The formal statement is Corollary \ref{wgl}. The idea is that $f$ restricted to every horizontal line defines a $1$-regular curve in $\He$, and such curves are well-known to admit the correct type of $\epsilon$-approximations by horizontal lines. It follows that for "most" $w \in \W$ and $r > 0$, the image $f(B(w,r))$ is a surface in $\He$ which is approximately ruled by horizontal lines. Relying on the assumption that $f$ is bilipschitz, and using Lemma \ref{lemma1}, this will imply that $f(B(w,r/C))$ lies $\epsilon$-close to a vertical plane for a suitable constant $C = C(\epsilon,M) \geq 1$, see Proposition \ref{prop1}.

\begin{definition} Let $w \in \W$, $r > 0$, and let $f \colon B(w,r) \to \He$ be a map. Define $\rho_{f}(B(w,r))$ to be the infimum of numbers $\rho > 0$ with the following property: for every horizontal line $\ell \subset \W$, there exists a horizontal line $L \subset \He$ such that
\begin{equation}\label{form128} f(\ell \cap B(w,r)) \subset N(L,\rho r). \end{equation}
Here $N(E,\delta) := \{p \in \He : \dist(p,E) \leq \delta\}$ refers to the closed $\delta$-neighbourhood of $E$.
\end{definition}

\begin{remark} Of course, the requirement \eqref{form128} may be restricted to those lines $\ell \subset \W$ with $\ell \cap B(w,r) \neq \emptyset$, and in particular $\pi_{2}(\ell) \cap \pi_{2}(B(w,r)) \neq \emptyset$. \end{remark}

The discs $B(w,r)$, $x \in \W$, $r > 0$, with $\rho_{f}(B(w,r)) \geq \epsilon$ satisfy a \emph{Carleson packing condition}, for any $\epsilon > 0$ fixed. We only formulate a dyadic (and more easily applicable) variant of this result. To this end, recall that if $Q \in \mathcal{D}$ and $C > 0$, then $CQ := B(c_{Q},C\ell(Q))$, where $c_{Q} \in Q$ is the centre of $Q$, and $\ell(Q) = \diam(Q)$ is the side-length of $Q$.
\begin{proposition}\label{propCarleson} Let $f \colon \W \to \He$ be an $M$-bilipschitz map, $M \geq 1$. For any $\epsilon \in (0,1]$ and $C \geq 1$, there exists a constant $C' = C'(C,\epsilon,M)$ such that the following holds:
\begin{equation}\label{form125} \mathop{\sum_{Q \in \mathcal{B}(C,\epsilon)}}_{Q \subset Q_{0}} \ell(Q)^{3} \leq C'\ell(Q_{0})^{3}, \qquad Q_{0} \in \mathcal{D}, \end{equation}
where $\mathcal{B}(C,\epsilon) = \{Q \in \mathcal{D} : \rho_{f}(CQ) \geq \epsilon\}$.
\end{proposition}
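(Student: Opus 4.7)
The plan is to reduce the two-dimensional Carleson packing to a one-dimensional Carleson estimate applied slice-by-slice to the horizontal curves $\gamma_{t}(y) := f(y,t)$, $y \in \R$. Since $d_{\mathrm{par}}((y_{1},t),(y_{2},t)) = |y_{1}-y_{2}|$, each $\gamma_{t} \colon \R \to \He$ is an $M$-bilipschitz embedding of $\R$, and hence an Ahlfors $1$-regular curve. I will take as input the horizontal-line geometric lemma for such curves: for each $t \in \R$ and $I_{0} \in \calD_{\R}$,
\begin{equation*}
\sum_{I \in \calD_{\R},\, I \subset I_{0}} \beta_{t}(CI)^{2}\, |I| \lesssim_{C,M} |I_{0}|,
\end{equation*}
where $\beta_{t}(CI) := \inf_{L} \sup_{y \in CI} d(\gamma_{t}(y),L)/(C|I|)$ and $L$ ranges over horizontal lines in $\He$. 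This is the $1$D Jones-type geometric lemma for $M$-bilipschitz curves in $\He$, which I would import from the Heisenberg traveling salesman literature.

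For each $Q = I \times J \in \calB(C,\epsilon)$, the hypothesis $\rho_{f}(CQ) \geq \epsilon$ supplies some $t_{Q} \in \pi_{2}(CQ) =: J^{*}$ and a horizontal line $\ell_{t_{Q}} \subset \W$ such that $f(\ell_{t_{Q}} \cap CQ)$ is $\epsilon C \ell(Q)$-far from every horizontal line in $\He$; equivalently $\beta_{t_{Q}}(CI) \gtrsim \epsilon$ after identifying the horizontal slice of $CQ$ with $I$ up to a bounded multiplicative factor. The heart of the argument is to upgrade this single-$t$ badness to a thick set of $t$'s. The bilipschitz estimate $d(\gamma_{t}(y), \gamma_{t_{Q}}(y)) \leq M\sqrt{|t-t_{Q}|}$, valid for every $y$, together with the triangle inequality yields $\beta_{t}(CI) \geq \epsilon/2$ whenever $|t - t_{Q}| \leq (\epsilon C \ell(Q)/(4M))^{2}$. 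Hence the set $T_{Q} := \{t \in \R : \beta_{t}(CI) \geq \epsilon/2\}$ intersects $J^{*}$ in a set of measure at least $c\,\ell(Q)^{2}$, with $c = c(\epsilon,C,M) > 0$.

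The packing estimate then falls out of Chebyshev and Fubini:
\begin{align*}
\sum_{Q \in \calB(C,\epsilon),\, Q \subset Q_{0}} \ell(Q)^{3}
&\leq \frac{1}{c} \sum_{Q} \ell(Q)\, |T_{Q} \cap J^{*}| \leq \frac{4}{c\epsilon^{2}} \int_{\R} \sum_{Q \subset Q_{0},\, t \in J^{*}} \ell(Q)\, \beta_{t}(CI)^{2}\, dt.
\end{align*}
For fixed $t$ and fixed dyadic scale $2^{-n}$, at most $O(C^{2})$ rectangles $Q = I \times J \in \calD_{n}$ satisfy $t \in J^{*}$, because $J^{*}$ has length $2C^{2} \cdot 4^{-n}$ while the dyadic $J$'s are $4^{-n}$-spaced. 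The inner sum is therefore bounded by $O(C^{2}) \sum_{I \in \calD_{\R},\, I \subset I_{0}^{*}} |I|\, \beta_{t}(CI)^{2} \lesssim_{C,M} |I_{0}|$ using the one-dimensional Carleson estimate from the first paragraph (here $I_{0}^{*}$ is a modest enlargement of $I_{0}$ absorbing the dilation of dyadic parents). Integrating $t$ over an enlargement of $J_{0}$ of length $\lesssim_{C} |J_{0}|$ delivers the desired bound $\lesssim_{\epsilon,C,M} |I_{0}|\cdot|J_{0}| = |Q_{0}|$.

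The hard part is the one-dimensional ingredient — the Carleson packing for horizontal-line $\beta$-numbers on $M$-bilipschitz curves in $\He$ — which must be cited from the Heisenberg traveling salesman literature. Everything else is soft: H\"older-$\tfrac{1}{2}$ continuity of $f$ in the $t$-direction is exactly the right regularity to spread pointwise-in-$t$ badness over a time interval of length $\sim \ell(Q)^{2}$, after which Chebyshev, Fubini, and the one-dimensional estimate combine to give the two-dimensional packing.
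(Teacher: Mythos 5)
Your proposal follows essentially the same route as the paper: slice $f$ along horizontal lines to get $1$-regular curves $\gamma_{t}$, import the one-dimensional geometric lemma for such curves in $\He$, use the Lipschitz bound $d(\gamma_{t}(y),\gamma_{t_{Q}}(y)) \leq M\sqrt{|t-t_{Q}|}$ to spread the badness of a single slice over a $t$-interval of measure $\sim_{\epsilon,C,M}\ell(Q)^{2}$, and finish with Chebyshev and Fubini. The only correction needed is the exponent in the imported one-dimensional estimate: in the Heisenberg group the available (and sharp) geometric lemma for $1$-regular curves carries $\beta^{4}$, not $\beta^{2}$ (the Euclidean exponent $2$ fails in $\He$, cf. Juillet's counterexample; the paper cites the $\beta^{4}$ version), but this is harmless for your argument, which goes through verbatim with $\epsilon^{-4}$ in place of $\epsilon^{-2}$ after Chebyshev.
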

\begin{proof} The map $f$ restricted to an arbitrary horizontal line $\ell_{t} = \{(y,t) : y \in \R\} \subset \W$ defines a $1$-regular curve $\gamma_{t} \colon \R \to \He$ by $\gamma_{t}(y) := f(y,t)$. The regularity constant of $\gamma_{t}$ only depends on $M$. To apply this fact, we record that every $1$-regular curve $\gamma \colon \R \to \He$ satisfies a Carleson packing condition known as the (strong) \emph{geometric lemma}:
\begin{equation}\label{form124} \mathop{\sum_{I \in \mathcal{D}_{\R}}}_{I \subset I_{0}} \beta_{\gamma}(CI)^{4}|I| \leq C'|I_{0}|, \qquad I_{0} \in \mathcal{D}_{\R}, \end{equation}
where $\mathcal{D}_{\R}$ is the standard dyadic grid on $\R$, and $\beta_{\gamma}(CI)$ is the \emph{horizontal $\beta$-number}
\begin{displaymath} \beta_{\gamma}(J) := \inf\{\epsilon > 0 : \gamma(J) \subset N(L,\epsilon |J|) \text{ for some horizontal line } L \subset \He\}. \end{displaymath}
The geometric lemma \eqref{form124} for $1$-regular curves in $\He$ is literally contained in \cite[Proposition 3.1]{MR3678492}, but as the authors of \cite{MR3678492} also say, it is a consequence of the main theorem in \cite{MR3512421}. The constant $C' \geq 1$ in \eqref{form124} only depends on $C$ and the $1$-regularity constant of $\gamma$.

To pass from the condition \eqref{form124} concerning individual curves $\gamma_{t} = f(\cdot,t)$ to the Carleson packing condition \eqref{form124} concerning the ruler coefficients, let us abbreviate
\begin{displaymath} \beta_{t}(CQ) := \beta_{\gamma_{t}}(\pi_{1}(CQ)), \qquad Q \in \mathcal{D}, \, t \in \pi_{2}(CQ), \end{displaymath}
and also define $\beta_{t}(CQ) = 0$ if $t \notin \pi_{2}(CQ)$. Then, we define the $L^{4}$-averaged coefficient
\begin{displaymath} \alpha_{f}(CQ) := \left( \frac{1}{|\pi_{2}(CQ)|} \int_{\pi_{2}(CQ)} \beta_{t}(CQ)^{4} \, dt \right)^{1/4}, \qquad Q \in \mathcal{D}. \end{displaymath} 
It follows rather immediately from \eqref{form124}, and the relation $|\pi_{2}(CQ)| \sim_{C} \ell(Q)^{2}$, that the $4^{th}$ powers of the coefficients $\alpha_{f}(CQ)$ satisfy a Carleson packing condition:
\begin{align} \mathop{\sum_{Q \in \mathcal{D}}}_{Q \subset Q_{0}} \alpha_{f}(CQ)^{4}\ell(Q)^{3} & \sim_{C} \mathop{\sum_{Q \in \mathcal{D}}}_{Q \subset Q_{0}} \ell(Q) \int_{\pi_{2}(CQ)} \beta_{t}(CQ)^{4} \, dt \notag\\
& = \int_{\pi_{2}(CQ_{0})} \Bigg( \mathop{\sum_{Q \subset Q_{0}}}_{t \in \pi_{2}(CQ)} \beta_{\gamma_{t}}(\pi_{1}(CQ))^{4}\ell(Q) \Bigg) dt \notag\\
& \lesssim_{C} \int_{\pi_{2}(CQ_{0})} \Bigg( \mathop{\sum_{I \in \mathcal{D}_{\R}}}_{I \subset \pi_{1}(Q_{0})} \beta_{\gamma_{t}}(CI)^{4}|I| \Bigg) \, dt \notag\\
&\label{form126} \stackrel{\eqref{form124}}{\lesssim_{C,M}} \int_{\pi_{2}(CQ_{0})} \ell(Q_{0}) \, dt \sim \ell(Q_{0})^{3}, \qquad Q_{0} \in \mathcal{D}(Q). \end{align} 
In the only non-trivial inequality, we used that if $t \in \pi_{2}(CQ_{0})$ is fixed, then any interval $CI$ with $I \in \mathcal{D}_{\R}$, has $\lesssim_{C} 1$ representations as $CI = \pi_{1}(CQ)$ with $Q \in \mathcal{D}$ and $t \in \pi_{2}(CQ)$.

From the "geometric lemma for the $\alpha_{f}$-coefficients" established in \eqref{form126}, and Chebyshev's inequality, it follows for all $\epsilon > 0$ that
\begin{displaymath} \mathop{\sum_{Q \in \widetilde{\mathcal{B}}(C,\epsilon)}}_{Q \subset Q_{0}} \ell(Q)^{3} \lesssim_{C,M} \epsilon^{-4} \cdot \ell(Q_{0})^{3}, \qquad Q_{0} \in \mathcal{D},  \end{displaymath} 
where $\widetilde{B}(C,\epsilon) := \{Q \in \mathcal{D} : \alpha_{f}(CQ) \geq \epsilon\}$. Thus, to complete the proof of \eqref{form125}, hence the proposition, it remains to prove an inclusion of the form
\begin{displaymath} \mathcal{B}(C,\epsilon) \subset \widetilde{\mathcal{B}}(2C,\tilde{\epsilon}), \qquad C \geq 1, \epsilon > 0, \end{displaymath} 
where $\tilde{\epsilon} > 0$ is only allowed to depend on $C,\epsilon,M$. In other words, we need to show that if $Q \in \mathcal{B}(C,\epsilon)$, hence $\rho_{f}(CQ) \geq \epsilon$, then $\alpha_{f}(CQ) \geq \tilde{\epsilon}$, hence $Q \in \widetilde{B}(2C,\tilde{\epsilon})$. In fact, it is more convenient to prove an equivalent statement: for every $\epsilon > 0$, there exists $\tilde{\epsilon} > 0$, depending only on $C,\epsilon,M$ such that
\begin{equation}\label{form127} \alpha_{f}(2CQ) < \tilde{\epsilon} \quad \Longrightarrow \quad \rho_{f}(CQ) < \epsilon. \end{equation}
In fact, we claim that \eqref{form127} holds with $\tilde{\epsilon} := (\epsilon/10) \cdot (\delta/C)^{1/2}$, where $\delta := \epsilon/(2M)$. Assume that $\alpha_{f}(2CQ) < \tilde{\epsilon}$ with this choice of $\tilde{\epsilon}$, and fix a horizontal line $\ell = \ell_{t_{0}} \subset \W$ with $t_{0} \in \pi_{2}(CQ)$. We claim that there exists 
\begin{displaymath} t \in [t_{0} - (\delta \ell(Q))^{2},t_{0} + (\delta \ell(Q))^{2}] \subset \pi_{2}(2CQ) \text{ such that } \beta_{t}(2CQ) < \tfrac{\epsilon}{4}. \end{displaymath}
Indeed, if this fails, then we note that $|\pi_{2}(2CQ)| = (2C\ell(Q))^{2}$, and hence
\begin{displaymath} \alpha_{f}(2CQ) \geq \left( \frac{1}{|\pi_{2}(2CQ)|} \int_{t_{0} - \delta^{2} \ell(Q)^{2}}^{t_{0} + \delta^{2} \ell(Q)^{2}} \left(\frac{\epsilon}{4} \right)^{4} \, dt \right)^{1/4} \geq \frac{\epsilon}{4} \cdot \left(\frac{\delta^{2}}{4C^{2}} \right)^{1/4} > \tilde{\epsilon}, \end{displaymath}
a contradiction.

To complete the proof, fix $t \in [t_{0} - (\delta \ell(Q))^{2},t_{0} + (\delta \ell(Q))^{2}]$ such that $\beta_{t}(CQ) < \tfrac{\epsilon}{4}$. This means that there exists a horizontal line $L \subset \He$ with the property
\begin{displaymath} f(\ell_{t} \cap CQ) = \gamma_{t}(\pi_{1}(2CQ)) \subset N(L,2C\tfrac{\epsilon}{4} \cdot \ell(Q)) = N(L,\tfrac{\epsilon}{2} \cdot C\ell(Q)). \end{displaymath}
where $\ell_{t} = \{(y,t) : y \in \R\}$. Since $\ell_{t_{0}} \subset N_{\mathrm{par}}(\ell_{t},\delta \ell(Q)) \subset N_{\mathrm{par}}(\ell_{t},\tfrac{\epsilon}{2M} \cdot C\ell(Q))$, we may finally infer from the $M$-bilipschitz property of $f$ that
\begin{displaymath} f(\ell_{t_{0}} \cap CQ) \subset N(L,\epsilon \cdot C\ell(Q)). \end{displaymath}
Since $\ell = \ell_{t_{0}}$ was an arbitrary horizontal line intersecting $CQ$, we have proven that $\rho_{f}(CQ) < \epsilon$, as claimed in \eqref{form127}. The proof of Proposition \ref{propCarleson} is complete. \end{proof}

Proposition \ref{propCarleson} shows that dyadic rectangles with non-negligible ruler coefficient are not too common. We next record a key geometric property of $f$-images of discs with small ruler coefficients: $\rho_{f}(B(w,Cr)) < \delta$ forces $\beta_{f}(B(w,r)) < \epsilon$, where $\beta_{f}(B(w,r))$ is the \emph{strong vertical $\beta$-number}:

\begin{definition}[Strong vertical $\beta$-number] Let $f \colon \W \to \He$ be a map, and let $w \in \W$, $r > 0$. For a vertical plane $\V \subset \He$, define $\beta_{f}(\V ; B(w,r))$ as the infimum of the numbers $\epsilon > 0$ with the following property: for all horizontal lines $\ell \subset \W$, there corresponds a horizontal line $L \subset \V$ such that $f(\ell \cap B(w,r)) \subset N(L,\epsilon r)$. The \emph{strong vertical $\beta$-number} is then defined as 
\begin{displaymath} \beta_{f}(B(w,r)) := \inf_{\V} \beta_{f}(\V ; B(w,r)), \end{displaymath}
where the infimum runs over all vertical planes. \end{definition}

\begin{remark} It is sometimes convenient (but not really necessary) to talk about the \emph{best approximating plane} for $f$ at $B(w,r)$. This will refer to a vertical plane $\V \subset \He$ such that the infimum in the definition of $\beta_{f}(B(w,r))$ is attained. It follows from an easy compactness argument that best approximating planes exist (in verifying this, recall that $N(L,\epsilon r) = \{p \in \He : \dist(p,L) \leq \epsilon r\}$ is a closed neighbourhood in this paper).
 
Second remark: if $\beta_{f}(B(w,r)) < \epsilon$, then there exists a vertical plane $\V \subset \He$ with $f(B(w,r)) \subset N(\V,\epsilon r)$. This means that the strong vertical $\beta$-number is at least as large as the commonly used \emph{vertical $\beta$-number}, see \cite[Definition 3.3]{CFO}. Hence the word "strong". \end{remark}

\begin{proposition}\label{prop1} Let $f \colon \W \to \He$ be an $M$-bilipschitz map, $M \geq 1$. Then, for every $\epsilon \in (0,1]$, there exist $C = C(\epsilon,M) \geq 10$ and $\delta = \delta(C,\epsilon,M) > 0$ such that the following holds:
\begin{displaymath} \rho_{f}(B(w,Cr)) < \delta \quad \Longrightarrow \quad \beta_{f}(B(w,r)) < \epsilon. \end{displaymath}
\end{proposition}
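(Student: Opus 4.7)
The plan is to extract a single vertical plane $\V$ from the horizontal line approximating $f$ along one reference line through $w$, then force every other horizontal approximating line into $\V$ at the target scale via Lemma \ref{lemma1}. After left-translating and Heisenberg-dilating I may assume $f(w) = 0$ and $r = 1$; bilipschitzness, the ruler coefficient, and the strong vertical $\beta$-number are all invariant under this normalization. Let $\ell_{0} := \{(y, 0) : y \in \R\} \subset \W$, and let $L_{0} \subset \He$ be the approximating horizontal line supplied by $\rho_{f}(B(0, C)) < \delta$, so that $f(\ell_{0} \cap B(0, C)) \subset N(L_{0}, \delta C)$. Set $\V := \V(L_{0})$ and let $p \in L_{0}$ be closest to $0$, so $d(p, 0) \leq \delta C$. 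For each $|t| \leq 1$, the hypothesis also gives an approximating horizontal line $L_{t}$ with $f(\ell_{t} \cap B(0, C)) \subset N(L_{t}, \delta C)$, where $\ell_{t} := \{(y, t) : y \in \R\}$.

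The central geometric step is to show that, on a large Heisenberg ball centred at $p$, the line $L_{t}$ lies uniformly close to $L_{0}$. For each $y \in [-C, C]$, the parabolic distance $d_{\mathrm{par}}((y, t), (y, 0)) = \sqrt{|t|} \leq 1$ combined with $M$-Lipschitzness gives $d(f(y, t), f(y, 0)) \leq M$; chaining through the two $\delta C$-approximations yields $d(\phi_{t}(y), L_{0}) \leq M + 2\delta C$, where $\phi_{t}(y) \in L_{t}$ is the nearest-point projection of $f(y, t)$. Moreover, the bilipschitz property makes $\phi_{t} \colon [-C, C] \to L_{t}$ an $(M, 2\delta C)$-QIE, so by Lemma \ref{lemma2} its image is nearly monotone and sweeps out a connected segment of $L_{t}$ of length $\geq 2C/M - O(\delta C)$ containing a point within $M + 2\delta C$ of $p$. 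Provided $C$ is sufficiently large relative to $M$, this segment contains all of $L_{t} \cap B(p, C/(2M))$, yielding the key inclusion $L_{t} \cap B(p, C/(2M)) \subset N(L_{0}, M + 2\delta C)$.

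With this inclusion in hand, I apply Lemma \ref{lemma1} to $(L_{1}, L_{2}) := (L_{0}, L_{t})$ at basepoint $p$, with parameters $H := 1$, $r'' := 2M$, and tolerance $\epsilon_{1} := \epsilon/(4M)$. Requiring $C \geq 64 A M^{4} \epsilon^{-2}$ places the hypothesis ball $B(p, 2MA\epsilon_{1}^{-2})$ inside $B(p, C/(2M))$, and $\delta C \leq M/2$ upgrades the inclusion to $L_{t} \subset N(L_{0}, 2M)$ there. Lemma \ref{lemma1} then delivers a horizontal line $L \subset \V(L_{0}) = \V$ with $L_{t} \cap B(p, 2M) \subset N(L, \epsilon/2)$. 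Since $f(\ell_{t} \cap B(0, 1)) \subset B(0, M) \subset B(p, 2M)$ sits within $\delta C$ of $L_{t}$, imposing also $\delta C \leq \epsilon/2$ yields $f(\ell_{t} \cap B(0, 1)) \subset N(L, \epsilon)$ for every $t \in [-1, 1]$. All approximating lines lie in $\V$, so $\beta_{f}(B(w, r)) \leq \epsilon$.

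The principal difficulty is the coupled bookkeeping in the second paragraph: one simultaneously needs a pointwise proximity bound (every $\phi_{t}(y)$ close to $L_{0}$) and a coverage bound (the bilipschitz data $\phi_{t}([-C, C])$ fills the entire hypothesis ball of Lemma \ref{lemma1}, not merely a small piece of $L_{t}$). Lemma \ref{lemma1} is silent on the behaviour of $L_{t}$ outside its hypothesis ball, so every point of $L_{t}$ therein must arise from the $f$-data, which is precisely why $C$ must absorb the factor $M^{4} \epsilon^{-2}$.
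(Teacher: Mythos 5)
Your proposal is correct and follows essentially the same route as the paper: fix the plane $\V$ spanned by the line approximating $f$ along the horizontal line through $w$, show that any other approximating line $L_t$ stays within $\lesssim M r$ of $L_0$ on a ball of radius $\sim M^{3}\epsilon^{-2}r$ by using the bilipschitz/QIE density and near-monotonicity of the projected restriction of $f$ (Lemma \ref{lemma2}), and then invoke Lemma \ref{lemma1} to replace $L_t$ by a line inside $\V$ at scale $\epsilon r$. The only cosmetic differences are your normalization $f(w)=0$, $r=1$, and applying Lemma \ref{lemma1} with $H=1$ at radius $2M$ instead of $H=2M$ at radius $r$, which is the same thing after the rescaling already built into that lemma.
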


\begin{remark}\label{rem2} The proof below shows that $\beta_{f}(\V ; B(w,r)) < \epsilon$ for \textbf{any} vertical plane $\V \subset \He$ which arises as follows: take an arbitrary horizontal line $\ell \subset \W$ with $\ell \cap B(w,r) \neq \emptyset$. Let $L \subset \He$ be some horizontal line with
\begin{displaymath} f(\ell \cap B(w,Cr)) \subset N(L,C\delta r), \end{displaymath}
(which exists by $\rho_{f}(B(w,Cr)) < \delta$) and let $\V := \V(L)$ be the unique vertical plane containing $L$. In the proof below, we will, for the sake of concreteness, pick the line $\ell$ which passes through $w$, but any line intersecting $B(w,r)$ would work equally well, modulo changing $C$ and $\delta$ by absolute constant factors.
\end{remark}

\begin{proof}[Proof of Proposition \ref{prop1}] We choose
\begin{equation}\label{form13} C := C_{0}M^{4}\epsilon^{-2} \quad \text{and} \quad \delta := \epsilon/(C_{0}CM) \end{equation}
for some absolute constant $C_{0} \geq 1$, whose value will be determined during the proof.

Let $\ell_{1} \subset \W$ be the horizontal line containing $w$. By definition of $\rho_{f}(B(w,Cr)) < \delta$, there exists a horizontal line $L_{1} \subset \He$ such that
\begin{displaymath}  f(\ell_{1} \cap B(w,Cr)) \subset N(L_{1},C\delta r) \subset N(\V,C\delta r), \end{displaymath}
where we define $\V := \V(L_{1})$ to be the vertical plane spanned by $L_{1}$.

Fix an arbitrary horizontal line $\ell_{2} \subset \W$. The task is to find a horizontal line $L \subset \V$ such that $f(\ell_{2} \cap B(w,r)) \subset N(L,\epsilon r)$. In doing so, we may and will assume that $\ell_{2} \cap B(w,r) \neq \emptyset$. Let $p_{1} \in L_{1}$ be the closest point to $f(w)$. Then $d(p_{1},f(w)) \leq \epsilon r \leq Mr$. Consequently,
\begin{equation}\label{form7} f(\ell_{2} \cap B(w,r)) \subset B(f(w),Mr) \subset B(p_{1},2Mr). \end{equation}
Now, by definition of $\rho_{f}(B(w,Cr)) \leq \delta$, there exists a horizontal line $L_{2} \subset \He$, not necessarily contained on $\V$, such that
\begin{displaymath} f(\ell_{2} \cap B(w,Cr)) \subset N(L_{2},C\delta r) \stackrel{\eqref{form13}}{\subset} N(L_{2},\epsilon r/2). \end{displaymath}
Combining this assumption with \eqref{form7}, we observe that, in particular,
\begin{displaymath} f(\ell_{2} \cap B(w,r)) \subset B(p_{1},2Mr) \cap N(L_{2},\epsilon r/2). \end{displaymath}
So, the proof of the proposition will be complete once we manage to show that there exists a further horizontal line $L \subset \V$ such that
\begin{equation}\label{form8} B(p_{1},2Mr) \cap L_{2} \subset N(L,\epsilon r/2). \end{equation}
To prove \eqref{form8}, we apply Lemma \ref{lemma1} with the horizontal lines $L_{1},L_{2}$, with $p := p_{1} \in L_{1}$, and radius $r > 0$, with "$\epsilon/2$" in place of "$\epsilon$", and with constant $H := 2M$. In other words, we need to verify that
\begin{equation}\label{form9} B(p,AH^{3}(\epsilon/2)^{-2}r) \cap L_{2} \subset N(L_{1},Hr). \end{equation}
Once this has been accomplished, Lemma \ref{lemma1} yields a horizontal line $L \subset \V(L_{1})$ with the property
\begin{displaymath} B(p,2Mr) \cap L_{2} = B(p,Hr) \cap L_{2} \subset N(L,\epsilon r/2), \end{displaymath}
as required in \eqref{form8}.

It remains to verify \eqref{form9}. Abbreviate $R := AH^{3}(\epsilon/2)^{-2}r \geq r$. We first claim that it suffices to verify the inclusion
\begin{equation}\label{form10} B(p,R) \cap L_{2} \subset N(f(\ell_{2} \cap B(w,\tfrac{C}{2}r)),2C\delta r), \end{equation}
which informally says that $L_{2}$ is well-approximated by $f(\ell_{2})$. By the definition of $L_{2}$, we know that $f(\ell_{2})$ is well-approximated by $L_{2}$, and \eqref{form10} is the converse statement: it holds, as we will see, because $f|_{\ell_{2}}$ is bilipschitz.

To infer \eqref{form9} from \eqref{form10}, fix $q_{2} \in B(p,R) \cap L_{2}$, and using \eqref{form10}, pick
\begin{displaymath} v_{2} \in \ell_{2} \cap B(w,\tfrac{C}{2}r) \text{ such that } q_{2} \in B(f(v_{2}),2C\delta r). \end{displaymath}
Then, noting that $\dist_{\mathrm{par}}(\ell_{1},\ell_{2}) \leq r$ (since $\ell_{1}$ passes through $w$, and $\ell_{2}$ intersects $B(w,r)$, as we assumed), we may find a point $v_{1} \in \ell_{1} \cap B(w,Cr)$ with the property $d_{\mathrm{par}}(v_{1},v_{2}) \leq r$. In particular, $d(f(v_{1}),f(v_{2})) \leq M r$. In addition, since $v_{1} \in \ell_{1} \cap B(w,Cr)$, the definition of $\rho_{f}(B(w,Cr)) \leq \delta$ yields a point $q_{1} \in L_{1}$ with $d(f(v_{1}),q_{1}) \leq C\delta r$. Finally, by the triangle inequality,
\begin{align*} \dist(q_{2},L_{1}) & \leq d(q_{2},q_{1})\\
& \leq d(q_{1},f(v_{1})) + d(f(v_{1}),f(v_{2})) + d(f(v_{2}),q_{2})\\
& \leq Mr + 3C\delta r \leq 2Mr, \end{align*}
as claimed in \eqref{form10} (here we took $\delta = \delta(C,M) > 0$ sufficiently small).

Finally, we turn our attention to \eqref{form10}. We will use Lemma \ref{lemma2}. To do so, we identify both $(\ell_{2},d_{\mathrm{par}})$ and $(L_{2},d)$ with $\R$, and we first use the restriction of $f$ to $I := \ell_{2} \cap B(w,\tfrac{C}{2}r)$ to define a QIE $i \colon I \to \R$. Simply, for $v \in I$, let $i(v)$ be the closest point on $L_{2}$ to $f(v)$. Since $f$ is $M$-bilipschitz, and $d(f(v),i(v)) \leq C\delta r$ for all $v \in I$, we see that $i \colon I \to \R$ is an $(M,C\delta r)$-QIE.

Identify $J := B(p,R) \cap L_{2}$ to a sub-interval of $\R$ (of length $\leq 2R$). Then, pick an arbitrary point $w_{2} \in \ell_{2} \cap B(w,r)$, and write $p_{2} := i(w_{2}) \in L_{2}$. Noting that $d(p,p_{2}) \leq 2C\delta r + d(f(w),f(w_{2})) \leq 2Mr$, it follows that
\begin{equation}\label{form12} J \subset [i(w_{2}) - R - 2Mr,i(w_{2}) + R + 2Mr] =: [i(w_{2}) - C'r, i(w_{2}) + C'r], \end{equation}
where $C' := AH^{3}(\epsilon/2)^{-2} + 2M$. Recall here that $H = 4M$, so $C' \lesssim_{\epsilon,M} 1$.

We first claim that there exist two points $a',b' \in I = \ell_{2} \cap B(w,\tfrac{C}{2}r)$ such that $J \subset [i(a'),i(b')]$. To see this, write $a := w_{2}$, and pick an arbitrary point $b \in \ell_{2} \cap B(w,5r)$ with $|a - b| = r > 2MC\delta r$. Since $i$ is an $(M,C\delta r)$-QIE, it follows from Remark \ref{rem1} that $i(a) \neq i(b)$. We assume with no loss of generality that
\begin{equation}\label{form11} a < b \quad \text{and} \quad i(a) < i(b). \end{equation}
According to Lemma \ref{lemma2}, the map $i$ is $2MC\delta r$-monotone, and since $a < b - 2MC\delta r$, we infer from \eqref{form11} that $i$ is $2MC\delta r$-increasing. In particular, if $a' \in I$ is a further point with $a' < a - 2MC\delta r$, then $i(a') < i(a)$. In particular, this holds if we pick $a' \in I = \ell_{2} \cap B(w,\tfrac{C}{2}r)$ with $a' < a$ at distance $\geq Cr/10$ from $a$. Then
\begin{displaymath} i(a) - i(a') \geq \frac{a - a'}{M} - C\delta r \geq \frac{Cr}{10M} - C\delta r \geq \frac{Cr}{20M}, \end{displaymath}
recalling from \eqref{form13} that $\delta < 1/(100M)$. Similarly, we may find a point $b' \in I$ with $b' > b$ at distance $\geq Cr/10$ from $b$ with the property
\begin{displaymath} i(b') - i(a) \stackrel{\eqref{form11}}{>} i(b') - i(b) > Cr/(20M). \end{displaymath}
Combining these estimates with \eqref{form12}, and noting that $C > 20MC'$ by \eqref{form13}, we find that $J \subset [i(a'),i(b')]$, as required. Next, we infer from Lemma \ref{lemma2} that $i(I)$ is $(C\delta r)$-dense in $[i(a'),i(b')]$, in particular in $J$. Unwrapping the definitions, this means that for every $q \in J = B(p,R) \cap L_{2}$, we may find a point $v \in I = \ell_{2} \cap B(w,\tfrac{C}{2}r)$ with the property $d(q,i(v)) \leq C\delta r$. Moreover, $i(v)$ was the closest point of $f(v)$ to $L_{2}$, hence $d(i(v),f(v)) \leq C\delta r$. It follows that $d(q,f(v)) \leq 2C\delta r$, and the proof of \eqref{form10} is complete. \end{proof}

As a corollary of Propositions \ref{propCarleson} and \ref{prop1}, we record that bilipschitz images of $\W$ inside $\He$ satisfy the \emph{weak geometric lemma} for the strong vertical $\beta$-numbers (hence also for the standard vertical $\beta$-numbers):
\begin{cor}[Weak geometric lemma]\label{wgl} Let $f \colon \W \to \He$ be an $M$-bilipschitz map, $M \geq 1$. Then, for any $\epsilon \in (0,1]$ and $C \geq 1$, we have
\begin{equation}\label{form129} \mathop{\sum_{Q \subset Q_{0}}}_{\beta_{f}(CQ) \geq \epsilon} \ell(Q)^{3} \lesssim_{C,\epsilon,M} \ell(Q_{0})^{3}, \qquad Q_{0} \in \mathcal{D}. \end{equation}
\end{cor}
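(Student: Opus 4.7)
The corollary is essentially a bookkeeping step that combines the two preceding propositions: Proposition \ref{prop1} converts smallness of $\rho_{f}$ on a large ball into smallness of $\beta_{f}$ on a smaller ball, and Proposition \ref{propCarleson} provides a Carleson packing estimate for the dyadic rectangles on which $\rho_{f}$ is not small. So the plan is simply to run the contrapositive of Proposition \ref{prop1} and then appeal to Proposition \ref{propCarleson}.

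In detail, fix $\epsilon \in (0,1]$ and $C \geq 1$. Apply Proposition \ref{prop1} to the parameter $\epsilon$ to produce constants $C_{0} = C_{0}(\epsilon,M) \geq 10$ and $\delta = \delta(\epsilon,M) > 0$ with the property
\begin{displaymath}
\rho_{f}(B(w,C_{0}r)) < \delta \quad \Longrightarrow \quad \beta_{f}(B(w,r)) < \epsilon,
\qquad w \in \W,\ r > 0.
\end{displaymath}
Now let $Q \in \mathcal{D}$ be a rectangle with $\beta_{f}(CQ) \geq \epsilon$. Recall that $CQ = B(c_{Q},C\ell(Q))$ by the notational convention introduced in Section \ref{s:corona}. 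Applying the above implication with $w = c_{Q}$ and $r = C\ell(Q)$ (and taking the contrapositive) gives
\begin{displaymath}
\rho_{f}(B(c_{Q},C_{0}C\ell(Q))) = \rho_{f}((C_{0}C)Q) \geq \delta,
\end{displaymath}
that is, $Q \in \mathcal{B}(C_{0}C,\delta)$ in the notation of Proposition \ref{propCarleson}.

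It then remains to invoke Proposition \ref{propCarleson} with parameters $C_{0}C$ and $\delta$, both of which depend only on $C$, $\epsilon$, and $M$. This yields a constant $C' = C'(C,\epsilon,M)$ such that
\begin{displaymath}
\mathop{\sum_{Q \subset Q_{0}}}_{\beta_{f}(CQ) \geq \epsilon} \ell(Q)^{3}
\leq \mathop{\sum_{Q \in \mathcal{B}(C_{0}C,\delta)}}_{Q \subset Q_{0}} \ell(Q)^{3}
\leq C'\ell(Q_{0})^{3}, \qquad Q_{0} \in \mathcal{D},
\end{displaymath}
which is the required weak geometric lemma \eqref{form129}. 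There is no genuine obstacle here; the work has been done in Propositions \ref{propCarleson} and \ref{prop1}, and the present step is just a one-line application of the contrapositive followed by a direct comparison of the two bad sets of dyadic rectangles.
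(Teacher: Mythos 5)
Your proof is correct and follows essentially the same route as the paper: apply Proposition \ref{prop1} (in contrapositive form) to show that $\beta_{f}(CQ) \geq \epsilon$ forces $Q$ into a bad family $\mathcal{B}(C',\delta)$ of rectangles with large ruler coefficient, and then conclude with the Carleson packing estimate of Proposition \ref{propCarleson}. The only cosmetic difference is that the paper packages the enlarged constant as a single $C' = C'(C,\epsilon,M)$ while you write it as $C_{0}C$; the argument is identical.
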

\begin{proof} Fix $C \geq 1$ and $\epsilon \in (0,1]$. Choose $C' = C'(C,\epsilon,M) \geq C$ and $\delta = \delta(C,\epsilon,M) > 0$ such that
\begin{displaymath} \rho_{f}(B(w,C'r)) < \delta \quad \Longrightarrow \quad \beta_{f}(B(w,Cr)) < \epsilon, \qquad w \in \W, \, r > 0. \end{displaymath}
This is possible by Proposition \ref{prop1}. Thus,
\begin{displaymath} \{Q \in \mathcal{D} : \beta_{f}(CQ) \geq \epsilon\} \subset \{Q \in \mathcal{D} : \rho_{f}(C'Q) \geq \delta\} = \mathcal{B}(C',\delta). \end{displaymath}
Now \eqref{form129} follows from Proposition \ref{propCarleson} applied with constants $C',\delta$. \end{proof}

\subsection{Ruled and vertical rectangles}\label{s:ruledRectangle} We have already learned from Proposition \ref{prop1} that small ruler coefficients imply small vertical $\beta$-numbers. In this section, we formalise the connection a little further, and add the notion of "slope" to the soup. 

We start with some notation on parametrising horizontal lines and vertical planes. Let $L \subset \He$ be a horizontal line which is not contained on any translate of the $xt$-plane. Then, $L$ can be written as
\begin{displaymath} L = \{(ay + b,y,\tfrac{1}{2}by + c) : y \in \R\} \end{displaymath}
for some unique triple $a,b,c \in \R$. The absolute value of the coefficient "$a$" will be called the \emph{slope of $L$}, denoted $\angle(L)$, or $\angle(L,\W)$ if the relation to the $yt$-plane needs emphasising. If $L$ is contained in the $xt$-plane, it cannot be expressed as above, and we declare $\angle(L) := \infty$. If $\V := \V(L)$ is the vertical plane spanned by $L$, then the \emph{slope of $\V$} equals, by definition, the slope of $L$, and we write $\angle(\V(L)) := \angle(L)$. 

\begin{definition}[Ruled rectangle]\label{goodness} Let $Q \in \calD$, let $C \geq 10$ and $\delta,\Sigma > 0$ be constants, and let
$f \colon B(c_{Q},C\ell(Q)) \to \mathbb{H}$ be a map. The
rectangle $Q \in \calD$ is called \emph{$(\delta,C)$-ruled} if
$\rho_{f}(CQ) < \delta/C$, where $CQ = B(c_{Q},C\ell(Q))$ as usual. This means, by definition, that for every horizontal line $\ell \subset \W$, there exists a horizontal line $L \subset \He$ such that
\begin{equation}\label{form18} f(\ell \cap CQ) \subset N(L,\delta \ell(Q)). \end{equation}
The rectangle $Q$ is called $(\delta,C,\Sigma)$-ruled if it is
$(\delta,C)$-ruled, and there exists a horizontal line $\ell
\subset \W$ such that $\ell \cap Q \neq \emptyset$, and such that
\textbf{some} horizontal line $L \subset \He$ satisfying \eqref{form18} has
slope $\angle(L) \leq \Sigma$.
\end{definition}

\begin{definition}[Vertical rectangle]\label{vertical}
Let $Q\in \mathcal{D}$, let $H\geq 1$, $\epsilon \in (0,1]$,
$\Sigma>0$ be constants and let $f:HQ \to \mathbb{H}$ be a map.
The rectangle $Q$ is called \emph{$(\epsilon,H,\Sigma)$-vertical}
if there exists a vertical plane $\mathbb{V}_Q\subset \mathbb{H}$
with $\angle(\V_{Q}) \leq \Sigma$ such that $\beta_{f}(\V_{Q}; HQ) < \epsilon/H$. \end{definition}

To avoid over-indexing, we omit reference to $f$ in the notation for ruled and vertical rectangles $Q \in \mathcal{D}$. We next record a straightforward corollary of Proposition \ref{prop1}, which says that ruled rectangles are vertical rectangles. The only additional information is that the slopes of the ruling control the slopes of well-approximating vertical planes.

\begin{cor}[to Proposition \ref{prop1}]\label{cor2} Let $f \colon \W \to \He$
be an $M$-bilipschitz map, $M \geq 1$, and let $\epsilon \in (0,1)$, $H \geq 1$, and $\Sigma > 0$.
 Then, there exists $C_{0} = C_{0}(\epsilon,H,M) \geq 10$ and $\delta_{0} = \delta(\epsilon,C_{0},H,M) > 0$ such that the following holds
 for all $C \geq C_{0}$ and $\delta \in (0,\delta_{0}]$. If $Q \in \calD$ is
 $(\delta,CH,\Sigma)$-ruled, then it is $(\epsilon,H,\Sigma)$-vertical.\end{cor}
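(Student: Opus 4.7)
The plan is to apply Proposition \ref{prop1} at the correct scale and then exploit the extra flexibility recorded in Remark \ref{rem2} to install the slope bound. Specifically, I would invoke Proposition \ref{prop1} with the approximation parameter $\epsilon/H$ in place of $\epsilon$, obtaining constants $C_{\mathrm{P}} = C_{\mathrm{P}}(\epsilon/H, M) \geq 10$ and $\delta_{\mathrm{P}} = \delta_{\mathrm{P}}(C_{\mathrm{P}}, \epsilon/H, M) > 0$. I would then set $C_0 := C_{\mathrm{P}}$ and $\delta_0 := C_0 H \delta_{\mathrm{P}}$, and work at scale $r := H\ell(Q)$, so that $B(c_Q, C_0 r) = C_0 H Q$ and $B(c_Q, r) = HQ$.

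Given $C \geq C_0$, $\delta \in (0, \delta_0]$, and a $(\delta, CH, \Sigma)$-ruled $Q$, the $(\delta, CH)$-ruled part of the hypothesis tells me that $\rho_f(CHQ) < \delta/(CH)$, so for every horizontal line $\ell' \subset \W$ there exists $L' \subset \He$ with $f(\ell' \cap CHQ) \subset N(L', \delta \ell(Q))$. Restricting to the smaller ball $C_0 HQ \subset CHQ$ and using $\delta \leq \delta_0 = C_0 H \delta_{\mathrm{P}}$, the neighbourhood $N(L', \delta \ell(Q))$ sits inside $N(L', C_0 \delta_{\mathrm{P}} r)$. This witnesses $\rho_f(B(c_Q, C_0 r)) < \delta_{\mathrm{P}}$, which is exactly the hypothesis of Proposition \ref{prop1} at scale $r$ with parameter $\epsilon/H$; the proposition therefore delivers a vertical plane $\V_Q$ with $\beta_f(\V_Q; HQ) < \epsilon/H$.

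To upgrade this conclusion with the slope bound, I would appeal to Remark \ref{rem2}: it records that the approximating plane $\V_Q$ produced by Proposition \ref{prop1} may be taken to be $\V(L)$ for \emph{any} horizontal line $L \subset \He$ that approximates $f(\ell \cap C_0 HQ)$ in the sense above, where $\ell$ is any horizontal line in $\W$ meeting $HQ$. The $\Sigma$-slope part of the $(\delta, CH, \Sigma)$-ruled hypothesis supplies precisely such an $\ell$ (meeting $Q \subset HQ$) together with an approximating line $L$ of slope $\angle(L) \leq \Sigma$. Taking this specific $L$ yields $\V_Q := \V(L)$ with $\angle(\V_Q) \leq \Sigma$ and $\beta_f(\V_Q; HQ) < \epsilon/H$, so $Q$ is $(\epsilon, H, \Sigma)$-vertical.

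The proof is essentially a re-packaging of earlier results, and the only step worth flagging is the two-fold use of the ruled hypothesis: the uniform $\rho_f$-bound supplies the Proposition \ref{prop1} input valid for \emph{every} horizontal line, while the $\Sigma$-slope witness line $\ell$ is fed into Remark \ref{rem2} to pin down a specific well-approximating plane with the desired slope control. Without Remark \ref{rem2}, Proposition \ref{prop1} alone provides no mechanism for transferring the slope bound from $L$ to the optimal plane, so that remark is the key conduit that makes the corollary possible.
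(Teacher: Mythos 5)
Your proposal is correct and follows essentially the same route as the paper: apply Proposition \ref{prop1} at centre $c_{Q}$ and radius $r = H\ell(Q)$ with $\epsilon/H$ in place of $\epsilon$, then use Remark \ref{rem2} to take $\V_{Q} = \V(L)$ for the slope-$\Sigma$ line $L$ supplied by the $(\delta,CH,\Sigma)$-ruled hypothesis. Your explicit bookkeeping of $C_{0}$, $\delta_{0} = C_{0}H\delta_{\mathrm{P}}$ and the restriction from $CHQ$ to $C_{0}HQ$ is just a more careful spelling-out of the constant choices the paper leaves implicit.
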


\begin{proof} Recall that $HQ := B(c_{Q},H\ell(Q))$. We apply Proposition \ref{prop1} with centre $w = c_{Q}$ and radius $r = H\ell(Q)$, and $\epsilon' := \epsilon/H$ in place of $\epsilon$. The conclusion is that if
\begin{displaymath} \rho_{f}(B(w,Cr)) = \rho_{f}(CHQ) < \delta \end{displaymath}
for sufficiently large $C \geq 1$ and sufficiently small $\delta > 0$,
depending on $\epsilon,H,M$, then there exists a vertical plane $\V_{Q}$ with $\beta_{f}(\V_{Q};HQ) < \epsilon/H$. Moreover, as explained in Remark \ref{rem2}, the plane $\V_{Q}$ can be chosen as $\V_{Q} = \V(L)$ for any horizontal line $L \subset \He$ with the following property: there is a horizontal line $\ell \subset \W$ with $\ell \cap HQ = \ell \cap B(w,r) \neq \emptyset$ such that
\begin{equation}\label{form20} f(\ell \cap CHQ) \subset N(L,\delta CH\ell(Q)). \end{equation}
Now we use that $Q$ is $(\delta,CH,\Sigma)$-ruled: there exists a horizontal line $\ell
\subset \W$ with $\ell \cap Q \neq \emptyset$ such that \eqref{form20} holds, and such
that $\angle(L) \leq \Sigma$. Hence, $\angle(\V(L)) \leq \Sigma$, and the choice
$\V_{Q} := \V(L)$ satisfies all the requirements needed to show
that $Q$ is $(\epsilon,H,\Sigma)$-vertical.
\end{proof}


\subsection{The local quasi-isometry $F_{Q}$}\label{s:tangent}

Let $f \colon \W \to \He$ be an $M$-bilipschitz map, $M \geq 1$,
 and fix $\epsilon \in (0,1)$, $H \geq 1$, and $\Sigma > 0$. Then, if  $Q \in \calD$ is an $(\epsilon,H,\Sigma)$-vertical rectangle, Definition \ref{vertical} implies that $f|_{HQ}$ composed with
closest-point-projection to $\V_{Q}$ is an $(M,\epsilon
\ell(Q))$-QIE.
 However, it will be convenient to "normalise" this QIE in such a way that its target is always $\W$, instead of the changing vertical plane $\V_{Q}$.
 For this purpose, we will next construct a horizontal QIE
 $F_{Q} :HQ
 \to \mathbb{W}$ of the form $F_{Q} =
 \pi_{\V_{Q}} \circ \iota_{Q}$, where $\iota_{Q}\colon HQ \to \V_{Q}$ is the composition of $f|_{HQ}$ with (almost) the
 closest-point-projection to $\V_{Q}$,
 and $\pi_{\V_{Q}} : \mathbb{V}_{Q} \to \mathbb{W}$ is a bilipschitz map defined in \eqref{form21} below.
 
 We first define the map $\iota:=\iota_{Q} \colon HQ \to \V_{Q}$, where $Q \in \calD$ is an $(\epsilon,H,\Sigma)$-vertical
  rectangle (in fact, we may define $\iota_{Q}$ on $\W$, but it is useless outside $HQ$). Recall from Definition \ref{vertical} that for every horizontal line $\ell \subset \W$, there exists a horizontal line $L = L_{Q,\ell} \subset \V_{Q}$ with the property
 \begin{equation}\label{eq:LineCurveApprox}
 f(\ell \cap
HQ) \subset N(L,\epsilon \ell(Q)).
 \end{equation}
Since
the $\angle(\mathbb{V}_Q) \leq \Sigma$, there exist
constants $a_{Q},b_{Q},c_{Q,\ell} \in \mathbb{R}$ with $|a_{Q}|
\leq \Sigma$ such that
\begin{equation}\label{eq:L}
L_{Q,\ell} =\{(a_{Q}y+b_{Q},y,\tfrac{1}{2}b_{Q}y +
c_{Q,\ell}):\,y\in\mathbb{R}\}.
\end{equation}
The following lemma will be used to define the map $\iota_Q$.
\begin{lemma}\label{l:Proj_Same_y} Let $L :=\{(ay+b,y,\tfrac{1}{2}by + c):\,y\in\mathbb{R}\}$ be a horizontal line with finite slope. For $(x,y,t) \in \He$, define the map
\begin{displaymath} i_{L}(x,y,t) := (ay + b,y,\tfrac{1}{2}by + c), \end{displaymath} 
which picks the unique point on $L$ with which has the same second coordinate as $(x,y,t)$. Then,
\begin{displaymath}
d((x,y,t),\iota_{L}(x,y,t)) \leq
A(1+|a|)\,\mathrm{dist}((x,y,t),L),\qquad (x,y,t)\in \mathbb{H},
\end{displaymath}
for some absolute constant $A\geq 1$.
\end{lemma}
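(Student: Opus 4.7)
The proof is a short triangle inequality argument, and the main task is to control the distance between two points on the horizontal line $L$ in terms of their $y$-coordinate separation.

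The plan is as follows. Fix $p = (x,y,t) \in \He$ and write $r := \dist(p, L)$. Pick $q \in L$ with $d(p,q) = r$; since $L$ is closed, such a minimizer exists. Parametrize $q$ as $q = (a y_0 + b, y_0, \tfrac{1}{2} b y_0 + c)$ for the unique $y_0 \in \R$. Since $i_L(p) = (a y + b, y, \tfrac{1}{2} b y + c)$, both $q$ and $i_L(p)$ lie on the horizontal line $L$, and the triangle inequality gives
\begin{equation*}
d(p, i_L(p)) \leq d(p, q) + d(q, i_L(p)) = r + d(q, i_L(p)).
\end{equation*}

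Next I would compute $d(q, i_L(p))$ exactly. Since $q$ and $i_L(p)$ lie on a common horizontal line, the Heisenberg distance between them coincides with the Euclidean distance of their $\pi$-projections to $\R^{2}$ (where $\pi(x,y,t) := (x,y)$). Thus
\begin{equation*}
d(q, i_L(p)) = |(a y_0 + b, y_0) - (a y + b, y)| = |y_0 - y|\sqrt{1 + a^{2}}.
\end{equation*}
To finish, I would use the fact that $\pi \colon \He \to \R^{2}$ is $1$-Lipschitz (this is one of the two standard "easy" properties of $\pi$ invoked already in the proofs of Lemmas \ref{lemma0} and \ref{lemma1}). Hence
\begin{equation*}
|y - y_0| \leq |\pi(p) - \pi(q)| \leq d(p,q) = r,
\end{equation*}
which gives $d(q, i_L(p)) \leq r\sqrt{1 + a^{2}} \leq r(1 + |a|)$. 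Combining, $d(p, i_L(p)) \leq r(2 + |a|) \leq 2 r (1 + |a|)$, so one can take $A = 2$.

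There is no real obstacle here: the whole argument boils down to the two elementary facts that $\pi$ is $1$-Lipschitz and that the Heisenberg metric restricted to a horizontal line agrees with the Euclidean metric of its horizontal projection. The factor $(1 + |a|)$ appears naturally as the Euclidean length of the tangent vector $(a, 1)$ to $\pi(L)$, which is exactly the rate at which movement in the $y$-coordinate inflates to movement along $L$.
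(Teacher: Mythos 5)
Your proof is correct and follows essentially the same route as the paper: pick the closest point on $L$, apply the triangle inequality, and bound the distance between the two points of $L$ using that the Heisenberg distance along a common horizontal line equals the Euclidean distance of the $\pi$-projections, together with the $1$-Lipschitz property of $\pi$ to control $|y-y_0|$. You merely make the last step (and the constant $A=2$) explicit, which the paper leaves implicit.
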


\begin{proof} Given $p:=(x,y,t)\in \mathbb{H}$, let $p':=(ay',y',\frac{1}{2}b
y'+c)$ be a point on the horizontal line $L$ that realizes the
distance to $(x,y,t)$. Then,
\begin{displaymath}
d(p,i_{L}(p)) \leq d(p,p')+
d(p',i_{L}(p)) \lesssim \mathrm{dist}(p,L) + (1+|a|)|y-y'|.
\end{displaymath}
Here the upper bound for the second summand is an easy consequence
of the fact that $i_{L}(p) = (ay+b,y,\tfrac{1}{2}by +c)$ and
$p'=(ay'+b,y',\tfrac{1}{2}by' +c)$ lie on the same horizontal line
$L$, and $L$ is a left translate of the line $\{(ay,y,0):\,y\in
\mathbb{R}\}$ by $(b,0,c)$.
\end{proof}

With the lemma above in mind, we define $\iota_{Q}|_{\ell}
\colon \ell \to L \subset \V_{Q}$ by setting $\iota_{Q}(w) := i_{L_{Q,\ell}}(f(w))$, that is
\begin{equation}\label{iota}
\iota_Q (w):= \left(a_{Q} f_2(w)+ b_{Q}, f_2(w), \tfrac{1}{2}
b_{Q} f_2(w)+ c_{Q,\ell}\right),\quad w\in \ell.
\end{equation}
The constants $a_{Q},b_{Q},c_{Q,\ell}$ are defined in
\eqref{eq:L}. As $\ell \subset \W$ ranges over horizontal lines,
this defines $\iota_{Q}$ on $\W$. Since $f$ is $M$-bilipschitz and
\eqref{eq:LineCurveApprox} holds, and $|a_{Q}|\leq \Sigma$, Lemma
\ref{l:Proj_Same_y} implies that $\iota_{Q}$ is an
$(M,A(1+\Sigma)\epsilon \ell(Q))$-QIE on $HQ$ (of course not necessarily on $\W$), which maps each horizontal line $\ell \subset \W$ inside the horizontal line $L_{Q,\ell} \subset \V_{Q}$. Thus, identifying $\V_{Q}$ with $\W$ in some abstract way, one can already view $\iota_{Q} \colon HQ \to \V_{Q}$ as a horizontal QIE in the sense of Definition \ref{hQIE}. It pays off to make the identification $\V_{Q} \cong \W$ more explicit, and this is what we do next.

Let $\V:= \{(ay+b,y,t):\, (y,t)\in \mathbb{R}^2\}$ be a vertical plane with $\angle(\V) < \infty$. Let $\Pi \colon \He \to \W$ be the standard vertical projection $\Pi(x,y,t) = (y,t + \tfrac{xy}{2})$, and set
 $\pi_{\V}:=
  \psi_{\V}\circ \Pi$, where $\psi_{\V}(y,t):= (y,t-\tfrac{1}{2}ay^2-by)$. In other words,
 \begin{equation}\label{form21}
\pi_{\V}(x,y,t) = \left(y, t+
\tfrac{1}{2}xy -\tfrac{1}{2}ay^2-by\right), \qquad (x,y,t) \in \He.
\end{equation}

\begin{lemma}\label{l:QIE_W} The map $\pi_{\V}$ maps horizontal lines in $\mathbb{V}$ onto horizontal lines in
$\mathbb{W}$, and is $A(1 + |a|)$-bilipschitz for some absolute constant $A \geq 1$.
\end{lemma}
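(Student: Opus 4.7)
The plan is a direct coordinate computation. The first move is to restrict $\pi_{\V}$ to $\V$ via the parametrization $(y,t) \mapsto (ay+b,y,t)$ of $\V$ by $\R^{2}$. Plugging into \eqref{form21} and cancelling the $\tfrac{1}{2}ay^{2}$ terms gives
\[
\pi_{\V}(ay + b,\,y,\,t) = \left(y,\; t - \tfrac{1}{2}by \right),
\]
so that $\pi_{\V}|_{\V} \colon \V \to \W$ is explicit and visibly a bijection.

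For the first claim, recall from \eqref{eq:L} that every horizontal line in $\V$ has the form $L_{c} = \{(ay+b,y,\tfrac{1}{2}by + c) : y \in \R\}$ for some $c \in \R$. Using the boxed formula above, $\pi_{\V}(L_{c}) = \{(y,c) : y \in \R\}$, which is precisely the horizontal line in $\W$ at height $c$.

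For the bilipschitz bound, fix $p_{j} = (ay_{j}+b,\, y_{j},\, t_{j}) \in \V$ for $j=1,2$. A straightforward application of the Heisenberg group law (the cross terms $\pm \tfrac{1}{2}a y_{1} y_{2}$ cancel) gives
\[
p_{1}^{-1} \cdot p_{2} = \left(a(y_{2} - y_{1}),\; y_{2} - y_{1},\; (t_{2} - t_{1}) - \tfrac{b}{2}(y_{2} - y_{1}) \right),
\]
whence
\[
d(p_{1},p_{2}) = \max\left\{ \sqrt{1 + a^{2}}\,|y_{2}-y_{1}|,\; \sqrt{\left|(t_{2}-t_{1}) - \tfrac{b}{2}(y_{2}-y_{1})\right|}\right\}.
\]
On the other hand, from the explicit formula for $\pi_{\V}|_{\V}$,
\[
d_{\mathrm{par}}(\pi_{\V}(p_{1}),\pi_{\V}(p_{2})) = \max\left\{ |y_{2}-y_{1}|,\; \sqrt{\left|(t_{2}-t_{1}) - \tfrac{b}{2}(y_{2}-y_{1})\right|}\right\}.
\]
The second entries of the two maxima coincide identically, and the first differ by the factor $\sqrt{1 + a^{2}} \in [1,\,1 + |a|]$. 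Therefore
\[
d_{\mathrm{par}}(\pi_{\V}(p_{1}), \pi_{\V}(p_{2})) \leq d(p_{1},p_{2}) \leq (1 + |a|)\, d_{\mathrm{par}}(\pi_{\V}(p_{1}), \pi_{\V}(p_{2})),
\]
so $\pi_{\V}|_{\V}$ is $(1+|a|)$-bilipschitz, i.e.\ $A=1$ in the statement.

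There is no real obstacle; the only delicate point is that the quadratic correction $-\tfrac{1}{2}ay^{2} - by$ built into the definition of $\pi_{\V}$ is calibrated to exactly absorb the Heisenberg twist along $\V$, which is what produces the clean matching of the $t$-terms and reduces the comparison to the slope factor $\sqrt{1+a^{2}}$.
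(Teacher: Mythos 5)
Your proof is correct and follows essentially the same route as the paper: the identity $\pi_{\V}(ay+b,y,c+\tfrac{1}{2}by)=(y,c)$ for the first claim, and a direct comparison of $d$ and $d_{\mathrm{par}}$ in coordinates on $\V$ for the second. The only difference is cosmetic: you carry out explicitly the computation the paper labels "easy to check" (working in $(y,t)$-coordinates rather than the line coordinates $(y,c)$, where your third component $(t_{2}-t_{1})-\tfrac{b}{2}(y_{2}-y_{1})$ is exactly $c_{2}-c_{1}$), which even yields the sharper constant $A=1$.
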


\begin{proof} We first observe that
\begin{equation}\label{eq:ProjForm}
\pi_{\V}\left(ay+b,y,c+ \tfrac{1}{2}by\right)=(y,c),\quad y,c\in
\mathbb{R}.
\end{equation}
Since every horizontal line in $\mathbb{V}$ is of the form
\begin{displaymath}
L\cdot (0,0,c)=\{(ay+b,y,c+\tfrac{1}{2}by):\,y\in \mathbb{R}\}
\end{displaymath}
for some $c\in\mathbb{R}$, formula \eqref{eq:ProjForm} proves that
$\pi_{\V}$ maps horizontal lines in $\mathbb{V}$ to horizontal
lines in $\mathbb{W}$. To prove the bilipschitz continuity of
$\pi_{\V}$, let
\begin{displaymath}
p=\left(ay+b,y,c+\tfrac{b}{2}y\right)\quad\text{and}\quad
p'=\left(ay'+b,y',c'+\tfrac{b}{2}y'\right)
\end{displaymath}
be two points in $\mathbb{V}$. It follows from \eqref{eq:ProjForm}
that
\begin{displaymath}
d_{\mathrm{par}}(\pi_{\V}(p),\pi_{\V}(p'))=
d_{\mathrm{par}}((y,c),(y',c'))\sim |y-y'|+\sqrt{|c-c'|}.
\end{displaymath}
On the other hand, it is easy to check that
\begin{displaymath}
d(p,p') \sim (1 + |a|)|y-y'|+ \sqrt{|c-c'|}.
\end{displaymath}
Hence $d(p,p')/(1 + |a|) \lesssim
d_{\mathrm{par}}(\pi_{\V}(p),\pi_{\V}(p')) \lesssim (1 +
|a|)d(p,p')$, as claimed.
\end{proof}

Combined with the properties
 of $\iota_{Q}$, which we  stated below
Lemma \ref{l:Proj_Same_y}, Lemma \ref{l:QIE_W} yields the
following corollary (and definition).

\begin{cor}\label{c:HorizQIE_W_W} Let $A\geq 1$ be an absolute
constant as in Lemma  \ref{l:Proj_Same_y} and Lemma \ref{l:QIE_W}.
Let $f \colon \W \to \He$ be an $M$-bilipschitz map, $M \geq 1$,
 and fix $\epsilon \in (0,1)$, $H \geq 1$, and $\Sigma > 0$.
 If $Q\in \mathcal{D}$ is an
$(A^{-2}(1+\Sigma)^{-2}\epsilon,H,\Sigma)$-vertical rectangle,
then
\begin{displaymath}
F_{Q} := \pi_{\V_{Q}}\circ \iota_{Q} \colon \W \to \mathbb{W}
\end{displaymath}
is a horizontal $(A(1 + \Sigma)M,\epsilon\ell(Q))$-QIE on $HQ$ which has the explicit expression
\begin{equation}\label{form45} F_{Q}(w) = (f_{2}(w),c_{Q,\ell}), \qquad w \in \ell, \end{equation}
for any horizontal line $\ell \subset \W$. Here $c_{Q,\ell}$ is the coefficient defined in \eqref{eq:L}.
\end{cor}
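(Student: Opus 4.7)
The proof is essentially a bookkeeping composition of the two ingredients established just above the statement. Here is the plan.

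First, I would unpack what the vertical hypothesis gives. Set $\epsilon' := A^{-2}(1+\Sigma)^{-2}\epsilon$. Since $Q$ is $(\epsilon',H,\Sigma)$-vertical, for every horizontal line $\ell \subset \W$ (meeting $HQ$) there is a horizontal line $L_{Q,\ell} \subset \V_Q$ with $f(\ell \cap HQ) \subset N(L_{Q,\ell}, \epsilon'\ell(Q))$, and the slope of $L_{Q,\ell}$ is $|a_Q| \leq \Sigma$. Applying Lemma \ref{l:Proj_Same_y} pointwise along each horizontal line in $HQ$, the ``same-$y$ projection'' $\iota_Q$ defined in \eqref{iota} satisfies $d(f(w),\iota_Q(w)) \leq A(1+\Sigma)\epsilon'\ell(Q)$ for $w \in HQ$. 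Combined with the $M$-bilipschitz property of $f$, this immediately gives that $\iota_Q \colon HQ \to \V_Q$ is an $(M, 2A(1+\Sigma)\epsilon'\ell(Q))$-QIE (up to the harmless factor of $2$ coming from two triangle-inequality applications).

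Next, by Lemma \ref{l:QIE_W}, the map $\pi_{\V_Q} \colon \V_Q \to \W$ is $A(1+|a_Q|)$-bilipschitz, hence $A(1+\Sigma)$-bilipschitz. Composing an $A(1+\Sigma)$-bilipschitz map with an $(M,2A(1+\Sigma)\epsilon'\ell(Q))$-QIE yields an $(A(1+\Sigma)M, 2A^2(1+\Sigma)^2\epsilon'\ell(Q))$-QIE; by our choice of $\epsilon'$, the additive error is at most $\epsilon\ell(Q)$ (adjusting the absolute constant $A$ once and for all to absorb the factor of $2$). This is exactly the QIE bound demanded by the conclusion.

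To check the explicit formula \eqref{form45} and the horizontality, fix a horizontal line $\ell \subset \W$ and recall from \eqref{eq:L} that $L_{Q,\ell} = \{(a_Q y + b_Q, y, \tfrac{1}{2}b_Q y + c_{Q,\ell}) : y \in \R\}$. For $w \in \ell$, the definition \eqref{iota} gives $\iota_Q(w) = (a_Q f_2(w) + b_Q,\, f_2(w),\, \tfrac{1}{2}b_Q f_2(w) + c_{Q,\ell})$, which is the point on $L_{Q,\ell}$ with second coordinate $f_2(w)$. Plugging this into the formula \eqref{eq:ProjForm} (with $y = f_2(w)$ and $c = c_{Q,\ell}$) yields $F_Q(w) = \pi_{\V_Q}(\iota_Q(w)) = (f_2(w), c_{Q,\ell})$, establishing \eqref{form45}. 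In particular, $F_Q(\ell \cap HQ)$ is contained in the horizontal line $\R \times \{c_{Q,\ell}\} \subset \W$, so $F_Q$ satisfies the horizontality requirement of Definition \ref{hQIE}.

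The only ``obstacle'' is just keeping the constants consistent; there is no genuine difficulty, because the vertical hypothesis has been pre-calibrated with the factor $A^{-2}(1+\Sigma)^{-2}$ precisely so that, after suffering the loss $A(1+\Sigma)$ from Lemma \ref{l:Proj_Same_y} and another loss $A(1+\Sigma)$ from the bilipschitz constant of $\pi_{\V_Q}$ in Lemma \ref{l:QIE_W}, the additive QIE error collapses back to $\epsilon\ell(Q)$ and the multiplicative constant is $A(1+\Sigma)M$.
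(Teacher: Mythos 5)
Your proposal is correct and follows essentially the same route as the paper: Lemma \ref{l:Proj_Same_y} makes $\iota_{Q}$ a QIE on $HQ$ sending horizontal lines into the lines $L_{Q,\ell} \subset \V_{Q}$, composition with the $A(1+\Sigma)$-bilipschitz $\pi_{\V_{Q}}$ from Lemma \ref{l:QIE_W} gives the stated horizontal QIE, and the formula \eqref{form45} is verified by the same computation (your use of \eqref{eq:ProjForm} is just the paper's explicit $\psi_{\V_{Q}}\circ\Pi$ calculation in condensed form). Your extra factor of $2$ in the additive error is handled the same way the paper implicitly handles it, by absorbing it into the absolute constant $A$, so there is no discrepancy of substance.
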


So, the map $F_{Q}$ may be defined on the whole plane $\W$, but it is only a horizontal QIE restricted to $HQ$. For this reason, we will often treat $F_{Q}$ as if it only were defined on $HQ$.

\begin{proof}[Proof of Corollary \ref{c:HorizQIE_W_W}] As we discussed below Lemma \ref{l:Proj_Same_y},
with these choices of parameters, the maps $\iota_{Q}$ associated
to $(A^{-2}(1+\Sigma)^{-2}\epsilon,H,\Sigma)$-vertical rectangles are
$(M,A^{-1}(1 + \Sigma)^{-1}\epsilon \ell(Q))$-QIEs $HQ \to \V_{Q}$
which send horizontal line segments to horizontal line segments.
Moreover, since $\pi_{\V_{Q}} \colon \V_{Q} \to \W$ is $A(1 +
\Sigma)$-bilipschitz by Lemma \ref{l:QIE_W}, and sends horizontal
lines in $\V_{Q}$ to horizontal lines in $\W$, the first claim follows. The formula \eqref{form45} follows directly from the definitions of $\iota_{Q}$ in \eqref{iota}, and $\pi_{\V_{Q}} = \psi_{\V_{Q}} \circ \Pi$ in \eqref{form21}:
\begin{align*} F_{Q}(w) & = \psi_{\V_{Q}}(\Pi[a_{Q} f_2(w)+ b_{Q}, f_2(w), \tfrac{1}{2}
b_{Q} f_2(w)+ c_{Q,\ell}])\\
& = \psi_{\V_{Q}}(f_{2}(w), b_{Q} f_2(w)+ c_{Q,\ell} + \tfrac{1}{2}a_{Q}f_{2}(w)^{2})\\
& = (f_{2}(w),\tfrac{1}{2}
b_{Q} f_2(w)+ c_{Q,\ell} + \tfrac{1}{2}a_{Q}f_{2}(w)^{2} - \tfrac{1}{2}a_{Q}f_{2}(w)^{2} - b_{Q}f_{2}(w))\\
& = (f_{2}(w),c_{Q,\ell}), \qquad w \in \ell \subset \W. \end{align*} 
This completes the proof of the corollary -- and the definition of $F_{Q}$. \end{proof}

To end this section, we quantify how much the maps $F_{Q}$ and $F_{\hat{Q}}$ can
differ from each other if $\hat{Q} \in \calD$ is the parent of $Q
\in \calD$, and both $Q,\hat{Q}$ are $(\epsilon,H,\Sigma)$-vertical.

\begin{lemma}\label{lemma5} Let $Q,\hat{Q} \in \calD$ be $(\epsilon,H,\Sigma)$-vertical rectangles such that $\hat{Q}$ is
the parent of $Q$. Assume moreover that $\epsilon <
cM^{-1}(1 + \Sigma)^{-1}$ for a small absolute constant $c >
0$. Then,
\begin{displaymath}
d_{\mathrm{par}}(F_{Q}(w),F_{\hat{Q}}(w)) \lesssim
(1+\Sigma)^{\frac{1}{2}}\epsilon \ell(Q), \qquad w \in HQ.
\end{displaymath}
\end{lemma}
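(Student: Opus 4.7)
The plan is to exploit the explicit formula \eqref{form45} from Corollary \ref{c:HorizQIE_W_W}, and then reduce to a direct application of Lemma \ref{lemma0}. For $w \in HQ$, let $\ell \subset \W$ be the horizontal line through $w$. Then $F_{Q}(w) = (f_{2}(w), c_{Q,\ell})$ and $F_{\hat{Q}}(w) = (f_{2}(w), c_{\hat{Q},\ell})$, where $c_{Q,\ell}, c_{\hat{Q},\ell}$ are the coefficients from \eqref{eq:L} describing the lines $L_{Q,\ell} \subset \V_{Q}$ and $L_{\hat{Q},\ell} \subset \V_{\hat{Q}}$. Since both points have the same first coordinate $f_{2}(w)$, the parabolic distance collapses to
\begin{displaymath} d_{\mathrm{par}}(F_{Q}(w), F_{\hat{Q}}(w)) = \sqrt{|c_{Q,\ell} - c_{\hat{Q},\ell}|}, \end{displaymath}
so my task reduces to proving $|c_{Q,\ell} - c_{\hat{Q},\ell}| \lesssim (1+\Sigma)\,\epsilon^{2}\ell(Q)^{2}$.

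To apply Lemma \ref{lemma0} to $L_{1} := L_{\hat{Q},\ell}$ (slope $|a_{\hat{Q}}| \leq \Sigma$) and $L_{2} := L_{Q,\ell}$, I need two points $p,q \in \He$ which both lines pass near. Since $\hat{Q}$ is the parent of $Q$ one has $HQ \subset H\hat{Q}$, and $\ell \cap HQ$ is a horizontal segment of length $2H\ell(Q)$. I would pick $w_{1},w_{2} \in \ell \cap HQ$ as the endpoints of this segment, and set $p := f(w_{1})$ and $q := f(w_{2})$. The $(\epsilon,H,\Sigma)$-verticality of $Q$ gives $\dist(p, L_{Q,\ell}), \dist(q, L_{Q,\ell}) \leq \epsilon \ell(Q)$, and the $(\epsilon,H,\Sigma)$-verticality of $\hat{Q}$ (together with $HQ \subset H\hat{Q}$) gives $\dist(p, L_{\hat{Q},\ell}), \dist(q, L_{\hat{Q},\ell}) \leq \epsilon \ell(\hat{Q}) = 2\epsilon \ell(Q)$. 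The $M$-bilipschitz property of $f$ yields $r := d(p,q) \in [2H M^{-1} \ell(Q),\,2HM\ell(Q)]$.

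With this setup, both distances are $\leq 2\epsilon\ell(Q) = \tilde{\epsilon} r$, where
\begin{displaymath} \tilde{\epsilon} = \frac{2\epsilon\ell(Q)}{r} \leq \frac{2\epsilon\ell(Q)}{2HM^{-1}\ell(Q)} = \frac{\epsilon M}{H} \leq \epsilon M. \end{displaymath}
The hypothesis $\epsilon < c M^{-1}(1+\Sigma)^{-1}$ with $c \leq 1/128$ guarantees precisely the condition $\tilde{\epsilon} < 1/(64(1+\Sigma))$ required by Lemma \ref{lemma0}. Applying the lemma to the $c$-coefficient estimate yields
\begin{displaymath} |c_{Q,\ell} - c_{\hat{Q},\ell}| \lesssim (1+\Sigma)(r\tilde{\epsilon})^{2} = (1+\Sigma)(2\epsilon\ell(Q))^{2} \lesssim (1+\Sigma)\epsilon^{2}\ell(Q)^{2}, \end{displaymath}
and taking the square root gives the claim.

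The only real subtlety is bookkeeping: matching the scale $r$ (which is comparable to $H\ell(Q)$ up to $M$) against the error $2\epsilon\ell(Q)$ to extract an effective parameter $\tilde\epsilon \lesssim \epsilon M$ that respects the smallness hypothesis of Lemma \ref{lemma0}. Once this is tracked, the rest of the argument is mechanical. No separate treatment is needed for the signs of $b_{Q}, b_{\hat{Q}}$ or for the precise location of $w \in HQ$, since the first coordinate of $F_{Q}(w)$ only depends on $f_{2}(w)$ and the cancellation is exact.
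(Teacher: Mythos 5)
Your proof is correct and follows essentially the same route as the paper: reduce via the formula \eqref{form45} to estimating $|c_{Q,\ell} - c_{\hat{Q},\ell}|$, then feed two well-separated points of $f(\ell \cap HQ)$, which lie $\lesssim \epsilon\ell(Q)$-close to both $L_{Q,\ell}$ and $L_{\hat{Q},\ell}$, into Lemma \ref{lemma0} to get $|c_{Q,\ell} - c_{\hat{Q},\ell}| \lesssim (1+\Sigma)\epsilon^{2}\ell(Q)^{2}$. The only (immaterial) difference is your choice of test points at the endpoints of $\ell \cap HQ$ rather than the paper's points in $\ell \cap Q$ at distance $\ell(Q)$; since the product $r\tilde{\epsilon} \sim \epsilon\ell(Q)$ is the same in both cases, the bookkeeping and the conclusion agree.
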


\begin{proof} Write $f = (f_{1},f_{2},f_{3}) \colon \W \to \He$, let $w \in HQ$, and let $\ell \subset \W$ be the unique horizontal line containing $w$. From the expression \eqref{form45}, we deduce that 
\begin{displaymath} F_{Q}(w) = (f_{2}(w),c_{Q,\ell}) \quad \text{and} \quad F_{\hat{Q}}(w) = (f_{2}(w),c_{\hat{Q},\ell}), \end{displaymath}
so $d_{\mathrm{par}}(F_{Q}(w),F_{\hat{Q}}(w)) = |c_{Q,\ell} -
c_{\hat{Q},\ell}|$. Let $L_{Q,\ell},L_{\hat{Q},\ell} \subset \V_{Q}$ be the horizontal lines familiar from \eqref{eq:LineCurveApprox}, with $\angle(L_{Q,\ell}),\angle(L_{\hat{Q},\ell}) \leq \Sigma$, and which satisfy
\begin{displaymath} f(\ell \cap HQ) \subset N(L_{Q},\epsilon \ell(Q)) \quad
\text{and} \quad f(\ell \cap HQ) \subset N(L_{\hat{Q}},2\epsilon
\ell(Q)). \end{displaymath} Let $w_{1},w_{2} \in \ell \cap Q$ with
$|w_{1} - w_{2}| = \ell(Q)$. Then $d(f(w_{1}),f(w_{2})) \geq
M^{-1}\ell(Q)$, and
\begin{displaymath} \dist(f(w_{j}),L_{Q}) \leq \epsilon \ell(Q) \quad
\text{and} \quad \dist(f(w_{j}),L_{\hat{Q}}) \leq 2\epsilon
\ell(Q) \end{displaymath} for $j \in \{1,2\}$. Now, since
$\epsilon \ll M^{-1}(1 + \Sigma)^{-1}$, Lemma \ref{lemma0} can be
applied with $r = M^{-1}\ell(Q)$ and $2M\epsilon$ in place of
$\epsilon$. The conclusion is that \begin{displaymath}|c_{Q,\ell}
- c_{\hat{Q},\ell}| \lesssim (1+\Sigma)(2M\epsilon \cdot
M^{-1}\ell(Q))^{2} \sim (1+\Sigma)(\epsilon \ell(Q))^{2},
\end{displaymath} and hence
$d_{\mathrm{par}}(F_{Q}(w),F_{\hat{Q}}(w)) \lesssim
(1+\Sigma)^{1/2}\epsilon \ell(Q)$, as claimed. \end{proof}

\subsection{Approximate quadrics}\label{s:appQuad} Recall that if $Q \in \mathcal{D}$ is an $(\epsilon,H,\Sigma)$-vertical rectangle, then $f(\ell \cap HQ) \subset N(L,\epsilon \ell(Q))$ for a certain horizontal line $L = L_{Q,\ell} \subset \He$ with $\angle(L) \leq \Sigma$. If we write $L = \{(ay + b,y,\tfrac{1}{2}by + c) : y \in \R\}$, as usual, then the image 
\begin{displaymath} \mathcal{Q}_{Q,\ell} := \Pi(L_{Q,\ell}) \subset \W \end{displaymath}
of $L_{Q,\ell}$ under the vertical projection $\Pi$ is the quadric $\mathcal{Q}_{Q,\ell} = \{(y,q_{Q,\ell}(y)) : y \in \R\}$ parametrised by 
\begin{displaymath} q_{Q,\ell}(y):= \tfrac{1}{2}ay^2 + by + c, \qquad y \in \R. \end{displaymath}
We will call $q_{Q,\ell}$ the \emph{$(Q,\ell)$-approximate quadric}. This notion will be central in the proof of Theorem \ref{main}: the intrinsic bilipschitz graphs $\Gamma_{j}$ appearing in \eqref{form143} of the corona decomposition are constructed, roughly speaking, by gluing together $(Q,\ell)$-approximate quadrics for various rectangles $Q$ and lines $\ell$.

Since the graph $\mathcal{Q}_{Q,\ell}$ is the $\Pi$-projection of the horizontal line $L_{Q,\ell}$ which approximates $f(\ell \cap HQ)$ well in $\He$, one expects $\mathcal{Q}_{Q,\ell}$ to approximate $\Pi(f(\ell \cap HQ))$ well in $\W$. The next two lemmas quantify this heuristic.

\begin{lemma}\label{lemma9} Let $L = \{(ay + b,y,\tfrac{1}{2}by + c) : y \in \R\}$ be a horizontal line. Write $q_{L}(y) := \tfrac{1}{2}ay^{2} + by + c$ and $\mathcal{Q}_{L}(y) := (y,q_{L}(y))$. Then, 
\begin{displaymath}
\max\{d_{\mathrm{par}}(\Pi(p),\mathcal{Q}_{L}(y)),|x - \dot{q}_{L}(y)|\}
\lesssim (1+|a|)\mathrm{dist}(p,L), \qquad p = (x,y,t) \in \He.
\end{displaymath}
\end{lemma}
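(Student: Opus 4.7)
The plan is to pass from the Heisenberg distance $\dist(p,L)$ to bounds on the individual coordinates of $p^{-1} \cdot p'$, where $p' \in L$ is the closest point, and then convert these coordinate bounds to the claimed estimates via a short algebraic identity.

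First I would fix $p = (x,y,t) \in \He$, pick $p' = (ay'+b,y',\tfrac{1}{2}by'+c) \in L$ realising $r := \dist(p,L) = d(p,p')$, and compute $p^{-1} \cdot p'$ directly from the Heisenberg group law. This yields $p^{-1}\cdot p' = (ay'+b-x,\, y'-y,\, T)$ with
\begin{displaymath}
T = -t + \tfrac{1}{2}by' + c - \tfrac{1}{2}xy' + \tfrac{1}{2}ayy' + \tfrac{1}{2}by,
\end{displaymath}
so from $\|p^{-1}\cdot p'\| = r$ one reads off the three key coordinate bounds $|y-y'| \leq r$, $|x-(ay'+b)| \leq r$, and $|T| \leq r^{2}$. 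The second bound of the lemma is then immediate: since $\dot{q}_{L}(y) = ay+b$, writing $|x-(ay+b)| \leq |x-(ay'+b)| + |a||y-y'|$ gives $|x - \dot{q}_{L}(y)| \leq (1+|a|)r$.

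For the parabolic-distance bound, the observation is that $\Pi(p) = (y, t + \tfrac{1}{2}xy)$ and $\mathcal{Q}_{L}(y) = (y, \tfrac{1}{2}ay^{2}+by+c)$ share the same first coordinate $y$, so
\begin{displaymath}
d_{\mathrm{par}}(\Pi(p),\mathcal{Q}_{L}(y))^{2} = |E|, \qquad E := t + \tfrac{1}{2}xy - \tfrac{1}{2}ay^{2} - by - c.
\end{displaymath}
A direct expansion (this is the one calculation worth doing carefully) gives the identity
\begin{displaymath}
E = -T + \tfrac{1}{2}(y-y')(x - ay - b).
\end{displaymath}
Combining this with $|T| \leq r^{2}$, $|y-y'| \leq r$, and the bound $|x-ay-b| \leq (1+|a|)r$ just proved, one obtains $|E| \lesssim (1+|a|)r^{2}$, hence $d_{\mathrm{par}}(\Pi(p),\mathcal{Q}_{L}(y)) \lesssim \sqrt{1+|a|}\, r \leq (1+|a|)r$.

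There is no real obstacle: the proof is computational and reduces to verifying the identity $E = -T + \tfrac{1}{2}(y-y')(x-ay-b)$, which is a bookkeeping of the terms produced by the Heisenberg group law. The only conceptual point is that one should match $\Pi(p)$ against $\mathcal{Q}_{L}(y)$ (the point of the quadric at the \emph{same} $y$-coordinate as $p$) rather than against $\mathcal{Q}_{L}(y') = \Pi(p')$; the discrepancy between these two targets is exactly what produces the extra term $\tfrac{1}{2}(y-y')(x-ay-b)$ and, through it, the factor $(1+|a|)$.
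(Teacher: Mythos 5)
Your proof is correct -- I checked the identity $E = -T + \tfrac{1}{2}(y-y')(x-ay-b)$ and it does hold, and the coordinate bounds you extract from $\|p^{-1}\cdot p'\| = \dist(p,L)$ are exactly what the computation needs. The paper's argument is the same in substance but packaged differently: instead of working with the nearest point $p'$ and verifying an algebraic identity, it compares $p$ with the point $i_{L}(p) \in L$ having the same $y$-coordinate, observes that $i_{L}(p)^{-1}\cdot p = \bigl(x - \dot{q}_{L}(y),\,0,\,t + \tfrac{1}{2}xy - q_{L}(y)\bigr)$, so that both quantities in the lemma are read off directly from $d(p,i_{L}(p))$, and then cites Lemma \ref{l:Proj_Same_y} for the bound $d(p,i_{L}(p)) \lesssim (1+|a|)\dist(p,L)$. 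Your route re-derives that $(1+|a|)$ factor by hand (the cross-term $\tfrac{1}{2}(y-y')(x-ay-b)$ playing the role of the drift along $L$ from $p'$ to $i_{L}(p)$), so it is self-contained but duplicates work the paper had already done; the paper's version is shorter precisely because the same-$y$ comparison point makes both target quantities appear as coordinates of a single group difference.
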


\begin{proof}
Since $\Pi(p)=(y,t+\frac{xy}{2})$ and the points
$\Pi(p),\mathcal{Q}_{L}(y) \in \mathbb{W}$ have the same $y$-coordinate, and $\dot{q}_{L}(y) = ay + b$,
we find that
\begin{equation}\label{eq:Difference1}
d_{\mathrm{par}}(\Pi(p),\mathcal{Q}_{L}(y)) 
=\left|\left(t+\tfrac{xy}{2}\right)-\left(\tfrac{1}{2}ay^2 + by +
c\right)\right|^{\frac{1}{2}}
\end{equation}
and
\begin{equation}\label{eq:Difference2} |x - \dot{q}_{L}(y)| = |x - (ay + b)|. \end{equation}
Both \eqref{eq:Difference1} and \eqref{eq:Difference2} are bounded from above by (a constant times) 
\begin{displaymath}
d(p,(ay+b,y,\tfrac{1}{2}by+c)) = d(p,i_{L}(p)) \leq A (1+|a|)\mathrm{dist}(p,L),
\end{displaymath}
where we used Lemma \ref{l:Proj_Same_y} in the final estimate. This completes the proof.  \end{proof}

\begin{cor}\label{cor4} Let $Q \in \mathcal{D}$ be an $(\epsilon,H,\Sigma)$-vertical rectangle, let $\ell \subset \W$ be a horizontal line, and let $q := q_{Q,\ell}$ be the $(Q,\ell)$-approximate quadric. Then,
\begin{equation}\label{form49} |(f_{3}(w) + \tfrac{1}{2}f_{1}(w)f_{2}(w)) - q(f_{2}(w))| \lesssim (1 + \Sigma)^{2}\epsilon^{2} \ell(Q)^{2}, \qquad w \in \ell \cap HQ, \end{equation} 
and
\begin{equation}\label{form50} |f_{1}(w) - \dot{q}(f_{2}(w))| \lesssim (1 + \Sigma) \epsilon \ell(Q), \qquad w \in \ell \cap HQ. \end{equation}
\end{cor}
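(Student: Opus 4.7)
The proof is essentially a direct application of Lemma \ref{lemma9} to the horizontal line $L = L_{Q,\ell}$ guaranteed by the $(\epsilon,H,\Sigma)$-verticality of $Q$. The plan is to first unwrap the definitions to identify the approximate quadric $q_{Q,\ell}$ with the quadric $q_L$ appearing in Lemma \ref{lemma9}, and then simply substitute $p = f(w)$ and translate the conclusions into statements about $f_1,f_2,f_3$.

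More concretely, since $Q$ is $(\epsilon,H,\Sigma)$-vertical, the definition of the strong vertical $\beta$-number gives a horizontal line $L = L_{Q,\ell} \subset \V_Q$ with $\angle(L) \leq \Sigma$ such that
\begin{equation*}
f(\ell \cap HQ) \subset N(L, \epsilon \ell(Q)).
\end{equation*}
Writing $L = \{(ay + b, y, \tfrac{1}{2}by + c) : y \in \R\}$ with $|a| \leq \Sigma$, by construction the approximate quadric is $q_{Q,\ell}(y) = \tfrac{1}{2}ay^2 + by + c$, which coincides exactly with the function $q_L$ from Lemma \ref{lemma9}.

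Now fix $w \in \ell \cap HQ$ and set $p := f(w) = (f_1(w), f_2(w), f_3(w))$, so that $\dist(p, L) \leq \epsilon \ell(Q)$. Since $\Pi(p) = (f_2(w), f_3(w) + \tfrac{1}{2} f_1(w) f_2(w))$ and $\mathcal{Q}_L(f_2(w)) = (f_2(w), q(f_2(w)))$ share their first coordinate, we have
\begin{equation*}
d_{\mathrm{par}}(\Pi(p), \mathcal{Q}_L(f_2(w))) = \bigl|f_3(w) + \tfrac{1}{2}f_1(w) f_2(w) - q(f_2(w))\bigr|^{1/2}.
\end{equation*}
Lemma \ref{lemma9} bounds the left-hand side by $\lesssim (1 + |a|) \dist(p, L) \leq (1 + \Sigma) \epsilon \ell(Q)$, and squaring gives \eqref{form49}. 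Similarly, Lemma \ref{lemma9} gives $|f_1(w) - \dot{q}(f_2(w))| \lesssim (1 + |a|) \dist(p, L) \lesssim (1 + \Sigma) \epsilon \ell(Q)$, which is \eqref{form50}.

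There is no real obstacle here: the work was done in Lemma \ref{lemma9}, and the corollary is just a translation of that lemma's output from a statement about arbitrary points $p \in \He$ and an arbitrary horizontal line $L$ into a statement about the specific point $p = f(w)$ and the specific approximating line $L_{Q,\ell}$ that exists by verticality of $Q$. The only thing worth double-checking is the identification $q_L \equiv q_{Q,\ell}$ and the fact that $\Pi(f(w))$ has $f_2(w)$ as its first coordinate, so that the $d_{\mathrm{par}}$-distance to $\mathcal{Q}_L(f_2(w))$ reduces to the square root of the quantity controlled in \eqref{form49}.
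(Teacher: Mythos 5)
Your proposal is correct and follows essentially the same argument as the paper: apply Lemma \ref{lemma9} to the point $p = f(w)$ and the line $L = L_{Q,\ell}$ provided by verticality, note that the left-hand side of \eqref{form49} is the square of $d_{\mathrm{par}}(\Pi(p),\mathcal{Q}_{L}(f_{2}(w)))$, and read off both estimates from the lemma. The identification $q_{L} \equiv q_{Q,\ell}$ and the slope bound $|a| \leq \Sigma$ are exactly the points the paper also uses.
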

\begin{proof} Apply Lemma \ref{lemma9} to the point $p := f(w) = (f_{1}(w),f_{2}(w),f_{3}(w)) =: (x,y,t)$ for $w \in \ell \cap HQ$, which satisfies $\dist(p,L) \leq \epsilon \ell(Q)$ by the assumption that $Q$ is $(\epsilon,H,\Sigma)$-vertical. The horizontal line $L \subset \V_{Q}$ appearing here has slope $|a| \leq \Sigma$. Now, the left hand side of \eqref{form49} is the parabolic distance squared between $\Pi(p)$ and $\mathcal{Q}_{L}(y)$. Consequently, both \eqref{form49} and \eqref{form50} follow directly from the estimate in Lemma \ref{lemma9}. \end{proof} 

To emphasise the obvious, \eqref{form49}-\eqref{form50} say that the values of $q$ and $\dot{q}$ evaluated at $f_{2}(y,t)$ -- and \textbf{not} at $y$! -- are essentially determined by $f(y,t)$ for $(y,t) \in \ell \cap HQ$.


\subsection{Vertical trees} Recall the definition of \emph{trees} in $\mathcal{D}$ from Section \ref{s:corona}. Before introducing concrete trees, we make the following choices of constants: for given parameters $M \geq 1$ (the bilipschitz constant of $f$) and $\Sigma > 0$
(the "maximal slope" constant), we set
\begin{equation}\label{form22} N := A(1 + \Sigma)M \quad \text{and} \quad 0 < \epsilon < cN^{-10}, \end{equation}
where $A \geq 1$ is the absolute constant from Corollary \ref{c:HorizQIE_W_W}, and $c \in (0,\tfrac{1}{100})$ is another absolute constant, whose size will have to be adjusted in upcoming arguments. Corollary \ref{c:HorizQIE_W_W} verifies that if $Q$ is an $(A^{-2}(1 + \Sigma)^{-2}\epsilon,H,\Sigma)$-vertical rectangle, then $F_{Q}$ is a horizontal $(N,\epsilon \ell(Q))$-QIE $HQ \to \W$.

\begin{definition}[Vertical tree]\label{greenTree}
Let $f \colon \W \to \He$ be an $M$-bilipschitz map $M \geq 1$,
and fix parameters $\epsilon > 0$, $H \geq 1$, and $\Sigma > 0$. A
tree $\mathcal{T} \subset \mathcal{D}$ is called a
\emph{$(\epsilon,H,\Sigma)$-vertical tree} (or just a \emph{vertical
tree}) if it consists of $(A^{-2}(1 +
\Sigma)^{-2}\epsilon,H,\Sigma)$-vertical rectangles.
\end{definition}

We next verify, as a consequence of Corollary \ref{cor3}, that the QIEs $F_{Q} \colon HQ \to \W$ are vertically monotone on a smaller parabolic ball $BQ \subset HQ$ whenever $H \gg (1 + \Sigma)^{2}M^{2}B$.
\begin{lemma}\label{lemma4} Let $B,M \geq 1$ and $\epsilon > 0$.
 Let $H := A(1 + \Sigma)^{2}M^{2}B$ for a sufficiently large absolute constant $A \geq 1$.
 Let $Q \in \calD$ be an $(A^{-2}(1 + \Sigma)^{-2}\epsilon,H,\Sigma)$-vertical rectangle. Then, the horizontal $(N,\epsilon \ell(Q))$-QIE $F_{Q} \colon HQ \to \W$ is vertically $\epsilon' \ell(Q)$-monotone on $BQ$ with $\epsilon' := 2N\epsilon$.
\end{lemma}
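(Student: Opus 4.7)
The plan is to reduce vertical monotonicity on $BQ$ to Corollary \ref{cor3}, applied to a suitable auxiliary rectangle $R$ that lies inside $HQ$, contains $BQ$, and is wide enough in the $y$-direction for the dimension hypothesis $|I| > 2M^{2}\sqrt{|J|} + 4M\epsilon$ (with "$M$" there $= N$ here) to hold.

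First I would apply Corollary \ref{c:HorizQIE_W_W} to the hypothesis that $Q$ is $(A^{-2}(1+\Sigma)^{-2}\epsilon,H,\Sigma)$-vertical: this yields that
\begin{displaymath}
F_{Q} \colon HQ \to \W \quad \text{is a horizontal } (N,\epsilon \ell(Q))\text{-QIE},
\end{displaymath}
with $N = A_{0}(1+\Sigma)M$ where $A_{0}$ is the absolute constant coming from Lemmas \ref{l:Proj_Same_y} and \ref{l:QIE_W}, as fixed in \eqref{form22}.

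Next I would define the auxiliary rectangle
\begin{displaymath}
R := \pi_{1}(HQ) \times \pi_{2}(BQ) \subset HQ,
\end{displaymath}
where the inclusion is immediate because $HQ$ is itself a rectangle of the form $\pi_{1}(HQ) \times \pi_{2}(HQ)$ and $\pi_{2}(BQ) \subset \pi_{2}(HQ)$ (using $B \leq H$). Clearly $BQ \subset R$, and $F_{Q}$ restricted to $R$ remains a horizontal $(N,\epsilon \ell(Q))$-QIE. The dimensions of $R = I \times J$ are $|I| = 2H \ell(Q)$ and $|J| = 2(B\ell(Q))^{2}$, so $\sqrt{|J|} = \sqrt{2}\, B \ell(Q)$. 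The dimension hypothesis of Corollary \ref{cor3}, applied with parameters $(N, \epsilon \ell(Q))$, reads
\begin{displaymath}
2H\ell(Q) > 2N^{2}\sqrt{2}\, B\ell(Q) + 4N\epsilon \ell(Q),
\end{displaymath}
which, dividing out $\ell(Q)$ and plugging in $H = A(1+\Sigma)^{2}M^{2}B$ and $N^{2} = A_{0}^{2}(1+\Sigma)^{2}M^{2}$, becomes
\begin{displaymath}
A(1+\Sigma)^{2}M^{2}B > \sqrt{2}\,A_{0}^{2}(1+\Sigma)^{2}M^{2}B + 2A_{0}(1+\Sigma)M \epsilon.
\end{displaymath}
The first term on the right is dominated by the left once the absolute constant $A$ in the hypothesis is taken $\geq 2\sqrt{2}\,A_{0}^{2}$; the second is negligible because $\epsilon < cN^{-10}$ from \eqref{form22}. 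This fixes the "sufficiently large $A$" in the statement.

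With the dimension hypothesis verified, Corollary \ref{cor3} then tells me that $F_{Q}$ is vertically $2N(\epsilon \ell(Q)) = \epsilon' \ell(Q)$-monotone on the whole of $R$. Since vertical $\delta$-monotonicity is hereditary under restriction to a sub-rectangle $I' \times J' \subset I \times J$ (both conditions — $\delta$-monotonicity of $t \mapsto \pi_{2}(F_{Q}(y,t))$ on each fibre, and the uniform signature across fibres — survive the restriction to $y \in \pi_{1}(BQ) \subset I$ and $t \in \pi_{2}(BQ) \subset J$), it follows that $F_{Q}$ is vertically $\epsilon' \ell(Q)$-monotone on $BQ$. There is no real obstacle here: the whole argument is a bookkeeping check that the wide subrectangle $R$ can be found inside $HQ$ with the right dimensions, which is exactly why the lemma's hypothesis has the quadratic factor $(1+\Sigma)^{2}M^{2}$ in the definition of $H$.
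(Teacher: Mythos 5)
Your proof is correct and follows essentially the same route as the paper: both arguments apply Corollary \ref{cor3} to an auxiliary rectangle inside $HQ$ with the $t$-extent of $BQ$ and $y$-extent $\gtrsim N^{2}B\ell(Q)$, and the choice $H \sim (1+\Sigma)^{2}M^{2}B \sim N^{2}B$ is exactly what makes the dimension hypothesis $|I| > 2N^{2}\sqrt{|J|} + 4N\epsilon\ell(Q)$ hold. The only cosmetic difference is that you take the full width $\pi_{1}(HQ)$ while the paper uses a concentric rectangle of width roughly $N^{2}B\ell(Q) + N\epsilon\ell(Q)$; the restriction of the resulting vertical monotonicity to $BQ$ is handled the same way.
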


\begin{proof} By Corollary \ref{cor3}, the vertical $2N\epsilon \ell(Q)$-monotonicity of $F_{Q}$ on $BQ$ will follow from the $(N,\epsilon \ell(Q))$-QIE property of $F_{Q}$ on $HQ$, provided that $H \geq AN^{2}B \sim (1 + \Sigma)^{2}M^{2}B$ for some absolute constant $A \geq 1$. Indeed, $BQ$ fits in a parabolic rectangle $I \times J$ of dimensions approximately $B\ell(Q) \times (B\ell(Q))^{2}$. To conclude that $F_{Q}$ is vertically $2N\epsilon \ell(Q)$-monotone on such a rectangle, Corollary \ref{cor3} needs $F_{Q}$ to be an $(N,\epsilon \ell(Q))$-QIE on a concentric parabolic rectangle of dimensions approximately
\begin{displaymath} [N^{2}B\ell(Q) + N\epsilon \ell(Q)] \times (B\ell(Q))^{2}. \end{displaymath}
Evidently $N\epsilon \leq N^{2}B$, so this his bigger rectangle fits inside $HQ$ for some $H \sim N^{2}B$. \end{proof}

Motivated by the lemma above, we fix a parameter $B \geq 5$, and in the sequel we will always require
 \begin{equation}\label{H} H \geq A(1 + \Sigma)^{2}M^{2}B \end{equation}
 for a sufficiently large absolute constant $A \geq 10$ such that the conclusion of Lemma \ref{lemma4} holds: whenever $Q$ is a rectangle in a vertical tree, then the restriction of $F_{Q} \colon HQ \to \W$ to $BQ$ is vertically $2N\epsilon \ell(Q)$-monotone. In the end, the choice $B := 5$ works, but we will keep the special notation "$B$" to emphasise the "meaning of the constant $5$".

\begin{definition}[Signature of $Q$]
 The \emph{signature of $Q$} is the signature (recall Definition \ref{signature}) of the vertically $\epsilon' \ell(Q)$-monotone map $F_{Q} \colon BQ \to \W$
 defined in Corollary \ref{c:HorizQIE_W_W}. \end{definition}

Recall, above, that $\epsilon' = 2N\epsilon < N^{-9}$ by
\eqref{form22}. We then arrive at the main result of this section, which states that signature within $(\epsilon,H,\Sigma)$-vertical trees are constant:
\begin{proposition}\label{prop2} Let $B \geq 5$, and let $\mathcal{T} \subset \mathcal{D}$ be an $(\epsilon,H,\Sigma)$-vertical tree, where $H$ is given by \eqref{H}. Then, all the rectangles $Q
\in \calT$ have the same signature.
\end{proposition}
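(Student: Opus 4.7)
By the tree axioms \nref{T1}--\nref{T3}, every $Q \in \mathcal{T}$ is joined to the root $Q(\mathcal{T})$ by a chain of parent/child pairs lying in $\mathcal{T}$. Hence, it suffices to prove the following single step: if $Q,\hat{Q} \in \mathcal{T}$ and $\hat{Q}$ is the parent of $Q$, then $F_{Q}$ and $F_{\hat{Q}}$ have the same signature on $BQ$ respectively $B\hat{Q}$. (Note $BQ \subset B\hat{Q}$ since $B \geq 5$ and $\ell(\hat{Q}) = 2\ell(Q)$.) Suppose, for contradiction, that the signatures differ; without loss of generality $F_{Q}$ has signature "$+$" and $F_{\hat{Q}}$ has signature "$-$".

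I would then pick two vertically aligned test points $w_{1} = (y_{Q},t_{1})$ and $w_{2} = (y_{Q},t_{2}) \in BQ$, where $y_{Q}$ is the first coordinate of $c_{Q}$, and $t_{1} < t_{2}$ are symmetric about the second coordinate of $c_{Q}$ with $\|t_{2} - t_{1}\| = K\epsilon \ell(Q)$. Here $K = K(\Sigma,M) \geq 1$ is a threshold to be chosen at the end. Since $\epsilon < cN^{-10}$ and $K$ will be polynomial in $N$, the choice $K\epsilon \leq B$ keeps $w_{1},w_{2}$ inside $BQ \subset B\hat{Q}$. Because the condition \eqref{H} on $H$ is much stronger than what Lemma \ref{lemma3} requires at the "micro-rectangle" $[y_{Q}-H\ell(Q),y_{Q}+H\ell(Q)] \times [t_{1},t_{2}]$, both induced maps
\begin{displaymath} f_{Q}(t) := \pi_{2}(F_{Q}(y_{Q},t)), \qquad f_{\hat{Q}}(t) := \pi_{2}(F_{\hat{Q}}(y_{Q},t)) \end{displaymath}
are QIEs from $([t_{1},t_{2}],\|\cdot\|)$ to $(\R,\|\cdot\|)$, with parameters $(N,2\epsilon \ell(Q))$ and $(N,4\epsilon \ell(Q))$ respectively. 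Combined with the assumed signatures, and assuming $K > 4N$, the QIE lower bounds upgrade to
\begin{equation}\label{plan-sig} f_{Q}(t_{2}) - f_{Q}(t_{1}) \;\geq\; (K/N - 2)^{2}\epsilon^{2}\ell(Q)^{2}, \qquad f_{\hat{Q}}(t_{1}) - f_{\hat{Q}}(t_{2}) \;\geq\; (K/N - 4)^{2}\epsilon^{2}\ell(Q)^{2}, \end{equation}
where squaring comes from passing $\|\cdot\| = \sqrt{|\cdot|}$ to the absolute value.

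Summing the two inequalities in \eqref{plan-sig} and rearranging,
\begin{displaymath} [f_{Q}(t_{2}) - f_{\hat{Q}}(t_{2})] + [f_{\hat{Q}}(t_{1}) - f_{Q}(t_{1})] \;\geq\; [(K/N - 2)^{2} + (K/N - 4)^{2}]\,\epsilon^{2}\ell(Q)^{2}. \end{displaymath}
On the other hand, Lemma \ref{lemma5} applied at each $w_{i}$ gives $d_{\mathrm{par}}(F_{Q}(w_{i}),F_{\hat{Q}}(w_{i})) \lesssim (1+\Sigma)^{1/2}\epsilon \ell(Q)$, i.e.\ $|f_{Q}(t_{i}) - f_{\hat{Q}}(t_{i})| \lesssim (1+\Sigma)\epsilon^{2}\ell(Q)^{2}$, so the left-hand side above is bounded by a constant multiple of $(1+\Sigma)\epsilon^{2}\ell(Q)^{2}$. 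Choosing $K := C_{0}N(1+\Sigma)^{1/2}$ for a sufficiently large absolute constant $C_{0}$ produces a contradiction, completing the proof.

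The main obstacle here is a bookkeeping one: verifying that the quantitative hypothesis of Lemma \ref{lemma3} (namely $|I'| > 2N^{2}\sqrt{|J'|} + 4N\epsilon \ell(Q)$) can be met simultaneously by $F_{Q}$ on $HQ$ and by $F_{\hat{Q}}$ on $H\hat{Q}$ at the chosen vertical separation $K\epsilon \ell(Q)$. The point is that $\sqrt{|J'|} = K\epsilon \ell(Q)$ is negligible compared with the width $|I'| \sim H\ell(Q)$ because the smallness condition $\epsilon < cN^{-10}$ from \eqref{form22} absorbs any polynomial factor of $N,\Sigma,K$ coming from Lemma \ref{lemma5} or the QIE moduli; the choice \eqref{H} of $H$ provides the remaining room. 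Once this is in place, the contradiction is quantitative and purely arithmetic, as above.
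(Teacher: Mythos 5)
Your argument is correct and rests on exactly the same ingredients as the paper's proof: the reduction to a single parent--child step via the tree axioms, Lemma \ref{lemma5} to compare $F_{Q}$ with $F_{\hat{Q}}$ at two vertically aligned points, and Lemma \ref{lemma6} to obtain the quantitative QIE lower bound on vertical increments that dominates that comparison error. The only difference is organizational: the paper argues directly at separation $\|t_{1}-t_{2}\| = \ell(Q)$, where $\epsilon < cN^{-10}$ makes the domination trivial and transfers the sign of $F_{\hat{Q}}$ to $F_{Q}$, while you argue by contradiction at the near-threshold separation $K\epsilon\ell(Q)$ with $K \sim N(1+\Sigma)^{1/2}$, which works equally well after the routine domain and width checks you indicate.
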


\begin{proof} It suffices to show that the signatures of $Q$ and $\hat{Q}$ are the same for all $Q,\hat{Q} \in \mathcal{T}$, where $\hat{Q}$ is the $\mathcal{D}$-parent of $Q$. Fix $Q \in \calT$ such that $\hat{Q} \in \mathcal{T}$, and assume for example that the signature of $\hat{Q}$, hence $F_{\hat{Q}} \colon B\hat{Q} \to \W$, is $+$. This means, by definition, that the maps $t \mapsto \pi_{2}(F_{\hat{Q}}(y,t))$ are $\epsilon' \ell(\hat{Q}) = 2\epsilon' \ell(Q)$-increasing for all $y \in \pi_{1}(B\hat{Q})$. The task is to show that the maps $t \mapsto \pi_{2}(F_{Q}(y_{0},t))$ are $\epsilon' \ell(Q)$-increasing for all $y \in \pi_{1}(BQ)$. For this, we will use Lemma \ref{lemma5} to compare $F_{Q}$ to $F_{\hat{Q}}$.

Let us prove first the $\epsilon' \ell(Q)$-increasing property of $t \mapsto \pi_{2}(F_{Q}(y,t))$. Since we already know that $F_{Q}$ is vertically $\epsilon' \ell(Q)$-monotone on $BQ$, it suffices to "test" the signature with a parameter $y \in \pi_{1}(BQ)$ of our choosing. We choose $y$ so that the vertical line $\{(y,t) : t \in \R\}$ intersects $Q$. Then, we may pick $t_{1} < t_{2}$ such that $(y,t_{1}),(y,t_{2}) \in Q$ and $d_{\mathrm{par}}((y,t_{1}),(y,t_{2})) = \ell(Q)$, or in other words $t_{2} = t_{1} + \ell(Q)^{2}$. Since certainly $(y,t_{j}) \in HQ$ for $j \in \{1,2\}$, Lemma \ref{lemma5} is applicable and shows that
\begin{equation}\label{form24} d_{\mathrm{par}}(F_{Q}(y,t_{j}),F_{\hat{Q}}(y,t_{j})) \leq A(1 + \Sigma)^{\tfrac{1}{2}}\epsilon \ell(Q), \qquad j \in \{1,2\}. \end{equation}
On the other hand, since $F_{Q}$ is a horizontal $(N,\epsilon \ell(Q))$-QIE on $HQ$, Lemma \ref{lemma6} implies that the map $t \mapsto \pi_{2}(F_{\hat{Q}}(y,t))$ is an $(N,2\epsilon \ell(Q))$-QIE on $([t_{1},t_{2}],\|\cdot\|)$ (the constant $H \geq AM^{2}$ is certainly large enough for this purpose), and consequently
\begin{equation}\label{form23} \|\pi_{2}(F_{\hat{Q}}(y,t_{2})) - \pi_{2}(F_{\hat{Q}}(y,t_{1}))\| \geq \frac{\ell(Q)}{N} - 2\epsilon \ell(Q) \geq \frac{\ell(Q)}{2N},  \end{equation}
recalling from \eqref{form22} that $\epsilon < cN^{-10}$. Since, moreover, $t \mapsto \pi_{2}(F_{\hat{Q}}(y,t))$ is $2\epsilon' \ell(Q)$-increasing, and $\|t_{1} - t_{2}\| = \ell(Q) > 2\epsilon' \ell(Q)$, we may upgrade \eqref{form23} to
\begin{displaymath} \pi_{2}(F_{\hat{Q}}(y,t_{2})) - \pi_{2}(F_{\hat{Q}}(y,t_{1})) > \frac{\ell(Q)^{2}}{(2N)^{2}}. \end{displaymath}
Combining this separation property with \eqref{form24}, and noting that $A^{2}(1 + \Sigma)\epsilon^{2} < (4N)^{-2}$ by \eqref{form22} shows that
\begin{align} \pi_{2}(F_{Q}(y,t_{2})) - \pi_{2}(F_{Q}(y,t_{1})) & > \pi_{2}(F_{\hat{Q}}(y,t_{2})) - \pi_{2}(F_{\hat{Q}}(y,t_{1})) \notag \\
& \quad - d_{\mathrm{par}}(F_{\hat{Q}}(y,t_{1}),F_{Q}(y,t_{1}))^{2} \notag \\
& \quad - d_{\mathrm{par}}(F_{\hat{Q}}(y,t_{2}),F_{Q}(y,t_{2}))^{2} \notag \\
&\label{form25} > \frac{\ell(Q)^{2}}{(2N)^{2}} - 2 \cdot \frac{\ell(Q)^{2}}{(4N)^{2}} > 0. \end{align}
Hence, we have found two (more than) $\epsilon' \ell(Q)$-separated points $(y,t_{1}),(y,t_{2}) \in Q \subset BQ$ such that $\pi_{2}(F_{Q}(y,t_{2})) > \pi_{2}(F_{Q}(y,t_{1}))$. Since $F_{Q}$ is vertically $\epsilon' \ell(Q)$-monotone on $BQ$, this implies that the signature of $F_{Q}$ is "$+$", as claimed. \end{proof}

By the previous proposition, we may talk about the \emph{signature of a vertical tree $\mathcal{T} \subset \mathcal{D}$}. Since the treatments of the two possible signatures $+$ or $-$ are similar -- except that a large number of inequality signs need to be inverted -- we will typically assume that the signature of a vertical tree is $+$; vertical trees with this signature will be called \emph{positive}.

We close the section by recording the following global bilipschitz property of $f_{2}$:

\begin{proposition}\label{prop3} Let $f = (f_{1},f_{2},f_{3}) \colon \W \to \He$ be an $M$-bilipschitz map, $M \geq 1$, and let $\mathcal{T} \subset \mathcal{D}$ be a vertical tree
associated with $f$.\footnote{In other words, the rectangles in
$\mathcal{T}$ are $(A^{-2}(1 +
\Sigma)^{-2}\epsilon,H,\Sigma)$-vertical, where $H = A(1 +
\Sigma)^2M^2 B$, with $B \geq 5$, and the verticality is defined relative to $f$
via Definition \ref{vertical}.} Let $Q_{1},Q_{2} \in \mathcal{T}$,
and let $w_{j} = (y_{j},t_{j}) \in \tfrac{1}{2}HQ_{j}$, $j \in \{1,2\}$, be
points with the properties $y_{1} < y_{2}$ and
\begin{equation}\label{form26} |y_{1} - y_{2}| > \max\{ \tfrac{1}{100}
\min\{\ell(Q_1),\ell(Q_2)\},4N^{2}\|t_{1} - t_{2}\|\}, \end{equation}
where $N = A(1 + \Sigma)M$. In particular, $d_{\mathrm{par}}(w_{1},w_{2}) = |y_{1} - y_{2}|$. Then,
\begin{equation}\label{form27} \tfrac{1}{2N}d_{\mathrm{par}}(w_{1},w_{2}) \leq |f_{2}(w_{2}) - f_{2}(w_{1})| \leq Md_{\mathrm{par}}(w_{1},w_{2}). \end{equation}
\end{proposition}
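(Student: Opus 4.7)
The upper bound in \eqref{form27} is immediate: since the Heisenberg norm satisfies $|y| \leq \|(x,y,t)\|$, the $M$-bilipschitz property of $f$ gives
\begin{displaymath} |f_{2}(w_{2}) - f_{2}(w_{1})| \leq d(f(w_{1}),f(w_{2})) \leq Md_{\mathrm{par}}(w_{1},w_{2}). \end{displaymath}
So the substance of the proposition is the lower bound. The plan is to apply Lemma \ref{lemma7} to the horizontal QIE $F_{P} \colon HP \to \W$ associated with a well-chosen rectangle $P \in \mathcal{T}$. By \eqref{form45} we have $\pi_{1}(F_{P}(w)) = f_{2}(w)$, so Lemma \ref{lemma7} delivers $|f_{2}(w_{2}) - f_{2}(w_{1})| \geq |y_{1} - y_{2}|/(2N) = d_{\mathrm{par}}(w_{1},w_{2})/(2N)$ as soon as (i) $w_{1},w_{2} \in HP$, and (ii) $|y_{1} - y_{2}| > \max\{8N\epsilon \ell(P),4N^{2}\|t_{1} - t_{2}\|\}$.

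The only real step is the choice of $P$. Assume with no loss of generality that $\ell(Q_{1}) \leq \ell(Q_{2})$. Since $Q_{1} \in \mathcal{T}$ and $\mathcal{T}$ is connected (condition \nref{T2}), the entire chain of dyadic ancestors $Q_{1} = P_{0} \subsetneq P_{1} \subsetneq \ldots \subsetneq P_{k} = Q(\mathcal{T})$, with $\ell(P_{i}) = 2^{i}\ell(Q_{1})$, lies inside $\mathcal{T}$. A short triangle-inequality computation -- using $d_{\mathrm{par}}(w_{1},c_{Q_{1}}) \leq H\ell(Q_{1})/2$, $d_{\mathrm{par}}(c_{Q_{1}},c_{P_{i}}) \leq \ell(P_{i})$, and $\ell(Q_{1}) \leq \ell(P_{i})$ -- shows $w_{1} \in HP_{i}$ for every $i \geq 0$ (the bound $H \geq 4$ is already enough). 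An analogous estimate using $d_{\mathrm{par}}(w_{2},c_{Q_{2}}) \leq H\ell(Q_{2})/2$ gives $w_{2} \in HP_{k}$. I will take $P := P_{i_{*}}$, where $i_{*}$ is the minimum index with $w_{2} \in HP_{i_{*}}$; this guarantees (i).

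The step I expect to require the most care is the verification of (ii), whose second constraint $|y_{1} - y_{2}| > 4N^{2}\|t_{1} - t_{2}\|$ is handed to us by \eqref{form26}. The first constraint splits into two cases. If $i_{*} = 0$, then $\ell(P) = \ell(Q_{1})$, and the hypothesis $|y_{1} - y_{2}| > \tfrac{1}{100}\ell(Q_{1})$ dominates $8N\epsilon \ell(Q_{1})$ because $\epsilon < cN^{-10}$ by \eqref{form22} (shrinking the absolute constant $c$ if needed). If $i_{*} \geq 1$, minimality gives $w_{2} \notin HP_{i_{*} - 1}$, whereas the sharper triangle bound $d_{\mathrm{par}}(w_{1},c_{P_{i_{*} - 1}}) \leq H\ell(Q_{1})/2 + \ell(P_{i_{*}-1}) \leq (H/2 + 1)\ell(P_{i_{*} - 1})$ holds because $\ell(Q_{1}) \leq \ell(P_{i_{*} - 1})$. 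Comparing the two through $d_{\mathrm{par}}(w_{2},c_{P_{i_{*} - 1}}) \leq |y_{1} - y_{2}| + d_{\mathrm{par}}(w_{1},c_{P_{i_{*} - 1}})$ forces $\ell(P) \lesssim |y_{1} - y_{2}|/H$, and hence $8N\epsilon \ell(P) \lesssim N\epsilon |y_{1} - y_{2}|/H \ll |y_{1} - y_{2}|$, since $H \gg N\epsilon$ by \eqref{H} and \eqref{form22}. With (i) and (ii) in place, Lemma \ref{lemma7} closes the argument.
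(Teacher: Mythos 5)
Your proof is correct and follows essentially the same route as the paper: both reduce the lower bound to Lemma \ref{lemma7} applied to the horizontal QIE $F_{P}$ of a tree rectangle $P$ found via connectedness, at a scale comparable to $|y_{1}-y_{2}|$, and then use the identity $\pi_{1}\circ F_{P} = f_{2}$ from \eqref{form45}. The only (immaterial) difference is the stopping rule: the paper takes the largest ancestor $Q \supset Q_{i}$ with $\tfrac{1}{100}\ell(Q) \leq |y_{1}-y_{2}|$, while you take the smallest ancestor $P$ of $Q_{1}$ with $w_{2} \in HP$ and use minimality to bound $\ell(P)$.
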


\begin{proof} The upper bound in \eqref{form27} only uses the fact that $f$ is $M$-Lipschitz, so it suffices to prove the lower bound. We start by choosing $Q\in \mathcal{T}$ such that
\begin{equation}\label{eq:GoalB}
w_1,w_2\in HQ\quad\text{and}\quad \tfrac{1}{100}\ell(Q)\leq |y_1-y_2|< \tfrac{2}{100} \ell(Q).
\end{equation}
 To this
end, let $i\in \{1,2\}$ be such that $
\min\{\ell(Q_1),\ell(Q_2)\}=\ell(Q_i)$ and let $Q\in \mathcal{T}$
be the largest rectangle containing $Q_i$ with the property 
$\tfrac{1}{100} \ell(Q)\leq |y_1-y_2|$. Such $Q$ exists since $\tfrac{1}{100} \ell(Q_i)\leq
|y_1-y_2|\leq \ell(Q(\mathcal{T}))$. Then $w_{i} \in \tfrac{1}{2}HQ_{i} \subset \tfrac{1}{2}HQ$, and $d_{\mathrm{par}}(w_{1},w_{2}) = |y_{1} - y_{2}| \leq \tfrac{2}{100}\ell(Q) \leq \tfrac{1}{2}\diam(HQ)$, so in fact both $w_{1},w_{2} \in HQ$, as desired.  

Next we consider the horizontal $(N,\epsilon \ell(Q))$-QIE $F_{Q}
\colon HQ \to \W$. We apply Lemma \ref{lemma7} to the map $F_Q$ and
the rectangle $Q$. The main hypothesis \eqref{eq:y_2-y_1_ass} of
the lemma is valid by \eqref{form26} and \eqref{eq:GoalB}, noting that $8N
\epsilon \ell(Q) \leq |y_{1} - y_{2}|$ as long as $8N\epsilon < \tfrac{1}{100}$, and this follows from the choices made at \eqref{form22}. Therefore,
Lemma \ref{lemma7} implies
\begin{displaymath} |\pi_{1}(F_{Q}(w_{2})) - \pi_{1}(F_{Q}(w_{1}))| \geq \frac{|y_{1} - y_{2}|}{2N}. \end{displaymath}
To conclude the proof of \eqref{form27}, we just need to note that
$\pi_{1}(F_{Q}(w)) = f_{2}(w)$ for all $w \in HQ$; this follows from see \eqref{form45}.  \end{proof}

\section{Big projections}\label{s:projections}

In this section, we show that bilipschitz images of $\W$ inside $\He$ have \emph{big vertical projections} (BVP) in the following sense: if $f \colon \W \to \He$ is an $M$-bilipschitz map, $M \geq 1$, and $w \in \W$, $r > 0$, then there exists $\theta \in [-\tfrac{\pi}{2},\tfrac{\pi}{2})$, and a constant $\delta > 0$ depending only on $M$, such that
\begin{displaymath} \calH^{3}[\Pi_{\theta}(f(B(w,r))]) \geq \delta r^{3} \sim \mathcal{H}^{3}(B(w,r)). \end{displaymath}
Here $\Pi_{\theta}$ is the \emph{vertical projection} to the subgroup $\W_{\theta} := R_{\theta}\W$, where $R_{\theta}(z,t) := (e^{i\theta}z,t)$ is a rotation around the $t$-axis by angle $\theta$. In particular, $\Pi_{0}(x,y,t) = \Pi(x,y,t) = (y,t + \tfrac{1}{2}xy)$ and $\W_{0} = \W$. We will check in the next remark that $\Pi_{\theta}$ has the explicit expression
\begin{equation}\label{form130} \Pi_{\theta} = R_{\theta} \circ \Pi \circ R_{\theta}^{-1}. \end{equation}
\begin{remark} We recap standard definition of \emph{vertical projection to $\W_{\theta}$}: for $\theta \in [-\tfrac{\pi}{2},\tfrac{\pi}{2})$ fixed, every point $p \in \He$ can be decomposed uniquely as $p = w_{\theta} \cdot v_{\theta}$, where $w_{\theta} \in \W_{\theta}$ and $v_{\theta} \in \mathbb{L}_{\theta} = \{(e^{i\theta}x,0,0) : x \in \R\}$. This gives rise to the vertical projection $p \mapsto \Pi_{\theta}(p) := w_{\theta}$. To see that $\Pi_{\theta}$ has the expression \eqref{form130}, one observes that $R_{\theta}^{-1} \colon \He \to \He$ is a group homomorphism, so $R_{\theta}^{-1}p = R_{\theta}^{-1}(w_{\theta} \cdot v_{\theta}) = (R_{\theta}^{-1}w_{\theta}) \cdot (R_{\theta}^{-1}v_{\theta})$. Since $R_{\theta}^{-1}w_{\theta} \in \W$, this yields $\Pi(R^{-1}_{\theta}(p)) = R_{\theta}^{-1}w_{\theta} = R_{\theta}^{-1}(\Pi_{\theta}(p))$, hence $\Pi_{\theta} = R_{\theta} \circ \Pi \circ \R_{\theta}^{-1}$. \end{remark}

Before heading to the results of the section, let us generalise the notion of slope (introduced in Section \ref{s:ruledRectangle}) as follows: if $L$ is a horizontal line, and $\W_{\theta}$ is a vertical subgroup, $\theta \in [-\tfrac{\pi}{2},\tfrac{\pi}{2})$, we define the \emph{slope of $L$ relative to $\W_{\theta}$} as
\begin{displaymath} \angle(L,\W_{\theta}) := \angle(R_{\theta}^{-1}L). \end{displaymath}
Similarly, for vertical planes $\V \subset \He$, we define $\angle(\V,\W_{\theta}) := \angle(R^{-1}_{\theta}\V)$.

Here is the main result of this section:

\begin{proposition}\label{prop:bvp} Rickman rugs in $\He$ have BVP. \end{proposition}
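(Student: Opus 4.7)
The plan is to reduce the problem to the $\epsilon$-flat case. The paper already signals that there will be an "elementary" Proposition \ref{projProp} which handles the following situation: if $Q \in \mathcal{D}$ satisfies $\beta_f(CQ) < \epsilon_0$ for sufficiently small $\epsilon_0 = \epsilon_0(M)$ and sufficiently large $C = C(M)$, then there exist an angle $\theta = \theta(Q) \in [-\tfrac{\pi}{2},\tfrac{\pi}{2})$ (essentially dictated by the best approximating vertical plane $\V_Q$) and a constant $\delta_0 = \delta_0(M) > 0$ such that $\mathcal{H}^3[\Pi_{\theta}(f(Q))] \geq \delta_0 \ell(Q)^3$. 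Granting this, my job is to locate, inside an arbitrary ball $B(w,r)$, a flat dyadic rectangle of comparable size.

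Given $w \in \W$ and $r > 0$, I would first pick a dyadic rectangle $Q_0 \in \mathcal{D}$ with $Q_0 \subset B(w,r)$ and $\ell(Q_0) \geq c_0 r$ for an absolute constant $c_0 > 0$. With $\epsilon_0$ and $C$ chosen as above, Corollary \ref{wgl} furnishes a constant $C'' = C''(\epsilon_0, C, M)$ with
\begin{displaymath}
\sum_{Q \in \mathcal{B}, \, Q \subset Q_0} \ell(Q)^3 \leq C'' \ell(Q_0)^3, \qquad \mathcal{B} := \{Q \in \mathcal{D} : \beta_f(CQ) \geq \epsilon_0\}.
\end{displaymath}
Since each generation of $\mathcal{D}$-descendants of $Q_0$ tiles $Q_0$ with total mass $\sim \ell(Q_0)^3$, summing the inequality above over the first $N$ generations (with $N$ chosen depending only on $C''$, hence only on $M$) forces some generation $n \in \{0,\ldots,N-1\}$ to have bad descendants of total measure at most $\tfrac{1}{2}\ell(Q_0)^3$. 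In particular there exists a good $Q \in \mathcal{D} \setminus \mathcal{B}$ with $Q \subset Q_0$ and $\ell(Q) \geq 2^{-N}\ell(Q_0) \gtrsim_M r$.

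Applying the flat case to this $Q$ produces $\theta = \theta(Q)$ with $\mathcal{H}^3[\Pi_\theta(f(Q))] \geq \delta_0 \ell(Q)^3 \gtrsim_M r^3$, and since $Q \subset B(w,r)$ and Hausdorff measure is monotone, I obtain $\mathcal{H}^3[\Pi_\theta(f(B(w,r)))] \gtrsim_M r^3$, which is the desired BVP. Note that the direction $\theta$ is allowed to depend on $w$ and $r$, so no uniformity of $\theta$ is required.

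The bulk of the work, therefore, sits in the flat Proposition \ref{projProp}, which I have not yet written down but expect to attack as follows: $\epsilon_0$-flatness means $f(CQ)$ lies in a thin parabolic neighbourhood of a vertical plane $\V_Q$; the vertical projection $\Pi_\theta$ is essentially bilipschitz on $\V_Q$ (up to a factor governed by $\angle(\V_Q, \W_\theta)$, which one can choose $\lesssim_M 1$ by selecting $\theta$ adapted to $\V_Q$); and the $M$-bilipschitz property of $f$ guarantees that $f(Q)$ is $3$-regular with mass $\sim_M \ell(Q)^3$. The main technical point will be that $\Pi_\theta$ is highly non-injective on $\He$ globally, so one must use the $\epsilon_0$-flatness to confine $f(Q)$ to a region where $\Pi_\theta$ behaves like the near-isometry on $\V_Q$, and check that $\Pi_\theta(f(Q))$ covers a macroscopic region of $\W_\theta$ rather than folding back on itself. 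Once the pigeonhole argument above is in place, this flat-case estimate is the only genuinely new ingredient.
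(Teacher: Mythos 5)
Your reduction to the flat case is sound and is in fact the same reduction the paper itself uses: the paper fixes $Q$, takes a largest sub-rectangle $Q'$ with $\beta_{f}(Q') < \epsilon$, and notes that the Carleson packing condition of Corollary \ref{wgl} forces $\ell(Q') \sim_{M} \ell(Q)$; your pigeonhole over the first $N$ generations is just a different bookkeeping of the same Carleson estimate, and the monotonicity step at the end is identical. So the part of your argument that you actually wrote out is correct, but it is also the part that carries essentially no content: all the substance of Proposition \ref{prop:bvp} sits in the flat-case statement (Proposition \ref{projProp}, packaged for $f(Q)$ as Corollary \ref{cor8}), which you explicitly defer and for which your sketch misses the key idea.

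The gap is that "$f(Q)$ is $3$-regular with mass $\sim_{M}\ell(Q)^{3}$, and $\Pi_{\theta}$ is nearly bilipschitz on the slab $N(\V_{Q},\epsilon\ell(Q))$" does not yield a lower bound on $\mathcal{H}^{3}(\Pi_{\theta}(f(Q)))$: upper Lipschitz control of a projection, even with fibers of small diameter inside the slab, gives $\mathcal{H}^{3}(\Pi_{\theta}(E)) \lesssim \mathcal{H}^{3}(E)$ but not the reverse, and the "non-folding" you flag as the main technical point is exactly what has to be proven; no purely metric/measure-theoretic property of the image set $f(Q)$ is used in the paper to rule it out. The paper's mechanism is topological and uses that $f$ is a continuous map defined on the whole rectangle: for $\hat{Q} = [0,H]\times[0,1]$ with $H \sim M^{2}$, the loop $\Pi \circ f \circ \partial\hat{Q}$ is trapped in four pairwise well-separated slabs in $\W$ — the two horizontal ones because the images of the horizontal edges lie $\epsilon$-close to horizontal lines of the approximating plane and these lines are $\gtrsim 1/M$ apart by bilipschitzness, the two vertical ones because $f_{2}$ is $M$-Lipschitz while $|f_{2}(H,0)-f_{2}(0,0)| \geq H/(2M)$ (via Lemma \ref{l:Proj_Same_y}). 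Hence the loop has nonzero winding number about every point of the rectangle $R$ enclosed by the slabs, and the degree argument of Lemma \ref{windingLemma} gives $R \subset \Pi(f(\hat{Q}))$ with $|R| \gtrsim 1$. Without this (or some substitute injectivity/co-area input), your flat-case step does not close, so as it stands the proposal proves only the easy reduction and leaves Proposition \ref{prop:bvp} unproven.
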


We will prove Proposition \ref{prop:bvp} with the aid of the following more technical statement:
\begin{proposition}\label{projProp} For every $M \geq 1$, there exist $H \geq 1$ and $\epsilon > 0$ such that the following holds. Let $f \colon \W \to \He$ be an $M$-bilipschitz map, let $\theta \in [-\tfrac{\pi}{2},\tfrac{\pi}{2})$, and let $Q \in \mathcal{D}$ be a rectangle such that $\beta_{f}(\V;HQ) < \epsilon/H$ for some vertical subgroup $\V$ with $\angle(\V,\W_{\theta}) = 0$. Then
\begin{equation}\label{form117} \calH^{3}(\Pi_{\theta}[f(HQ)]) \geq \ell(Q)^{3}. \end{equation} 
\end{proposition}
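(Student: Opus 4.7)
The proof will use the QIE machinery developed in Section \ref{s:tangent}. First, by the rotational equivariance $\Pi_\theta = R_\theta \circ \Pi \circ R_\theta^{-1}$, I would replace $f$ by $R_\theta^{-1} \circ f$ and $\V$ by $R_\theta^{-1}\V$: since $R_\theta$ is a Heisenberg isometry and group isomorphism, this preserves the bilipschitz constant and the strong $\beta$-number, and transforms $\Pi_\theta$ to $\Pi$. Hence I may assume $\theta = 0$ and $\angle(\V,\W) = 0$, so that $\V$ is a translate of $\W$ (i.e.\ a plane of the form $\{x = x_0\}$, with slope parameter $\Sigma = 0$). Corollary \ref{c:HorizQIE_W_W} then provides, after a harmless rescaling of $\epsilon$ by absolute factors, a horizontal $(AM, \epsilon\ell(Q))$-QIE
\[
F_Q = \pi_\V \circ \iota_Q \colon HQ \to \W, \qquad F_Q(y,t) = (f_2(y,t), \sigma(t)),
\]
where $\sigma(t) := c_{Q,\ell_t}$, as made explicit by \eqref{form45}.

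Next, I would estimate $|F_Q(HQ)|$ by a Fubini/slicing argument exploiting the product structure of $F_Q$. By Lemma \ref{lemma3}, the map $\sigma \colon (\pi_2(HQ), \|\cdot\|) \to (\R, \|\cdot\|)$ is a $(AM, 2\epsilon\ell(Q))$-QIE; combined with the monotonicity and density statements of Lemma \ref{lemma2}, the image $\sigma(\pi_2(HQ))$ contains an interval of Euclidean length $\gtrsim (H\ell(Q))^2/M^2$. Similarly, Lemma \ref{lemma6} shows that for each $t$ the map $y \mapsto f_2(y,t)$ is a $(AM, 2\epsilon\ell(Q))$-QIE with image of Euclidean length $\gtrsim H\ell(Q)/M$. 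Since $F_Q$ maps each horizontal slice $HQ \cap \ell_t$ into the horizontal line at height $\sigma(t)$, and distinct values of $t$ produce distinct heights (outside a negligible collapse set, by monotonicity of $\sigma$), Fubini gives $|F_Q(HQ)| \gtrsim (H\ell(Q))^3/M^3$.

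Finally, I would transfer the estimate to $\Pi(f(HQ))$. Since $\pi_\V = \psi_\V \circ \Pi$ and the shear $\psi_\V(y,t) = (y, t - by)$ is volume-preserving, $|\Pi(\iota_Q(HQ))| = |F_Q(HQ)|$. By Lemma \ref{lemma9} applied with slope $|a| = 0$ — equivalently by Corollary \ref{cor4} — the points $\Pi(f(w))$ and $\Pi(\iota_Q(w))$ share their first coordinate $f_2(w)$ and differ in the second coordinate by at most $C(\epsilon\ell(Q))^2$ for every $w \in HQ$. A column-wise argument then gives $|\Pi(f(HQ))| \gtrsim |\Pi(\iota_Q(HQ))|$: for each fixed $u \in f_2(HQ)$, the preimage $\{w \in HQ : f_2(w) = u\}$ is an essentially monotone curve parametrized by $t$ (by the QIE property of $y \mapsto f_2(y,t)$), and along this curve $\Pi \circ \iota_Q$ traces the monotone graph $t \mapsto (u, bu + \sigma(t))$. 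The column of $\Pi(f(HQ))$ at first coordinate $u$ is a vertical perturbation of this graph by at most $C(\epsilon\ell(Q))^2$; the quantitative monotonicity $|\sigma(t_1) - \sigma(t_2)| \gtrsim |t_1 - t_2|/M^2$ (valid for $|t_1 - t_2| \gg (\epsilon\ell(Q))^2$, from Lemma \ref{lemma2}) prevents the perturbation from collapsing more than a negligible portion of each column, so the column measures are preserved up to lower-order corrections. Integrating over $u$ yields $|\Pi(f(HQ))| \gtrsim (H\ell(Q))^3/M^3$, and choosing $H = C_0 M$ for a sufficiently large absolute constant $C_0$, with $\epsilon > 0$ correspondingly small, gives $\calH^3(\Pi_\theta[f(HQ)]) \geq \ell(Q)^3$. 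The delicate step is the column-wise transfer, since pointwise Hausdorff closeness does not automatically entail measure closeness; it is the monotone structure of $\sigma$ that rescues the argument.
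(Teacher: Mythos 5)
Your reduction to $\theta = 0$ via $\Pi_{\theta} = R_{\theta}\circ\Pi\circ R_{\theta}^{-1}$, and the QIE estimates for the slice maps $y \mapsto f_{2}(y,t)$ and for the induced map $\sigma(t) = c_{Q,\ell_{t}}$, are fine and parallel to the normalisations and separation estimates in the paper. The gap is in the two measure-theoretic steps. Lemma \ref{lemma2} only gives that the image of a QIE is $\epsilon$-\emph{dense} in the interval between the images of the endpoints; it does not give that the image is an interval, nor that it has positive measure. The map $\sigma$ is not continuous in general: the constants $c_{Q,\ell}$ come from a non-canonical choice of approximating horizontal line, and even a genuine $(N,\epsilon\ell(Q))$-QIE from $(J,\|\cdot\|)$ to $(\R,\|\cdot\|)$ can have countable image (a step function whose Euclidean jumps have size $(\epsilon\ell(Q))^{2}$ is such a QIE, since jumps of that Euclidean size have square-root-metric size $\epsilon\ell(Q)$). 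In that case $F_{Q}(HQ)$ is a countable union of horizontal segments and has Lebesgue measure zero, so the claim ``Fubini gives $|F_{Q}(HQ)| \gtrsim (H\ell(Q))^{3}/M^{3}$'' does not follow from the lemmas you invoke, and is false for a general map with the stated QIE properties.

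The column-wise transfer has the same defect, and it is the one you yourself flag as delicate. For fixed $u$, the level set $\{w \in HQ : f_{2}(w) = u\}$ is just a closed set, not an ``essentially monotone curve parametrised by $t$'', and its image under $\Pi\circ f$ need not be connected. What your argument actually shows (via Corollary \ref{cor4} and the quantitative monotonicity of $\sigma$) is that the column of $\Pi(f(HQ))$ over $u$ is $C(\epsilon\ell(Q))^{2}$-dense in an interval of Euclidean length $\gtrsim (H\ell(Q))^{2}/M^{2}$ — and density does not imply positive length, so integrating over $u$ gives nothing. Converting density into measure is exactly where topology is unavoidable: one would have to extract from $\{f_{2} = u\}$ a continuum joining the top and bottom edges of $HQ$, so that its image in the column is a genuine segment, or argue globally. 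The paper does the latter: it shows via a winding-number argument (Lemma \ref{windingLemma}) that $\Pi(f([0,H]\times[0,1]))$ \emph{contains} a rectangle $R$ of area $\geq 1$, because the image of the boundary loop lies in four well-separated slabs and hence winds non-trivially around every point of $R$. Your QIE/separation estimates would serve as input to such an argument (they are what makes the slabs separated), but as written the proposal does not bridge the gap between Hausdorff-density of the image and a lower bound on its Lebesgue measure.
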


The proof of Proposition \ref{projProp} uses winding numbers, so we briefly recall them. There exists a $\Z$-valued function $\gamma \mapsto \mathfrak{w}(\gamma,0)$ defined on the set of loops (that is: closed paths) $\gamma \colon [0,1] \to \R^{2} \, \setminus \, \{0\}$ with the following properties:
\begin{itemize}
\item If $\gamma(t) = e^{2\pi i nt}$, $n \in \Z$, then $\mathfrak{w}(\gamma,0) = n$.
\item If $\gamma_{1}$ and $\gamma_{2}$ are homotopic in $\R^{2} \, \setminus \, \{0\}$, then $\mathfrak{w}(\gamma_{1},0) = \mathfrak{w}(\gamma_{2},0)$.
\end{itemize}
The value $\mathfrak{w}(\gamma,0)$ is the \emph{winding number of $\gamma$ with respect to $0$}. The winding number with respect to an arbitrary point $z \in \R^{2}$ can be defined by $\mathfrak{w}(\gamma,z) = \mathfrak{w}(\gamma - z,0)$, whenever $\gamma$ is a loop in $\R^{2} \, \setminus \, \{z\}$. For us, winding numbers will be useful via the following lemma:
\begin{lemma}\label{windingLemma} Let $Q = [0,a] \times [0,b] \subset \R^{2}$ be a closed rectangle, and let $f \colon Q \to \R^{2}$ be a continuous map. Write $\gamma := f(\partial Q)$, where $\partial Q \colon [0,1] \to \R^{2}$ is parametrised as a loop in any such way that $\partial Q(0) = 0 = \partial Q(1)$. If $z \in \R^{2} \, \setminus \, \gamma$ is a point with $\mathfrak{w}(\gamma,z) \neq 0$, then $z \in f(Q)$. \end{lemma}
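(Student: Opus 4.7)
The plan is to argue by contradiction, exploiting the homotopy invariance of the winding number together with the contractibility of the rectangle $Q$. Suppose, to reach a contradiction, that $z \notin f(Q)$. Then $f$ factors as a continuous map $f \colon Q \to \R^{2} \setminus \{z\}$.

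First I would fix a parametrization $\partial Q \colon [0,1] \to Q$ of the boundary with $\partial Q(0) = \partial Q(1)$, so that $\gamma = f \circ \partial Q$ is a loop in $\R^{2} \setminus \{z\}$ whose winding number with respect to $z$ is, by assumption, nonzero. Next, I would use the fact that $Q$ is convex (in particular contractible): picking any interior basepoint $q_{0} \in Q$, the straight-line homotopy $H_{s}(u) := (1-s)\partial Q(u) + s q_{0}$ is a continuous map $H \colon [0,1] \times [0,1] \to Q$ deforming $\partial Q$ to the constant loop at $q_{0}$.

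Composing this homotopy with $f$ gives a continuous map $f \circ H \colon [0,1] \times [0,1] \to \R^{2} \setminus \{z\}$, which is a homotopy in $\R^{2} \setminus \{z\}$ between $\gamma = f \circ \partial Q$ and the constant loop at $f(q_{0})$. By homotopy invariance of the winding number, $\mathfrak{w}(\gamma, z) = \mathfrak{w}(\mathrm{const}_{f(q_{0})}, z) = 0$, contradicting the hypothesis $\mathfrak{w}(\gamma, z) \neq 0$.

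The only thing to verify carefully is that $f \circ H$ really takes values in $\R^{2} \setminus \{z\}$, which is automatic from our standing assumption $z \notin f(Q)$ (since $H$ maps into $Q$). There is no real obstacle here: the statement is just a packaging of the elementary topological fact that a loop that bounds a disk in $\R^{2} \setminus \{z\}$ has winding number zero around $z$.
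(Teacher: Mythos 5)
Your proof is correct and follows essentially the same route as the paper: contract $\partial Q$ inside the (convex) rectangle, compose with $f$, and invoke homotopy invariance of the winding number to force $\mathfrak{w}(\gamma,z)=0$, contradicting the hypothesis. The only cosmetic difference is that the paper contracts toward the corner $0$ via the scaling $(1-s)\partial Q(t)$, while you contract toward an arbitrary basepoint $q_{0}\in Q$ by a straight-line homotopy.
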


\begin{proof} Consider the homotopy $H(s,t) := f((1 - s)\partial Q(t))$, $(s,t) \in [0,1]^{2}$, which deforms $\gamma$ to the constant path $\gamma_{0}(t) \equiv f(0)$. Note that $H$ is well-defined, because $(1 - s)\partial Q(t) \in [0,(1 - s)a] \times [0,(1 - s)b] \subset Q$ for all $(s,t) \in [0,1]^{2}$. If $z \notin f(Q)$, then $\mathrm{Im\,}H \subset \R^{2} \, \setminus \, \{z\}$, so $\gamma$ and $\gamma_{0}$ are homotopic in $\R^{2} \, \setminus \, \{z\}$. Hence $\mathfrak{m}(\gamma,z) = \mathfrak{m}(\gamma_{0},z) = 0$, contrary to the assumption. \end{proof}

We are then armed to prove Proposition \ref{projProp}. Let the reader be warned that the letter "$H$" will refer both to a constant, and occasionally a homotopy; the correct interpretation should always be clear from the context. 

\begin{proof}[Proof of Proposition \ref{projProp}] First, we may assume that $\theta = 0$, since 
\begin{displaymath} \mathcal{H}^{3}(\Pi_{\theta}[f(HQ)]) \stackrel{\eqref{form130}}{=} \mathcal{H}^{3}(R_{\theta}(\Pi[(R_{\theta}^{-1} \circ f)(HQ)])) = \mathcal{H}^{3}(\Pi[(R_{\theta}^{-1} \circ f)(HQ)]), \end{displaymath}
and $f_{\theta} := R_{\theta}^{-1} \circ f$ is an $M$-bilipschitz map with the property $\beta_{f_{\theta}}(R_{\theta}^{-1}(\V),HQ) < \epsilon/H$, where $\angle(R_{\theta}^{-1}(\V)) = \angle(\V,\W_{\theta}) = 0$.

So, assuming $\theta = 0$, there exists a vertical plane $\V \subset \He$ with $\angle(\V) = 0$ such that $\beta_{f}(\V; HQ) \leq \epsilon/H$. In other words, for every horizontal line $\ell \subset \W$, there exists a horizontal line $L \subset \V$ such that 
\begin{equation}\label{form118} f(\ell \cap HQ) \subset N(L,\epsilon\ell(Q)). \end{equation}
By pre- and post-composing $f$ with left translations and dilations, it is easy to reduce the proof to the following situation:
\begin{itemize}
\item $\ell(Q) = 1$ and $HQ = [0,H) \times [0,H^{2})$,
\item $f(0) = 0$.
\end{itemize}
The second point in particular implies, by \eqref{form118}, that $\dist(\W,\V) \leq \dist(f(0),\V) \leq \epsilon$. Therefore, if we replace "$\V$" by "$\W$", the inclusion \eqref{form118} continues to hold with "$2\epsilon$" in place of "$\epsilon$". So, we may additionally assume that
\begin{itemize}
\item $\V = \W = \{(0,y,t) : y,t \in \R\}$.
\end{itemize}
Even though the $(\epsilon,H,0)$-verticality assumption would control the behaviour of $f$ on $HQ$, it is useful to restrict attention to a slightly smaller rectangle, namely
\begin{displaymath} \hat{Q} := [0,H] \times [0,1] \subset \overline{HQ}. \end{displaymath} 
For $j \in \{0,1\}$, let $h_{j} := [0,H] \times \{j\}$ and $v_{j} := \{jH\} \times [0,1]$ be the horizontal and vertical edges of $\hat{Q}$, respectively, and let $\partial \hat{Q} = h_{0}v_{1}h_{1}v_{0}$ be the loop which travels clockwise around $\partial \hat{Q}$, starting and ending at $0$. We parametrise $\partial \hat{Q}$ by the interval $[0,2H + 2]$, so the four corners of $\hat{Q}$ are $\mathcal{C}(\hat{Q}) = \{\partial \hat{Q}(0) = \partial \hat{Q}(2H + 2),\partial \hat{Q}(H),\partial \hat{Q}(H + 1),\partial \hat{Q}(2H + 1)\}$. Then $\pi := \Pi \circ f \circ \partial \hat{Q} \colon [0,2H + 2] \to \W$ is a loop which starts and ends at $\Pi \circ f(0) = 0$.\footnote{The "coordinate projections" $\pi_{1}(y,t) = y$ and $\pi_{2}(y,t) = t$ of $\W$ will not be used in this section, so the notation should not cause confusion.} We plan to show that $\pi$ winds (positively or negatively) around all the points in a large rectangle $R \subset \W$, and then infer from Lemma \ref{windingLemma} that $R \subset \Pi(f(\hat{Q}))$.

We first decompose $\pi = \pi_{1}\pi_{2}\pi_{3}\pi_{4}$ in the natural way, where
\begin{equation}\label{form122} \begin{cases} \pi_{1} = \Pi \circ f \circ h_{0} \colon [0,H] \to \W, \\ \pi_{2} = \Pi \circ f \circ v_{1} \colon [H,H + 1] \to \W, \\ \pi_{3} = \Pi \circ f \circ h_{1} \colon [H + 1,2H + 1] \to \W, \\ \pi_{4} = \Pi \circ f \circ v_{0} \colon [2H + 1,2H + 2] \to \W, \end{cases}  \end{equation} 
and
\begin{equation}\label{form123} \pi_{i}(t) = \pi_{j}(t), \qquad t \in \mathrm{dom\,} \pi_{i} \cap \mathrm{dom\, } \pi_{j}, \, 1 \leq i,j \leq 4. \end{equation}
In the sequel, we abbreviate $I_{j} := \mathrm{dom\,} \pi_{j}$. We will fit $\pi$ (or rather its trace) inside a union of four slabs, that is: neighbourhoods of lines, $S_{1},S_{2},S_{3},S_{4} \subset \W$, two of which are horizontal, and two vertical, see Figure \ref{fig6}.
\begin{figure}[h!]
\begin{center}
\begin{overpic}[scale = 1]{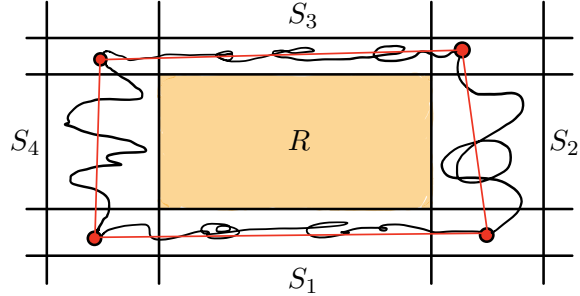}
\put(-2,25){$S_{4}$}
\put(48,0){$S_{1}$}
\put(95,25){$S_{2}$}
\put(48,48){$S_{3}$}
\put(48,25){$R$}
\end{overpic}
\caption{The black wiggly loop is $\pi$, and the red segments form $\eta \cong \pi$.}\label{fig6}
\end{center}
\end{figure}
The horizontal slabs $S_{1},S_{3}$ containing $\pi_{1},\pi_{3}$ are neighbourhoods of the horizontal lines (say) $L_{1},L_{3} \subset \W$ which best approximate $f(h_{1})$ and $f(h_{2})$, respectively. Since 
\begin{displaymath} f(h_{j}) \subset N(L_{j},\epsilon) \subset N(\W,\epsilon)  \end{displaymath}
it is not hard to check that
\begin{displaymath} \pi_{j} = \Pi(f(h_{j})) \subset N(L_{j},A\epsilon), \qquad j \in \{1,3\}, \end{displaymath}
for some absolute constant $A \geq 1$, and we set $S_{j} := N(L_{j},A\epsilon)$ for $j \in \{1,3\}$. (For this argument, "$o(\epsilon)$" would work just as well as "$A\epsilon$".) On the other hand, since $f$ is $M$-bilipschitz, the lines $L_{1},L_{3} \subset \W$ are reasonably well separated: 
\begin{displaymath} \dist(L_{1},L_{3}) \geq \dist(f(0,0),f(0,1)) - 2\epsilon \geq \tfrac{1}{M} \cdot d_{\mathrm{par}}((0,0),(0,1)) - 2\epsilon \geq \tfrac{1}{2M},  \end{displaymath}  
choosing $\epsilon < 1/(2M)$. Choosing also $\epsilon \leq 1/(8AM)$, the slabs $S_{1}$ and $S_{3}$ inherit the separation of their central lines:
\begin{equation}\label{form120} \dist(S_{1},S_{3}) \geq \tfrac{1}{4M}. \end{equation}
Figure \ref{fig6} depicts the scenario where $S_{3}$ is "above" $S_{1}$, but it could also happen that $S_{3}$ is "below" $S_{1}$. To define the vertical slabs containing $\pi_{2},\pi_{4}$, first recall that the second coordinate map $w \mapsto f_{2}(w)$ is $M$-Lipschitz. It follows that 
\begin{displaymath} f_{2}(v_{j}) \subset [f_{2}(0,jH) - M, f_{2}(0,jH) + M], \qquad j \in \{0,1\}. \end{displaymath} 
Since $f_{2}$ coincides with the second coordinate of $\Pi \circ f$, we infer that $\pi_{2} = \Pi(f(v_{1}))$ and $\pi_{4} = \Pi(f(v_{0}))$ are contained in vertical slabs $S_{2}$ and $S_{4}$ of width $2M$ around the vertical lines $V_{2} := \{f_{2}(H,0)\} \times \R$ and $V_{4} := \{f_{2}(0,0)\} \times \R$, respectively.

Just like $S_{1},S_{3}$, the vertical slabs $S_{2}$ and $S_{4}$ are also well-separated if the parameter $H$ is chosen larger than $20M^{2}$, and $\epsilon$ is smaller than $H/(AM)$. In fact, we claim that
\begin{equation}\label{form119} |f_{2}(H,0) - f_{2}(0,0)| \geq H/(2M). \end{equation}
To see this, note that $d(f(H,0),f(0,0)) \geq H/M$ by the $M$-bilipschitz property of $f$, and on the other hand $\dist(f(0,0),L_{0}) \leq \epsilon$ and $\dist(f(H,0),L_{0}) \leq \epsilon$. Then, use Lemma \ref{l:Proj_Same_y} to deduce that $f(0,0)$ and $f(H,0)$ lie at distance $\lesssim \epsilon$ from the points $p_{0},p_{H} \in L_{0}$ whose second coordinates match those of $f(0,0)$ and $f(H,0)$. The lower bound \eqref{form119} follows by estimating $|f_{2}(H,0) - f_{2}(0,0)| = d(p_{0},p_{H}) \geq d(f(H,0),f(0,0)) - 2A\epsilon \geq H/(2M)$. 

Taking $H := 20M^{2}$, it follows from the definitions of $S_{2},S_{4}$, and the lower bound \eqref{form119}, that
\begin{equation}\label{form121} \dist(S_{2},S_{4}) \geq H/(4M). \end{equation}
Of course $S_{4}$ can either be to the "left" or "right" from $S_{2}$; Figure \ref{fig6} shows the latter scenario. Combining \eqref{form120} and \eqref{form121}, we deduce that the area of the open rectangle "$R$" bounded by the four slabs $S_{1},S_{2},S_{3},S_{4}$ is $\geq H/(16M^{2}) \geq 1$. So, to conclude the proof of \eqref{form117}, hence the proposition, it remains to show that $R \subset \Pi(f(\hat{Q}))$.

By Lemma \ref{windingLemma}, it suffices to show that $\mathfrak{w}(\pi,z) \neq 0$ for all $z \in R$. This hopefully seems quite obvious: $R$ is "enclosed" by the union $S_{1} \cup \ldots \cup S_{4}$, and $\pi = \pi_{1}\pi_{2}\pi_{3}\pi_{4}$ is a loop with $\pi_{j} \subset S_{j}$ for all $1 \leq j \leq 4$. The details are not much harder: we will deform $\pi$ inside $S_{1} \cup \ldots \cup S_{4}$ into a polygonal loop $\eta = \eta_{1}\eta_{2}\eta_{3}\eta_{4}$ with $4$ linearly parametrised edges $\eta_{j} = [p_{j},p_{j + 1}] \subset S_{j}$. Finally, we observe that if $z \in R$, then $z$ in particular lies in the quadrilateral bounded by $\eta$, see Figure \ref{fig6}. It follows that $\mathfrak{w}(\pi,z) = \mathfrak{w}(\eta,z) \in \{-1,1\}$, since $\eta$ is evidently homotopic to either $z + e^{2\pi it}$ or $z + e^{-2\pi i t}$ in $\R^{2} \, \setminus \, \{z\}$. 

Let us fill in the details. We define $\eta_{1},\ldots,\eta_{4}$ by the formulae
\begin{displaymath} \begin{cases} \eta_{1} = [\pi_{1}(0),\pi_{1}(H)]_{I_{1}} \subset S_{1}, \\ \eta_{2} = [\pi_{2}(H),\pi_{2}(H + 1)]_{I_{2}} \subset S_{2}, \\ \eta_{3} = [\pi_{3}(H + 1),\pi_{3}(2H + 1)]_{I_{3}} \subset S_{3}, \\ \eta_{4} = [\pi_{4}(2H + 1),\pi_{4}(2H + 2)]_{I_{4}} \subset S_{4}, \end{cases} \end{displaymath}
where $I_{j} = \mathrm{dom\,} \pi_{j} = \mathrm{dom\,} \eta_{j}$, and $[p_{1},p_{2}]_{[a,b]}$ refers, in general, to the linear parametrisation of the segment $[p_{1},p_{2}] \subset \W$ by the interval $[a,b] \subset \R$, namely
\begin{displaymath} [p_{1},p_{2}]_{[a,b]}(t) := (1 - \tfrac{t - a}{b - a}) \cdot p_{1} + \tfrac{t - a}{b - a} \cdot p_{2}, \qquad t \in [a,b]. \end{displaymath} 
Then, we define the polygonal path $\eta \colon [0,2H + 2] \to S_{1} \cup \ldots \cup S_{4}$ by setting $\eta(t) := \eta_{j}(t)$ for $t \in I_{j}$. This definition is consistent by \eqref{form123}, and $\eta$ is continuous. 

Next, let $H_{j} \colon [0,1] \times I_{j} \to S_{j}$ be the convex homotopy between $\pi_{j}$ and $\eta_{j}$, namely $H_{j}(s,t) := (1 - s)\pi_{j}(t) + s\eta_{j}(t)$ for $(s,t) \in [0,1] \times I_{j}$. Then $H_{j}|_{\{0\} \times I_{j}} = \pi_{j}$, $H_{j}|_{\{1\} \times I_{j}} = \eta_{j}$, and $H_{j}(s,\partial I_{j}) \equiv \pi_{j}(\partial I_{j}) \equiv \eta_{j}(\partial I_{j})$ for all $s \in [0,1]$.

Finally, we define the homotopy $H \colon [0,1] \times [0,2H + 2] \to S_{1} \cup \ldots \cup S_{4}$ as a concatenation of the homotopies $H_{j}$, namely
\begin{displaymath} H(s,t) := H_{j}(s,t), \qquad s \in [0,1], \, t \in I_{j}, \, 1 \leq j \leq 4. \end{displaymath}
This definition is well-posed, and $H$ is continuous, since 
\begin{displaymath} H_{i}(s,t) = H_{j}(s,t), \qquad 1 \leq i,j \leq 3, \, s \in [0,1], \, t \in I_{i} \cap I_{j}, \end{displaymath} 
by \eqref{form123} (for $t \in I_{j} \cap I_{j}$ simply $H_{j}(s,t) = \pi(t) = \eta(t)$ for all $s \in [0,1]$). Since $\mathrm{Im\,} H \subset S_{1} \cup \ldots \cup S_{4} \subset \W \, \setminus \, R$, the winding numbers of $\pi$ and $\eta$ with respect to every point in $R$ coincide. Clearly $\mathfrak{w}(\eta,z) \in \{-1,1\}$ for all $z \in R$, so we conclude that $\mathfrak{w}(\pi,z) \neq 0$ for all $z \in R$. Lemma \ref{windingLemma} now implies that $R \subset \Pi(f(\hat{Q}))$, and the proof of the proposition is complete. \end{proof}

Proposition \ref{projProp} also yields information about the projections of $f(Q)$ instead of $f(HQ)$:
\begin{cor}\label{cor8} For every $M \geq 1$, there exist $\delta,\epsilon > 0$, depending only on $M$, such that that the following holds. Let $f \colon \W \to \He$ be an $M$-bilipschitz map, $M \geq 1$, let $\theta \in [-\tfrac{\pi}{2},\tfrac{\pi}{2})$, and let $Q \in \mathcal{D}$ be a rectangle with $\beta_{f}(\V;Q) < \epsilon$ for some vertical plane $\V \subset \He$ with $\angle(\V,\W_{\theta}) = 0$. Then,
\begin{equation}\label{form131} \mathcal{H}^{3}(\Pi_{\theta}(f(Q))) \geq \delta \mathcal{H}^{3}(Q). \end{equation}
\end{cor}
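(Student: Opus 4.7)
The plan is to reduce Corollary \ref{cor8} directly to Proposition \ref{projProp} via a scaling argument: apply Proposition \ref{projProp} to a carefully chosen dyadic sub-rectangle $Q' \subset Q$ of side length comparable to $\ell(Q)/H$, positioned so that the enlarged ball $HQ' = B(c_{Q'}, H\ell(Q'))$ still lies inside $Q$. The conclusion of Proposition \ref{projProp} applied to $Q'$ then yields a lower bound for $\mathcal{H}^{3}(\Pi_{\theta}[f(HQ')])$, which transfers to $\mathcal{H}^{3}(\Pi_{\theta}[f(Q)])$ via the inclusion $HQ' \subset Q$.

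First I would fix the constants $H = H(M) \geq 1$ and $\epsilon_{0} = \epsilon_{0}(M) > 0$ supplied by Proposition \ref{projProp}, and set $\epsilon := \epsilon_{0}/(16H)$, so that $\epsilon$ depends only on $M$. Then, given $Q \in \mathcal{D}_{n}$ with $\beta_{f}(\V; Q) < \epsilon$, let $k$ be the smallest integer with $2^{k} \geq 4H$ (so $2^{k} < 8H$), and pick $Q' \in \mathcal{D}_{n+k}$ with $Q' \subset Q$ whose centre $c_{Q'}$ is sufficiently close to $c_{Q}$ that $HQ' \subset Q$. This is geometrically feasible: $HQ'$ has Euclidean dimensions $2H\ell(Q') \times 2(H\ell(Q'))^{2} \leq \ell(Q)/2 \times \ell(Q)^{2}/8$, comfortably smaller than the dimensions $\ell(Q) \times \ell(Q)^{2}$ of $Q$, and the centres of $\mathcal{D}_{n+k}$ sub-rectangles of $Q$ populate a grid of spacing $\ell(Q)/2^{k} \leq \ell(Q)/(4H)$, finer than the required tolerance.

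The inclusion $HQ' \subset Q$ lets me transfer the approximation bound: for any horizontal line $\ell \subset \W$, the same horizontal line $L \subset \V$ that approximates $f(\ell \cap Q)$ to error $\epsilon \ell(Q)$ also approximates $f(\ell \cap HQ')$, yielding
\begin{displaymath}
\beta_{f}(\V; HQ') \leq \frac{\epsilon \ell(Q)}{H\ell(Q')} = \frac{\epsilon \cdot 2^{k}}{H} < 8\epsilon = \frac{\epsilon_{0}}{2H} < \frac{\epsilon_{0}}{H}.
\end{displaymath}
Since $\angle(\V, \W_{\theta}) = 0$ is inherited from the hypothesis, Proposition \ref{projProp} applies to $Q'$ and gives $\mathcal{H}^{3}(\Pi_{\theta}[f(HQ')]) \geq \ell(Q')^{3}$. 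Combining this with $\Pi_{\theta}(f(HQ')) \subset \Pi_{\theta}(f(Q))$ (a consequence of $HQ' \subset Q$) completes the proof:
\begin{displaymath}
\mathcal{H}^{3}(\Pi_{\theta}[f(Q)]) \geq \ell(Q')^{3} = \frac{\ell(Q)^{3}}{8^{k}} \geq \frac{\ell(Q)^{3}}{(8H)^{3}} \sim_{M} \mathcal{H}^{3}(Q),
\end{displaymath}
so $\delta \sim 1/H^{3}$ depends only on $M$, as required.

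The only subtlety is verifying in the second step that a dyadic $Q' \in \mathcal{D}_{n+k}$ can be found with $HQ' \subset Q$ (rather than the easier $HQ' \subset 1Q$), but this is elementary bookkeeping with the Euclidean shapes of dyadic parabolic rectangles versus parabolic balls, and poses no conceptual difficulty.
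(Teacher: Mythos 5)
Your proposal is correct and follows essentially the same route as the paper: the paper's proof likewise picks a sub-rectangle $Q' \subset Q$ with $HQ' \subset Q$ and $\ell(Q') \gtrsim \ell(Q)/H$, transfers the hypothesis to the bound $\beta_{f}(\V;HQ') \lesssim H\epsilon$, applies Proposition \ref{projProp} to $Q'$, and concludes via $\Pi_{\theta}(f(HQ')) \subset \Pi_{\theta}(f(Q))$. Your version merely makes the dyadic choice of $Q'$ and the constant bookkeeping ($\epsilon = \epsilon_{0}/(16H)$, $2^{k} \sim H$) explicit where the paper leaves them to an absolute constant $A$.
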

\begin{proof} Choose a sub-rectangle $Q' \subset Q$ such that $HQ' \subset Q$ and $\ell(Q') \geq \ell(Q)/(AH)$, where $H \sim_{M} 1$ is the parameter from Proposition \ref{projProp}, and $A \geq 1$ is an absolute constant. If $\ell \subset \W$ is a horizontal line, then by the assumption $\beta_{f}(\V;Q) < \epsilon$, there exists a horizontal line $L \subset \V$ such that
\begin{displaymath} f(\ell \cap HQ') \subset f(\ell \cap Q) \subset N(L,\epsilon \ell(Q)) = N(L,AH\epsilon \cdot \ell(Q')). \end{displaymath} 
In other words, $\beta_{f}(\V;HQ') \leq A\epsilon$. Now, if $\epsilon > 0$ is chosen sufficiently small, depending on $H \sim_{M} 1$, Proposition \ref{projProp} shows that
\begin{displaymath} \mathcal{H}^{3}(\Pi_{\theta}[f(Q)]) \geq \mathcal{H}^{3}(\Pi_{\theta}[f(HQ')]) \geq \ell(Q')^{3} \sim_{M} \ell(Q)^{3}. \end{displaymath}
Thus \eqref{form131} is satisfied for $\delta = \delta(M) > 0$ small enough. \end{proof}

We then complete the proof of Proposition \ref{prop:bvp}.
\begin{proof}[Proof of Proposition \ref{prop:bvp}] The BVP property clearly follows from its dyadic variant: for every $Q \in \mathcal{D}$, there exists $\theta \in [-\tfrac{\pi}{2},\tfrac{\pi}{2})$ such that $\mathcal{H}^{3}(\Pi_{\theta}(f(Q))) \gtrsim_{M} \ell(Q)^{3}$. To prove this, fix $Q \in \mathcal{D}$, and let $Q' \subset Q$ be the largest (or one of the largest) sub-rectangle(s) of $Q$ with $\beta_{f}(Q') < \epsilon$, where $\epsilon = \epsilon(M) > 0$ is the constant from Corollary \ref{cor8}. Since the rectangles $Q' \subset Q$ with $\beta_{f}(Q') \geq \epsilon$ satisfy a Carleson packing condition by Corollary \ref{wgl}, we have $\ell(Q') \sim_{M} \ell(Q)$. Now, Corollary \ref{cor8} yields $\theta \in [-\pi,\pi)$ such that
\begin{displaymath} \mathcal{H}^{3}(\Pi_{\theta}[f(Q)]) \geq \mathcal{H}^{3}(\Pi_{\theta}[f(Q')]) \geq \delta \ell(Q')^{3} \sim_{M} \ell(Q)^{3}, \end{displaymath}
as desired. \end{proof}


\section{Construction of trees}\label{s:treeSelection}

In this section, we construct the decomposition $\mathcal{D} = \mathcal{B} \cup \mathcal{G}$ and the trees $\mathcal{T}_{1},\mathcal{T}_{2},\ldots$ which are required by the corona decomposition, Theorem \ref{main}, recall also Section \ref{s:corona}. This is not too difficult, and the most involved part of the paper only begins in the next section, where each tree $\mathcal{T}_{j}$ is associated with an intrinsic bilipschitz graph $\Gamma_{j} \subset \He$.

Let $f \colon \W \to \He$ be an $M$-bilipschitz map, $M \geq 1$. The construction of $(\mathcal{B},\mathcal{G},\mathcal{T}_{j})$ is very similar to the one described in \cite[\S7]{DS1}: for the reader familiar with \cite{DS1}, the only significant difference lies in checking the Carleson packing condition for the top rectangles, see \nref{D4} below. As in \cite{DS1}, this task requires controlling the total "turning" of certain best-approximating planes; whereas in \cite{DS1} the amount of "turning" is controlled by hypothesis on either singular integrals or $\beta$-numbers, the present argument rather relies on the "big projections" property, Proposition \ref{prop:bvp}.

We claim that for every $H \geq 1$, and for every $\epsilon > 0$ which is sufficiently small in a manner depending only on $M$, there exist constants $\delta > 0$ (depending on $M$), $\Sigma \geq 1$ (depending on $\delta$), and a partition $\mathcal{D} = \mathcal{B} \cup \mathcal{G}$ with the following properties:
\begin{itemize}
\item[(D1) \phantomsection\label{D1}] The rectangles in $\mathcal{B}$ satisfy a Carleson packing condition, that is,
\begin{displaymath} \mathop{\sum_{Q \in \mathcal{B}}}_{Q \subset Q_{0}} |Q| \lesssim_{\epsilon,H,M} |Q_{0}|, \qquad Q_{0} \in \mathcal{D}. \end{displaymath}
\item[(D2) \phantomsection\label{D2}] The rectangles in $\mathcal{G}$ are partitioned into countably many trees "$\mathcal{T}$" with top rectangles $Q(\mathcal{T})$. For each tree $\mathcal{T}$, there exists a vertical subgroup $\W_{\mathcal{T}} = \W_{\theta(\mathcal{T})} \subset \He$ such that 
\begin{equation}\label{form142} \mathcal{H}^{3}(\Pi_{\theta(\mathcal{T})}[f(Q(\mathcal{T}))]) \geq  \delta \mathcal{H}^{3}(Q(\mathcal{T})). \end{equation}
Further, for every $Q \in \mathcal{T}$, there exists a vertical plane $\V_{Q} \subset \He$ with $\angle(\V_{Q},\W_{\mathcal{T}}) \leq \Sigma$ with the property $\beta_{f}(\V;HQ) < H^{-1}A^{-2}(1 + \Sigma)^{-2}\epsilon$. If $\W_{\mathcal{T}} = \W$, these properties imply that $\mathcal{T}$ is an $(\epsilon,H,\Sigma)$-vertical tree.
\item[(D3) \phantomsection\label{D3}] $Q \in \mathbf{Leaves}(\mathcal{T})$ if and only if either $\angle(\V_{Q},\W_{\mathcal{T}}) \geq \Sigma/2$, or then at least one child $Q' \in \mathbf{ch}(Q)$ has $\beta_{f}(HQ') \geq H^{-1}A^{-2}(1 + \Sigma)^{-2}\epsilon$.
\item[(D4) \phantomsection\label{D4}] The top rectangles of the trees $\mathcal{T}_{1},\mathcal{T}_{2},\ldots$ satisfy a Carleson packing condition with constants depending only on $\epsilon,H,M$.
\end{itemize}
The conditions \nref{D1} and \nref{D4} are exactly the Carleson packing conditions required by \nref{C1} and \nref{C3} in the definition of corona decompositions -- at least if eventually $\epsilon,H \sim_{M,\eta} 1$, and this will be the case. The condition \nref{D3} is merely an auxiliary condition which helps verify \nref{D4} below. The condition \nref{D2} is the one to remember: it contains the properties of individual trees which are needed, starting in the next section, to build the $\eta$-approximating intrinsic bilipschitz graphs required by condition \nref{C2}. In this construction, we will want to choose $H \geq 1$ large enough, and $\epsilon > 0$ small enough, in a manner depending on $M,\eta$, so the decompositions $\mathcal{D} = \mathcal{B} \cup \mathcal{G}$ and $\mathcal{G} = \mathcal{T}_{1} \cup \mathcal{T}_{2} \cup \ldots$ will have to be performed with such parameters.

It will soon be apparent that $M$ determines $\delta$, then a small "$\delta$" forces us to choose "$\Sigma$" large, and eventually small "$\delta$" and large "$\Sigma$" force us to choose "$\epsilon$" small. So, the decomposition $\mathcal{D} = \mathcal{B} \cup \mathcal{G}$ can be performed with any $\epsilon > 0$ sufficiently small to meet these requirements, which eventually just depend on $M$.

Before attempting a "global" decomposition of $\mathcal{D} = \mathcal{D} \cup \mathcal{B}$, we perform a localised version. Fix $\bar{Q} \in \mathcal{D}$ arbitrary, and write $\mathcal{D}(\bar{Q}) := \{Q \in \mathcal{D} : Q \subset \bar{Q}\}$. We claim that $\mathcal{D}(\bar{Q})$ can be partitioned to collections $\mathcal{B}(\bar{Q})$ and $\mathcal{G}(\bar{Q})$ with the properties \nref{D1}-\nref{D4}.

We simply let $\mathcal{B}(\bar{Q})$ consist of all the rectangles $Q \in \mathcal{D}(\bar{Q})$ with 
\begin{equation}\label{form135} \beta_{f}(HQ) \geq H^{-1}A^{-2}(1 + \Sigma)^{-2}\epsilon. \end{equation}
Let $\mathcal{G}(\bar{Q}) := \mathcal{D}(\bar{Q}) \, \setminus \, \mathcal{B}(\bar{Q})$. For $Q \in \mathcal{G}(\bar{Q})$, let $\V_{Q}$ be the best approximating plane for $f$ in $HQ$, so  $\beta_{f}(\V_{Q};HQ) < H^{-1}A^{-1}(1 + \Sigma)^{-2}\epsilon$. In particular, $\beta_{f}(\V;Q) < \epsilon$, so if $\theta \in [-\tfrac{\pi}{2},\tfrac{\pi}{2})$ is the unique parameter with $\angle(\V_{Q},\W_{\theta}) = 0$, and $\epsilon,\delta > 0$ are small enough, depending only on $M$, Corollary \ref{cor8} implies $\mathcal{H}^{3}(\Pi_{\theta}[f(Q))] \geq \delta \calH^{3}(Q)$. We also define $\W_{Q} := \W_{\theta}$ for $Q \in \mathcal{G}(\bar{Q})$, so
\begin{equation}\label{form136} \angle(\V_{Q},\W_{Q}) = 0, \qquad Q \in \mathcal{G}(\bar{Q}). \end{equation}
We now perform the tree decomposition of $\mathcal{G}(\bar{Q})$ in the only possible way. Pick some maximal rectangle $Q \in \mathcal{G}(\bar{Q})$ which has not yet been assigned to any tree. If either the $\mathcal{D}$-parent "$\hat{Q}$" of $Q$ lies in $\mathcal{D} \, \setminus \, \mathcal{G}(\bar{Q})$, or $\hat{Q} \in \mathbf{Leaves}(\hat{\mathcal{T}})$ for some existing tree $\hat{\mathcal{T}}$, then $Q$ becomes the top rectangle of a new tree $\mathcal{T}$, and we set $\W_{\mathcal{T}} := \W_{Q} = \W_{Q(\mathcal{T})}$. Otherwise, $\hat{Q} \in \hat{\mathcal{T}} \, \setminus \, \mathbf{Leaves}(\hat{\mathcal{T}})$ for some existing tree $\hat{\mathcal{T}}$, and we add $Q$ to $\hat{\mathcal{T}}$. Now that $Q$ has been assigned to some tree $\mathcal{T}$, we declare when, exactly, $Q$ is a minimal rectangle in $\mathcal{T}$:
\begin{displaymath} Q \in \mathbf{Leaves}(\mathcal{T}) \quad \Longleftrightarrow \quad \mathbf{ch}(Q) \cap \mathcal{B}(\bar{Q}) \neq \emptyset \quad \text{or} \quad \angle(\V_{Q},\W_{\mathcal{T}}) \geq \Sigma/2. \end{displaymath}
This completes the construction of the trees. Note that, for every tree $\mathcal{T}$, the family $\mathbf{Leaves}(\mathcal{T})$ is a union of the collections
\begin{displaymath} \mathbf{Leaves}_{1}(\mathcal{T}) := \{Q \in \mathbf{Leaves}(\mathcal{T}) : \mathbf{ch}(Q) \cap \mathcal{B}(\bar{Q}) \neq \emptyset\} \end{displaymath}
and
\begin{displaymath} \mathbf{Leaves}_{2}(\mathcal{T}) := \{Q \in \mathbf{Leaves}(\mathcal{T}) : \angle(\V_{Q},\W_{\mathcal{T}}) \geq \Sigma/2\}. \end{displaymath}
Let us mention that, while it is possible that $Q(\mathcal{T}) \in \mathbf{Leaves}_{1}(\mathcal{T})$, we always have $Q(\mathcal{T}) \in \mathcal{T} \, \setminus \, \mathbf{Leaves}_{2}(\mathcal{T})$, because $\angle(\V_{Q(\mathcal{T})},\W_{\mathcal{T}}) = 0$ by \eqref{form136} and the choice $\W_{\mathcal{T}} := \W_{Q(\mathcal{T})}$.

Which conditions among \nref{D1}-\nref{D4} are satisfied? The Carleson packing condition for $\mathcal{B}(\bar{Q})$ required in \nref{D1} is satisfied by Corollary \ref{wgl} (also because $\Sigma \sim_{M} 1$, as we will soon see). The property \nref{D3} of $\mathbf{Leaves}(\mathcal{T})$ is evidently satisfied by construction. The projection condition \eqref{form142} and the strong $\beta$-number bound in \nref{D2} are also satisfied by construction (and because $\mathcal{T} \subset \mathcal{G}(\bar{Q})$), so the only part of \nref{D2} in doubt is the slope bound $\angle(\V_{Q},\W_{\mathcal{T}}) \leq \Sigma$ for $Q \in \mathcal{T}$. We handle this in the next lemma before turning to the most difficult property \nref{D4}.
\begin{lemma} Let $\mathcal{T}$ be one of the trees constructed above. If the constant $\epsilon > 0$ is chosen small enough, depending on $M,\Sigma$, then $\angle(\V_{Q},\W_{\mathcal{T}}) \leq \Sigma$ for all $Q \in \mathcal{T}$. \end{lemma}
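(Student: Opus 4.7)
The plan is to propagate the slope control from $\mathcal{T}$-parents to $\mathcal{T}$-children using Lemma~\ref{lemma0}. The root case $Q = Q(\mathcal{T})$ is handled immediately by \eqref{form136} and the definition $\W_{\mathcal{T}} := \W_{Q(\mathcal{T})}$, which together force $\angle(\V_{Q(\mathcal{T})},\W_{\mathcal{T}}) = 0$. For $Q \in \mathcal{T} \setminus \{Q(\mathcal{T})\}$, the $\mathcal{D}$-parent $\hat{Q}$ also lies in $\mathcal{T}$ and has $Q$ as a child inside $\mathcal{T}$, so $\hat{Q} \notin \mathbf{Leaves}(\mathcal{T})$; in particular $\hat{Q} \notin \mathbf{Leaves}_{2}(\mathcal{T})$, giving $\angle(\V_{\hat{Q}},\W_{\mathcal{T}}) < \Sigma/2$. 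The entire statement thus reduces to showing $|\angle(\V_{Q},\W_{\mathcal{T}}) - \angle(\V_{\hat{Q}},\W_{\mathcal{T}})| < \Sigma/2$ for $\epsilon$ small in terms of $M$ and $\Sigma$, after which the triangle inequality closes the loop.

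To set up the slope comparison, I would first precompose $f$ with $R_{\theta(\mathcal{T})}^{-1}$, so that $\W_{\mathcal{T}} = \W$ and both angles become ordinary slopes. Then pick a horizontal line $\ell \subset \W$ through the centre of $Q$, and two points $w_{1},w_{2} \in \ell \cap Q$ with $d_{\mathrm{par}}(w_{1},w_{2}) = \ell(Q)$. Writing $p := f(w_{1})$, $q := f(w_{2})$, the $M$-bilipschitz property yields $r := d(p,q) \geq \ell(Q)/M$. The $\beta$-number bound for $\V_{Q}$ on $HQ$ from \nref{D2} supplies a horizontal line $L_{0} \subset \V_{Q}$ with $\dist(p,L_{0}), \dist(q,L_{0}) \leq H^{-1}A^{-2}(1+\Sigma)^{-2}\epsilon\,\ell(Q)$. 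Since $HQ \subset H\hat{Q}$ for $H \geq 2$, the analogous bound for $\V_{\hat{Q}}$ produces $L_{1} \subset \V_{\hat{Q}}$ with $\dist(p,L_{1}), \dist(q,L_{1}) \leq 2H^{-1}A^{-2}(1+\Sigma)^{-2}\epsilon\,\ell(Q)$. Normalising by $r$, both relative errors are bounded by $\tilde{\epsilon} := 2MH^{-1}A^{-2}(1+\Sigma)^{-2}\epsilon$, while the slope of $L_{1}$ satisfies $|a_{1}| \leq \angle(\V_{\hat{Q}},\W) < \Sigma/2 < \Sigma$. Provided $\epsilon$ is small enough that $\tilde{\epsilon} < 1/(64(1+\Sigma))$, Lemma~\ref{lemma0} applies and delivers
\begin{displaymath} |a_{0} - a_{1}| \lesssim (1+\Sigma)^{2}\tilde{\epsilon} \lesssim MH^{-1}A^{-2}\epsilon. \end{displaymath}

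The main ``obstacle'' is really just bookkeeping of slopes: the $(1+\Sigma)^{-2}$ factor baked into the $\beta$-number threshold in \nref{D2} exactly cancels the $(1+\Sigma)^{2}$ blow-up in Lemma~\ref{lemma0}, producing a slope error of order $\epsilon$ in which $\Sigma$ has disappeared (with $H$ and $A$ already fixed and $M$ absorbed into the implicit constant). This is the whole reason that precise power of $(1+\Sigma)^{-2}$ was built into the definition of vertical rectangle used in \nref{D2}. Taking $\epsilon$ small enough in terms of $M$ and $\Sigma$ to make $|a_{0} - a_{1}| < \Sigma/2$, one concludes $\angle(\V_{Q},\W_{\mathcal{T}}) = |a_{0}| \leq |a_{1}| + \Sigma/2 < \Sigma$, as desired.
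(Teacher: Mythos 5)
Your proof is correct and follows essentially the same route as the paper: reduce to the $\mathcal{D}$-parent $\hat{Q} \in \mathcal{T} \setminus \mathbf{Leaves}(\mathcal{T})$, which satisfies $\angle(\V_{\hat{Q}},\W_{\mathcal{T}}) < \Sigma/2$, pick two horizontally aligned points of $Q$ at parabolic distance $\ell(Q)$, and feed the two approximating horizontal lines $L_{Q} \subset \V_{Q}$, $L_{\hat{Q}} \subset \V_{\hat{Q}}$ into Lemma \ref{lemma0}. The only differences are cosmetic and harmless: the paper invokes the ``$|a_{2}| \leq 2\Sigma$'' clause of Lemma \ref{lemma0} with $\Sigma/2$ in place of $\Sigma$ instead of your difference estimate plus triangle inequality, and your intermediate bound carries a spurious factor $H^{-1}$ (since the radius of $HQ$ is $H\ell(Q)$, the bound $\beta_{f}(\V_{Q};HQ) < H^{-1}A^{-2}(1+\Sigma)^{-2}\epsilon$ only guarantees an error $A^{-2}(1+\Sigma)^{-2}\epsilon\,\ell(Q)$), which does not affect the conclusion because $\epsilon$ is in any case chosen small depending on $M,\Sigma$.
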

\begin{proof} The inequality is only in doubt for $Q \in \mathbf{Leaves}_{2}(\mathcal{T})$, so fix $Q \in \mathbf{Leaves}_{2}(\mathcal{T})$. Note that the $\mathcal{D}$-parent $\hat{Q}$ of $Q$ lies in $\mathcal{T} \, \setminus \, \mathbf{Leaves}_{2}(\calT)$, hence satisfies $\angle(\V_{\hat{Q}},\W_{\mathcal{T}}) < \Sigma/2$, because $Q(\mathcal{T}) \notin \mathbf{Leaves}_{2}(\mathcal{T})$.
 
To show that $\angle(\V_{Q},\W_{\mathcal{T}}) \leq \Sigma$, pick two points $w_{1},w_{2} \in Q \subset \hat{Q}$ which lie on a common horizontal line and satisfy $d_{\mathrm{par}}(w_{1},w_{2}) = \ell(Q)$. Since $\beta_{f}(Q) < \epsilon$ and $\beta_{f}(\hat{Q}) < \epsilon$, there exist horizontal lines $L_{Q} \subset \V_{Q}$ and $L_{\hat{Q}} \subset \V_{\hat{Q}}$ with the property
\begin{displaymath} \{f(w_{1}),f(w_{2})\} \subset N(L_{Q},\epsilon \ell(Q)) \cap N(L_{\hat{Q}},2\epsilon \ell(Q)), \end{displaymath}
and moreover $\angle(L_{\hat{Q}},\W_{\mathcal{T}}) =: \angle(\V_{\hat{Q}},\W_{\mathcal{T}}) < \Sigma/2$. So, it now suffices to prove the following geometric statement for $r := 2\ell(Q) > 0$. Assume that $L_{1},L_{2} \subset \He$ are horizontal lines, $\W'$ is a vertical subgroup, and $p_{1},p_{2} \in \He$ are points satisfying
\begin{itemize}
\item[(a)] $\angle(L_{1},\W') < \Sigma/2$,
\item[(b)] $r/M \leq d(p_{1},p_{2}) \leq Mr$,
\item[(c)] $\{p_{1},p_{2}\} \subset N(L_{1},\epsilon r) \cap N(L_{2},\epsilon r)$.
\end{itemize}
Then $\angle(L_{2},\W') \leq \Sigma$ if $\epsilon > 0$ is chosen small enough, depending on $M,\Sigma$. After rotating $\W'$ to $\W$, if necessary, this statement follows directly from Lemma \ref{lemma0} (more precisely the first part about "$|a_{2}| \leq 2\Sigma$"). \end{proof}

We finally onsider condition \nref{D4}. Let $\kappa \sim_{M} \delta \sim_{M} 1$ be a suitable small constant to be determined soon, and classify the trees according to the following three types:
\begin{enumerate}
\item[(i)] Those with $|Q(\mathcal{T}) \, \setminus \, \cup \mathbf{Leaves}(\mathcal{T})| \geq \kappa \cdot |Q(\mathcal{T})|$.
\item[(ii)] Those with $|\cup \mathbf{Leaves}_{1}(\mathcal{T})| \geq \kappa \cdot |Q(\mathcal{T})|$.
\item[(iii)] Those with $|\cup \mathbf{Leaves}_{2}(\mathcal{T})| \geq (1 - 2\kappa) \cdot |Q(\mathcal{T})|$.
\end{enumerate}
Every tree has one of the types (i)-(iii), because
\begin{displaymath} Q(\mathcal{T}) \subset [Q(\mathcal{T}) \, \setminus \, \cup \mathbf{Leaves}(\mathcal{T})] \cup [\cup \mathbf{Leaves}_{1}(\mathcal{T})] \cup [\cup \mathbf{Leaves}_{2}(\mathcal{T})]. \end{displaymath}
We next claim that \emph{trees of type (iii) do not exist} if the "approximation parameter" $\epsilon > 0$ is chosen small enough, and and the "slope parameter" $\Sigma \geq 1$ is chosen large enough, depending only on $\delta \sim_{M} 1$. To see this, let $\mathcal{T}$ be a hypothetical tree of type (iii), and recall from \eqref{form142} that
\begin{equation}\label{form132} \mathcal{H}^{3}(\Pi_{\theta}[f(Q(\mathcal{T}))]) \geq \delta \mathcal{H}^{3}(Q(\mathcal{T})) \end{equation}
for the unique parameter $\theta \in [-\tfrac{\pi}{2},\tfrac{\pi}{2})$ such that $\W_{\mathcal{T}} = \W_{\theta}$. We claim that
\begin{equation}\label{form133} \mathcal{H}^{3}(\Pi_{\theta}[f(Q)]) = o(\epsilon,\Sigma) \cdot \mathcal{H}^{3}(Q), \qquad Q \in \mathbf{Leaves}_{2}(\mathcal{T}), \end{equation}
where $o(\epsilon,\Sigma)$ denotes a constant which tends to $0$ as $\epsilon \to 0$ and $\Sigma \to \infty$. Let us complete the non-existence proof of trees of type (iii), assuming that \eqref{form133} has already been established. In addition to \eqref{form133}, we will need the following estimate:
\begin{align} \mathcal{H}^{3}(\Pi_{\theta}[f(Q(\mathcal{T}) \, \setminus \, \cup \mathbf{Leaves}_{2}(\mathcal{T}))]) & \lesssim \mathcal{H}^{3}(f(Q(\mathcal{T}) \, \setminus \, \cup \mathbf{Leaves}_{2}(\mathcal{T}))) \notag\\
&\label{form134} \lesssim_{M} \kappa \cdot |Q(\mathcal{T})|. \end{align}
The first inequality follows from the well-known fact that vertical projections do not increase $3$-dimensional Hausdorff measure by more than a constant, see \cite[Lemma 3.6]{CFO}. The second inequality follows from the the $M$-bilipschitz property of $f$, and the hypothesis that $\mathcal{T}$ has type (iii). By \eqref{form134}, we may choose $\kappa \sim_{M} \delta$ so small that $\mathcal{H}^{3}(\Pi_{\theta}[f(Q(\mathcal{T}) \, \setminus \, \cup \mathbf{Leaves}_{2}(\mathcal{T}))]) < (\delta/2) \cdot \mathcal{H}^{3}(Q(\mathcal{T}))$. Using this estimate, \eqref{form132}, and the disjointness of the rectangles in $\mathbf{Leaves}(\mathcal{T})$, we have
\begin{align*} \delta \mathcal{H}^{3}(Q(\mathcal{T})) & \leq \mathcal{H}^{3}(\Pi_{\theta}[f(\cup \mathbf{Leaves}_{2}(\mathcal{T}))]) + \mathcal{H}^{3}(\Pi_{\theta}[f(Q(\mathcal{T}) \, \setminus \, \cup \mathbf{Leaves}_{2}(\mathcal{T}))])\\
& \leq \sum_{Q \in \mathbf{Leaves}_{2}(\mathcal{T})} \mathcal{H}^{3}(\Pi_{\theta}[f(Q)]) + \frac{\delta}{2} \cdot |Q(\mathcal{T})|\\
& \leq o(\epsilon,\Sigma) \sum_{Q \in \mathbf{Leaves}(\mathcal{T})} \mathcal{H}^{3}(Q) + \frac{\delta}{2} \cdot |Q(\mathcal{T})| < \delta |Q(\mathcal{T})|, \end{align*} 
and choosing $\epsilon > 0$ so small and $\Sigma \geq 1$ so large, depending on $\delta \sim_{M} 1$, that $o(\epsilon,\Sigma) < \delta/2$. This contradiction will show that trees of type (iii) exist, once \eqref{form133} has been established.

To prove \eqref{form133}, fix $Q \in \mathbf{Leaves}_{2}(\mathcal{T})$. Applying the rotation $R_{\theta}^{-1}$, we may assume $\theta = 0$, so $\Pi_{\theta} = \Pi$, $\W_{\mathcal{T}} = \W$, and $\angle(\V_{Q}) = \angle(\V_{Q},\W_{\mathcal{T}}) \geq \Sigma/2$, where $\V_{Q}$ is a vertical plane with $f(Q) \subset N(\V,\epsilon \ell(Q))$. Of course we also have $f(Q) \subset B(f(c_{Q}),M\ell(Q))$. Putting this information together, \eqref{form133} will follow from a geometric estimate of the form
\begin{displaymath} \mathcal{H}^{3}(\Pi[B(p,r) \cap N(\V,\epsilon r)]) = o(\epsilon,\Sigma) \cdot r^{3}, \qquad \angle(\V) \geq \Sigma/2, \, p \in \He, \, r > 0. \end{displaymath}
The claim is invariant under scaling and left translation, so we may assume that $\V$ is a vertical subgroup, and $r = 1$. Also, since we only need to consider the situation where $B(p,r) \cap N(\V,\epsilon r) \neq \emptyset$, we may assume that $p \in \V$, replacing "$r$" by "$2r$" if necessary. After this reduction, we may apply one more left translation which keeps the subgroup $\V$ fixed, but sends $p$ to $0$; in other words, it suffices to show
\begin{displaymath} \mathcal{H}^{3}(\Pi[B(0,1) \cap N(\V,\epsilon)]) = o(\epsilon,\Sigma), \qquad \angle(\V) \geq \Sigma/2. \end{displaymath}
This is so obvious that we leave further calculations to the reader (if $\epsilon = 0$ and $\angle(\V) = \infty$, the projection $\Pi(B(0,1) \cap \V)$ is just a segment of length $\sim 1$ on the $t$-axis).

We have now proven that trees of type (iii) do not exist. So, to verify the Carleson packing condition for the tops $Q(\mathcal{T})$ in property \nref{D4}, it suffices to consider trees of type (i)-(ii). First, the sets $Q(\mathcal{T}) \, \setminus \, \cup \mathbf{Leaves}(\mathcal{T}) \subset Q(\mathcal{T})$ are disjoint for distinct trees, so 
\begin{displaymath} \mathop{\sum_{\mathcal{T} \text{ type (i)}}}_{Q(\mathcal{T}) \subset Q_{0}} |Q(\mathcal{T})| \leq \frac{1}{\kappa} \mathop{\sum_{\mathcal{T} \text{ type (i)}}}_{Q(\mathcal{T}) \subset Q_{0}} |Q(\mathcal{T}) \, \setminus \, \cup \mathbf{Leaves}(\mathcal{T})| \leq \frac{1}{\kappa} \cdot |Q_{0}| \sim_{M} |Q_{0}|, \quad Q_{0} \in \mathcal{D}. \end{displaymath} 
To handle trees of type (ii), we use the Carleson packing condition for the strong vertical $\beta$-numbers in Corollary \ref{wgl}, recalling that every leaf $Q \in \mathbf{Leaves}_{1}(\mathcal{T})$ has a child $Q' \in \mathbf{ch}(Q)$ with $\beta_{f}(HQ') \gtrsim_{M} \epsilon$:
\begin{displaymath} \mathop{\sum_{\mathcal{T} \text{ type (ii)}}}_{Q(\mathcal{T}) \subset Q_{0}} |Q(\mathcal{T})| \leq \frac{1}{\kappa} \mathop{\sum_{\mathcal{T} \text{ type (ii)}}}_{Q(\mathcal{T}) \subset Q_{0}} \sum_{Q \in \mathbf{Leaves}_{1}(\mathcal{T})} |Q| \lesssim \frac{1}{\kappa} \mathop{\sum_{Q \subset Q_{0}}}_{\beta(HQ) \gtrsim_{M} \epsilon} |Q| \lesssim_{\epsilon,H,M} |Q_{0}|, \quad Q_{0} \in \mathcal{D}. \end{displaymath} 
This completes the proofs of \nref{D1}-\nref{D4} for the decomposition $\mathcal{D}(\bar{Q}) = \mathcal{B}(\bar{Q}) \cup \mathcal{G}(\bar{Q})$, where $\bar{Q} \in \mathcal{D}$ was an arbitrary "starting rectangle".

To infer, from this special case, a decomposition $\mathcal{D} = \mathcal{B} \cup \mathcal{G}$ satisfying properties \nref{D1}-\nref{D4}, one follows \cite[p. 38]{DS1} verbatim. Choose a  sequence of disjoint "starting rectangles" $\bar{Q}_{1},\bar{Q}_{2},\ldots$ with the following property: every rectangle in $\mathcal{D}_{n}$ is contained in $\bigcup \bar{Q}_{j}$ for $n \geq 0$, and for $n < 0$, the same is still true for every rectangle in $\mathcal{D}_{n} \, \setminus \, \mathcal{E}_{n}$, where $\card \mathcal{E}_{n} \lesssim 1$. Such a sequence $\bar{Q}_{1},\bar{Q}_{2}\ldots $ is easy to pick by hand, or see \cite[p. 38]{DS1} for a more abstract construction; the idea is roughly to cover $B(0,1)$ by disjoint rectangles of side-length $1$, then $B(0,2) \, \setminus \, B(0,1)$ by disjoint rectangles of side-length $2$ (without overlapping the unit rectangles), and so on. Once this is done properly, all the rectangles in $Q \in \mathcal{D}_{n}$ with $\dist(Q,0) \gg 2^{-n}$ will be contained in the union $\bigcup \bar{Q}_{j}$, and of course $\card \{Q \in \mathcal{D}_{n} : \dist(Q,0) \lesssim 2^{-n}\} \lesssim 1$.

Now, we define $\mathcal{G} := \cup \mathcal{G}(\bar{Q}_{j})$ and $\mathcal{B} := \mathcal{D} \, \setminus \, \mathcal{G}$. The tree partition of $\mathcal{G}$ is given as the union of the tree partitions of the families $\mathcal{G}(\bar{Q}_{j}) \subset \mathcal{D}(\bar{Q}_{j})$. The conditions \nref{D2}-\nref{D3} concerning individual trees are valid without problems, and the Carleson packing condition \nref{D4} follows from the disjointness of the rectangles $\bar{Q}_{1},\bar{Q}_{2},\ldots$ The Carleson packing condition for $\mathcal{B}$ postulated in \nref{D1} is also valid, because
\begin{displaymath} \mathcal{B} = \bigcup_{j \geq 1} \mathcal{B}(\bar{Q}_{j}) \cup \bigcup_{n \in \Z} \mathcal{E}_{n}. \end{displaymath}
This completes the construction of the trees satisfying conditions \nref{D1}-\nref{D4}.


\section{Constructing intrinsic bilipschitz graphs}\label{s:bilipschitzGraphs}

Recalling the the decomposition $\mathcal{D} = \mathcal{B} \cup \mathcal{G}$ achieved in the previous section, the proof of Theorem \ref{main} is only one component short: to every tree $\mathcal{T} := \mathcal{T}_{j}$, satisfying the properties stated in \nref{D2}, we need to associate an intrinsic bilipschitz graph $\Gamma_{\mathcal{T}}$ which $\eta$-approximates $f$ in the sense specified in \eqref{form143}. Performing a rotation around the $t$-axis if necessary, we may assume that the tree $\mathcal{T}$ is $(\epsilon,H,\Sigma)$-vertical, where $\epsilon,H$ may be chosen freely (as long as $\epsilon > 0$ was taken small enough, depending on $M$), and $\Sigma$ is a parameter determined by $M$. The existence of $\Gamma_{\mathcal{T}}$ is the content of the following proposition:
\begin{proposition}\label{mainProp} For every $M \geq 1$ and $\eta,\Sigma > 0$, there exist constants $\epsilon > 0$ and $H \geq 10$ such that the following holds. Let $f \colon \W \to \He$ be an $M$-bilipschitz map, and let $\mathcal{T} \subset \mathcal{D}$ be an $(\epsilon,H,\Sigma)$-vertical tree associated with $f$. Then, there exists an intrinsic Lipschitz graph $\Gamma_{\mathcal{T}} \subset \He$ over the plane $\W$ such that
\begin{equation}\label{form103} \dist(f(w),\Gamma_{\mathcal{T}}) \leq \eta \cdot \ell(Q), \qquad w \in 2Q, \, Q \in \mathcal{T}. \end{equation} 
The intrinsic Lipschitz constant of $\Gamma_{\mathcal{T}}$ only depends on $M$ and $\Sigma$. The graph $\Gamma_{\mathcal{T}}$ can be chosen so that it admits a $C$-bilipschitz parametrisation by $\W$, where $C$ also only depends on $M$ and $\Sigma$.
\end{proposition}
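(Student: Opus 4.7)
The plan is to build $\phi \colon \W \to \mathbb{L}$ whose intrinsic graph is $\Gamma_{\mathcal{T}}$, by gluing the approximate quadrics $q_{Q,\ell}$ from Section \ref{s:appQuad} via a Whitney/stopping-time construction driven by the tree $\mathcal{T}$. By Proposition \ref{prop2}, assume $\mathcal{T}$ has signature ``$+$''. For each $Q \in \mathcal{T}$ the vertical plane $\V_{Q}$ is itself an intrinsic graph with $\phi^{\V_{Q}}_{1}(y,t) \equiv a_{Q}y + b_{Q}$ (independent of $t$, since $\V_{Q}$ is a plane), and the approximate quadrics $q_{Q,\ell}$ record the $\Pi$-images of the horizontal lines $L_{Q,\ell} \subset \V_{Q}$. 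The model for $\phi_{1}(y,t)$ is: pick by a dyadic stopping time inside $\mathcal{T}$ a rectangle $Q(w) \in \mathcal{T}$ and a horizontal line $\ell(w)$ such that $w$ lies essentially on $q_{Q(w),\ell(w)}$, then set $\phi_{1}(w) := a_{Q(w)}y + b_{Q(w)}$; one regularises by a dyadic partition of unity adapted to the scales of $\mathcal{T}$ to make $\phi_{1}$ continuous.

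The intrinsic Lipschitz bound \eqref{coneCondition2} reduces to controlling, along chains $Q \supsetneq \hat Q \supsetneq \cdots$ inside $\mathcal{T}$, the per-generation differences of the triples $(a_{Q},b_{Q},c_{Q,\ell})$: these are handled by Lemma \ref{lemma5} for the $c$-coefficients, and by Lemma \ref{lemma0} (applied to $L_{Q,\ell}$ and $L_{\hat Q,\ell}$, which both approximate the same piece of $f(\ell)$) for the $a,b$-coefficients. The per-scale bounds $\lesssim_{\Sigma}\epsilon \ell(Q)$ and $\lesssim_{\Sigma}\epsilon$ telescope to an intrinsic Lipschitz constant depending only on $M,\Sigma$. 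The approximation property \eqref{form103} then follows from Corollary \ref{cor4}: for $w \in 2Q$ with $Q \in \mathcal{T}$, the coordinates $f_{1}(w)$ and $f_{3}(w) + \tfrac{1}{2}f_{1}(w)f_{2}(w)$ are approximated to error $O_{\Sigma}(\epsilon)\ell(Q)$ by $\dot q_{Q,\ell}(f_{2}(w))$ and $q_{Q,\ell}(f_{2}(w))$, whose graph agrees with $\Gamma(\phi)$ up to the same error; choosing $\epsilon, H^{-1}$ small in terms of $\eta, M, \Sigma$ delivers the bound.

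The main novelty is the bilipschitz parametrisation. Because the intrinsic graph map $\Phi(w) = w \cdot \phi(w)$ is almost never bilipschitz, I will parametrise $\Gamma_{\mathcal{T}}$ through the characteristic-straightening map $(s,t) \mapsto (s,\tau(s,t))$, where $\tau(s,t)$ is obtained by following the characteristic of $\Gamma_{\mathcal{T}}$ emanating from $(y_{0},t_{0}) = (s,t)$ and reading off its $t$-coordinate at the horizontal position $s$. In the model construction, within each plane $\V_{Q}$ the characteristic is the quadric $\mathcal{Q}_{Q,\ell_{0}}$ through $(y_{0},t_{0})$, so one sets $\tau(s,t) := q_{Q(s,t),\ell(s,t)}(s)$ after resolving the stopping time at $(s,t)$. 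The upper bound $|\tau(s,t_{1}) - \tau(s,t_{2})| \lesssim_{\Sigma} |t_{1}-t_{2}|$ follows from the same telescoping as in the Lipschitz step.

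The hard part is the matching lower bound $|\tau(s,t_{1}) - \tau(s,t_{2})| \gtrsim |t_{1}-t_{2}|$, corresponding to condition $(\tau_{2})$ flagged in the introduction. The plan is to transfer the bilipschitz separation of $f$ into a uniform separation of the $c_{Q,\ell}$'s across scales. Fixing $Q \in \mathcal{T}$ and two horizontal lines $\ell_{1}, \ell_{2}$ meeting $BQ$, for generically placed $w_{j} \in \ell_{j} \cap \tfrac{1}{2}HQ$ Proposition \ref{prop3} gives $|f_{2}(w_{1}) - f_{2}(w_{2})| \gtrsim d_{\mathrm{par}}(w_{1},w_{2})$; combining this with the common-signature vertical monotonicity of $F_{Q}$ (Corollary \ref{cor3} together with Proposition \ref{prop2}) forces a quantitative gap $|c_{Q,\ell_{1}} - c_{Q,\ell_{2}}| \gtrsim \|\pi_{2}(\ell_{1}) - \pi_{2}(\ell_{2})\|^{2}$ at every scale $Q$ that resolves both lines. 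The main obstacle will be chaining these per-scale gaps consistently across the tree: different $t_{1},t_{2}$ resolve at different scales, and the inter-scale drift from Lemma \ref{lemma5} must not swallow the per-scale gap; pushing $\epsilon$ sufficiently small in $M,\Sigma$ is what makes the geometric-series from Lemma \ref{lemma5} negligible against the quadratic lower bound above, and should close the argument.
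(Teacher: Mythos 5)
Your proposal follows essentially the same route as the paper: the graph is built by gluing the $(Q,\ell)$-approximate quadrics over the tree, it is parametrised by the characteristic straightening $(s,t) \mapsto (s,\tau(s,t))$, the approximation \eqref{form103} comes from Corollary \ref{cor4}, and the crucial separation $(\tau_{2})$ comes from the constant signature together with the vertical QIE property of $F_{Q}$, with inter-scale drift absorbed by taking $\epsilon$ small. The chaining you flag as the main obstacle is exactly what the paper resolves by the generation-by-generation induction of Proposition \ref{t:construction} (axioms (X1)--(X5)), where the separation constant is preserved without loss by averaging the two neighbouring characteristics away from corner data and is re-derived afresh from $f$ near corner data -- via Lemma \ref{lemma6} applied to $\pi_{2}\circ F_{Q}$ on a vertical segment, rather than Proposition \ref{prop3}, which concerns horizontally separated points.
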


The intrinsic graph $\Gamma_{\mathcal{T}}$ will be constructed by specifying its \emph{characteristic curves}, but any general theory of these curves is not required to understand the following proposition: 
\begin{proposition}\label{mainProp2} Let $M \geq 1$, $\Sigma,\eta > 0$, $f \colon \W \to \He$, and $\mathcal{T}$ have the same meaning as in Proposition \ref{mainProp}. Then, the following holds if $H,K \geq 1$ are chosen large enough depending on $M,\Sigma,\eta$, and $\epsilon \in (0,1]$ is chosen small enough depending on all the previous parameters. There exists a map $\tau = \tau_{\mathcal{T}} \colon \W \to \R^{2}$ with the following properties.
\begin{itemize}
\item[($\tau_{1}$)] For every $t \in \R$ fixed, $y \mapsto \tau(y,t) \in C^{1,1}(\R)$, and \emph{a fortiori} $\partial_{y} \tau(y,t)$ is $2\Sigma$-Lipschitz:
\begin{displaymath} |\partial_{y} \tau(y_{2},t) - \partial_{y} \tau(y_{2},t)| \leq 2\Sigma |y_{2} - y_{1}|, \qquad y_{1},y_{2} \in \R. \end{displaymath}
\item[($\tau_{2}$)]  There exists a constant $\theta > 0$, depending only on $M,\Sigma$, such that
\begin{displaymath} \theta \cdot |t_{2} - t_{1}| \leq |\tau(y,t_{2}) - \tau(y,t_{1})| \leq \theta^{-1} \cdot |t_{2} - t_{1}|, \qquad t_{1},t_{2},y \in \R. \end{displaymath}
In particular, the graphs $\{(y,\tau(y,t)) : y \in \R\}$, $t \in \R$, foliate $\R^{2}$. 
\item[($\tau_{3}$)] For every $y \in \R$ fixed, the map $t \mapsto \partial_{y} \tau(y,t)$ is $\tfrac{1}{2}$-H\"older continuous with constant $\eta$:
\begin{displaymath} |\partial_{y} \tau(y,t_{2}) - \partial_{y} \tau(y,t_{1})| \leq \eta \sqrt{|t_{2} - t_{1}|}, \qquad t_{1},t_{2} \in \R. \end{displaymath}
\item[($\tau_{4}$)] Let $Q \in \mathcal{T}$ be a rectangle with corner $w := (x,t) \in \mathcal{C}(Q)$, and let $\ell \subset \W$ be the unique horizontal line containing $w$. Let $q := q_{Q,\ell}$ be the $(Q,\ell)$-approximate quadric introduced in Section \ref{s:appQuad}. Then,
\begin{displaymath} |\tau(y,t) - q(y)| \leq \eta \cdot \ell(Q)^{2} \quad \text{and} \quad |\dot{\tau}(y,t) - \dot{q}(y)| \leq \eta \cdot \ell(Q) \end{displaymath}
for all $y \in B(f_{2}(w),K\ell(Q))$, where $f_{2}$ is the second component of $f$.
\end{itemize}
 \end{proposition}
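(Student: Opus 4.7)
The plan is to construct $\tau$ as a scale-averaged interpolation of the approximate quadrics $q_{Q,\ell}$ from Section \ref{s:appQuad}. Fix a large auxiliary constant $K$ (to be determined by $M,\Sigma,\eta$) and, for each $n \in \Z$ and each $(y,t) \in \W$, select a rectangle $Q_{n}(y,t) \in \mathcal{T}$ of scale $2^{-n}$ whose "thickened" parent contains $(y,t)$ in a suitable sense (using Corollary \ref{cor5} to make choices on horizontal lines of generation $n$ compatible as $n$ varies, and the leaves of $\mathcal{T}$ to stop the descent). Set $q_{n}(y,t) := q_{Q_{n}(y,t),\,\ell_{t}}(y)$ and define
\begin{equation*}
\tau(y,t) \; := \; \sum_{n \in \Z} \psi_{n}(y,t)\, q_{n}(y,t),
\end{equation*}
where $\{\psi_{n}\}$ is a smooth (in $y$) dyadic partition of unity adapted to the tree, supported so that $\psi_{n}(y,t) \neq 0$ only when $(y,t)$ lives at "scale $\sim 2^{-n}$" inside $\mathcal{T}$; on the complement of $\cup\mathcal{T}$ one uses only the top $Q(\mathcal{T})$. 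Since each $q_{n}(y,t)$ is a quadratic in $y$ with leading coefficient $\tfrac{1}{2}a_{Q_{n}}$, $|a_{Q_{n}}|\leq\Sigma$, the series produces a $C^{1,1}$ function in $y$.

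Condition $(\tau_{1})$ is immediate: every $\partial_{y} q_{n}(y,t) = a_{Q_n}y + b_{Q_n}$ is $\Sigma$-Lipschitz in $y$, and an application of Lemma \ref{lemma0} (applied to the horizontal lines $L_{Q,\ell_{t}}$ and $L_{\hat{Q},\ell_{t}}$ with $r = \ell(Q)/M$, as in Lemma \ref{lemma5}) gives $|a_{Q} - a_{\hat{Q}}| \lesssim (1{+}\Sigma)^{2}M^{2}\epsilon$ between consecutive scales, so the summation over $n$ adds only $O((1+\Sigma)^{2}M^{2}\epsilon)$ to the Lipschitz constant; choosing $\epsilon$ small yields the bound $2\Sigma$. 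Condition $(\tau_{3})$ follows from the same coefficient comparison combined with the QIE property of $t \mapsto c_{Q,\ell_{t}}$ (Lemma \ref{lemma3}): variations of $\partial_{y}q_{n}(y,t)$ in $t$ at fixed scale $2^{-n}$ are $\lesssim \epsilon \cdot 2^{-n}$ when $\|t_{2}-t_{1}\|<2^{-n}$, and one sums the geometric series. For $(\tau_{4})$, if $Q \in \mathcal{T}$ has corner $w\in\mathcal{C}(Q)$ on $\ell=\ell_{t}$, then at every scale $n$ with $2^{-n}\geq \ell(Q)$ the selected rectangle $Q_{n}(y,t)$ is a $\mathcal{D}$-ancestor of $Q$, and Lemma \ref{lemma5} telescopes to
\begin{equation*}
|q_{n}(y,t) - q_{Q,\ell_{t}}(y)| \;\lesssim\; (1{+}\Sigma)^{1/2}\epsilon \,\ell(Q)^{2} \quad \text{and analogously for derivatives},
\end{equation*}
uniformly for $y \in B(f_{2}(w),K\ell(Q))$; at scales $2^{-n} < \ell(Q)$ the partition of unity vanishes. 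Choosing $\epsilon$ small in terms of $\eta,K,M,\Sigma$ gives $(\tau_{4})$.

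The main obstacle is condition $(\tau_{2})$, the bilipschitz estimate in $t$; this is exactly the property that, as the introduction stresses, prevents "crossing characteristics". The key observation is that $\tau(y,t_{2}) - \tau(y,t_{1})$ is, up to controlled errors in the coefficients $a_{Q_{n}},b_{Q_{n}}$, equal to a weighted sum of the differences $c_{Q_{n}(y,t_{2}),\ell_{t_{2}}} - c_{Q_{n}(y,t_{1}),\ell_{t_{1}}}$. For the scale $n_{0}$ with $4^{-n_{0}} \sim |t_{2}-t_{1}|$ one arranges $Q_{n_{0}}(y,t_{1}) = Q_{n_{0}}(y,t_{2}) =: Q_{0}$, and then Corollary \ref{c:HorizQIE_W_W} and Lemma \ref{lemma3} applied to the horizontal QIE $F_{Q_{0}}$ yield
\begin{equation*}
|c_{Q_{0},\ell_{t_{2}}} - c_{Q_{0},\ell_{t_{1}}}| \;=\; |\pi_{2}(F_{Q_{0}}(y,t_{2})) - \pi_{2}(F_{Q_{0}}(y,t_{1}))| \;\sim_{M,\Sigma}\; |t_{2}-t_{1}|,
\end{equation*}
producing the two-sided bound with $\theta \sim_{M,\Sigma} 1$. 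The contribution from scales $n < n_{0}$ (coarser than $|t_{2}-t_{1}|$) is controlled by the same Lemma \ref{lemma3}-style QIE estimate applied to the correspondingly larger rectangle, and the contribution from scales $n > n_{0}$ (finer) is a small perturbation by Lemma \ref{lemma5} and a geometric summation. The bilipschitz $f$-hypothesis enters precisely through the QIE property of $F_{Q}$, which uses the full bilipschitz constant $M$ and cannot be obtained from vertical approximation alone. Combining these three scale regimes, and absorbing the $O(\epsilon)$ perturbations from the coefficient variations into the main term by taking $\epsilon$ small compared to $\theta$, gives $(\tau_{2})$.

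Once $\tau$ is produced, the intrinsic graph $\Gamma_{\mathcal{T}}$ of Proposition \ref{mainProp} is assembled from the characteristic curves $y \mapsto (y,\tau(y,t))$ via the characteristic straightening formalism referred to in the introduction: $(\tau_{2})$ ensures the straightening map is bilipschitz, hence $\Gamma_{\mathcal{T}}$ is in fact an intrinsic bilipschitz graph; $(\tau_{4})$ gives the approximation \eqref{form103} after passing from $\W$-coordinates back to $\He$ using Corollary \ref{cor4}; and $(\tau_{1})$, $(\tau_{3})$ produce the intrinsic Lipschitz bound. The anticipated hardest bookkeeping is in Step (iii) above: one must select the rectangles $Q_{n}(y,t)$ in a way that is simultaneously compatible across both varying $n$ and varying $t$, so that the telescoping comparisons via Lemma \ref{lemma5} remain valid when both indices move. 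Corollary \ref{cor5} is the combinatorial device that makes this bookkeeping possible.
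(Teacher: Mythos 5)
There is a genuine gap, and it sits exactly at the property the whole construction is designed to protect, namely $(\tau_{2})$. Your argument for the lower bound selects a tree rectangle $Q_{0}=Q_{n_{0}}(y,t_{1})=Q_{n_{0}}(y,t_{2})$ at the scale $2^{-n_{0}}\sim\sqrt{|t_{2}-t_{1}|}$ and invokes the QIE property of $F_{Q_{0}}$. But $\mathcal{T}$ has leaves (it may even consist of the single rectangle $Q(\mathcal{T})$), so no such $Q_{0}$ exists once $\sqrt{|t_{2}-t_{1}|}$ drops below the local leaf scale, nor at locations outside the tree, nor at coarse scales where $\ell_{t_{1}},\ell_{t_{2}}$ no longer meet $HQ(\mathcal{T})$ and the quadrics $q_{Q(\mathcal{T}),\ell_{t}}$ are not even controlled. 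Worse, even when $Q_{0}$ exists, Lemma \ref{lemma3}/Lemma \ref{lemma6} only make $t\mapsto c_{Q_{0},\ell_{t}}$ an $(N,2\epsilon\ell(Q_{0}))$-QIE: the additive error $\epsilon\ell(Q_{0})$ wipes out any lower bound when $\|t_{2}-t_{1}\|\lesssim\epsilon\ell(Q_{0})$, and the best-approximating lines $L_{Q,\ell}$ are only defined up to an $\epsilon\ell(Q)$ ambiguity, so two nearby lines can receive identical constants. With $\epsilon$ fixed, this kills $(\tau_{2})$ at fine scales for any construction that reads the separation off the quadrics themselves. The paper's proof (Proposition \ref{t:construction}) avoids this by building the characteristics inductively on the dyadic lines $\mathcal{L}_{n}$: near corner data of generation $\ceil{n/2}$ the quadrics are imposed, but elsewhere a new line's characteristic is defined as the \emph{average} of its two generation-$(n-1)$ neighbours, and far from $Q(\mathcal{T})$ at generation $0$ one uses explicit vertical translates with spacing $\sim N^{1/2}$; the separation axiom \eqref{X2} is then propagated by construction (together with the constant signature of the tree, Proposition \ref{prop2}) rather than re-derived from QIE estimates at every scale. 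That interpolation mechanism is the missing idea in your proposal.

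Two further points would need repair even in the regime where your argument applies. First, the telescoping you invoke for $(\tau_{4})$ is quantitatively wrong: Lemma \ref{lemma5} gives $|c_{Q,\ell}-c_{\hat{Q},\ell}|\lesssim(1+\Sigma)(\epsilon\ell(Q))^{2}$ per parent--child step, so summing from scale $\ell(Q)$ up to scale $2^{-n}$ produces an error $\sim\epsilon^{2}4^{-n}$ governed by the \emph{coarsest} scale, not $\epsilon\ell(Q)^{2}$; hence coarse-scale terms $\psi_{n}q_{n}$ must be given zero weight near fine-scale corner data (this is precisely the overwriting structure of \eqref{form28}--\eqref{form31}), which your "scale-averaged" sum does not arrange. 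Second, your selection of $Q_{n}(y,t)$ by containment of $(y,t)$ conflates the domain of $f$ with the domain of $\tau$: the variable $y$ lives in image coordinates, the relevant neighbourhoods are $B(f_{2}(w),K2^{-n/2})$, and at a given $y$ several non-nested rectangles of the same generation can contribute. Comparing their quadrics is not a parent--child matter handled by Lemma \ref{lemma5} or Corollary \ref{cor5}; it requires the global bilipschitz estimate for $f_{2}$ along the tree (Proposition \ref{prop3}) combined with the quadric comparison lemma, i.e. Proposition \ref{prop4}, which your outline does not use.
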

 
\begin{remark}\label{rem3} Property ($\tau_{4}$) captures, roughly speaking, that the values of $\tau$ are dictated by approximate quadrics associated with rectangles $Q \in \mathcal{T}$. This statement encodes the relation between $\tau$ and $f$, because, by Corollary \ref{cor4}, the values of the approximate quadrics evaluated at $f_{2}(w)$ are nearly determined by $f(w)$. The property ($\tau_{4}$) is eventually used to verify the $\eta$-approximation in \eqref{form103}, and for this purpose, we will need to choose $K \geq A(1 + \Sigma)M$ for a sufficiently large absolute constant $A \geq 1$.  \end{remark}

During the remainder of this section, we will reduce the proof of Proposition \ref{mainProp} (hence Theorem \ref{main}) to Proposition \ref{mainProp2}, which we now take for granted. The proof of Proposition \ref{mainProp2} is contained in the next section. To prove Proposition \ref{mainProp}, we define a function $\phi_{\mathcal{T}}$ implicitly by
\begin{displaymath} \phi_{\mathcal{T}}(0,y,\tau(y,t)) := \dot{\tau}(y,t), \qquad (y,t) \in \W, \end{displaymath} 
where $\dot{\tau}(y,t) := \partial_{y}\tau(y,t)$, here and always. The definition of $\phi_{\mathcal{T}}$ is well-posed by properties ($\tau_{1}$)-($\tau_{2}$). We also define the \emph{intrinsic graph map}
\begin{align*} \Phi_{\mathcal{T}}(0,y,\tau(y,t)) & := (0,y,\tau(y,t)) \cdot (\dot{\tau}(y,t),0,0), \qquad (y,t) \in \W, \end{align*}
and the intrinsic graph $\Gamma_{\mathcal{T}} := \{\Phi_{\mathcal{T}}(0,y,t) : (0,y,t) \in \W\}$. 

\begin{remark} I explain, briefly and informally, how the properties ($\tau_{1}$)-($\tau_{4}$) of the map "$\tau$" translate into geometric properties of $\Gamma_{\mathcal{T}}$. The property ($\tau_{1}$) says nothing beyond "$\phi_{\mathcal{T}}$ is an intrinsic Lipschitz function". Property ($\tau_{2}$) is the deepest property, and allows us to find a bilipschitz parametrisation for $\Gamma_{\mathcal{T}}$; more on this very soon. Property $(\tau_{3})$ means that the graph $\Gamma_{\mathcal{T}}$ looks everywhere, and at every scale, like a "vertical Lipschitz flag", up to an error quantified by the parameter $\eta$. Namely, if $\eta = 0$, then $\dot{\tau}(y,t)$ is independent of $t$, and therefore also $(0,y,t) \mapsto \phi_{\mathcal{T}}(0,y,t)$ is independent of $t$. And the intrinsic graph of an intrinsic Lipschitz function, which does not depend on the $t$-variable, has the form $\Gamma_{\R^{2}} \times \R \subset \He$, where $\Gamma_{\R_{2}} \subset \R^{2}$ is a Lipschitz graph. 

Finally, condition ($\tau_{4}$) means that the "Lipschitz flag" mentioned above looks like a vertical plane near all points of the form $\Phi_{\mathcal{T}}(0,f_{2}(w),t)$, where $w = (x,t) \in \mathcal{C}(Q)$ and $Q \in \mathcal{T}$; to see this, try replacing "$\tau$" and "$\dot{\tau}$" by "$q$" and "$\dot{q}$" in the definition of $\Phi_{\mathcal{T}}$, and notice (or check form \eqref{form105}) that the image of $\Phi_{\mathcal{T}}$ is a vertical plane in this case. \end{remark}

The map $\Phi_{\mathcal{T}}$ is the "standard" intrinsic graph map of $\phi_{\mathcal{T}}$, and is almost never a bilipschitz map $\W \to \Gamma_{\mathcal{T}}$, as explained in Section \ref{s:iLip}. To obtain a \textbf{bi}lipschitz parametrisation of $\Gamma_{\mathcal{T}}$, one instead needs to consider $\Psi \colon \W \to \He$ defined by 
\begin{equation}\label{form104} \Psi(y,t) := \Phi_{\mathcal{T}}(0,y,\tau(y,t)), \qquad (y,t) \in \W. \end{equation}
It is not a new idea that the map "$\Psi$" above is a good candidate for bilipschitz parametrising $\Gamma_{\mathcal{T}}$ by $\W$; the same map was used earlier in, at least, \cite{MR2247905} and \cite{MR2603594} in a scenario where $\phi_{\mathcal{T}}$ is assumed \emph{a priori} Euclidean Lipschitz regular. This need not be true in our setting, but condition ($\tau_{2}$) is an adequate substitute.

\begin{proposition}\label{bilipProp} The function $\phi_{\mathcal{T}}$ is an intrinsic Lipschitz function. Further, the map $\Psi$ is a bilipschitz homeomorphism $(\W,d_{\mathrm{par}}) \to \Gamma_{\mathcal{T}}$. The intrinsic Lipschitz, and bilipschitz constants, here, depend only on $\eta,\Sigma,\theta$ in the properties $(\tau_{1})$-$(\tau_{3})$.  \end{proposition}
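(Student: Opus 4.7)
The plan is to derive an explicit formula for $\Psi$ from the Heisenberg group law, then read off both claims from direct estimates of the entries of $\Psi_1^{-1}\cdot\Psi_2$ via properties $(\tau_1)$--$(\tau_3)$. Using $(0,y,s)\cdot(x,0,0) = (x,y,s-\tfrac{1}{2}xy)$, one finds immediately
\begin{displaymath} \Psi(y,t) = \bigl(\dot{\tau}(y,t),\, y,\, \tau(y,t) - \tfrac{1}{2}y\,\dot{\tau}(y,t)\bigr). \end{displaymath}
The map $\phi_{\mathcal{T}}$ is well-defined because $(\tau_2)$ makes $t \mapsto \tau(y,t)$ a homeomorphism of $\R$ for each fixed $y$, so $(y,t) \mapsto (y,\tau(y,t))$ is a bijection of $\W$, and then $\Psi(\W) = \Phi_{\mathcal{T}}(\W) = \Gamma_{\mathcal{T}}$ is automatic. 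Writing $\tau_j := \tau(y_j,t_j)$ and $\dot{\tau}_j := \dot{\tau}(y_j,t_j)$, a direct expansion gives
\begin{displaymath} \Psi_1^{-1}\Psi_2 = \bigl(\dot{\tau}_2 - \dot{\tau}_1,\; y_2 - y_1,\; (\tau_2 - \tau_1) - \tfrac{1}{2}(y_2-y_1)(\dot{\tau}_1+\dot{\tau}_2)\bigr), \end{displaymath}
and applying $\Pi(x,y,t) = (0,y,t+\tfrac{1}{2}xy)$ the $\W$-projection collapses to the much cleaner expression
\begin{displaymath} \Pi(\Psi_1^{-1}\Psi_2) = \bigl(0,\; y_2 - y_1,\; \tau_2 - \tau_1 - (y_2-y_1)\,\dot{\tau}_1\bigr). \end{displaymath}

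To prove $\Psi$ is bilipschitz, I would bound each coordinate of $\Psi_1^{-1}\Psi_2$ against $d_{\mathrm{par}}((y_1,t_1),(y_2,t_2)) = \max\{|y_2-y_1|,\sqrt{|t_2-t_1|}\}$. The horizontal coordinate splits at $\dot{\tau}(y_2,t_1)$ into a pure $y$-increment (at most $2\Sigma|y_2-y_1|$ by $(\tau_1)$) and a pure $t$-increment (at most $\eta\sqrt{|t_2-t_1|}$ by $(\tau_3)$). For the third coordinate, I would introduce the trapezoidal-rule error
\begin{displaymath} E_y := \tau(y_2,t_1)-\tau(y_1,t_1) - \tfrac{1}{2}(y_2-y_1)\bigl(\dot{\tau}(y_1,t_1)+\dot{\tau}(y_2,t_1)\bigr), \end{displaymath}
decompose the third coordinate as $E_y + [\tau(y_2,t_2)-\tau(y_2,t_1)] - \tfrac{1}{2}(y_2-y_1)[\dot{\tau}(y_2,t_2)-\dot{\tau}(y_2,t_1)]$, and use $|E_y| \lesssim \Sigma|y_2-y_1|^2$ from the $C^{1,1}$-bound $(\tau_1)$, $|\tau(y_2,t_2)-\tau(y_2,t_1)| \leq \theta^{-1}|t_2-t_1|$ from $(\tau_2)$, and the cross-term bound $\lesssim \eta|y_2-y_1|\sqrt{|t_2-t_1|}$ from $(\tau_3)$. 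For the lower bound, the $Y$-component handles the regime $|y_2-y_1| \gtrsim \sqrt{|t_2-t_1|}$ directly; in the opposite regime, $(\tau_2)$ gives $|\tau(y_2,t_2)-\tau(y_2,t_1)| \geq \theta|t_2-t_1|$, which dominates $|E_y|$ since $\Sigma|y_2-y_1|^2 \ll \theta|t_2-t_1|$ there, forcing $\sqrt{|T|} \gtrsim_{\Sigma,\theta}\sqrt{|t_2-t_1|}$ (with $\eta$ chosen small enough in $(\tau_3)$ to neutralize the cross-term similarly).

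For intrinsic Lipschitz it suffices, by \eqref{coneCondition2}, to prove
\begin{displaymath} |\dot{\tau}_2-\dot{\tau}_1| \leq L\cdot\max\bigl\{|y_2-y_1|,\,\sqrt{|\tau_2-\tau_1-(y_2-y_1)\dot{\tau}_1|}\bigr\}, \end{displaymath}
with $L = L(\eta,\Sigma,\theta)$. Splitting $\dot{\tau}_2-\dot{\tau}_1 = [\dot{\tau}(y_2,t_2)-\dot{\tau}(y_2,t_1)] + [\dot{\tau}(y_2,t_1)-\dot{\tau}(y_1,t_1)]$, the second bracket is at most $2\Sigma|y_2-y_1|$ by $(\tau_1)$ and is controlled by the first maximand. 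The first bracket is at most $\eta\sqrt{|t_2-t_1|}$ by $(\tau_3)$, and a dichotomy finishes the job: either $\theta|t_2-t_1|\geq 2\Sigma|y_2-y_1|^2$, in which case $(\tau_2)$ dominates the $C^{1,1}$-error to yield $|\tau_2-\tau_1-(y_2-y_1)\dot{\tau}_1|\geq\tfrac{\theta}{2}|t_2-t_1|$ and the second maximand handles $\sqrt{|t_2-t_1|}$; or $\theta|t_2-t_1| < 2\Sigma|y_2-y_1|^2$, in which case $\sqrt{|t_2-t_1|} \lesssim_{\Sigma,\theta}|y_2-y_1|$ and the first maximand suffices.

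The main obstacle is precisely this coupling of $(\tau_1)$ and $(\tau_2)$ in the lower estimates: the trapezoidal error $\Sigma|y_2-y_1|^2$ appearing in the third coordinates of both $\Psi_1^{-1}\Psi_2$ and $\Pi(\Psi_1^{-1}\Psi_2)$ threatens to cancel against the $t$-monotonicity $\theta|t_2-t_1|$ from $(\tau_2)$ in the mixed regime $|y_2-y_1|^2 \sim |t_2-t_1|$. The dichotomy above isolates exactly this regime of potential cancellation and shows that $(\tau_2)$ wins there, while the complementary regime reduces cleanly to $(\tau_1)$ or $(\tau_3)$.
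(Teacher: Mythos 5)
Your proposal is correct and follows essentially the same route as the paper: both compute the coordinates of $\Psi_1^{-1}\cdot\Psi_2$ and of $\Pi(\Psi_1^{-1}\cdot\Psi_2)$, bound them using $(\tau_1)$ for the Taylor/trapezoidal error $\lesssim\Sigma|y_2-y_1|^2$, $(\tau_2)$ for the $t$-separation, and $(\tau_3)$ for the H\"older cross-terms, and then run the same dichotomy between the regimes $|y_2-y_1|\gtrsim\sqrt{|t_2-t_1|}$ and its complement for both the bilipschitz lower bound and the cone condition \eqref{coneCondition2}. The only differences are cosmetic (you lower-bound the third coordinate of $\Psi_1^{-1}\cdot\Psi_2$ directly, where the paper uses a triangle inequality through the intermediate point $\Psi(y_1,t_2)$), so no further comment is needed.
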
 

Recalling that $\theta \sim_{M,\Sigma} 1$, and choosing $0 < \eta < \min\{\Sigma,\theta\}$, Proposition \ref{bilipProp} implies the claims in Proposition \ref{mainProp} about the intrinsic Lipschitz constant of $\Gamma_{\mathcal{T}}$, and the bilipschitz constant of its $\W$-parametrisation $\Psi$.

\begin{proof}[Proof of Proposition \ref{bilipProp}] Fix $y_{1},y_{2},t_{1},t_{2} \in \R$, and write $p_{j} := \Psi(y_{j},t_{j})$ for $j \in \{1,2\}$. These are two arbitrary points on $\Gamma_{\mathcal{T}}$. We record immediately that $p^{-1} \cdot p_{2} = \Psi(y_{1},t_{1})^{-1} \cdot \Psi(y_{2},t_{2}) = (a,b,c)$, where
\begin{equation}\label{form97} \begin{cases} a = \dot{\tau}(y_{2},t_{2}) - \dot{\tau}(y_{1},t_{1}), \\ b = y_{2} - y_{1}, \\ c = \tau(y_{2},t_{2}) - \tau(y_{1},t_{1}) - \tfrac{1}{2}(y_{2} - y_{1}) \cdot [\dot{\tau}(y_{1},t_{1}) + \dot{\tau}(y_{2},t_{2})]. \end{cases} \end{equation}
To prove that $\phi_{\mathcal{T}}$ is intrinsic Lipschitz, we need to show (recall \eqref{coneCondition2}) that
\begin{equation}\label{form115} |\dot{\tau}(y_{2},t_{2}) - \dot{\tau}(y_{1},t_{1})| \lesssim_{\eta,\Sigma,\theta} \|\Pi(p_{1}^{-1} \cdot p_{2})\|, \end{equation}
and to prove that $\Psi$ is bilipschitz, we need to show that
\begin{equation}\label{form116} d(p_{1},p_{2}) = \|p_{1}^{-1} \cdot p_{2}\| \sim_{\eta,\Sigma,\theta} \max\{|y_{1} - y_{2}|, \sqrt{|t_{1} - t_{2}|}\}. \end{equation}
We start with the latter task. The third component "$c$" in \eqref{form97} simplifies if either $y_{1} = y_{2}$ or $t_{1} = t_{2}$. First, if $y_{1} = y = y_{2}$, then 
\begin{equation}\label{form98} |c| = |\tau(y,t_{2}) - \tau(y,t_{1})| \in [\theta |t_{2} - t_{1}|,\theta^{-1}|t_{2} - t_{1}|] \end{equation}
by condition ($\tau_{2}$). Second, if $t_{1} = t = t_{2}$, then
\begin{equation}\label{form99} |c| = \left| \int_{y_{1}}^{y_{2}} \frac{2\dot{\tau}(y,t) - [\dot{\tau}(y_{1},t) + \dot{\tau}(y_{2},t)]}{2} \, dy \right| \lesssim \Sigma |y_{2} - y_{1}|^{2} \end{equation} 
by condition ($\tau_{1}$). Since moreover, by conditions ($\tau_{1})$ and ($\tau_{3}$),
\begin{displaymath} |a| \leq |\dot{\tau}(y_{2},t_{2}) - \dot{\tau}(y_{1},t_{2})| + |\dot{\tau}(y_{1},t_{2}) - \dot{\tau}(y_{1},t_{1})| \leq 2\Sigma |y_{2} - y_{1}| + \eta \sqrt{|t_{2} - t_{1}|}, \end{displaymath}
we can now use the triangle inequality to conclude from \eqref{form98}-\eqref{form99} that
\begin{displaymath} d(p_{1},p_{2}) \lesssim \max\{\Sigma,\sqrt{\Sigma},\eta,\theta^{-1/2}\} \cdot d_{\mathrm{par}}((y_{1},t_{1}),(y_{2},t_{2})). \end{displaymath} 
Hence $\Psi \colon \W \to \Gamma_{\mathcal{T}}$ is Lipschitz. To prove a lower bound, observe first that
\begin{displaymath} d(\Psi(y_{1},t_{1}),\Psi(y_{2},t_{2})) = d(p_{1},p_{2}) \geq |b| = |y_{2} - y_{1}|. \end{displaymath} 
So, if $\sqrt{|t_{2} - t_{1}|} \lesssim_{\Sigma,\theta} |y_{2} - y_{1}|$, the lower bound is clear. If, on the other hand $|y_{2} - y_{1}| \ll \sqrt{|t_{1} - t_{2}|}$, or more precisely $(1 + 10\max\{\Sigma,\sqrt{\Sigma}\})|y_{2} - y_{1}| \leq \tfrac{1}{2}\sqrt{\theta}\sqrt{|t_{2} - t_{1}|}$, then we use \eqref{form98}-\eqref{form99}, and the upper bound for $|a|$ (with $t_{1} = t_{2}$) to deduce
\begin{align*} d(p_{1},p_{2}) & \geq d(\Psi(y_{1},t_{1}),\Psi(y_{1},t_{2})) - d(\Psi(y_{1},t_{2}),(y_{2},t_{2}))\\
& \geq \sqrt{\theta}\sqrt{|t_{2} - t_{1}|} - (1 + 10\max\{\Sigma,\sqrt{\Sigma}\})|y_{2} - y_{1}|\\
& \geq \tfrac{1}{2}\sqrt{\theta}\sqrt{|t_{2} - t_{1}|} \gtrsim \sqrt{\theta} \cdot d_{\mathrm{par}}((y_{1},t_{1}),(y_{2},t_{2})). \end{align*}  
This completes the proof of the bilipschitz property \eqref{form116}. 

We then turn to the proof of the intrinsic Lipschitz condition \eqref{form115}. First, we spell out
\begin{displaymath} \Pi(p_{1}^{-1} \cdot p_{2}) = (0,y_{2} - y_{1},\tau(y_{2},t_{2}) - \tau(y_{1},t_{1}) - (y_{2} - y_{1}) \dot{\tau}(y_{1},t_{1})). \end{displaymath} 
Denote the third component "$h$", and estimate it from below as follows:
\begin{align*} |h| & \geq |\tau(y_{2},t_{2}) - \tau(y_{2},t_{1})| - |\tau(y_{2},t_{1}) - \tau(y_{1},t_{1}) - (y_{2} - y_{2})\dot{\tau}(y_{1},t_{1})|\\
& \geq \theta \cdot |t_{2} - t_{1}| - \left| \int_{y_{1}}^{y_{2}} \dot{\tau}(y,t_{1}) - \dot{\tau}(y_{1},t_{1}) \, dy \right| \geq \theta \cdot |t_{2} - y_{1}| - 2\Sigma \cdot |y_{2} - y_{1}|^{2}. \end{align*}
We used here properties ($\tau_{1}$)-($\tau_{2}$). Consequently, if $2\Sigma \cdot |y_{1} - y_{2}|^{2} \leq \tfrac{\theta}{2} \cdot |t_{2} - t_{1}|$, then $\|\Pi(p_{1}^{-1} \cdot p_{2})\| \geq \sqrt{|h|} \gtrsim_{\theta} \sqrt{|t_{2} - t_{1}|}$. In the opposite case,
\begin{displaymath} \|\Pi(p^{-1} \cdot p_{2})\| \geq |y_{2} - y_{1}| \gtrsim_{\Sigma,\theta} \sqrt{|t_{2} - t_{1}|}. \end{displaymath}
In both cases, $\|\Pi(p^{-1} \cdot p_{2})\| \gtrsim_{\Sigma,\theta} |y_{2} - y_{1}| + \sqrt{|t_{2} - t_{1}|}$.

Next, we use properties ($\tau_{2}$)-($\tau_{3}$):
\begin{align*} |\dot{\tau}(y_{2},t_{2}) - \dot{\tau}(y_{1},t_{1})| & \leq |\dot{\tau}(y_{2},t_{2}) - \dot{\tau}(y_{1},t_{2})| + |\dot{\tau}(y_{1},t_{2}) - \dot{\tau}(y_{1},t_{1})|\\
& \leq 2\Sigma \cdot |y_{2} - y_{1}| + \eta \cdot \sqrt{|t_{2} - t_{1}|}. \end{align*}
Comparing this upper bound with the lower bound $\|\Pi(p^{-1} \cdot p_{2})\| \gtrsim_{\Sigma,\theta} |y_{2} - y_{1}| + \sqrt{|t_{2} - t_{1}|}$ completes the proof of \eqref{form115}, and the proof of the proposition. \end{proof}

The next proposition shows that the intrinsic Lipschitz graph $\Gamma_{\mathcal{T}}$ satisfies the main requirement of Proposition \ref{mainProp}, namely \eqref{form103}. This will be based on properties ($\tau_{3}$)-($\tau_{4}$).

\begin{proposition}\label{approxProp} Let $Q \in \mathcal{T}$, and assume that the constant $K$ in ($\tau_{4}$) satisfies $K \geq A(1 + \Sigma)M$. Then, $\dist(f(w),\Gamma_{\mathcal{T}}) = o(\eta) \cdot \ell(Q)$ for all $w \in 2Q$.
\end{proposition}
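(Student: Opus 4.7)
The natural candidate on $\Gamma_{\mathcal{T}}$ is constructed as follows. By property $(\tau_{2})$, for each fixed $y \in \R$ the map $t \mapsto \tau(y,t)$ is a bijection of $\R$, so there exists a unique $t^{\ast} \in \R$ with $\tau(f_{2}(w), t^{\ast}) = T(w)$, where $T(w) := f_{3}(w) + \tfrac{1}{2} f_{1}(w) f_{2}(w)$ is the third coordinate of $\Pi(f(w))$. I set $p := \Phi_{\mathcal{T}}(0, f_{2}(w), \tau(f_{2}(w), t^{\ast})) \in \Gamma_{\mathcal{T}}$. A direct computation with the Heisenberg group law, using the defining identity $\tau(f_{2}(w), t^{\ast}) = T(w)$ to cancel the third component, produces $p^{-1} \cdot f(w) = (f_{1}(w) - \dot{\tau}(f_{2}(w), t^{\ast}), 0, 0)$, so that $d(f(w), p) = |f_{1}(w) - \dot{\tau}(f_{2}(w), t^{\ast})|$. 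The proof is thereby reduced to bounding this scalar.

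Select a corner $w_{1} = (y_{0}, t_{1}) \in \mathcal{C}(Q)$ minimising $|t_{w} - t_{1}|$; since $w \in 2Q$, this gives $|t_{w} - t_{1}| \lesssim \ell(Q)^{2}$. Let $\ell_{1}, \ell_{w} \subset \W$ denote the horizontal lines through $w_{1}, w$, and write $q := q_{Q, \ell_{1}}$. A key observation is that $\dot{q}_{Q, \ell}(y) = a_{Q} y + b_{Q}$ is independent of the line $\ell$ (only the constant $c_{Q,\ell}$ depends on $\ell$), so I write $\dot q$ unambiguously. The triangle inequality then gives
\begin{align*}
|f_{1}(w) - \dot{\tau}(f_{2}(w), t^{\ast})|
&\leq |f_{1}(w) - \dot q(f_{2}(w))|
+ |\dot q(f_{2}(w)) - \dot\tau(f_{2}(w), t_{1})|\\
&\quad + |\dot\tau(f_{2}(w), t_{1}) - \dot\tau(f_{2}(w), t^{\ast})|.
\end{align*}
Corollary \ref{cor4} applied at $w$ along $\ell_{w}$ bounds the first summand by $\lesssim (1+\Sigma)\epsilon\,\ell(Q)$. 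Property $(\tau_{4})$ applied at the corner $w_{1}$ bounds the second summand by $\eta\,\ell(Q)$; the hypothesis $f_{2}(w) \in B(f_{2}(w_{1}), K\ell(Q))$ is verified using $M$-Lipschitzness of $f$ together with the assumption $K \geq A(1+\Sigma)M$. Property $(\tau_{3})$ bounds the third summand by $\eta\sqrt{|t_{1} - t^{\ast}|}$.

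The remaining task, and the technical heart of the proof, is the estimate $|t_{1} - t^{\ast}| \lesssim_{M, \Sigma} \ell(Q)^{2}$. By $(\tau_{2})$, this reduces to bounding $|\tau(f_{2}(w), t_{1}) - T(w)|$, which I split as
\begin{displaymath}
|\tau(f_{2}(w), t_{1}) - T(w)| \leq |\tau(f_{2}(w), t_{1}) - q(f_{2}(w))| + |q_{Q, \ell_{w}}(f_{2}(w)) - T(w)| + |c_{Q, \ell_{1}} - c_{Q, \ell_{w}}|,
\end{displaymath}
using $q - q_{Q, \ell_{w}} \equiv c_{Q, \ell_{1}} - c_{Q, \ell_{w}}$. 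The first term is $\leq \eta\,\ell(Q)^{2}$ by $(\tau_{4})$ and the second is $\lesssim (1+\Sigma)^{2}\epsilon^{2}\ell(Q)^{2}$ by Corollary \ref{cor4}. The main obstacle is the third term, which compares the intercept constants of two distinct best-approximating horizontal lines $L_{Q, \ell_{w}}, L_{Q, \ell_{1}} \subset \V_{Q}$. I plan to handle it by introducing the "same-$y$" projections $\tilde p := i_{L_{Q, \ell_{w}}}(f(w))$ and $\tilde p' := i_{L_{Q, \ell_{1}}}(f(w'))$, where $w' := (y_{w}, t_{1}) \in \ell_{1} \cap 2Q$. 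Lemma \ref{l:Proj_Same_y} gives $d(f(w), \tilde p), d(f(w'), \tilde p') \lesssim (1+\Sigma)\epsilon\,\ell(Q)$, while $d(f(w), f(w')) \leq M\sqrt{|t_{w} - t_{1}|} \lesssim_{M} \ell(Q)$. A direct group computation then produces $\tilde p^{-1} \cdot \tilde p' = (a_{Q} \Delta, \Delta, c_{Q, \ell_{1}} - c_{Q, \ell_{w}})$ where $\Delta := f_{2}(w') - f_{2}(w)$, whence $|c_{Q, \ell_{1}} - c_{Q, \ell_{w}}| \leq d(\tilde p, \tilde p')^{2} \lesssim_{M, \Sigma} \ell(Q)^{2}$.

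Assembling the pieces yields $\sqrt{|t_{1} - t^{\ast}|} \lesssim_{M, \Sigma} \ell(Q)$, so the third summand in the triangle inequality is $\lesssim_{M, \Sigma} \eta\,\ell(Q)$, and $d(f(w), p) \lesssim_{M, \Sigma} (\epsilon + \eta)\,\ell(Q)$. Taking $\epsilon$ sufficiently small depending on $M, \Sigma, \eta$ then delivers the claim. The mechanism that makes the argument work, despite the fact that the intercept gap $|c_{Q, \ell_{1}} - c_{Q, \ell_{w}}|$ is only $\lesssim_{M} \ell(Q)^{2}$ with no $\eta$ savings, is precisely the square root in $(\tau_{3})$: an $\ell(Q)^{2}$ gap in time is promoted to only an $\ell(Q)$ gap in $\dot\tau$, and the genuine $\eta$ smallness is then supplied by $(\tau_{3})$ itself.
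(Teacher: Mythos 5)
Your proposal is correct, and it reaches the conclusion by a somewhat different route than the paper, although the two arguments share their core mechanism. The paper works through the approximating plane: it parametrises $\V_{Q}$ by $V(y,t) = (0,y,q(y)+t)\cdot(\dot q(y),0,0)$, proves that the whole patch $\V_{Q}\cap B(f(w_{0}),6M\ell(Q))$ lies in $N(\Gamma_{\mathcal{T}},o(\eta)\ell(Q))$, and then quotes $f(2Q)\subset N(\V_{Q},\epsilon\ell(Q))$; the final step there is exactly your mechanism, namely solving $\tau(y,t_{h})=\tau(y,t_{0})+h$ by $(\tau_{2})$ and letting the square root in $(\tau_{3})$ convert a time discrepancy of size $\lesssim_{M,\Sigma}\ell(Q)^{2}$ into a slope discrepancy $\lesssim_{M,\Sigma}\eta\,\ell(Q)$. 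You bypass the plane entirely: for each $w\in 2Q$ you solve $\tau(f_{2}(w),t^{\ast}) = f_{3}(w)+\tfrac12 f_{1}(w)f_{2}(w)$ and exploit the exact cancellation $p^{-1}\cdot f(w) = (f_{1}(w)-\dot\tau(f_{2}(w),t^{\ast}),0,0)$ (which I checked; it is correct, as is the identity $\tilde p^{-1}\cdot\tilde p' = (a_{Q}\Delta,\Delta,c_{Q,\ell_{1}}-c_{Q,\ell_{w}})$ behind your intercept bound), so the whole estimate reduces to one scalar handled by Corollary \ref{cor4} at $w$, property $(\tau_{4})$ at a corner of $Q$, and $(\tau_{2})$--$(\tau_{3})$. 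Your observation that $\dot q_{Q,\ell}$ is independent of $\ell$, and that the crude $\lesssim_{M,\Sigma}\ell(Q)^{2}$ comparison of the intercepts $c_{Q,\ell_{1}},c_{Q,\ell_{w}}$ suffices because it only enters through $\eta\sqrt{\,\cdot\,}$ in $(\tau_{3})$, are both sound; the hypothesis $f_{2}(w)\in B(f_{2}(w_{1}),K\ell(Q))$ for $(\tau_{4})$ indeed follows from $K\geq A(1+\Sigma)M$ and the $M$-Lipschitz bound. What each approach buys: yours is more economical pointwise (no bilipschitz parametrisation $V$ of $\V_{Q}$ is needed, and the error bookkeeping is a single triangle inequality), while the paper's version proves the stronger, purely geometric statement \eqref{form100} that the plane patch itself is trapped near $\Gamma_{\mathcal{T}}$, which is closer in spirit to the flatness statements used elsewhere in the corona scheme.
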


The notation $o(\eta)$ here refers to a constant which can be chosen arbitrarily small by taking $\eta > 0$ in Proposition \ref{mainProp2} sufficiently small, \textbf{depending only on $M$ and $\Sigma$}. Hence, Proposition \ref{approxProp} implies \eqref{form103}, modulo the effort of re-naming small constants.

\begin{proof}[Proof of Proposition \ref{approxProp}] The idea is simple: since $Q \in \mathcal{T}$ is an $(\epsilon,H,\Sigma)$-vertical rectangle, we know that $f(2Q)$ is approximated by the vertical plane $\V := \V_{Q}$ up to an error $\epsilon \ell(Q)$. So, choosing $0 < \epsilon \leq \eta$, it suffices to show that $\V$ is further approximated by $\Gamma_{\mathcal{T}}$ in a sufficiently large neighbourhood of $f(Q)$, up to an error $o(\eta) \cdot \ell(Q)$.

For this purpose, let $w_{0} \in \mathcal{C}(Q)$ be an arbitrary corner. Then, $2Q \subset B(w_{0},5\ell(Q))$, so 
\begin{displaymath} f(2Q) \subset B(f(w_{0}),5M\ell(Q)) \cap N(\mathbb{V},\epsilon \ell(Q)) \subset N(\V \cap B(f(w_{0}),6M\ell(Q)),\epsilon \ell(Q)), \end{displaymath} 
using $M \geq 1 \geq \epsilon$ in the last inclusion. Abbreviate $B(f(w_{0}),6M\ell(Q)) =: B_{0}$. It remains to prove
\begin{equation}\label{form100} \V \cap B_{0} \subset N(\Gamma_{\mathcal{T}},o(\eta) \cdot \ell(Q)), \end{equation} 
because then $f(2Q) \subset N(\Gamma_{\mathcal{T}},o(\eta) \cdot \ell(Q))$.

To prove \eqref{form100}, let $\ell = \{(y,t_{0}) : y \in \R\} \subset \W$ be the unique horizontal line containing $w_{0}$, and let $q := q_{Q,\ell}$ be the $(Q,\ell)$-approximate quadric. Thus $q(y) = \tfrac{1}{2}ay^{2} + by + c$ for some $|a| \leq \Sigma$ and $b,c \in \R$. With this notation, we parametrise $\V$ with the map 
\begin{equation}\label{form105} V(y,t) = (0,y,q(y) + t) \cdot (\dot{q}(y),0,0) = (ay + b,y,\tfrac{1}{2}by + c + t), \end{equation}
which has the special property $V(y,t) = V(y,0) \cdot (0,0,t)$. The map $V$ is an analogue of the bilipschitz parametrisation "$\Psi$" for $\V$; replace $\tau(y,t)$ by $q(y) + t$ in \eqref{form104} to see the connection). It is easy to see that $V$ is a bilipschitz embedding with constant $\lesssim 1 + \Sigma$; one can either use the formula \eqref{form105} directly, or repeat the proof of Proposition \ref{bilipProp}, noting that $t \mapsto q(y) + t$ is $1$-bilipschitz, and $t \mapsto \partial_{y} [q(y) + t] = \dot{q}(y)$ is independent of $t$.

We next wish to parametrise $\V \cap B_{0}$ by $V$, and we claim the following: there exists an absolute constant $A \geq 1$ such that 
\begin{equation}\label{form106} \V \cap B_{0} \subset V(B[(f_{2}(w_{0}),0),AM(1 + \Sigma)\ell(Q)]), \end{equation}
The proof of \eqref{form106} is based on recalling that $q = q_{Q,\ell}$ is the approximate quadric associated with the horizontal line $\ell \ni w_{0}$. Recalling the definition of the map $\iota_{Q}$ from \eqref{iota}, we in fact have $V(f_{2}(w_{0}),0) = \iota_{Q}(w_{0})$ (use the formula \eqref{form105} to verify this), and hence
\begin{equation}\label{form107} d(f(w_{0}),V(f_{2}(w_{0}),0)) = d(f(w_{0}),\iota_{Q}(w_{0})) \lesssim (1 + \Sigma) \epsilon \ell(Q) \end{equation}
by Lemma \ref{l:Proj_Same_y} (or see the discussion below \eqref{iota}). Since $V$ is an $A(1 + \Sigma)$-bilipschitz parametrisation of $\V$, and $f(w_{0})$ is the centre of $B_{0}$, \eqref{form106} follows from \eqref{form107}.

With \eqref{form106} in hand, \eqref{form100} will evidently follow from
\begin{equation}\label{form108} V(B[(f_{2}(w_{0}),0),AM(1 + \Sigma)\ell(Q)]) \subset N(\Gamma_{\mathcal{T}},o(\eta) \cdot \ell(Q)). \end{equation}
To prove \eqref{form108}, pick $(y,h) \in B[(f_{2}(w_{0}),0),AM(1 + \Sigma)\ell(Q)]$, hence
\begin{equation}\label{form109} |y - f_{2}(w_{0})| \leq A(1 + \Sigma)M \ell(Q) \quad \text{and} \quad \sqrt{|h|} \leq A(1 + \Sigma)M \ell(Q). \end{equation}
We need to prove that $\dist(V(y,t),\Gamma_{\mathcal{T}}) = o(\eta) \cdot \ell(Q)$. We start by verifying
\begin{equation}\label{form102} d(V(y,0),\Psi(y,t_{0})) = o(\eta) \cdot \ell(Q). \end{equation} 
Recall that $t_{0}$ is second coordinate of $w_{0}$. To prove \eqref{form102}, compute explicitly
\begin{displaymath} \Psi(y,t_{0})^{-1} \cdot V(y,0) = (\dot{q}(y) - \dot{\tau}(y,t_{0}),0,q(y) - \tau(y,t_{0})).  \end{displaymath} 
Property ($\tau_{4}$), combined with the first part of \eqref{form109} and $K \geq A(1 + \Sigma)M$, now implies
\begin{displaymath} |\tau(y,t_{0}) - q(y)| \leq \eta \cdot \ell(Q)^{2} \quad \text{and} \quad |\dot{\tau}(y,t_{0}) - \dot{q}(y)| \leq \eta \cdot \ell(Q), \end{displaymath}
and \eqref{form102} follows. As a consequence,
\begin{displaymath}  d(V(y,h),\Psi(y,t_{0}) \cdot (0,0,h)) = d(V(y,0) \cdot (0,0,h),\Psi(y,t_{0}) \cdot (0,0,h)) \leq o(\eta) \cdot \ell(Q). \end{displaymath}
Now $\dist(V(y,h),\Gamma_{\mathcal{T}}) = o(\eta) \cdot \ell(Q)$, and hence \eqref{form108}, will follow as soon as we prove
\begin{equation}\label{form110} \dist(\Psi(y,t_{0}) \cdot (0,0,h),\Gamma_{\mathcal{T}}) = o(\eta) \cdot \ell(Q). \end{equation}
To see this, let us spell out the definition:
\begin{displaymath} \Psi(y,t_{0}) \cdot (0,0,t) = (\dot{\tau}(y,t_{0}),y,\tau(y,t_{0}) + t + \tfrac{1}{2}y \cdot \dot{\tau}(y,t_{0})). \end{displaymath}
By property ($\tau_{2}$), the map $t \mapsto \tau(y,t)$ is a $\theta^{-1}$-bilipschitz homeomorphism $\R \to \R$, for any fixed $y \in \R$, where $\theta \sim_{M,\Sigma} 1$. Recalling from \eqref{form109} that $|h| \lesssim_{M,\Sigma} \ell(Q)^{2}$, there consequently exists $t_{h} \in \R$ with $|t_{h} - t_{0}| \lesssim_{M,\Sigma} \ell(Q)^{2}$ such that $\tau(y,t_{h}) = \tau(y,t_{0}) + h$. To complete the proof of \eqref{form110}, and the proposition, we claim that
\begin{displaymath} d(\Psi(y,t_{0}) \cdot (0,0,h),\Psi(y,t_{h})) \leq_{M,\Sigma} \eta \cdot \ell(Q). \end{displaymath} 
This follows immediately from computing explicitly
\begin{displaymath} [\Psi(y,t_{0}) \cdot (0,0,h)]^{-1} \cdot \Psi(y,t_{h}) = (\dot{\tau}(y,t_{h}) - \dot{\tau}(y,t_{0}),0,0), \end{displaymath}
recalling that  $|t_{h} - t_{0}| \lesssim_{M,\Sigma} \ell(Q)^{2}$, and using the $\tfrac{1}{2}$-H\"older continuity of $t \mapsto \dot{\tau}(y,t)$ with constant $\eta$, recall property ($\tau_{3}$). The proof of Proposition \ref{approxProp} is complete. \end{proof}

\section{Reduction to an inductive construction} 

The purpose of this section is to reduce Proposition \ref{mainProp2} to yet another result, Proposition \ref{t:construction} below. We first describe this proposition, and the details of the reduction are contained in Section \ref{s:reduction}. With no loss of generality, we assume that the top rectangle of the given $(\epsilon,H,\Sigma)$-vertical tree $\mathcal{T}$ is $Q(\mathcal{T}) = [0,1)^{2} \subset \W$. Also, with no loss of generality, we assume that $\mathcal{T}$ is a positive vertical tree, that is, the signature of $\mathcal{T}$ is $+$. The case of negative signature could be reduced to the positive case, but in fact it would be easier to make minor changes in the argument whenever the positive signature is used. 

Instead of constructing $\tau \colon \W \to \R^{2}$ directly, we will prove, inductively, the existence of a sequence of maps $\tau_{n}$, $n \geq 0$, defined on the dyadic horizontal lines of generation $n$:
\begin{displaymath} \mathcal{L}_{n} = \{\R \times \{k \cdot 2^{-n}\} : k \in \Z\}. \end{displaymath}
The lines in $\mathcal{L}_{n}$ will be in one-to-one correspondence with \emph{characteristics of generation $n$}, denoted $\mathcal{X}_{n}$, each of which is a graph of a $C^{2}$-function, with $2\Sigma$-Lipschitz first derivative. For $n \geq 0$, we will construct a map $\tau_{n} \colon \mathcal{L}_{n} \to C^{2}(\R)$. Thus, the $\tau_{n}$-image of every line in $\mathcal{L}_{n}$ will be a real-valued $C^{2}$-function defined on $\R$. The relationship between $\tau_{n}$ and $\tau$ will, eventually, be that if $\ell = \R \times \{k \cdot 2^{-n}\} \in \mathcal{L}_{n}$, then 
\begin{displaymath} \tau(y,k \cdot 2^{-n}) \approx \tau_{n}(\ell)(y), \qquad y \in \R, \end{displaymath}
where the error tends geometrically to $0$ as $n \to \infty$. 

The construction of the maps $\tau_{n}$ involves a number of constant parameters; we gather here some reminders and information. We have already encountered the bilipschitz constant $M$, the "slope" constant $\Sigma$, and the constants $B \geq 5$ (such that $F_{Q}$ is vertically $\epsilon'\ell(Q)$-increasing on $BQ$ by Lemma \ref{lemma4}), $H \geq A(1 + \Sigma)M^{2}B$, and $N = A(1 + \Sigma)M$. Also, the constant $K \geq 1$ already appeared in condition ($\tau_{4}$) of Proposition \ref{mainProp2}, and we mentioned in Remark \ref{rem3} that we need to choose $K \geq A(1 + \Sigma)M$ for the purpose of verifying \eqref{form103}. Given such a constant "$K$", we will below need to assume
\begin{equation}\label{form146} H \geq A\max\{N^{2},KN\}. \end{equation}
The letter "$A$" will continue to denote a generic large absolute constant, whose value may change without separate mention. In addition to all the previous constants, there will be a new small constant $\delta > 0$, which may be specified by the "user", but must be sufficiently small in a manner depending on $M,\Sigma$, and $\eta$ (as in property ($\tau_{3}$) of Proposition \ref{mainProp2}). The constant "$\delta$" here has nothing to do with the "projection constant" which appeared in \eqref{form142}; this previous "$\delta$" will no longer be used.

Finally, the constant $\epsilon > 0$ in the definition of $(\epsilon,H,\Sigma)$-vertical rectangles needs to be chosen small in a manner depending on all the previous constants. All in all, the relations between the constants are given by the following informal sequence:
\begin{equation}\label{allConstants} 1 \lesssim_{\delta,K,M,\Sigma} \epsilon \ll \{\delta,M,\Sigma\} \ll K \ll H \lesssim_{\delta,K,M,\Sigma} 1. \end{equation}

After this introduction, here is the statement containing the properties of the maps $\tau_{n}$:
\begin{proposition}\label{t:construction} Let $M \geq 1$, $\Sigma > 0$, let $\delta > 0$ be sufficiently small relative to $M,\Sigma$, let $K \geq 1$ be sufficiently large depending on $\delta,M,\Sigma$, let $H \geq 1$ be sufficiently large depending $K,M,\Sigma$, and let $\epsilon > 0$ be sufficiently small depending on $\delta,H,K,M,\Sigma$. Let $\mathcal{T}$ be an $(\epsilon,H,\Sigma)$-vertical tree with top $Q(\mathcal{T}) = [0,1)^{2}$. Then, for every $n \geq 0$, there exists a map $\tau_{n} \colon \mathcal{L}_{n} \to C^{2}(\R)$ satisfying the following axioms \eqref{X1}-\eqref{X4}.  \end{proposition}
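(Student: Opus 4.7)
\medskip

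The plan is to build $\tau_n$ by induction on $n \geq 0$, using the approximate quadrics $q_{Q,\ell}$ of Section \ref{s:appQuad} as the ``local model'' of $\tau_n(\ell)$ near each rectangle $Q \in \mathcal{T}$ whose corner lies on $\ell$, and interpolating/extending smoothly elsewhere.

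\emph{Base case.} For $n = 0$, the only lines $\ell \in \mathcal{L}_0$ that interact with $Q(\mathcal{T}) = [0,1)^2$ are $\ell_0 = \R \times \{0\}$ and $\ell_{-1} = \R \times \{-1\}$, say, and the two lines actually containing a corner of $Q(\mathcal{T})$ (namely $\R \times \{0\}$ and $\R \times \{1\}$ in the half-open convention). I would set $\tau_0(\ell)$ equal to the approximate quadric $q_{Q(\mathcal{T}),\ell}$ for the lines which carry a corner of $Q(\mathcal{T})$, and extend $\tau_0$ to the remaining lines of $\mathcal{L}_0$ by vertical translation of these quadrics, using the positive signature to guarantee the translations are monotone in the line index. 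Properties ($\tau_1$), ($\tau_4$) are immediate from $|a| \leq \Sigma$ in the parametrisation of the approximating horizontal line; property ($\tau_2$) at this scale reduces to a separation estimate between the constants $c_{Q(\mathcal{T}),\ell}$ for neighbouring $\ell$'s, which follows from Proposition \ref{prop3} applied to horizontally aligned corners of $Q(\mathcal{T})$.

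\emph{Inductive step.} Assume $\tau_n$ has been constructed. The lines of generation $n+1$ split as $\mathcal{L}_{n+1} = \mathcal{L}_n \cup (\mathcal{L}_{n+1} \setminus \mathcal{L}_n)$, and each line $\ell \in \mathcal{L}_{n+1} \setminus \mathcal{L}_n$ sits between two consecutive parents $\ell^{\pm} \in \mathcal{L}_n$. My construction of $\tau_{n+1}(\ell)$ proceeds as follows. First, consider the rectangles $Q \in \mathcal{T}$ of the appropriate generation $\lceil (n+1)/2 \rceil$ having a corner on $\ell$; by Corollary \ref{cor5}, the dyadic parent $\hat{Q}$ of any such $Q$ has a corner on $\ell^-$ or $\ell^+$. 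This gives a natural matching between the approximate quadrics $q_{Q,\ell}$ and $q_{\hat{Q},\ell^{\pm}} = \tau_n(\ell^{\pm})$ (at least locally, near those corners). In the region $B(f_2(w),K\ell(Q))$ around the corner $w$, Lemma \ref{lemma5} guarantees that $q_{Q,\ell}$ differs from $q_{\hat{Q},\ell^{\pm}}$ by $O((1+\Sigma)^{1/2}\epsilon \ell(Q))$ in parabolic distance. I define $\tau_{n+1}(\ell)$ by taking $q_{Q,\ell}$ near each such corner, and smoothly patching together these local models, using a partition of unity in the $y$-variable at scale $\approx K\ell(Q)$, with the interpolated value between parent quadrics $q_{\hat{Q},\ell^\pm}$ used in the gaps. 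On old lines $\ell \in \mathcal{L}_n$, a similar ``refinement'' replaces $\tau_n(\ell)$ locally by the finer approximate quadrics $q_{Q,\ell}$ for smaller $Q \in \mathcal{T}$ with corner on $\ell$.

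\emph{Verification of the four axioms.} Property (X1) ($C^{1,1}$ regularity with $2\Sigma$-Lipschitz derivative) is preserved because each quadric $q_{Q,\ell}$ has second derivative $a_Q \in [-\Sigma,\Sigma]$ and the partition of unity is taken at scale comparable to the one on which the second-derivative differences are small, the latter following once more from Lemma \ref{lemma5} combined with the differentiation of the $c$-coefficient estimate in Lemma \ref{lemma0}. Property (X4) (matching $q_{Q,\ell}$ on $B(f_2(w),K\ell(Q))$) is enforced directly by the construction. Property (X3) (Hölder continuity of $\dot{\tau}$ between consecutive generations) follows because, between $\tau_n(\ell)$ and $\tau_{n+1}(\ell')$ for neighbouring $\ell,\ell'$, the derivative changes by the slope difference between horizontal lines approximating $f$ on nearby horizontal lines of $\W$, bounded using Lemma \ref{lemma0} in a form analogous to Lemma \ref{lemma5}; the geometric factor $\epsilon \ll \eta$ provides the required Hölder constant.

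\emph{The main obstacle.} The hardest axiom to maintain inductively is (X2), the vertical bilipschitz separation $|\tau_{n+1}(\ell)(y) - \tau_{n+1}(\ell')(y)| \sim |t_\ell - t_{\ell'}|$, because this requires that no ``crossing of characteristics'' occurs in the passage from $\tau_n$ to $\tau_{n+1}$. This is where the assumption that $f \colon \W \to \He$ is bilipschitz (and not just intrinsic Lipschitz) and the positive signature of the tree $\mathcal{T}$ (Proposition \ref{prop2}) are essential. The separation between $c_{Q,\ell^+}$ and $c_{Q,\ell^-}$ for a common $Q \in \mathcal{T}$ is controlled from below by the vertical $\epsilon' \ell(Q)$-increasing property of the horizontal QIE $F_Q$ on $BQ$ (Lemma \ref{lemma4}), exactly as in the signature-comparison computation \eqref{form23}--\eqref{form25}. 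Summing these estimates across scales, and using that the accumulated perturbation between $q_{Q,\ell}$ and $q_{\hat{Q},\ell}$ is geometrically smaller than the intrinsic spacing $\sim 4^{-n}/\theta$, I expect to propagate (X2) from generation $n$ to generation $n+1$ with the monotonicity signature preserved. The sharpness of the constants here is what dictates the hierarchy \eqref{allConstants}, in particular the need to take $\epsilon$ much smaller than $\delta$, and $H \gg K$.
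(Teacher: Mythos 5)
Your overall scheme is the same as the paper's: induct on $n$, use the approximate quadrics $q_{Q,\ell}$ as local models on $K2^{-n/2}$-neighbourhoods of the points $f_{2}(w)$ with $w$ a corner of $Q \in \mathcal{T}_{\ceil{n/2}}$, glue them with a partition of unity in $y$, keep $\tau_{n-1}(\ell)$ (or the average of $\tau_{n-1}(\ell^{\uparrow})$ and $\tau_{n-1}(\ell_{\downarrow})$ on new lines) in the gaps, and drive the separation axiom \eqref{X2} by the positive signature and vertical monotonicity of $F_{Q}$. The genuine gap is that you never control what happens when two \emph{distinct} rectangles $Q,Q' \in \mathcal{T}_{\ceil{n/2}}$ -- not a parent--child pair -- have corners $w,w'$ on $\ell_{\downarrow} \cup \ell \cup \ell^{\uparrow}$ with $|f_{2}(w) - f_{2}(w')| \lesssim K2^{-n/2}$, so that their local models overlap. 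The tools you cite (Lemma \ref{lemma5}, and differentiating the coefficient estimates of Lemma \ref{lemma0}) only compare a rectangle with its own parent along a fixed line; they say nothing about two unrelated same-generation rectangles whose corner images happen to land close under $f_{2}$. This is exactly the content of Proposition \ref{prop4} in the paper, and it is not free: one must first convert $|f_{2}(w) - f_{2}(w')| \lesssim K2^{-n/2}$ into $d_{\mathrm{par}}(w,w') \lesssim \max\{KN,N^{2}\}2^{-n/2}$ using the bilipschitz property of $f$ through Proposition \ref{prop3} (this is what forces $H \gg \max\{KN,N^{2}\}$, so that $\tfrac{1}{2}HQ \cap \tfrac{1}{2}HQ'$ contains a common horizontal segment), and then compare the two quadrics to \emph{second} order via Corollary \ref{cor4} and the quadric comparison lemma. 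Without such a statement the glued $\tau_{n}$ is not controlled: the partition-of-unity derivatives are of size $2^{n/2}/K$ and $2^{n}/K^{2}$, so any unquantified discrepancy between overlapping models ruins the derivative and second-derivative bounds needed for \eqref{X1} and \eqref{X5} (cf.\ Lemma \ref{lemma13}).

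Two further points. First, your verification of \eqref{X1} is too local: in the gap regions $\tau_{n}$ coincides with $\tau_{n-1}$ (or an average of two previous characteristics), whose derivative is only known to be $2\Sigma$-Lipschitz, so "each quadric has $|\ddot{q}| \leq \Sigma$" cannot by itself keep the constant at $2\Sigma$ uniformly in $n$. The paper needs a multi-scale case analysis on $|y_{1} - y_{2}|$, descending to the coarsest generation $\hat{n}$ with $|y_{1} - y_{2}| \gtrsim 2^{-\hat{n}/2}$, comparing there with a single quadric of curvature $\leq \Sigma$, and absorbing the accumulated error $\lesssim \delta|y_{1} - y_{2}|$ obtained by summing \eqref{X3} and \eqref{X4} geometrically; this is where $K$ large relative to $\delta,\Sigma$ is actually used. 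Second, a smaller slip: in the base case the separation of the constants $c_{Q_{0},\ell}$ for neighbouring $\ell \in \mathcal{L}_{0}$ does not follow from Proposition \ref{prop3}, which bounds the horizontal quantity $|f_{2}(w_{1}) - f_{2}(w_{2})|$ for horizontally separated points; it is a vertical statement, coming from the signature together with the fact that $t \mapsto \pi_{2}(F_{Q_{0}}(0,t))$ is an $(N,2\epsilon)$-QIE (Lemmas \ref{lemma4} and \ref{lemma6}, formula \eqref{form45}) -- the mechanism you correctly describe later when discussing \eqref{X2}.
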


The axioms \eqref{X1}-\eqref{X4} closely mirror the properties ($\tau_{1}$)-($\tau_{4}$) listed in Proposition \ref{mainProp2}; in fact, \eqref{X1}-\eqref{X5} correspond directly to ($\tau_{1}$)-($\tau_{4}$), and \eqref{X4} is an additional axiom which ensures that the maps $\tau_{n}$ "form a Cauchy sequence in $C^{1}$" -- a deliberately vague statement to be clarified in Section \ref{s:reduction}. We move to the axioms \eqref{X1}-\eqref{X4}.

The first axiom requires that $\dot{\tau}_{n}(\ell)$ is $2\Sigma$-Lipschitz for all $\ell \in \mathcal{L}_{n}$:
\begin{equation}\label{X1} |\dot{\tau}_{n}(\ell)(y_{1}) - \dot{\tau}_{n}(\ell)(y_{2})| \leq 2\Sigma |y_{1} - y_{2}|, \qquad y_{1},y_{2} \in \R, \, n \geq 0, \, \ell \in \mathcal{L}_{n}. \tag{X1} \end{equation}
The characteristics will have the same "vertical ordering" as the lines in $\mathcal{L}_{n}$. Since $\pi_{2}(\ell) \in \R$ is a singleton for all $\ell \in \mathcal{L}_{n}$, we may define $\ell_{2} - \ell_{1} := \pi(\ell_{2}) - \pi(\ell_{1}) \in \R$. We write $\ell_{2} > \ell_{1}$ if $\ell_{2} - \ell_{1} > 0$. If $\ell_{1},\ell_{2} \in \mathcal{L}_{n}$ with $\ell_{2} > \ell_{1}$, the second axiom states that there exists a constant $\theta = \theta(M,\Sigma) > 0$ such that 
\begin{equation}\label{X2} \theta (\ell_{2} - \ell_{1}) \leq \tau_{n}(\ell_{2})(y) - \tau_{n}(\ell_{1})(y) \leq \theta^{-1} (\ell_{2} - \ell_{1}), \qquad y \in \R. \tag{X2} \end{equation}
This is the only axiom where the positive signature of $\mathcal{T}$ is visible. If the signature were negative, one would simply replace $\tau_{n}(\ell_{2})(y) - \tau_{n}(\ell_{1})(y)$ by $\tau_{n}(\ell_{1})(y) - \tau_{n}(\ell_{2})(y)$. 

The slopes of neighbouring characteristics do not differ much: the third axiom states that if $\ell \in \mathcal{L}_{n}$, and $\bar{\ell} \in \{\ell,\ell^{\uparrow},\ell_{\downarrow}\}$, then
\begin{equation}\label{X3} |\dot{\tau}_{n}(\ell)(y) - \dot{\tau}_{n}(\bar{\ell})(y)| \leq \delta \cdot 2^{-n/2}, \qquad y \in \R. \tag{X3} \end{equation}
Here $\ell^{\uparrow}$ and $\ell_{\downarrow}$ refer to the \emph{$\mathcal{L}_{n}$-neighbours of $\ell$}: by definition, $\ell^{\uparrow} := \ell + 2^{-n}$ and $\ell_{\downarrow} := \ell - 2^{-n}$.

The fourth axiom nails the connection between the characteristics in $\mathcal{X}_{n}$ and the map $f = (f_{1},f_{2},f_{3}) \colon \W \to \He$ via the $(Q,\ell)$-approximate quadrics, recall Section \ref{s:appQuad}. For $Q \in \mathcal{D}$, recall the notation $\mathcal{C}(Q)$ for the four corners of $Q$.
\begin{definition}[$\mathcal{C}$ and $\mathcal{C}_{m}$] For $m \in \Z$, we write
\begin{displaymath} \mathcal{C}_{m} := \bigcup_{Q \in \mathcal{T}_{m}} \mathcal{C}(Q) \quad \text{and} \quad \mathcal{C} := \bigcup_{m \in \Z} \mathcal{C}_{m}. \end{displaymath} 
For a horizontal line $\ell \subset \W$, we also write $\mathcal{C}_{m}(\ell) := \mathcal{C}_{m} \cap \ell$. \end{definition}
So, the set $\mathcal{C}$ consists of the corners of rectangles \textbf{only in $\mathcal{T}$}. Since we assume that the top of $\mathcal{T}$ is $[0,1)^{2}$, we have $\mathcal{T}_{m} = \emptyset$, hence $\mathcal{C}_{m} = \emptyset$, for all $m < 0$, and $\mathcal{C}_{0} = \{(0,0),(1,0),(0,1),(1,1)\}$. The axiom \eqref{X5}, below, together with the "stability" axiom \eqref{X4}, implies that the characteristics in $\mathcal{X}_{n}$ resemble $(Q,\ell)$-approximate quadrics in large neighbourhoods of points of the form $f_{2}(w)$, where $w \in \mathcal{C}_{\ceil{n/2}}(\ell)$ is a corner of $Q \in \mathcal{T}_{\ceil{n/2}}$. 

Fix $\ell \in \mathcal{L}_{n}$ and $y \in \R$. Let
\begin{displaymath} w \in \mathcal{C}_{\ceil{n/2}}(\ell) \cup \mathcal{C}_{\ceil{n/2}}(\ell^{\uparrow}) \cup \mathcal{C}_{\ceil{n/2}}(\ell_{\downarrow}) \end{displaymath}
be a corner point such that $y \in B(f_{2}(w),K2^{-n/2})$. Let $Q \in \mathcal{T}_{\ceil{n/2}}$ be any rectangle with corner $w$, and let $q_{Q,\ell}$ be the $(Q,\ell)$-approximate quadric. We require that
\begin{equation}\label{X5} \begin{cases} |\tau_{n}(\ell)(y) - q_{Q,\ell}(y)| = o(\epsilon) \cdot 2^{-n}, \\ |\dot{\tau}_{n}(\ell)(y) - \dot{q}_{Q,\ell}(y)| = o(\epsilon) \cdot 2^{-n/2}, \\ |\ddot{\tau}_{n}(\ell)(y) - \ddot{q}_{Q,\ell}(y)| = o(\epsilon). \end{cases} \tag{X4} \end{equation}
The $o(\epsilon)$-notation here means the following: we write $|g(y) - h(y)| = o(\epsilon) \cdot \mathrm{const}$ if for all $\bar{\epsilon} > 0$, there exists $\epsilon > 0$, depending only on $K,M,\Sigma$, such that $|g(y) - h(y)| \leq \bar{\epsilon} \cdot \mathrm{const}$. The "triple control" in \eqref{X5} is such a common notion in the proof below that we give it a separate abbreviation: for $g,h \in C^{2}(\R)$, and $y \in \R$, we write $g(y) \approx_{n} h(y)$ if 
\begin{equation}\label{approxNotation} \begin{cases} |g(y) - h(y)| = o(\epsilon) \cdot 2^{-n}, \\ |\dot{g}(y) - \dot{h}(y)| = o(\epsilon) \cdot 2^{-n/2}, \\ |\ddot{g}(y) - \ddot{h}(y)| = o(\epsilon). \end{cases} \end{equation}
Thus, \eqref{X5} is abbreviated to $\tau_{n}(y) \approx_{n} q_{Q,\ell}(y)$, for appropriate $Q,\ell,y$.

Characteristics of generation $n - 1$ are generally not characteristics of generation $n$, for $n \geq 1$, but this is almost true: the final axiom states that 
\begin{equation}\label{X4} |\tau_{n}(\ell)(y) - \tau_{n - 1}(\ell)(y)| \leq \delta \cdot 2^{-n} \quad \text{and} \quad |\dot{\tau}_{n}(\ell)(y) - \dot{\tau}_{n - 1}(\ell)(y)| \leq \delta \cdot 2^{-n/2} \tag{X5} \end{equation}
for all $\ell \in \mathcal{L}_{n - 1} \subset \mathcal{L}_{n}$ and all $y \in \R$. 

\begin{remark} The appearance of $\ceil{n/2}$ in axiom \eqref{X5} may appear arbitrary and awkward, but there is a reason for the numerology: for $n \geq 0$, we, informally speaking, want $\tau_{n} \colon \mathcal{L}_{n} \to C^{2}(\R)$ to keep records of all the data of the tree $\mathcal{T}$ down to a certain depth $d(n)$. We choose $d(n)$ to be the \emph{smallest integer such that that all the lines in $\mathcal{L}_{n}$ contain corner points of rectangles in $\mathcal{D}_{d(n)}$}. This means that we have to take $d(n) := \ceil{n/2}$; see Figure \ref{fig5} for an illustration of the case $n = 2$. Note, for example, that the horizontal line $\ell$ in the middle lies in $\mathcal{L}_{1}$, and the smallest generation of rectangles with corners on $\ell$ is $\ceil{1/2} = 1$. Most likely any fixed choice $d(n) \sim n/2$ would work equally well!
\begin{figure}[h!]
\begin{center}
\begin{overpic}[scale = 0.8]{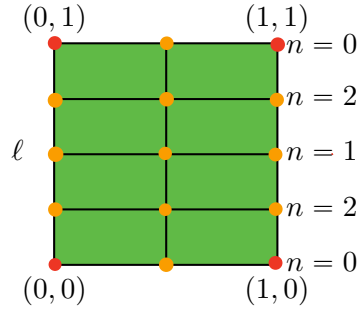}
\put(8,1){$(0,0)$}
\put(72,1){$(1,0)$}
\put(8,80){$(0,1)$}
\put(72,80){$(1,1)$}
\put(84,9){$n = 0$}
\put(84,72){$n = 0$}
\put(84,41){$n = 1$}
\put(84,25){$n = 2$}
\put(84,57){$n = 2$}
\put(5,41){$\ell$}
\end{overpic}
\caption{The points in $\mathcal{C}_{0}$ depicted are in red, and the points in $\mathcal{C}_{1} \, \setminus \, \mathcal{C}_{0}$ are depicted in orange. The generations of the horizontal lines are indicated with "$n = j$" for $j \in \{0,1,2\}$.}\label{fig5}
\end{center}
\end{figure}
\end{remark}

\subsection{From Proposition \ref{t:construction} to Proposition \ref{mainProp2}}\label{s:reduction} Let us take the maps $\tau_{n} \colon \mathcal{L}_{n} \to C^{2}(\R)$ for granted, and use them to complete the proof of Proposition \ref{mainProp2}. We start by quantifying how the maps $\tau_{n}$ "form a Cauchy sequence in $C^{1}(\R)$". We claim that if $N > n \geq 0$, $\ell_{n} \in \mathcal{L}_{n}$, and $\ell_{N} \in \mathcal{L}_{N}$ satisfy $|\ell_{n} - \ell_{N}| \leq 100 \cdot 2^{-n}$, then 
\begin{equation}\label{form111} \|\tau_{n}(\ell_{n}) - \tau_{N}(\ell_{N})\|_{L^{\infty}(\R)} \lesssim \max\{\delta,\theta\} \cdot 2^{-n} \quad \text{and} \quad \|\dot{\tau}_{n}(\ell_{n}) - \dot{\tau}_{N}(\ell_{N})\|_{L^{\infty}(\R)} \lesssim \delta \cdot 2^{-n/2}. \end{equation}
To see this, choose a sequence $\ell_{n + 1},\ldots,\ell_{N}$ of horizontal lines such that $\ell_{j} \in \mathcal{L}_{j}$, and $\ell_{j - 1} \in \{\ell_{j},\ell_{j}^{\uparrow},\ell_{j\downarrow}\}$ for all $j \in \{n + 2,\ldots,N\}$. Then $|\ell_{n + 1} - \ell_{N}| \leq 2^{-n}$, so $|\ell_{n} - \ell_{n + 1}| \leq 101 \cdot 2^{-n}$. It follows that $\ell_{n}$ is $\leq 101$ steps away in $\mathcal{L}_{n}$ from $\ell_{n + 1}$ (if $\ell_{n + 1} \in \mathcal{L}_{n}$) or one of the $\mathcal{L}_{n + 1}$-neighbours of $\ell_{n + 1}$ (if $\ell_{n + 1} \in \mathcal{L}_{n + 1} \, \setminus \, \mathcal{L}_{n}$). For example, if $\ell_{n + 1} \in \mathcal{L}_{n}$, then 
\begin{displaymath} \|\dot{\tau}_{n}(\ell_{n}) - \dot{\tau}_{N}(\ell_{N})\|_{L^{\infty}(\R)} \leq \|\dot{\tau}_{n}(\ell_{n}) - \dot{\tau}_{n}(\ell_{n + 1})\|_{L^{\infty}(\R)} + \sum_{j = n + 2}^{N} \|\dot{\tau}_{j}(\ell_{j}) - \dot{\tau}_{j - 1}(\ell_{j - 1})\|_{L^{\infty}(\R)}. \end{displaymath}
The first term can be estimated by $\leq 101$ applications of axiom \eqref{X3}, and the second term can be estimated using both axioms \eqref{X3} and \eqref{X4}. After summing a geometric series, the result is $\|\dot{\tau}_{n}(\ell_{n}) - \dot{\tau}_{N}(\ell_{N})\|_{L^{\infty}(\R)} \lesssim \delta \cdot 2^{-n/2}$. The other cases, and the first inequality in \eqref{form111}, are proven such a similar manner (also using axiom \eqref{X2}) that we omit the details. 

We now construct the map $\tau \colon \W \to \R^{2}$. Fix $t \in \R$, and choose a sequence of integers $\{k_{n}\}_{n \in \N}$ such that $|t - k_{n} \cdot 2^{-n}| \leq 2^{-n}$ for all $n \in \N$. Then, writing $\ell_{n} := \R \times \{k_{n} \cdot 2^{-n}\} \in \mathcal{L}_{n}$, the inequalities \eqref{form111} imply that $\{\tau_{n}(\ell_{n})\}_{n \in \N}$ is a Cauchy sequence in $C^{1}(\R)$. Hence the limit
\begin{displaymath} \tau(\cdot,t) := \lim_{n \to \infty} \tau_{n}(\ell_{n}) \in C^{1}(\R) \end{displaymath}
exists in the $C^{1}$-norm, and
\begin{displaymath} \dot{\tau}(y,t) = \lim_{n \to \infty} \dot{\tau}_{n}(\ell_{n})(y), \qquad y \in \R. \end{displaymath} 
In particular, we see that $y \mapsto \dot{\tau}(y,t) = \partial_{y} \tau(y,t)$ is $2\Sigma$-Lipschitz by axiom \eqref{X1}, which gives property ($\tau_{1}$). We remark that, as a corollary of \eqref{form111}, we have
\begin{equation}\label{form112} \|\tau_{n}(\ell) - \tau(\cdot,t)\|_{L^{\infty}(\R)} \lesssim \max\{\delta,\theta\} \cdot 2^{-n} \quad \text{and} \quad \|\dot{\tau}_{n}(\ell) - \dot{\tau}(\cdot,t)\|_{L^{\infty}(\R)} \lesssim \delta \cdot 2^{-n/2} \end{equation}
whenever $n \geq 0$, $\ell \in \mathcal{L}_{n}$ and $|\pi_{2}(\ell) - t| \leq 100 \cdot 2^{-n}$. This implies that the definition of $\tau(\cdot,t)$ does not depend on the choice of the sequence $\{k_{n}\}_{n \in \N}$, as long as $|t - k_{n} \cdot 2^{-n}| \leq 2^{-n}$ for all $n \in \N$. This leads to an observation, which will strengthen first inequality in \eqref{form112}. Namely, if $t = \pi_{2}(\ell)$ for some $\ell \in \mathcal{L}_{n}$, i.e. $t$ is a dyadic rational, then $\tau_{m}(\ell)$ is defined for all $m \geq n$, and $\tau(\cdot,t)$ can be expressed as $\tau(\cdot,t) = \lim_{m \to \infty} \tau_{m}(\ell)$. It follows from \eqref{X4} that
\begin{equation}\label{form144} \|\tau_{n}(\ell) - \tau(\cdot,t)\|_{L^{\infty}(\R)} \leq \sum_{m \geq n} \|\tau_{m + 1}(\ell) - \tau_{m}(\ell)\|_{L^{\infty}(\R)} \lesssim \delta \cdot 2^{-n}. \end{equation} 

We then turn to the task of verifying properties ($\tau_{2}$)-($\tau_{4}$) for $\tau$. The separation property ($\tau_{2}$) follows immediately from axiom \eqref{X2}:
\begin{displaymath} \tau(y,t_{2}) - \tau(y,t_{1}) = \lim_{n \to \infty} \tau_{n}(\ell^{1}_{n})(y) - \tau_{n}(\ell^{2}_{n})(y) \in [\theta (t_{2} - t_{1}),\theta^{-1}(t_{2} - t_{1})], \end{displaymath}
where $\{\ell_{n}^{1}\}_{n \in \N},\{\ell_{n}^{2}\}_{n \in \N}$ are sequences of horizontal lines with $|\pi_{2}(\ell_{n}^{1}) - t_{1}| \leq 2^{-n}$ and $|\pi_{2}(\ell_{n}^{2}) - t_{2}| \leq 2^{-n}$. The H\"older continuity property ($\tau_{3}$) follows easily from \eqref{form112} if we choose $\delta \leq c\eta$ for a sufficiently small absolute constant $c > 0$. To see this, let $t_{1},t_{2} \in \R$. The case $|t_{1} - t_{2}| \geq 1$ is a little special: then one needs to check from the case $n = 0$ of the construction of $\tau_{n}$ that the characteristics $\tau_{0}(\ell)$, $\ell \in \mathcal{L}_{0}$, only differ by a constant, hence $\dot{\tau}_{0}(\ell_{1}) \equiv \dot{\tau}_{0}(\ell_{2})$ for $\ell_{1},\ell_{2} \in \mathcal{L}_{0}$. This will indeed be so, see \eqref{form86}-\eqref{form113}, and the discussion below \eqref{form85}. Hence, if $\ell_{1},\ell_{2} \in \mathcal{L}_{0}$ satisfy $|\pi_{2}(\ell_{j}) - t_{j}| \leq 1$, we have
\begin{displaymath} \|\dot{\tau}(\cdot,t_{2}) - \dot{\tau}(\cdot,t_{1})\|_{L^{\infty}(\R)} \leq \|\dot{\tau}_{0}(\ell_{2}) - \dot{\tau}(\cdot,t_{2})\|_{L^{\infty}(\R)} + \|\dot{\tau}_{0}(\ell_{1}) - \dot{\tau}(\cdot,t_{1})\|_{L^{\infty}(\R)} \leq \eta \sqrt{|t_{1} - t_{2}|}. \end{displaymath}
by \eqref{form112}. Consider then the case $|t_{1} - t_{2}| < 1$, and pick $n \geq 0$ such that $2^{-(n + 1)} < |t_{1} - t_{2}| < 2^{-n}$. Let $\ell_{1},\ell_{2} \in \mathcal{L}_{n}$ be lines with $|\pi_{2}(\ell_{j}) - t_{j}| \leq 2^{-n}$. Then $\ell_{1},\ell_{2}$ are neighbours, or perhaps two steps away, in $\mathcal{L}_{n}$, and hence axiom \eqref{X3} implies
\begin{displaymath} \|\dot{\tau}_{n}(\ell_{2}) - \dot{\tau}_{n}(\ell_{1})\|_{L^{\infty}(\R)} \lesssim \delta \cdot 2^{-n/2} \lesssim \delta \cdot \sqrt{|t_{2} - t_{1}|}. \end{displaymath}
It then follows from \eqref{form112}, and the triangle inequality, that $\|\dot{\tau}(\cdot,t_{2}) - \dot{\tau}(\cdot,t_{1})\|_{L^{\infty}(\R)} \leq \eta \sqrt{|t_{2} - t_{1}|}$. The proof of property ($\tau_{3}$) is complete.

Finally, to verify property ($\tau_{4}$), let $Q \in \mathcal{T}$. More precisely, let $m \in \N$ be such that $Q \in \mathcal{T}_{m}$, so $\ell(Q) = 2^{-m}$. Let $w = (x,t) \in \mathcal{C}(Q)$ be a corner, and let $\ell = \pi_{2}^{-1}\{t\} \subset \W$ be the unique horizontal line containing $w$. Since the horizontal edges of all rectangles in $\mathcal{D}_{m}$ are contained on lines in $\mathcal{L}_{2m}$, we have $\ell \in \mathcal{L}_{n}$ and $w \in \mathcal{C}_{\ceil{n/2}}(\ell)$ with $n := 2m$. Next, let $q = q_{Q,\ell}$ be the $(Q,\ell)$-approximate quadric, and fix 
\begin{displaymath} y \in B(f_{2}(w),K\ell(Q)) = B(f_{2}(w),K2^{-n/2}). \end{displaymath}
According to axiom \eqref{X5}, we have 
\begin{equation}\label{form114} |\tau_{n}(\ell)(y) - q(y)| = o(\epsilon) \cdot 2^{-n} \quad \text{and} \quad |\dot{\tau}_{n}(\ell)(y) - \dot{q}(y)| = o(\epsilon) \cdot 2^{-n/2}. \end{equation} 
Choosing $\delta \leq c\eta$ as before, $\epsilon > 0$ small depending on $\eta,K,M,\Sigma$, and combining \eqref{form112}-\eqref{form144} with \eqref{form114}, we find
\begin{displaymath} |\tau(y,t) - q(y)| \leq \eta \cdot \ell(Q)^{2} \quad \text{and} \quad |\dot{\tau}(y,t) - \dot{q}(y)| \leq \eta \cdot \ell(Q), \end{displaymath}
as desired in property ($\tau_{4}$). This completes the proof of Proposition \ref{mainProp2}, modulo the proof of Proposition \ref{t:construction}.

 \section{Constructing the characteristics}
 
 In this section, we prove Proposition \ref{t:construction}.
 
 \subsection{Case $n = 0$} The construction of the characteristics in $\mathcal{X}_{n}$ is inductive, and incorporates information about the characteristics already constructed in $\mathcal{X}_{n - 1}$. The only exception is the case $n = 0$, where we now begin; then $\ceil{n/2} = 0$, and 
 \begin{displaymath} \mathcal{L}_{0} = \{\R \times \{k\} : k \in \Z\} =: \{\ell_{k} : k \in \Z\}. \end{displaymath}
 Let $Q_{0} := [0,1)^{2}$ be the only rectangle in $\mathcal{T}_{0}$. For $k \in \{-1,0,1,2\}$, let $q_{k} := q_{Q_{0},\ell_{k}}$ be the $(Q_{0},\ell_{k})$-approximate quadric. For the same indices $k \in \{-1,0,1,2\}$, we also define
 \begin{equation}\label{form86} \tau_{0}(\ell_{k})(y) := q_{k}(y). \end{equation}
 For the indices $k \notin \{-1,0,1,2\}$, the definition of $\tau_{0}(\ell_{k})$ is a bit arbitrary: let us set
 \begin{equation}\label{form113} \tau_{0}(\ell_{k}) := \tau_{0}(\ell_{0})(y) + C_{M,\Sigma}k, \qquad k \notin \{-1,0,1,2\}, \end{equation} 
 where $C_{M,\Sigma} \sim N^{1/2}$ is a constant to be determined shortly. 
 
 This completes the definition of $\tau_{0} \colon \mathcal{L}_{0} \to C^{1,1}(\R)$, and now we need to check the axioms \eqref{X1}-\eqref{X4}. Axiom \eqref{X1} is easy: since $\mathcal{T}$ is an $(\epsilon,H,\Sigma)$-vertical tree, every $(Q,\ell)$-approximate quadric $q_{Q,\ell}$ with $Q \in \mathcal{T}$ satisfies $\|\ddot{q}_{Q,\ell}\|_{L^{\infty}(\R)} \leq \Sigma$. In particular,
 \begin{equation}\label{form85} \|\ddot{\tau}_{0}(\ell_{k})\|_{L^{\infty}(\R)} \leq \Sigma, \qquad k \in \Z. \end{equation}
 which is even better than \eqref{X1}. Axiom \eqref{X2} requires some work, so we postpone it for a moment. Axiom \eqref{X3} is a triviality: the quadrics $q_{k}$ are associated to the same rectangle $Q_{0}$, hence only differ by a constant (the graphs of every $q_{k}$ arise as $\Pi$-projections of certain horizontal in the vertical plane $\V_{Q_{0}}$). Hence $\dot{\tau}_{0}(\ell_{k}) \equiv \dot{\tau}_{0}(\ell_{k + 1})$ for all $k \in \Z$. 
  
There is very little to verify in axiom \eqref{X5}, but let us do this over-formally to get some familiarity with the axiom. Let $\ell \in \mathcal{L}_{0}$ and fix $y \in \R$. Assume that there exists a rectangle $Q \in \mathcal{T}_{0}$ with corner $w \in \mathcal{C}_{0}(\ell_{\downarrow}) \cup \mathcal{C}_{0}(\ell) \cup \mathcal{C}_{0}(\ell^{\uparrow})$ such that $y \in B(f_{2}(w),K)$. The claim is that $\tau_{0}(\ell)(y) \approx_{0} q_{Q,\ell}(y)$, where $q_{Q,\ell}$ is the $(Q,\ell)$-approximate quadric. 
 
Since $Q_{0}$ is the only rectangle in $\mathcal{T}_{0}$, we have $Q = Q_{0}$, and $\ell \in \{\ell_{-1},\ell_{0},\ell_{1},\ell_{2}\}$. For such lines, in \eqref{form86}, we defined $\tau_{0}(\ell)$ to be the $(Q_{0},\ell)$-approximate quadric. Hence $\tau_{0}(\ell) := q_{Q_{0},\ell} \approx_{0} q_{Q_{0},\ell}$, and the verification of axiom \eqref{X5} is complete.

Axiom \eqref{X4} is meaningless for the time being, since it compares $\tau_{n - 1}(\ell)$ with $\tau_{n}(\ell)$ for $n \geq 1$. So, we finally turn to axiom \eqref{X2}. We first claim that for $k \in \{0,1,2\}$,
\begin{equation}\label{form37aa} \tau_{0}(\ell_{k})(y) - \tau_{0}(\ell_{k - 1})(y) = q_{k}(y) - q_{k - 1}(y) \in [cN^{-1/2},CN^{1/2}], \qquad y \in \R, \end{equation}
where $0 < c \leq C < \infty$ are absolute constants. To see this, we recall that the signature of $\mathcal{T}$, and hence $Q_{0} \in \mathcal{T}$ is $+$, which meant that the $(N,\epsilon)$-QIE
\begin{displaymath} F := F_{Q_{0}} := \pi_{\V_{Q_{0}}} \circ \iota_{Q_{0}} = \psi_{\V_{Q_{0}}} \circ \Pi \circ \iota_{Q_{0}} \colon HQ_{0} \to \W \end{displaymath}
is vertically $\epsilon'$-increasing on the rectangle $BQ_{0} \subset HQ_{0}$, where $\epsilon' = 2N\epsilon$. This fact was established in Lemma \ref{lemma4}, assuming that $H \geq A(1 + \Sigma)^{2}MB$. The quadric straightening map $\psi_{0} := \psi_{\mathbb{V}_{Q_{0}}} \colon \W \to \W$ appearing in the definition of $F$ was introduced just above \eqref{form21}. It had the form
\begin{displaymath} \psi_{0}(y,t) = (y,t - \tfrac{1}{2}ay^{2} - by), \end{displaymath}
where $a = a_{Q_{0}}$ and $b = b_{Q_{0}}$ are the common coefficients of approximate $(Q_{0},\ell)$-quadrics, all of which have the form $q_{\ell}(y) = \tfrac{1}{2}ay^{2} + by + c_{\ell}$. In particular, the quadrics $q_{-1},q_{0},q_{1},q_{2}$ can be written in this general form for certain coefficients $c_{j} := c_{\ell_{j}} \in \R$, and $q_{k} - q_{k - 1} \equiv c_{k} - c_{k - 1}$. So, proving \eqref{form37aa} is equivalent to showing that $c_{k} - c_{k - 1} \sim_{N} 1$. For this purpose, we define $w_{k} := (0,k) \in \W$. Since $F$ is vertically $\epsilon'$-increasing on $BQ_{0}$, and $B \geq 5$, in particular the map $t \mapsto \pi_{2}(F(0,t))$ is $\epsilon'$-increasing on $[k - 1,k]$ for $k \in \{0,1,2\}$. Since $\|\pi_{2}(w_{k}) - \pi_{2}(w_{k - 1})\| > \epsilon'$ (recall from \eqref{form22} that $\epsilon \ll N^{-10}$), and $(\pi_{2} \circ F)|_{\ell} \equiv c_{\ell}$ by the expression \eqref{form45}, this implies
\begin{displaymath} c_{k} = \pi_{2}(F(w_{k})) > \pi_{2}(F(w_{k - 1})) > c_{k - 1}, \qquad k \in \{0,1,2\}. \end{displaymath}
To establish the more quantitative estimate needed for \eqref{form37aa}, we use Lemma \ref{lemma6}, which implies that the map $t \mapsto \pi_{2}(F(0,t))$ is an $(N,2\epsilon)$-QIE on $([k - 1,k],\|\cdot\|)$ (for applying Lemma \ref{lemma6}, we also use that $F$ is a QIE on $HQ_{0}$, a large neighbourhood of $[-1,2]^{2}$). Hence, for $k \in \{0,1,2\}$,
\begin{align*} (2N)^{-1} & \leq N^{-1} - 2\epsilon \leq \|\pi_{2}(F(w_{k})) - \pi_{2}(F(w_{k - 1}))\| = \|c_{k} - c_{k - 1}\| \leq N + 2\epsilon \leq 2N. \end{align*}
This completes the proof of \eqref{form37aa}. To complete the proof of axiom \eqref{X2}, we recall the definition
\begin{displaymath} \tau_{0}(\ell_{k})(y) := \tau_{0}(\ell_{0})(y) + C_{M,\Sigma}k = q_{0}(y) + C_{M,\Sigma}k, \qquad  k \notin \{-1,0,1,2\}. \end{displaymath}
We see from \eqref{form37aa}, and recalling that all the quadrics $q_{k}$ only differ by a constant, that axiom \eqref{X2} is satisfied if $C_{M,\Sigma} \sim N^{1/2}$ is chosen sufficiently large. This completes the proof of Proposition \ref{t:construction} in the case $n = 0$.

\subsection{Case $n \geq 1$}\label{s:genCase} We next assume that the map $\tau_{n - 1} \colon \mathcal{L}_{n - 1} \to C^{1,1}(\R)$ has already been constructed for some $n \geq 1$, and satisfies axioms \eqref{X1}-\eqref{X4}. We now describe the construction of $\tau_{n} \colon \mathcal{L}_{n} \to C^{1,1}(\R)$. Fix $\ell \in \mathcal{L}_{n}$. The construction is a little different in the cases where also $\ell \in \mathcal{L}_{n - 1}$ (so $\tau_{n - 1}(\ell)$ is defined), and where $\ell \in \mathcal{L}_{n} \, \setminus \, \mathcal{L}_{n - 1}$. The constructions can, however, be carried out simultaneously.  

Write $m := \ceil{n/2}$, and let $\mathcal{C}_{m}^{+}(\ell) := \mathcal{C}_{m}(\ell) \cup \mathcal{C}_{m}(\ell^{\uparrow}) \cup \mathcal{C}_{m}(\ell_{\downarrow})$. We would like to define
\begin{displaymath} \mathcal{I}(\ell) := \{I_{n}(w) : w \in \mathcal{C}^{+}_{m}(\ell)\}, \quad \text{where} \quad I_{n}(w) := \bar{B}(f_{2}(w),K2^{-n/2}), \end{displaymath}
but the intervals $I_{n}(w)$, $w \in \mathcal{C}_{m}^{+}(\ell)$, may have plenty of overlap, which leads to technical issues. To avoid these, we prune $\mathcal{I}(\ell)$ as follows. Pick $I \in \mathcal{I}(\ell)$ arbitrarily and see if it is covered by the intervals in $\mathcal{I}(\ell) \, \setminus \, \{I\}$. If it is, discard it; otherwise keep it. The process terminates in $\leq \card \mathcal{I}(\ell) < \infty$ steps. The union of the intervals never changed during the process, and the remaining intervals have overlap bounded by $2$: if three intervals of the same length meet at a common point, then one is covered by the two others.

We keep the notation $\mathcal{I}(\ell)$ for the sub-collection obtained above. The key facts are these:
\begin{equation}\label{form87} \bigcup_{w \in \mathcal{C}_{m}^{+}(\ell)} I_{n}(w) = \bigcup_{I \in \mathcal{I}(\ell)} I \quad \text{and} \quad \sum_{I \in \mathcal{I}(\ell)} \mathbf{1}_{I} \leq 2. \end{equation}
From the second property in \eqref{form87}, we infer that also
\begin{equation}\label{form88} \sum_{I \in \mathcal{I}(\ell)} \mathbf{1}_{2I} \leq 4, \end{equation}
because if $y \in 2I_{1} \cap \cdots \cap 2I_{5}$, then $I_{j} \subset [x - |I|,x + |I|]$ for all $1 \leq j \leq 5$, hence $5|I| \leq \int_{x - |I|}^{x + |I|} \sum \mathbf{1}_{I} \leq 4|I|$ by \eqref{form87}, a contradiction. 

With \eqref{form87}-\eqref{form88} in hand, we construct a partition of unity for $\ell \in \mathcal{L}_{n}$ fixed. Define the (possibly empty) closed sets
\begin{equation}\label{form91} U(\ell) := \bigcup_{I \in \mathcal{I}(\ell)} I = \bigcup_{w \in \mathcal{C}_{m}(\ell)} I_{n}(w) \quad \text{and} \quad U^{\sharp}(\ell) := \bigcup_{I \in \mathcal{I}(\ell)} \kappa_{0} I = \bigcup_{w \in \mathcal{C}_{m}(\ell)} \kappa_{0}I_{n}(w), \end{equation}
where $\kappa_{0} \in (1,\tfrac{4}{5}\sqrt{2})$ is an arbitrary fixed constant; let us for concreteness set 
\begin{displaymath} \kappa_{0} := \tfrac{1}{2}(1 + \tfrac{4}{5}\sqrt{2}). \end{displaymath}
The last set equation in \eqref{form91} requires a little argument, since the selection of the intervals in $\mathcal{I}(\ell)$ had, in principle, nothing to do with the enlargements $\kappa_{0}I$. Fortunately, the same selection process simultaneously yields the last equation in \eqref{form91}. Indeed, if some interval $\kappa_{0}I_{n}(w)$, $w \in \mathcal{C}_{m}(\ell)$, were not covered by the intervals $\kappa_{0}I$ with $I \in \mathcal{I}(\ell)$, then in particular $I_{n}(w) \notin \mathcal{I}(\ell)$. Hence $I_{n}(w) \subset I_{1} \cup \ldots \cup I_{k}$ for certain intervals $I_{1},\ldots,I_{k} \in \mathcal{I}(\ell)$, and it is easy to check that consequently $\kappa_{0}I_{n}(w) \subset \kappa_{0}I_{1} \cup \ldots \cup \kappa_{0}I_{k}$. 

Let $\{\varphi_{I,\ell}\}_{I \in \mathcal{I}(\ell)} \subset C^{\infty}(\R)$ be a family of functions satisfying the following properties:
\begin{displaymath} \mathbf{1}_{U(\ell)} \leq \sum_{I \in \mathcal{I}(\ell)} \varphi_{I,\ell} \leq \mathbf{1}_{U^{\sharp}(\ell)} \quad \text{and} \quad \spt \varphi_{I,\ell} \subset \kappa_{0}I \text{ for all } I \in \mathcal{I}(\ell). \end{displaymath}
Define also
\begin{displaymath} \varphi_{\ell} = 1 - \sum_{I \in \mathcal{I}(\ell)} \varphi_{I,\ell}, \quad \text{hence} \quad \mathbf{1}_{\R \, \setminus \, U^{\sharp}(\ell)} \leq \varphi_{\ell} \leq \mathbf{1}_{\R \, \setminus \, U(\ell)}. \end{displaymath}
Since $\kappa_{0} \leq 2$, the overlap condition \eqref{form88} holds, and $|I| = 2K2^{-n/2}$ for all $I \in \mathcal{I}(\ell)$, the functions $\varphi_{I,\ell}$ and $\varphi_{\ell}$ can be easily chosen to satisfy the following bounds for all $y \in \R$:
\begin{equation}\label{form70} |\dot{\varphi}_{\ell}(y)| + \sum_{I \in \mathcal{I}(\ell)} |\dot{\varphi}_{I,\ell}(y)| \lesssim \tfrac{1}{K} \cdot 2^{n/2} \quad \text{and} \quad |\ddot{\varphi}_{\ell}(y)| + \sum_{I \in \mathcal{I}(\ell)} |\ddot{\varphi}_{I,\ell}(y)| \lesssim \tfrac{1}{K^{2}} \cdot 2^{n}. \end{equation} 

Before defining $\tau_{n}(\ell)$, we associate a quadric to each interval $I \in \mathcal{I}(\ell)$. Recall that every interval $I \in \mathcal{I}(\ell)$ has the form $I = I_{n}(w)$ for some $w_{I} \in \mathcal{C}_{m}(\ell) \cup \mathcal{C}_{m}(\ell^{\uparrow}) \cup \mathcal{C}_{m}(\ell_{\downarrow})$. Let $Q_{I} \in \mathcal{T}_{m}$ be any rectangle with corner $w_{I}$, and let 
\begin{displaymath} q_{I,\ell} := q_{Q_{I},\ell} \end{displaymath}
be the $(Q_{I},\ell)$-approximate quadric. There may be several (up to $4$) admissible choices of $Q_{I}$, and they might produce different quadrics $q_{I,\ell}$. However, we will see in Proposition \ref{prop4} below that the different quadrics nearly agree, so the particular choice does not matter. We are now prepared to define $\tau_{n}(\ell)$. If $\ell \in \mathcal{L}_{n - 1}$, set
\begin{equation}\label{form28} \tau_{n}(\ell) := \sum_{I \in \mathcal{I}(\ell)} \varphi_{I,\ell} \cdot q_{I,\ell} + \varphi_{\ell} \cdot \tau_{n - 1}(\ell). \end{equation}
If $\ell \in \mathcal{L}_{n} \, \setminus \, \mathcal{L}_{n - 1}$, set instead
\begin{equation}\label{form31} \tau_{n}(\ell) := \sum_{I \in \mathcal{I}(\ell)} \varphi_{I,\ell} \cdot q_{I,\ell} + \tfrac{1}{2} \cdot \varphi_{\ell} \cdot [\tau_{n - 1}(\ell^{\uparrow}) + \tau_{n - 1}(\ell_{\downarrow})], \end{equation} 
observing that $\ell^{\uparrow},\ell_{\downarrow} \in \mathcal{L}_{n - 1}$. 

\subsubsection{Preparatory lemmas} Before beginning to verify axioms \eqref{X1}-\eqref{X4}, we prove a few lemmas, which will be needed in the verification of multiple axioms. In these lemmas, the definitions of, and relationships between, the constants $\epsilon,H,K,M,N,\Sigma$ are as stated before. We recall from \eqref{allConstants} that $\epsilon \ll \{M,\Sigma\} \ll K \ll H$.

The first lemma uses nothing but the $M$-Lipschitz property of $f$, hence $f_{2}$.
\begin{lemma}\label{lemma11} Let $w_{1},w_{2} \in \W$ with $d_{\mathrm{par}}(w_{1},w_{2}) \leq 100 \cdot 2^{-n/2}$. Then, assuming that $K \geq 100AM$ for $A > (\tfrac{4}{5}\sqrt{2} - \kappa_{0})^{-1}$, we have
\begin{displaymath} B(f_{2}(w_{2}),\kappa_{0}K2^{-n/2}) \subset B(f_{2}(w_{1}),\tfrac{4}{5}K2^{-(n - 1)/2}) \subset B(f_{2}(w_{1}),\kappa_{0}K2^{-(n - 1)/2}). \end{displaymath} 
\end{lemma}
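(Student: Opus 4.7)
The plan is to use the elementary Lipschitz property: since $f$ is $M$-Lipschitz and the coordinate projection $\pi_{2}\colon(\W,d_{\mathrm{par}})\to(\R,|\cdot|)$ is $1$-Lipschitz, the component $f_{2} = \pi_{2}\circ f$ is $M$-Lipschitz from $(\W,d_{\mathrm{par}})$ to $\R$. Thus
\[
|f_{2}(w_{2}) - f_{2}(w_{1})| \leq M\, d_{\mathrm{par}}(w_{1},w_{2}) \leq 100M\cdot 2^{-n/2}.
\]

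The second inclusion is just a comparison of radii: one checks numerically that $\tfrac{4}{5} < \kappa_{0}$ (indeed $\kappa_{0} = \tfrac{1}{2}(1 + \tfrac{4}{5}\sqrt{2}) > 1$), so every ball of radius $\tfrac{4}{5}K 2^{-(n-1)/2}$ around $f_{2}(w_{1})$ is contained in the concentric ball of radius $\kappa_{0}K 2^{-(n-1)/2}$.

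For the first, harder inclusion, pick $y \in B(f_{2}(w_{2}),\kappa_{0}K 2^{-n/2})$ and estimate by the triangle inequality:
\[
|y - f_{2}(w_{1})| \leq |y - f_{2}(w_{2})| + |f_{2}(w_{2}) - f_{2}(w_{1})| \leq \kappa_{0}K 2^{-n/2} + 100M\cdot 2^{-n/2}.
\]
Since $2^{-(n-1)/2} = \sqrt{2}\cdot 2^{-n/2}$, the target bound $\tfrac{4}{5}K 2^{-(n-1)/2}$ equals $\tfrac{4}{5}\sqrt{2}\, K 2^{-n/2}$. Thus it suffices to verify
\[
\kappa_{0}K + 100M \leq \tfrac{4}{5}\sqrt{2}\,K,
\quad\text{i.e.,}\quad
100M \leq \bigl(\tfrac{4}{5}\sqrt{2} - \kappa_{0}\bigr) K.
\]
The hypothesis $K \geq 100AM$ with $A > (\tfrac{4}{5}\sqrt{2} - \kappa_{0})^{-1}$ gives $(\tfrac{4}{5}\sqrt{2} - \kappa_{0})K \geq 100AM\cdot(\tfrac{4}{5}\sqrt{2} - \kappa_{0}) > 100M$, completing the proof.

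There is no real obstacle here: the only thing one needs to know is that $\kappa_{0}$ was chosen strictly less than $\tfrac{4}{5}\sqrt{2}$ precisely to make this slack available, and that taking $K$ large enough absorbs the $M$-Lipschitz error $100M\cdot 2^{-n/2}$ into the geometric gap between $\kappa_{0}K 2^{-n/2}$ and $\tfrac{4}{5}\sqrt{2}K 2^{-n/2}$.
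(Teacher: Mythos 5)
Your proof is correct and follows essentially the same route as the paper: the $M$-Lipschitz bound on $f_{2}$, the triangle inequality, and the observation that $K \geq 100AM$ with $A^{-1} < \tfrac{4}{5}\sqrt{2} - \kappa_{0}$ absorbs the error $100M \cdot 2^{-n/2}$ into the gap between $\kappa_{0}K$ and $\tfrac{4}{5}\sqrt{2}K$, with the second inclusion reduced to $\tfrac{4}{5} \leq \kappa_{0}$. No issues.
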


\begin{proof} Since $f_{2}$ is $M$-Lipschitz, and $d_{\mathrm{par}}(w_{1},w_{2}) \leq 100 \cdot 2^{-n/2}$, we have
\begin{displaymath} B(f_{2}(w_{2}),\kappa_{0}K2^{-n/2}) \subset B(f_{2}(w_{1}),\kappa_{0}K2^{-n/2} + 100 \cdot M2^{-n/2}).  \end{displaymath}
Since $K \geq 100AM$, and $\kappa_{0} = \tfrac{1}{2}(1 + \tfrac{4}{5}\sqrt{2})$, here $\kappa_{0}K + 100M \leq (\kappa_{0} + A^{-1})K < \tfrac{4}{5}\sqrt{2} \cdot K$, hence
\begin{displaymath} B(f_{2}(w_{1}),\kappa_{0}K2^{-n/2} + M2^{-(n - 1)/2}) \subset B(f_{2}(w_{1}),\tfrac{4}{5}K2^{-(n - 1)/2}). \end{displaymath}
The second inclusion simply follows from $\tfrac{4}{5} \leq \kappa_{0}$. \end{proof}

The next lemma implies that the "$\approx_{n}$"-notation introduced in \eqref{approxNotation} interacts well with partitions of unity. For technical purposes, we record a slightly more quantitative result:

\begin{lemma}\label{lemma13} Let $\{\varphi_{I}\}_{I \in \mathcal{I}} \subset C^{2}(\R)$ be a family of functions satisfying
\begin{equation}\label{form96} \sum_{I \in \mathcal{I}} \varphi_{I}(y_{0}) = 1, \quad \sum_{I \in \mathcal{I}} |\dot{\varphi}_{I}(y_{0})| \leq c_{1} \cdot 2^{n/2}, \quad \text{and} \quad \sum_{I \in \mathcal{I}} |\ddot{\varphi}_{I}(y_{0})| \leq c_{2} \cdot 2^{n} \end{equation}
for some fixed $y_{0} \in \R$. Let $\{p_{I}\}_{I \in \mathcal{I}} \cup \{q_{I}\}_{I \in \mathcal{I}} \subset C^{2}(\R)$ be further families of functions satisfying
\begin{displaymath} |p_{I}(y_{0}) - q_{I}(y_{0})| \leq \epsilon_{1} \cdot 2^{-n}, \quad |\dot{p}_{I}(y_{0}) - \dot{q}_{I}(y_{0})| \leq \epsilon_{2} \cdot 2^{-n/2}, \quad \text{and} \quad |\ddot{p}_{I}(y_{0}) - \ddot{q}_{I}(y_{0})| \leq \epsilon_{3} \end{displaymath} 
for all $I \in \mathcal{I}$ with $y_{0} \in \spt \varphi_{I}$. Here $c_{1},c_{2},\epsilon_{1},\epsilon_{2},\epsilon_{3} > 0$ are constants. Then, writing $Q_{p} := \sum \varphi_{I}p_{I}$ and $Q_{q} := \sum \varphi_{I}q_{I}$, we have
\begin{equation}\label{form95} \begin{cases} |Q_{p}(y_{0}) - Q_{q}(y_{0})| \leq \epsilon_{1} \cdot 2^{-n},\\ |\dot{Q}_{p}(y_{0}) - \dot{Q}_{q}(y_{0})| \leq (c_{1}\epsilon_{1} + \epsilon_{2}) \cdot 2^{-n/2},\\ |\ddot{Q}_{p}(y_{0}) - \ddot{Q}_{q}(y_{0})| \leq c_{2}\epsilon_{1} + 2c_{1}\epsilon_{2} + \epsilon_{3}. \end{cases} \end{equation} 

\end{lemma}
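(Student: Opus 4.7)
The proof of Lemma \ref{lemma13} is a direct calculation via the Leibniz rule, and the statement is tight enough that little is to be gained from clever reorganization. Here is the plan.

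Write $R_I := p_I - q_I$, so that $Q_p - Q_q = \sum_{I \in \mathcal{I}} \varphi_I R_I$. For any $I$ with $y_0 \notin \spt \varphi_I$, the function $\varphi_I$ vanishes identically in a neighbourhood of $y_0$, hence $\varphi_I(y_0) = \dot{\varphi}_I(y_0) = \ddot{\varphi}_I(y_0) = 0$, so only those $I$ with $y_0 \in \spt \varphi_I$ contribute at $y_0$. For those indices the hypotheses give $|R_I(y_0)| \leq \epsilon_1 \cdot 2^{-n}$, $|\dot R_I(y_0)| \leq \epsilon_2 \cdot 2^{-n/2}$, and $|\ddot R_I(y_0)| \leq \epsilon_3$.

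The first bound in \eqref{form95} comes from evaluating the sum at $y_0$, pulling out the common factor of size $\epsilon_1 \cdot 2^{-n}$, and using $\sum \varphi_I(y_0) = 1$ (since the $\varphi_I$ are nonnegative in the intended application; if nonnegativity is not assumed, I would read the first identity of \eqref{form96} as giving $\sum |\varphi_I(y_0)| \leq 1$, as is the case for any bona fide partition of unity built as above). Differentiating once gives
\begin{displaymath}
\dot Q_p(y_0) - \dot Q_q(y_0) \;=\; \sum_I \dot{\varphi}_I(y_0) R_I(y_0) \;+\; \sum_I \varphi_I(y_0) \dot R_I(y_0),
\end{displaymath}
and the triangle inequality, combined with the bounds on $\sum |\dot{\varphi}_I(y_0)|$ and $\sum \varphi_I(y_0)$, yields $(c_1 \epsilon_1 + \epsilon_2) \cdot 2^{-n/2}$. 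For the second derivative, Leibniz gives
\begin{displaymath}
\ddot Q_p(y_0) - \ddot Q_q(y_0) \;=\; \sum_I \ddot{\varphi}_I(y_0) R_I(y_0) \;+\; 2\sum_I \dot{\varphi}_I(y_0)\dot R_I(y_0) \;+\; \sum_I \varphi_I(y_0)\ddot R_I(y_0),
\end{displaymath}
and the same triangle inequality argument, using the three bounds on the $\sum |\varphi_I^{(k)}(y_0)|$, produces $c_2 \epsilon_1 + 2c_1 \epsilon_2 + \epsilon_3$.

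There is no real obstacle: the only book-keeping point is verifying that in each derivative one indeed only couples a factor of size $2^{kn/2}$ from the partition of unity with a factor of size $2^{-(2-k)n/2}$ from the difference $R_I$, so that all terms in \eqref{form95} have the same homogeneity. The constants $c_1, c_2, \epsilon_i$ are carried through transparently, and in the intended application $c_1,c_2 \lesssim K^{-1}, K^{-2}$ (from \eqref{form70}) will multiply much smaller $\epsilon$-quantities, so the losses are harmless.
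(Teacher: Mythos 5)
Your proof is correct and follows the same route as the paper's (very terse) argument: restrict the sum to indices with $y_{0} \in \spt \varphi_{I}$, apply the Leibniz rule, and use the triangle inequality together with the bounds \eqref{form96}. Your side remark about needing $\sum_{I} |\varphi_{I}(y_{0})| \leq 1$ (i.e.\ nonnegativity of the $\varphi_{I}$, which holds for the partitions of unity used in the paper) is a fair reading of a hypothesis the lemma statement leaves implicit, and the paper's "direct computation" tacitly relies on the same fact.
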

\begin{proof} Write $Q_{p}(y_{0}) - Q_{q}(y_{0}) = \sum_{I \in \mathcal{I}} \varphi_{I}(y_{0}) \cdot [p_{I}(y_{0}) - q_{I}(y_{0})]$, and note that the sum can be restricted to those $I \in \mathcal{I}$ with $y_{0} \in \spt \varphi_{I}$. Then \eqref{form95} follows by direct computation.  \end{proof}

Quite often the following corollary is enough: 

\begin{cor}\label{cor7} Let $\{\varphi_{I}\}_{I \in \mathcal{I}}$ be a partition of unity satisfying \eqref{form96} with $c_{1},c_{2} \lesssim 1$. Let $\{q_{I}\}_{I \in \mathcal{I}(\ell)} \subset C^{2}(\R)$. Assume that, for some $y_{0} \in \R$ fixed, there exists a function $q = q_{y_{0}} \in C^{2}(\R)$ such that 
\begin{displaymath} q_{I}(y_{0}) \approx_{n} q(y_{0}) \quad \text{for all} \quad I \in \mathcal{I}(\ell) \quad \text{with} \quad y_{0} \in \spt \varphi_{I,\ell}. \end{displaymath} 
Then, $q(y_{0}) \approx_{n} Q(y_{0})$, where $Q = \sum_{I \in \mathcal{I}} \varphi_{I}q_{I}$. \end{cor}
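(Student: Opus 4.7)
The plan is to obtain Corollary \ref{cor7} as a direct specialisation of Lemma \ref{lemma13}. The key observation is that if one takes $p_{I} := q$ — the same function $q_{y_{0}}$ for every index $I \in \mathcal{I}$ — then the combination $Q_{p} := \sum_{I} \varphi_{I} p_{I}$ collapses to $q \cdot \sum_{I} \varphi_{I} \equiv q$ identically on $\R$, using that $\{\varphi_{I}\}_{I \in \mathcal{I}}$ is an honest partition of unity, i.e.\ $\sum_{I} \varphi_{I} \equiv 1$. In particular, the value, first derivative, and second derivative of $Q_{p}$ at $y_{0}$ coincide exactly with those of $q$.

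With this choice in place, the hypothesis $q_{I}(y_{0}) \approx_{n} q(y_{0})$ for every $I \in \mathcal{I}$ with $y_{0} \in \spt \varphi_{I}$ translates, via the definition \eqref{approxNotation}, into precisely the three pointwise inequalities demanded by Lemma \ref{lemma13} with $\epsilon_{1},\epsilon_{2},\epsilon_{3} = o(\epsilon)$ (recalling that $p_{I}(y_{0}) = q(y_{0})$, and similarly for the first two derivatives). Since we are assuming $c_{1},c_{2} \lesssim 1$, the three bounds furnished by the conclusion \eqref{form95} retain the form $o(\epsilon) \cdot 2^{-n}$, $o(\epsilon) \cdot 2^{-n/2}$, and $o(\epsilon)$, respectively — the absolute constants $c_{1},c_{2}$ may be absorbed into the $o(\epsilon)$ notation, whose smallness parameter depends only on $K,M,\Sigma$. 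Translating back, this is exactly the statement $Q(y_{0}) \approx_{n} Q_{p}(y_{0}) = q(y_{0})$, which is the desired conclusion.

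There is no real obstacle here; the corollary is designed so that Lemma \ref{lemma13} applies essentially verbatim. The only point worth flagging is the reading of the phrase "partition of unity" as an identity $\sum_{I} \varphi_{I} \equiv 1$ on all of $\R$ (rather than the purely pointwise condition $\sum_{I} \varphi_{I}(y_{0}) = 1$ appearing in the hypothesis of Lemma \ref{lemma13}). This global identity is exactly what reduces $Q_{p}$ to $q$ globally, so that the derivatives of $Q_{p}$ at $y_{0}$ automatically match those of $q$; this is the only reason one does not pick up extra error terms involving $\dot{q}(y_{0}) \sum_{I} \dot{\varphi}_{I}(y_{0})$ or $\ddot{q}(y_{0}) \sum_{I} \ddot{\varphi}_{I}(y_{0})$ in the computation.
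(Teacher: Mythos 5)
Your proof is correct and is exactly the intended derivation: the paper presents Corollary \ref{cor7} as an immediate consequence of Lemma \ref{lemma13}, obtained precisely by taking $p_{I} := q$ for every $I$, so that $Q_{p} \equiv q$ by the partition-of-unity identity, and the conclusion \eqref{form95} with $\epsilon_{1},\epsilon_{2},\epsilon_{3} = o(\epsilon)$ and $c_{1},c_{2} \lesssim 1$ is the claimed $Q(y_{0}) \approx_{n} q(y_{0})$. Your reading of \emph{partition of unity} as the global identity $\sum_{I} \varphi_{I} \equiv 1$ (rather than only the pointwise condition at $y_{0}$) is indeed what is needed for the first- and second-derivative comparisons, and it is what holds in every application in the paper, where the family is $\{\varphi_{I,\ell}\}_{I \in \mathcal{I}(\ell)} \cup \{\varphi_{\ell}\}$ with $\varphi_{\ell} = 1 - \sum_{I} \varphi_{I,\ell}$.
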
 

The next proposition gives a condition for checking that the hypothesis of Corollary \ref{cor7} is valid. The notation $q_{Q,\ell}$ will refer to the $(Q,\ell)$-approximate quadric associated with an $(\epsilon,H,\Sigma)$-vertical rectangle $Q \in \mathcal{D}$.

\begin{proposition}\label{prop4} Let $w_{1},w_{2} \in \mathcal{C}_{\ceil{n/2}} \cup \mathcal{C}_{\ceil{(n - 1)/2}}$ be corners of $Q_{1},Q_{2} \in \mathcal{T}_{\ceil{n/2}} \cup \mathcal{T}_{\ceil{(n - 1)/2}}$. Assume that $w_{1} \in \ell_{1}$ and $w_{2} \in \ell_{2}$, where $\ell_{1},\ell_{2} \subset \W$ are horizontal lines with $|\ell_{1} - \ell_{2}| \leq 100 \cdot 2^{-n}$. Then, if $\ell \subset \W$ is an additional horizontal line with $|\ell - \ell_{1}| \leq 100 \cdot 2^{-n}$, we have
\begin{equation}\label{form65} q_{Q_{1},\ell}(y) \approx_{n} q_{Q_{2},\ell}(y), \qquad y \in B(f_{2}(w_{1}),100K2^{-n/2}) \cap B(f_{2}(w_{2}),100K2^{-n/2}). \end{equation}
\end{proposition}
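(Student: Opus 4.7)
My plan is to exploit that each approximate quadric $q_i := q_{Q_i,\ell}$ is a polynomial of degree at most $2$ with bounded leading coefficient ($|a_i| \leq \Sigma$), so the difference $d := q_1 - q_2$ has constant $\ddot d$ and affine $\dot d$; it is therefore determined up to $o(\epsilon)$-error by its values and those of its derivative at any two well-separated points. The natural test points will be chosen on the line $\ell$, inside the common parabolic "neighbourhood" of both $Q_1$ and $Q_2$, and the comparison with $f$ will be carried out through Corollary \ref{cor4}. Finally, a two-term Taylor expansion transports the pointwise bounds to the full ball $B(f_2(w_1),100K\cdot 2^{-n/2})$.

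For the selection of test points, observe first that since $|\ell-\ell_1|,|\ell_1-\ell_2| \leq 100\cdot 2^{-n}$ and $\ell(Q_i) \sim 2^{-n/2}$, both parabolic distances $d_{\mathrm{par}}(\ell,\ell_i)$ are of order $2^{-n/2}$, which is much smaller than $H\ell(Q_i)$ once $H$ is a large absolute constant (already guaranteed by $H \geq A\max\{N^2,KN\}$). Hence $\ell \cap \tfrac12 HQ_1 \cap \tfrac12 HQ_2$ is an interval on $\ell$ of length $\sim H\cdot 2^{-n/2}$. I pick $w_A$ in this interval close to the $\ell$-projection of $w_1$, so that $d_{\mathrm{par}}(w_1,w_A) \lesssim 2^{-n/2}$, and I pick $w_B$ in the same interval with $d_{\mathrm{par}}(w_A,w_B) \in [\ell(Q_1),10\ell(Q_1)]$. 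Since $w_A,w_B$ lie on a common horizontal line (so $\|t_A-t_B\|=0$) and their horizontal separation exceeds $\tfrac{1}{100}\min\{\ell(Q_1),\ell(Q_2)\}$, Proposition \ref{prop3} yields $|f_2(w_A)-f_2(w_B)| \geq (2N)^{-1}\cdot 2^{-n/2}$. Setting $y_X := f_2(w_X)$, two applications of Corollary \ref{cor4} (one for each $Q_i$) at $w_X \in \{w_A,w_B\} \subset \ell \cap HQ_1 \cap HQ_2$ give, after subtracting,
\begin{equation*}
|d(y_X)| \lesssim_{\Sigma} \epsilon^2 \cdot 2^{-n}, \qquad |\dot d(y_X)| \lesssim_{\Sigma} \epsilon \cdot 2^{-n/2}.
\end{equation*}

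To conclude, since $\dot d$ is affine,
\begin{equation*}
|\ddot d| \;=\; \frac{|\dot d(y_B) - \dot d(y_A)|}{|y_B - y_A|} \;\lesssim_{N,\Sigma}\; \epsilon,
\end{equation*}
which already establishes the second-derivative clause of $\approx_n$. For any $y \in B(f_2(w_1),100K\cdot 2^{-n/2})$, the $M$-Lipschitz bound $|f_2(w_1)-y_A| \leq M\,d_{\mathrm{par}}(w_1,w_A) \lesssim_M 2^{-n/2}$ gives $|y-y_A| \lesssim_M K\cdot 2^{-n/2}$. Expanding about $y_A$,
\begin{align*}
|\dot d(y)| &\leq |\dot d(y_A)| + |\ddot d|\cdot|y-y_A| \lesssim_{K,M,\Sigma} \epsilon\cdot 2^{-n/2}, \\
|d(y)| &\leq |d(y_A)| + |\dot d(y_A)|\cdot|y-y_A| + \tfrac{1}{2}|\ddot d|\cdot|y-y_A|^2 \lesssim_{K,M,\Sigma} \epsilon\cdot 2^{-n}.
\end{align*}
All implicit constants depend only on $K,M,\Sigma$, hence each of the three quantities is $o(\epsilon)$ times the required power of $2^{-n/2}$ in the sense of \eqref{approxNotation}, proving \eqref{form65}. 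The main place where care is needed is the verification that the common interval $\ell \cap \tfrac12 HQ_1 \cap \tfrac12 HQ_2$ is long enough to simultaneously accommodate $w_A$ near the projection of $w_1$ and a point $w_B$ that is $\gtrsim \ell(Q_1)$-separated from $w_A$; this is what forces the lower bound on $H$, and is also what makes the $K$-enlarged ball $B(f_2(w_1),100K\cdot 2^{-n/2})$ harmless in the final Taylor step.
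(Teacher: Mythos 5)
There is a genuine gap at the very first step, where you assert that $\ell \cap \tfrac12 HQ_1 \cap \tfrac12 HQ_2$ is an interval of length $\sim H\cdot 2^{-n/2}$ because the lines $\ell,\ell_1,\ell_2$ are vertically within $100\cdot 2^{-n}$ of each other. Vertical closeness of the lines says nothing about the \emph{horizontal} positions of $Q_1$ and $Q_2$: the hypotheses only place $w_1\in\ell_1$ and $w_2\in\ell_2$, and the two rectangles could be separated in the $y$-direction by an amount vastly exceeding $H\ell(Q_i)\sim H2^{-n/2}$, in which case $\tfrac12 HQ_1\cap\tfrac12 HQ_2=\emptyset$ and your test points $w_A,w_B$ do not exist. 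The lower bound $H\geq A\max\{N^2,KN\}$ that you invoke only guarantees that $\ell$ meets each $HQ_i$ \emph{separately}; it does not make the two neighbourhoods overlap. The missing ingredient is the only piece of information that can force horizontal proximity, namely the hypothesis that the two balls $B(f_2(w_1),100K2^{-n/2})$ and $B(f_2(w_2),100K2^{-n/2})$ intersect (otherwise there is nothing to prove). From that one gets $|f_2(w_1)-f_2(w_2)|\leq 200K\cdot 2^{-n/2}$, and then Proposition \ref{prop3} must be applied \emph{in contrapositive form} to the pair $w_1,w_2$: either $|y_1-y_2|\leq\max\{\tfrac1{100}\min\ell(Q_i),4N^2\|t_1-t_2\|\}\lesssim N^2 2^{-n/2}$, or else $d_{\mathrm{par}}(w_1,w_2)\leq 2N|f_2(w_1)-f_2(w_2)|\lesssim KN\cdot 2^{-n/2}$. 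Only after this does one know $\dist_{\mathrm{par}}(Q_1,Q_2)\lesssim\max\{KN,N^2\}\,2^{-n/2}$, which is what the requirement $H\gg\max\{KN,N^2\}$ is actually for, and which makes the common intersection $\ell\cap\tfrac12 HQ_1\cap\tfrac12 HQ_2$ nonempty and long enough for your choice of $w_A$ and $w_B$.

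Once that step is inserted, the remainder of your argument is essentially the paper's: choose two $\gtrsim 2^{-n/2}$-separated points of $\ell$ in the common half-neighbourhood, use Proposition \ref{prop3} to get $|y_B-y_A|\gtrsim_N 2^{-n/2}$, apply Corollary \ref{cor4} to both quadrics at both points to bound $d=q_1-q_2$ and $\dot d$ there, and then propagate to all $y$ in the ball (the paper packages your explicit Taylor expansion into the quadric comparison lemma, Lemma \ref{quadricComparison}, applied with $r\sim_{K,N}1$; the two are interchangeable). So the fix is localized, but as written the proof does not establish that the configuration on which the whole construction rests actually exists.
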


\begin{proof} Write $m := \ceil{n/2}$. Then $2^{-m} \leq \ell(Q_{j}) \leq 2^{-m + 1}$ for both $j \in \{1,2\}$. If the intersection in \eqref{form65} is empty, there is nothing to prove. In the opposite case, we have
\begin{equation}\label{form66} |f_{2}(w_{1}) - f_{2}(w_{2})| \leq 200K \cdot 2^{-m}. \end{equation}
Based on this, we claim that
\begin{equation}\label{form67} d_{\mathrm{par}}(w_{1},w_{2}) \lesssim \max\{KN,N^{2}\} \cdot 2^{-m}. \end{equation}
Writing $w_{j} := (y_{j},t_{j})$, $j \in \{1,2\}$, the hypothesis of the lemma implies that $\|t_{1} - t_{2}\| \lesssim 2^{-m}$. Therefore, if 
\begin{equation}\label{form74} |y_{1} - y_{2}| \leq \max\{\tfrac{1}{100} \cdot 2^{-m},4N^{2}\|t_{1} - t_{2}\|\}, \end{equation} 
the estimate \eqref{form67} follows immediately. In the opposite case, we rely on Proposition \ref{prop3}, which we recall here for the reader's convenience (with terminology slightly adapted to the present situation):
\begin{proposition}\label{prop3a} Let $Q_{1},Q_{2} \in \mathcal{T}$,
and let $w_{j} = (y_{j},t_{j}) \in \tfrac{1}{2}HQ_{j}$, $j \in \{1,2\}$, with
\begin{displaymath} |y_{1} - y_{2}| > \max\{ \tfrac{1}{100} \cdot \min\{\ell(Q_{1}),\ell(Q_{2})\},4N^{2}\|t_{1} - t_{2}\|\}. \end{displaymath}
Then,
\begin{displaymath} |f_{2}(w_{2}) - f_{2}(w_{1})| \geq \tfrac{1}{2N}d_{\mathrm{par}}(w_{1},w_{2}). \end{displaymath}
\end{proposition}
With the proposition in hand, recalling \eqref{form66}, and assuming that \eqref{form74} fails, we estimate
\begin{displaymath} d_{\mathrm{par}}(w_{1},w_{2}) \leq 2N|f_{2}(w_{1}) - f_{2}(w_{2})| \lesssim KN2^{-m}, \end{displaymath} 
and \eqref{form67} follows. The main consequence of \eqref{form67} is that $\dist_{\mathrm{par}}(Q_{1},Q_{2}) \leq d_{\mathrm{par}}(w_{1},w_{2}) \lesssim \max\{KN,N^{2}\} \ell(Q_{j})$ for $j \in \{1,2\}$, recalling $Q_{1},Q_{2} \in \mathcal{T}_{m}$. Therefore, if $H \gg \max\{KN,N^{2}\}$ is sufficiently large, then $\tfrac{1}{2}HQ_{1} \cap \tfrac{1}{2}HQ_{2} \cap \ell \neq \emptyset$, and we may even pick two "horizontally aligned" points 
\begin{displaymath} v_{1},v_{2} \in \ell \cap \tfrac{1}{2}HQ_{1} \cap \tfrac{1}{2}HQ_{2} \quad \text{with} \quad d_{\mathrm{par}}(v_{1},v_{2}) = 2^{-m}. \end{displaymath}
These points will help us to show that the quadrics $q_{1} := q_{Q_{1},\ell}$ and $q_{2} := q_{Q_{2},\ell}$ are almost the same, which is what we claimed. To make this precise, we will appeal to the following \emph{quadric comparison lemma}, whose proof will be postponed to Appendix \ref{s:quadricComparison}:
\begin{lemma}[Quadric comparison lemma]\label{quadricComparison} Let $q \colon \R \to \R$ be a quadric, and let $I = [a_{1},a_{2}] \subset \R$ be an interval. Write
\begin{displaymath} m := \max\{|q(a_{1})|,|q(a_{2})|\} \quad \text{and} \quad \dot{m} := \max\{|\dot{q}(a_{1})|,|\dot{q}(a_{2})|\}. \end{displaymath}
Then, for all $r \geq 1$ and $t \in rI$,
\begin{equation}\label{quadricEst} |q(t)| \lesssim (m + |I|\dot{m})r^{2}, \quad |\dot{q}(t)| \lesssim (\dot{m} + \tfrac{m}{|I|})r, \quad \text{and} \quad |\ddot{q}(t)| \lesssim \tfrac{m}{|I|^{2}} + \tfrac{\dot{m}}{|I|}.  \end{equation} 
\end{lemma}

We aim to apply the lemma to the quadric $q := q_{1} - q_{2}$ on the interval $I := [a_{1},a_{2}] := [f_{2}(v_{1}),f_{2}(v_{2})]$ (or $I = [f_{2}(v_{2}),f_{2}(v_{1})]$), where $v_{1},v_{2} \in \ell \cap \tfrac{1}{2}HQ_{1} \cap \tfrac{1}{2}HQ_{2}$ were chosen above. To this end, we need to have upper and lower bounds for $|I|$, plus estimates for $|q(a_{j})|,|\dot{q}(a_{j})|$, $j \in \{1,2\}$. Let us start by estimating $|I|$. Since $v_{1},v_{2} \in \ell \cap \tfrac{1}{2}HQ_{1} \cap \tfrac{1}{2}HQ_{2}$, and $d_{\mathrm{par}}(v_{1},v_{2}) = \ell(Q_{1}) = \ell(Q_{2})$, Proposition \ref{prop3a} can be applied, once more:
\begin{displaymath} |I| = |f_{2}(v_{2}) - f_{2}(v_{1})| \geq \tfrac{1}{2N}d_{\mathrm{par}}(v_{1},v_{2}) = \tfrac{1}{2N} \cdot 2^{-m}. \end{displaymath} 
Evidently $|I| \leq M2^{-m}$, since $f_{2}$ is $M$-Lipschitz. 

To estimate $|q(a_{j})|,|\dot{q}(a_{j})|$, $j \in \{1,2\}$, we observe that $v_{1},v_{2} \in \ell \cap HQ_{1} \cap HQ_{2}$, and then apply Corollary \ref{cor4} to both quadrics $q_{i}$ at both points $a_{j} = f_{2}(v_{j})$:
\begin{displaymath} |f_{t}(v_{j}) - q_{i}(a_{j})| \lesssim_{\Sigma} \epsilon \cdot 2^{-n} \quad \text{and} \quad |f_{x}(v_{j}) - \dot{q}_{i}(a_{j})| \lesssim_{\Sigma} \epsilon \cdot 2^{-m}, \qquad i,j \in \{1,2\}. \end{displaymath} 
Using the triangle inequality, and recalling that $q = q_{1} - q_{2}$, we infer
\begin{displaymath} m := \max_{j \in \{1,2\}} |q(a_{j})| \lesssim_{\Sigma} \epsilon \cdot 2^{-n} \quad \text{and} \quad \dot{m} := \max_{j \in \{1,2\}} |q(a_{j})| \lesssim_{\Sigma} \epsilon \cdot 2^{-m}. \end{displaymath}
Now, combining these bounds with the length estimate $|I| \sim_{N} 2^{-m} \sim 2^{-n/2}$, we may apply the quadric comparison lemma: for any $r \geq 1$ and $t \in rI$, 
\begin{equation}\label{form52} \begin{cases} |q_{1}(t) - q_{2}(t)| \lesssim_{r,N,\Sigma} \epsilon \cdot 2^{-n}, \\ |\dot{q}_{1}(t) - \dot{q}_{2}(t)| \lesssim_{r,N,\Sigma} \epsilon \cdot 2^{-m} \\ |\ddot{q}_{1}(t) - \ddot{q}_{2}(t)| \lesssim_{N,\Sigma} \epsilon. \end{cases} \end{equation}
The bound for the second derivatives is naturally independent or $r$, since the second derivative of a quadric is constant. As for the first two estimates, the point "$t$" that we are really interested in is the point $y \in B(f_{2}(w_{1}),100K2^{-n/2}) \cap B(f_{2}(w_{2}),100K2^{-n/2})$ given by \eqref{form65}. Since $d_{\mathrm{par}}(v_{j},w_{j}) \lesssim H2^{-m}$, it follows from the $M$-Lipschitz property of $f_{2}$ that
\begin{displaymath} \dist(y,I) \leq |y - f_{2}(w_{1})| + |f_{2}(v_{1}) - f_{2}(w_{1})| \lesssim (K + HM)2^{-m}. \end{displaymath} 
Since $|I| \sim_{N} 2^{-m}$, and $M \leq N$, we have $y \in rI$ for some $r \sim_{K,N} 1$, and it finally follows from \eqref{form52} that
\begin{displaymath} |q_{1}(y) - q_{2}(y)| \lesssim_{K,N,\Sigma} \epsilon \cdot 2^{-n} \quad \text{and} \quad |\dot{q}_{1}(y) - \dot{q}_{2}(y)| \lesssim_{K,N,\Sigma} \epsilon \cdot 2^{-m}. \end{displaymath} 
The correct bound for $|\ddot{q}_{1}(y) - \ddot{q}_{2}(y)|$ already follows from \eqref{form52}. Since $N \sim_{M,\Sigma} 1$, this completes the proof of the proposition. \end{proof}

The final result in this section is virtually a restatement of Corollary \ref{cor5}; 
\begin{cor}\label{cor6} Let $\ell_{1} < \ell_{2} < \ell_{3} < \ell_{4} < \ell_{5}$ be consecutive lines in $\mathcal{L}_{n}$, $n \geq 1$, such that $\ell_{2},\ell_{4} \in \mathcal{L}_{n - 1}$. Assume that one of the lines $\ell_{j}$ contains the corner "$w$" of a rectangle $Q \in \mathcal{T}_{\ceil{n/2}}$. Then, the dyadic parent of $Q$ has a corner $\widehat{w} \in \ell_{2} \cup \ell_{4}$ which lies in $\mathcal{C}_{\ceil{n/2}} \cap \mathcal{C}_{\ceil{(n - 1)/2}}$. \end{cor}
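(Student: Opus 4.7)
The plan is to extract the corner $\hat w$ from Corollary \ref{cor5} and then use the tree axioms (T2) and (T3) to upgrade it to a corner of rectangles in both $\mathcal{T}_{\ceil{n/2}}$ and $\mathcal{T}_{\ceil{(n-1)/2}}$. The geometric content has already been done by Corollary \ref{cor5}; the additional content here is purely combinatorial, so I do not expect any real obstacle.

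First, I would apply Corollary \ref{cor5} directly to the line configuration $\ell_1<\ldots<\ell_5$, yielding a corner $\hat w$ of the dyadic parent $\hat Q\in \mathcal{D}_{\ceil{n/2}-1}$ with $\hat w\in\ell_2\cup\ell_4$. Next, I would show $\hat Q\in\mathcal{T}$. Since $n\geq 1$ forces $\ceil{n/2}\geq 1$, the rectangle $Q\in\mathcal{T}_{\ceil{n/2}}$ is a proper subset of the root $Q(\mathcal{T})=[0,1)^2\in\mathcal{T}_0$, so $Q\subset \hat Q\subset Q(\mathcal{T})$ with $Q,Q(\mathcal{T})\in\mathcal{T}$; axiom (T2) then gives $\hat Q\in\mathcal{T}_{\ceil{n/2}-1}$. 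Coherence (T3), combined with $Q\in \mathbf{ch}(\hat Q)\cap \mathcal{T}$, shows that every child $Q'\in\mathbf{ch}(\hat Q)$ lies in $\mathcal{T}_{\ceil{n/2}}$.

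Finally, I would conclude by a short parity check. Note that $\hat w$ is a corner of $\hat Q$ and also of at least one child $Q'\in\mathbf{ch}(\hat Q)$, so $\hat w\in \mathcal{C}_{\ceil{n/2}-1}\cap \mathcal{C}_{\ceil{n/2}}$ (using $\hat Q,Q'\in\mathcal{T}$). Now:
\begin{itemize}
\item If $n=2m$, then $\ceil{n/2}=m$ and $\ceil{(n-1)/2}=m$, so the required condition $\hat w\in\mathcal{C}_{\ceil{n/2}}\cap\mathcal{C}_{\ceil{(n-1)/2}}$ reduces to $\hat w\in\mathcal{C}_m$, which holds because $Q'\in\mathcal{T}_m$ and $\hat w\in\mathcal{C}(Q')$.
\item If $n=2m+1$, then $\ceil{n/2}=m+1$ and $\ceil{(n-1)/2}=m=\ceil{n/2}-1$, so the required condition becomes $\hat w\in \mathcal{C}_{m+1}\cap \mathcal{C}_m$, which is exactly what the previous paragraph established.
\end{itemize}
In both cases $\hat w\in\ell_2\cup\ell_4$ and $\hat w\in\mathcal{C}_{\ceil{n/2}}\cap\mathcal{C}_{\ceil{(n-1)/2}}$, completing the proof.

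The only thing worth double-checking is the boundary case $n=1$, where $\ceil{n/2}=1$ and $\hat Q$ must coincide with $Q(\mathcal{T})=[0,1)^2\in\mathcal{T}_0$; here the argument goes through unchanged, as the corner $\hat w$ of $\hat Q$ is trivially a corner of one of its $\mathcal{T}$-children.
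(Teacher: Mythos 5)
Your proof is correct and follows essentially the same route as the paper's: invoke Corollary \ref{cor5} for the corner $\widehat{w} \in \ell_{2} \cup \ell_{4}$ of the dyadic parent $\hat{Q}$, use $n \geq 1$ together with the tree axioms \nref{T2} and \nref{T3} to place $\hat{Q}$ and all of its children in $\mathcal{T}$ (so that $\widehat{w} \in \mathcal{C}_{\ceil{n/2}-1} \cap \mathcal{C}_{\ceil{n/2}}$), and finish with the parity comparison of $\ceil{(n-1)/2}$ against $\ceil{n/2}$. You are merely a bit more explicit than the paper about the appeal to connectedness and about the fact that a corner of $\hat{Q}$ is a corner of one of its children; no gap.
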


\begin{proof} Only the last statement was not part of Corollary \ref{cor5}. If $n$ is odd, then $\ceil{(n - 1)/2} = \ceil{n/2} - 1$, and the corners of the dyadic parent $\hat{Q} \in \mathcal{T}_{\ceil{n/2} - 1}$ of $Q$ lie in $\mathcal{C}_{\ceil{(n - 1)/2}}$ by definition (here we used $n \geq 1$, so the dyadic parent of $Q$ still lies in $\mathcal{T}$). Moreover, by the coherence axiom \nref{T3}, all the children of $\hat{Q}$ lie in $\mathcal{T}_{\ceil{n/2}}$, and consequently the corners of $\hat{Q}$ are corners of some rectangles in $\mathcal{T}_{\ceil{n/2}}$. 

If $n$ is even, then $\ceil{(n - 1)/2} = \ceil{n/2}$, hence the siblings of $Q \in \mathcal{T}_{\ceil{n/2}}$ lie simultaneously in $\mathcal{T}_{\ceil{n/2}}$ and $\mathcal{T}_{\ceil{(n - 1)/2}}$. In particular, the corners of $\hat{Q}$ lie in $\mathcal{C}_{\ceil{n/2}} \cap \mathcal{C}_{\ceil{(n - 1)/2}}$. \end{proof}

\begin{lemma}\label{lemma12} Let $n \geq 0$ and $Q \in \mathcal{D}_{\ceil{n/2}}$. Then at least one of the horizontal edges of $Q$ is contained on a line in $\mathcal{L}_{n}$. 
\end{lemma}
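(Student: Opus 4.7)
The plan is a direct case analysis on the parity of $n$, using the explicit coordinate formulas for dyadic parabolic rectangles and for the lines in $\mathcal{L}_n$. Recall that $Q \in \mathcal{D}_{\ceil{n/2}}$ has the form $Q = I \times J$ with $J = [l \cdot 4^{-\ceil{n/2}}, (l+1) \cdot 4^{-\ceil{n/2}})$ for some $l \in \Z$, so the horizontal edges of $Q$ lie on the lines $\R \times \{l \cdot 4^{-\ceil{n/2}}\}$ and $\R \times \{(l+1) \cdot 4^{-\ceil{n/2}}\}$. On the other hand, $\mathcal{L}_n$ consists of the lines $\R \times \{k \cdot 2^{-n}\}$, $k \in \Z$.

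If $n = 2m$ is even, then $\ceil{n/2} = m$, and $4^{-\ceil{n/2}} = 2^{-2m} = 2^{-n}$. Hence both endpoints of $J$ are already integer multiples of $2^{-n}$, so both horizontal edges of $Q$ lie on lines in $\mathcal{L}_n$.

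If $n = 2m+1$ is odd, then $\ceil{n/2} = m+1$, and $4^{-\ceil{n/2}} = 2^{-2m-2} = \tfrac{1}{2} \cdot 2^{-n}$. The two endpoints of $J$ are therefore $l \cdot \tfrac{1}{2} \cdot 2^{-n}$ and $(l+1) \cdot \tfrac{1}{2} \cdot 2^{-n}$. Since $l$ and $l+1$ are consecutive integers, exactly one of them is even, say $l = 2k$ or $l+1 = 2k$; the corresponding endpoint equals $k \cdot 2^{-n}$, and so at least one horizontal edge of $Q$ lies on a line in $\mathcal{L}_n$.

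There is no real obstacle here: the lemma is a direct unwrapping of the definitions, and the proof is just the computation above. The only mild subtlety is keeping track of $\ceil{n/2}$ in the two parity cases and observing the factor-of-two mismatch in the odd case, which is resolved by the parity of consecutive integers.
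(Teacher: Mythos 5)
Your proof is correct, and it is essentially the paper's own argument: split on the parity of $n$, note the height of $Q$ is $2^{-n}$ or $2^{-n-1}$, and in the odd case observe that one of two consecutive multiples of $2^{-n-1}$ is a multiple of $2^{-n}$ (the paper phrases this as "every second line in $\mathcal{L}_{n+1}$ lies in $\mathcal{L}_{n}$"). Nothing is missing.
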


\begin{proof} The height of the rectangle $Q$ is $4^{-\ceil{n/2}} \in \{2^{-n},2^{-n - 1}\}$, depending on the parity of $n$. If the height is $2^{-n}$, then both horizontal edges of $Q$ are contained on lines in $\mathcal{L}_{n}$. If the height is $2^{-n - 1}$, then both horizontal edges of $Q$ are contained on lines in $\mathcal{L}_{n + 1}$, and one observes that every second line in $\mathcal{L}_{n + 1}$ is contained in $\mathcal{L}_{n}$. \end{proof}

We then begin to verify the axioms \eqref{X1}-\eqref{X4} in order. While verifying axiom (Xi) for $i \in \{1,\ldots,5\}$ and $\tau_{n}$, we may inductively use all of the axioms (Xj) for $\tau_{n - 1}$. Moreover, to verify axioms \eqref{X1} for $\tau_{n}$, we will use axioms \eqref{X3} and \eqref{X4} for $\tau_{n}$. However, the proofs of axioms \eqref{X3},\eqref{X4} for $\tau_{n}$ will not use axiom \eqref{X1} for $\tau_{n}$, so there is no circular reasoning. 

\subsubsection{Verifying axiom \eqref{X1}} Fix $\ell \in \mathcal{L}_{n}$. The claim is, then, that
\begin{equation}\label{form57} |\dot{\tau}_{n}(\ell)(y_{1}) - \dot{\tau}_{n}(\ell)(y_{2})| \leq 2\Sigma |y_{1} - y_{2}|, \qquad y_{1},y_{2} \in \R, \end{equation}
and we assume inductively that this holds for $\tau_{n - 1}(\ell)$, $\tau_{n - 1}(\ell^{\uparrow})$, and $\tau_{n - 1}(\ell_{\downarrow})$. An easy case occurs if $\{y_{1},y_{2}\} \subset \R \, \setminus \, U^{\sharp}(\ell)$, so $\varphi_{\ell} \equiv 1$ on a neighbourhood of $y_{1},y_{2}$ (this is why we defined $U,U^{\sharp}$ as closed sets). Then, depending on whether $\ell \in \mathcal{L}_{n - 1}$ or $\ell \in \mathcal{L}_{n} \, \setminus \, \mathcal{L}_{n - 1}$, we have either
\begin{displaymath} \dot{\tau}_{n}(\ell)(y_{1}) - \dot{\tau}_{n}(\ell)(y_{2}) = \dot{\tau}_{n - 1}(\ell)(y_{1}) - \dot{\tau}_{n - 1}(\ell)(y_{2}) \end{displaymath}
or
\begin{displaymath} \dot{\tau}_{n}(\ell)(y_{1}) - \dot{\tau}_{n}(\ell)(y_{2}) = \tfrac{1}{2}[\dot{\tau}_{n - 1}(\ell^{\uparrow})(y_{1}) - \dot{\tau}_{n - 1}(\ell^{\uparrow})(y_{2})] + \tfrac{1}{2}[\dot{\tau}_{n - 1}(\ell^{\downarrow})(y_{1}) - \dot{\tau}_{n - 1}(\ell^{\downarrow})(y_{2})]. \end{displaymath} 
In both cases, \eqref{form57} follows from induction and the triangle inequality.

Let us then assume that $y_{1} \in U^{\sharp}(\ell)$, for example. Then, it does not matter if $y_{2} \in U^{\sharp}(\ell)$ or $y_{2} \in \R \, \setminus \, U^{\sharp}(\ell)$, and the verification of \eqref{form57} will not use the inductive assumption \eqref{X1}. Since $y_{1} \in U^{\sharp}(\ell)$, we have
\begin{equation}\label{form59} y_{1} \in \kappa_{0}I_{n}(w) = B(f_{2}(w),\kappa_{0}K2^{-n/2}) \end{equation}
for some $w \in \mathcal{C}_{m}(\ell) \cup \mathcal{C}_{m}(\ell^{\uparrow}) \cup \mathcal{C}_{m}(\ell_{\downarrow})$. This means that $w \in \ell \cup \ell^{\uparrow} \cup \ell_{\downarrow}$ and $w \in \mathcal{C}(Q)$ for some $Q \in \mathcal{T}_{m} = \mathcal{T}_{\ceil{n/2}}$. There are three different cases to consider:
\begin{equation}\label{form61} |y_{1} - y_{2}| \geq \tfrac{1}{10}K \quad \text{or} \quad \tfrac{1}{5}K \cdot 2^{-n/2} < |y_{1} - y_{2}| < \tfrac{1}{10}K \quad \text{or} \quad |y_{1} - y_{2}| \leq \tfrac{1}{5}K\cdot 2^{-n/2}.   \end{equation} 
We handle these cases in order. First, assume $|y_{1} - y_{2}| \geq \tfrac{1}{10}K$. Let $\ell =: \ell_{n} \leq \ell_{n - 1} \leq \ldots \leq \ell_{0}$ be a sequence of horizontal lines such that $\ell_{k} \in \mathcal{L}_{k}$ and $\ell_{k + 1} \in \{\ell_{k},\ell_{k}^{\uparrow}\}$. Then, by axioms \eqref{X3} and \eqref{X4} for $\tau_{0},\ldots,\tau_{n}$, we have
\begin{align*} |\dot{\tau}_{n}(\ell_{n})(y_{j}) - \dot{\tau}_{0}(\ell_{0})(y_{j})| & \leq \sum_{k = 0}^{n - 1} |\dot{\tau}_{k + 1}(\ell_{k + 1})(y_{j}) - \dot{\tau}_{k}(\ell_{k})(y_{j})|\\
& \lesssim \delta \sum_{k = 0}^{n - 1} 2^{-k/2} \sim \delta \leq \frac{10\delta}{K}|y_{1} - y_{2}|, \qquad j \in \{1,2\}. \end{align*} 
Since $K > 1$ in Proposition \ref{t:construction} is chosen small in a manner depending on both $\delta$ and $\Sigma$, it follows that
\begin{displaymath} |\dot{\tau}_{n}(\ell)(y_{2}) - \dot{\tau}_{n}(\ell)(y_{1})| \leq |\dot{\tau}_{0}(\ell_{0})(y_{2}) - \dot{\tau}_{0}(\ell_{0})(y_{1})| + \tfrac{A\delta}{K}|y_{1} - y_{2}| \leq 2\Sigma |y_{2} - y_{1}|. \end{displaymath} 
Here we also used the special definition of $\tau_{0}$, which implies, as seen in \eqref{form85}, that $\dot{\tau}_{0}(\ell)$ is $\Sigma$-Lipschitz, not just $2\Sigma$-Lipschitz, for all $\ell \in \mathcal{L}_{0}$. 

Recalling the case distinction \eqref{form61}, we next assume $\tfrac{1}{5}K \cdot 2^{-n/2} < |y_{1} - y_{2}| < \tfrac{1}{10}K$. Note that this forces $n \geq 2$. We then claim that there exists $0 \leq \hat{n} \leq n - 2$, and a rectangle $\hat{Q} \in \mathcal{T}_{\ceil{\hat{n}/2}}$ with $\hat{Q} \supset Q$ such that
\begin{equation}\label{form58} |y_{1} - y_{2}| \geq 2^{-\hat{n}/2} \quad \text{and} \quad y_{1},y_{2} \in B(f_{2}(\widehat{w}),K2^{-\hat{n}/2}) \end{equation} 
for some $\widehat{w} \in \mathcal{C}(\hat{Q}) \subset \mathcal{C}_{\ceil{\hat{n}/2}}$. Let $n_{1} \in \{0,\ldots,n\}$ be the greatest index with the property $|y_{1} - y_{2}| \leq \tfrac{1}{5}K2^{-n_{1}/2}$. Then $1 \leq n_{1} < n$ by our case assumption, so $\hat{n} := n_{1} - 1 \in \{0,\ldots,n - 2\}$, and 
\begin{displaymath} |y_{1} - y_{2}| > \tfrac{1}{5}K2^{-(n_{1} + 1)/2} \geq 2^{-\hat{n}/2}. \end{displaymath}
So, the first assertion of \eqref{form58} holds.

To prove the second assertion, let $Q_{1} \in \mathcal{T}_{\ceil{n_{1}/2}}$ and $\hat{Q} \in \mathcal{T}_{\ceil{\hat{n}/2}}$ be rectangles with $Q \subset Q_{1} \subset \hat{Q}$. Let $w_{1} \in \mathcal{C}(Q_{1})$ be an arbitrary corner. We also choose a corner $\widehat{w} \in \mathcal{C}(\hat{Q})$, but with some care: recall from Lemma \ref{lemma12} that one of the horizontal edges of $\hat{Q} \in \mathcal{D}_{\ceil{\hat{n}/2}}$ lies on a line $\ell_{\hat{n}} \in \mathcal{L}_{\hat{n}}$. We choose $\widehat{w}$ to be one of the two corners of $\hat{Q}$ on $\ell_{\hat{n}}$.

Certainly $d_{\mathrm{par}}(w_{1},\widehat{w}) \leq 100 \cdot 2^{-n_{1}/2}$. Then, \eqref{form59} and Lemma \ref{lemma11} (applied with $w_{1}$ and $w_{2} := \widehat{w}$) first imply that
\begin{displaymath} y_{1} \in B(f_{2}(w),\kappa_{0}K2^{-n/2}) \subset B(f_{2}(w_{1}),\kappa_{0}K2^{-n_{1}/2}) \subset B(f_{2}(\widehat{w}),\tfrac{4}{5}K2^{-\hat{n}/2}),  \end{displaymath} 
and next
\begin{displaymath} |y_{2} - f_{2}(w_{1})| \leq |y_{1} - y_{2}| + \tfrac{4}{5}K2^{-\hat{n}/2} \leq \tfrac{1}{5}K2^{-\hat{n}/2} + \tfrac{4}{5}K2^{-\hat{n}/2} = K2^{-\hat{n}/2}.  \end{displaymath}
This verifies \eqref{form58} with $\widehat{w},\hat{Q}$, and $\hat{n}$, as above.

This information enables us to use the inductive assumption \eqref{X5} on the characteristics of generation $0 \leq \hat{n} \leq n - 2$. Let us construct a sequence of horizontal lines $\ell_{n},\ldots, \ell_{\hat{n} - 1}$: 
\begin{itemize}
\item[(a)] $\ell_{n} \in \{\ell,\ell^{\uparrow},\ell_{\downarrow}\}$ is the line containing $w$, so $\ell_{n}$ intersects the closures of $Q$ and $\hat{Q} \supset Q$,
\item[(b)]  $\ell_{k} \in \mathcal{L}_{k}$ for all $n \leq k \leq \hat{n} - 1$, and $\ell_{k - 1} \in \{\ell_{k},\ell_{k}^{\uparrow},\ell_{k\downarrow}\}$ for $\hat{n} + 2 \leq k \leq n$ (this notation refers to the $\mathcal{L}_{k}$-neighbours of $\ell_{k}$),
\item[(c)] Every line $\ell_{k}$, $n \leq k \leq \hat{n} - 1$, intersects the closure of $\hat{Q}$,
\end{itemize}
The line $\ell_{n}$ is dictated by (a). Assume that $\ell_{k} \in \mathcal{L}_{k}$ has already been chosen for some $\hat{n} + 2 \leq k \leq n$. Then either $\ell_{k} \in \mathcal{L}_{k - 1}$, and we simply set $\ell_{k - 1} := \ell_{k}$, or otherwise both $\mathcal{L}_{k}$-neighbours of $\ell_{k}$ are in $\mathcal{L}_{k - 1}$. Since the height of the rectangle $\hat{Q} \in \mathcal{T}_{\ceil{\hat{n}/2}} \subset \mathcal{D}_{\ceil{\hat{n}/2}}$ is $4^{-\ceil{\hat{n}/2}} \in \{2^{-\hat{n}},2^{-(\hat{n} + 1)}\}$, depending on the parity of $\hat{n}$, and the vertical separation of $\mathcal{L}_{k}$-neighbours is $2^{-k} \leq 2^{-\hat{n} + 2}$, one of these neighbours meets the closure of $\hat{Q}$. So, $\ell_{k - 1} \in \mathcal{L}_{k - 1}$ may be chosen as in (b) and (c) for $\hat{n} + 2 \leq k \leq n$. 

We earlier defined $\ell_{\hat{n}} \in \mathcal{L}_{\hat{n}}$ to be a line containing the corner $\widehat{w} \in \mathcal{C}(\hat{Q})$. According to (b)-(c), both $\ell_{\hat{n} + 1} \in \mathcal{L}_{\hat{n} + 1}$ and $\ell_{\hat{n}} \in \mathcal{L}_{\hat{n}} \subset \mathcal{L}_{\hat{n} + 1}$ intersect the closure of $\hat{Q}$. If the height of $\hat{Q}$ is $2^{-\hat{n} + 1}$, it follows that $\ell_{\hat{n}} \in \{\ell_{\hat{n} + 1},\ell_{\hat{n} + 1}^{\uparrow},\ell_{\hat{n} + 1\downarrow}\}$. Otherwise, the height of $\hat{Q}$ is $2^{-\hat{n}}$, and $\ell_{\hat{n}}$ may be up to two steps away from $\ell_{\hat{n} + 1}$ in $\mathcal{L}_{\hat{n} + 1}$. In both cases, axioms \eqref{X3}-\eqref{X4} yield 
\begin{equation}\label{form89} \|\dot{\tau}_{k}(\ell_{k}) - \dot{\tau}_{k + 1}(\ell_{k + 1})\|_{L^{\infty}(\R)} \lesssim \delta \cdot 2^{-k/2}, \qquad \hat{n} \leq k \leq n - 1. \end{equation}
From \eqref{form89}, we deduce that for both $j \in \{1,2\}$,
\begin{align} |\dot{\tau}_{n}(\ell)(y_{j}) - \dot{\tau}_{\hat{n}}(\ell_{\hat{n}})(y_{j})| & \leq \sum_{k = \hat{n}}^{n - 1} |\dot{\tau}_{k + 1}(\ell_{k + 1})(y_{j}) - \dot{\tau}_{k}(\ell_{k})(y_{j})| \notag \\
&\label{form90} \leq \delta \sum_{k = \hat{n}}^{n - 1} 2^{-k/2} \sim \delta \cdot 2^{-\hat{n}/2} \leq \delta |y_{1} - y_{2}|, \end{align} 
using the lower bound in \eqref{form58} in the final inequality. Next, we estimate the difference $|\dot{\tau}_{\hat{n}}(\ell_{\hat{n}})(y_{2}) - \dot{\tau}_{\hat{n}}(\ell_{\hat{n}})(y_{1})|$ using the inclusion $y_{1},y_{2} \in B(f_{2}(\widehat{w}),K2^{-\hat{n}/2})$ from \eqref{form58}, and also $\widehat{w} \in \mathcal{C}_{\ceil{\hat{n}/2}}(\ell_{\hat{n}})$. With these facts in hand, axiom \eqref{X5} lets us compare $\dot{\tau}_{\hat{n}}(\ell_{\hat{n}})$ to the $(\hat{Q},\ell_{\hat{n}})$-approximate quadric $q := q_{\hat{Q},\ell_{\hat{n}}}$: 
\begin{displaymath} |\dot{\tau}_{\hat{n}}(\ell_{\hat{n}})(y_{2}) - \dot{\tau}_{\hat{n}}(\ell_{\hat{n}})(y_{1})| \leq |\dot{q}(y_{2}) - \dot{q}(y_{1})| + o(\epsilon) \cdot 2^{-\hat{n}/2} \leq |\dot{q}(y_{2}) - \dot{q}(y_{1})| + o(\epsilon) \cdot |y_{1} - y_{2}|. \end{displaymath} 
Since $\|\ddot{q}\|_{L^{\infty}} \leq \Sigma$ by the assumption that $\hat{Q} \in \mathcal{T}$ is an $(\epsilon,H,\Sigma)$-vertical rectangle, we combine the estimate above with \eqref{form90} to deduce
\begin{displaymath} |\dot{\tau}_{n}(\ell_{n})(y_{2}) - \dot{\tau}_{n}(\ell_{n})(y_{1})| \leq \Sigma |y_{1} - y_{2}| + A(\delta + o(\epsilon))|y_{1} - y_{2}|, \end{displaymath}
where $A \geq 1$ is an absolute constant. Finally, the same estimate holds for $|\dot{\tau}_{n}(\ell)(y_{2}) - \dot{\tau}_{n}(\ell)(y_{1})|$, since $\ell_{n} \in \{\ell,\ell^{\uparrow},\ell_{\downarrow}\}$, and axiom \eqref{X3} implies $\|\dot{\tau}_{n}(\ell) - \dot{\tau}_{n}(\ell_{n})\|_{L^{\infty}(\R)} \leq \delta \cdot 2^{-n/2}$. Choosing $\delta,\epsilon > 0$ small enough, depending on $\Sigma$, we see that $|\dot{\tau}_{n}(\ell)(y_{2}) - \dot{\tau}_{n}(\ell)(y_{1})| \leq 2\Sigma|y_{1} - y_{2}|$, as desired. This completes the proof of axiom \eqref{X1} in the case where $\tfrac{1}{5}K \cdot 2^{-n/2} < |y_{1} - y_{2}| < \tfrac{1}{10}K$.

Recalling the case distinction in \eqref{form61}, it remains to treat the case $|y_{1} - y_{2}| \leq \tfrac{1}{5}K \cdot 2^{-n/2}$. Axioms \eqref{X3}-\eqref{X4} will not be used here. Recall from \eqref{form59} that $w \in \mathcal{C}_{m}(\ell) \cup \mathcal{C}_{m}(\ell^{\uparrow}) \cup \mathcal{C}_{m}(\ell_{\downarrow})$ is the corner of some rectangle $Q \in \mathcal{T}_{m}$ which satisfies $y_{1} \in B(f_{2}(w),\kappa_{0}K2^{-n/2})$. Here $m = \ceil{n/2}$, as always. We will next apply Corollary \ref{cor6} to find a "parent" corner point $\widehat{w}$ close to $w$. The details are slightly differently depending on whether $\ell \in \mathcal{L}_{n} \, \setminus \, \mathcal{L}_{n - 1}$ or $\ell \in \mathcal{L}_{n - 1}$. 

If $\ell \in \mathcal{L}_{n} \, \setminus \, \mathcal{L}_{n - 1}$, let $\ell_{1} < \ldots < \ell_{5}$ be the five consecutive lines of $\mathcal{L}_{n}$ with $\ell_{2} = \ell_{\downarrow} \in \mathcal{L}_{n - 1}$ and $\ell_{4} = \ell^{\uparrow} \in \mathcal{L}_{n - 1}$. Then, recalling that $w \in \mathcal{C}_{m}(\ell) \cup \mathcal{C}_{m}(\ell^{\uparrow}) \cup \mathcal{C}_{m}(\ell_{\downarrow})$ is the corner of $Q \in \mathcal{T}_{m}$, Corollary \ref{cor6} produces a corner $\widehat{w} \in \mathcal{C}_{m} \cap \mathcal{C}_{\ceil{(n - 1)/2}}$ of the dyadic parent of $Q$ which lies on $\ell^{\uparrow} \cup \ell_{\downarrow}$. 

If, on the other hand, $\ell \in \mathcal{L}_{n - 1}$, then choose the five lines in $\mathcal{L}_{n}$ so that $\ell \in \{\ell_{2},\ell_{4}\}$. Then $\ell = \ell_{2}$ and $\ell_{4}$ are consecutive in $\mathcal{L}_{n - 1}$, and Corollary \ref{cor6} produces a corner $\widehat{w} \in \mathcal{C}_{m} \cap \mathcal{C}_{\ceil{(n - 1)/2}}$ of the dyadic parent of $Q$ which lies on $\ell \cup \ell_{4}$.

In both cases, $d_{\mathrm{par}}(w,\widehat{w}) \leq 100 \cdot 2^{-n/2}$, so Lemma \ref{lemma11} implies
\begin{displaymath} y_{1} \in B(f_{2}(w),\kappa_{0}K2^{-n/2}) \subset B(f_{2}(\widehat{w}),\tfrac{4}{5}K \cdot 2^{-(n - 1)/2}). \end{displaymath}
Consequently $|y_{2} - f_{2}(\widehat{w})| \leq \tfrac{1}{5}K \cdot 2^{-n/2} + \tfrac{4}{5}K \cdot 2^{-(n - 1)/2} < K2^{-(n - 1)/2}$, so in fact
\begin{equation}\label{form64} [y_{1},y_{2}] \subset B(f_{2}(\widehat{w}),K2^{-(n - 1)/2}). \end{equation}
We are now in a position to apply the inductive hypothesis \eqref{X5} to the quadric $\tau_{n - 1}(\ell)$ (in case $\ell \in \mathcal{L}_{n - 1}$) or both quadrics $\tau_{n - 1}(\ell^{\uparrow})$ and $\tau_{n - 1}(\ell_{\downarrow})$ (in case $\ell \in \mathcal{L}_{n} \, \setminus \, \mathcal{L}_{n - 1}$). We treat the cases separately for clarity. Let us first assume $\ell \in \mathcal{L}_{n - 1}$, so $\widehat{w} \in \ell \cup \ell_{4}$, where $\ell$ and $\ell_{4}$ are consecutive in $\mathcal{L}_{n - 1}$. Let $\hat{Q} \in \mathcal{T}_{\ceil{(n - 1)/2}}$ be a rectangle with corner $\widehat{w}$. Let $q := q_{\hat{Q},\ell}$ be the $(\hat{Q},\ell)$-approximate quadric. Then, axiom \eqref{X5} applied inductively to $\tau_{n - 1}(\ell)$ implies 
\begin{equation}\label{form62} \tau_{n - 1}(y) \approx_{n - 1} q(y), \qquad y \in [y_{1},y_{2}]. \end{equation}
Similar reasoning works if $\ell \in \mathcal{L}_{n} \, \setminus \, \mathcal{L}_{n - 1}$, so $\widehat{w} \in \ell^{\uparrow} \cup \ell_{\downarrow}$. Then, \eqref{X5} is applied inductively to both $\tau_{n - 1}(\ell^{\uparrow})$ and $\tau_{n - 1}(\ell_{\downarrow})$, and to the $(\hat{Q},\ell^{\uparrow})$- and $(\hat{Q},\ell_{\downarrow})$-approximate quadrics $q^{\uparrow}$ and $q_{\downarrow}$; the meaning of $\hat{Q} \in \mathcal{T}_{\ceil{(n - 1)/2}}$ is the same as before. The results are
\begin{equation}\label{form63} \tau_{n - 1}(\ell^{\uparrow})(y) \approx_{n - 1} q^{\uparrow}(y) \quad \text{and} \quad \tau_{n - 1}(\ell_{\downarrow})(y) \approx_{n - 1} q_{\downarrow}(y), \qquad y \in [y_{1},y_{2}].  \end{equation}
To conclude the proof of axiom \eqref{X1}, we will use \eqref{form62}-\eqref{form63}, and additionally the following claim; in the two possible definitions
\begin{displaymath} \begin{cases} \sum_{I \in \mathcal{I}(\ell)} \varphi_{I,\ell}(y)q_{I,\ell}(y) + \varphi_{\ell}(y) \cdot \tau_{n - 1}(\ell)(y), \\\sum_{I \in \mathcal{I}(\ell)} \varphi_{I,\ell}(y)q_{I,\ell}(y) + \tfrac{1}{2}\varphi_{\ell}(y) \cdot [\tau_{n - 1}(\ell^{\uparrow})(y) + \tau_{n - 1}(\ell_{\downarrow})(y)] \end{cases} \end{displaymath}
of $\tau_{n}(\ell)(y)$, we have $q_{I,\ell}(y) \approx_{n} q(y)$ for all $y \in [y_{1},y_{2}]$ and for all $I \in \mathcal{I}(\ell)$ such that $y \in \spt \varphi_{I,\ell}$. To accomplish this, we appeal to Proposition \ref{prop4}. Note that if $y \in \spt \varphi_{I,\ell}$ for some $y \in [y_{1},y_{2}]$, then 
\begin{displaymath} y \in \kappa_{0}I_{n}(w_{I}) = \bar{B}(f_{2}(w_{I}),\kappa_{0}K2^{-n/2}) \subset B(f_{2}(w_{I}),100K2^{-n/2}) \end{displaymath}
for some $w_{I} \in \mathcal{C}_{m}(\ell_{\downarrow}) \cup \mathcal{C}_{m}(\ell) \cup \mathcal{C}_{m}(\ell^{\uparrow})$, and $q_{I,\ell} = q_{Q_{I},\ell}$ is the $(Q_{I},\ell)$-approximate quadric for some rectangle $Q_{I} \in \mathcal{T}_{m} = \mathcal{T}_{\ceil{n/2}}$ with corner $w_{I}$. On the other hand, from \eqref{form64} we know that $y \in [y_{1},y_{2}] \subset B(f_{2}(\widehat{w}),100K2^{-n/2})$. Since the vertical separation between the lines containing $w_{I}$ and $\widehat{w}$ is certainly $\leq 100 \cdot 2^{-n}$, Proposition \ref{prop4} is applicable with $(Q_{1},w_{1}) := (Q_{I},w_{I}) \in \mathcal{T}_{\ceil{n/2}} \times \mathcal{C}(Q_{I})$ and $(Q_{2},w_{2}) := (\hat{Q},\widehat{w}) \in \mathcal{T}_{\ceil{(n - 1)/2}} \times \mathcal{C}(\hat{Q})$, and implies 
\begin{equation}\label{form68} q_{I,\ell}(y) = q_{Q_{I},\ell}(y) \approx_{n} q_{\hat{Q},\ell}(y) = q(y), \qquad y \in [y_{1},y_{2}]. \end{equation}
Now, all the pieces are in place to prove \eqref{X1} in the final case $|y_{1} - y_{2}| \leq \tfrac{1}{5}K \cdot 2^{-n/2}$. Assume, first, that $\ell \in \mathcal{L}_{n - 1}$, so
\begin{displaymath} \tau_{n}(\ell)(y) = \sum_{I \in \mathcal{I}(\ell)} \varphi_{I,\ell}(y)q_{I,\ell}(y) + \varphi_{\ell}(y) \cdot \tau_{n - 1}(\ell)(y). \end{displaymath} 
Combining \eqref{form62} and \eqref{form68}, we see that $q_{I,\ell}(y) \approx_{n} q(y) \approx_{n - 1} \tau_{n - 1}(\ell)(y)$ for all $I \in \mathcal{I}(\ell)$ such that $y \in \spt \varphi_{I,\ell}$. Now Corollary \ref{cor7} implies
\begin{equation}\label{form78} \tau_{n}(\ell)(y) \approx_{n - 1} q(y), \qquad y \in [y_{1},y_{2}]. \end{equation}
In particular, since $\|\ddot{q}\|_{L^{\infty}} \leq \Sigma$, we have $|\ddot{\tau}_{n}(\ell)(y)| \leq \Sigma + o(\epsilon)$ for $y \in [y_{1},y_{2}]$, and \eqref{X1} then follows by choosing $\epsilon > 0$ small enough, depending on $K,M,\Sigma$.

Assume finally that $\ell \in \mathcal{L}_{n} \, \setminus \, \mathcal{L}_{n - 1}$, so 
\begin{displaymath} \tau_{n}(\ell)(y) = \sum_{I \in \mathcal{I}(\ell)} \varphi_{I,\ell}(y)q_{I,\ell}(y) + \tfrac{1}{2}\varphi_{\ell}(y) \cdot [\tau_{n - 1}(\ell^{\uparrow})(y) + \tau_{n - 1}(\ell_{\downarrow})(y)]. \end{displaymath}
We claim that
\begin{equation}\label{form77} \begin{cases} |\dot{\tau}_{n}(\ell)(y) - \dot{q}(y)| \lesssim_{M} \left(\tfrac{1}{K} + o(\epsilon)\right) 2^{-n/2}, \\  |\ddot{\tau}_{n}(\ell)(y) - \ddot{q}(y)| \lesssim_{M} \tfrac{1}{K} + o(\epsilon), \end{cases} \quad y \in [y_{1},y_{2}]. \end{equation}
Only the latter estimate will be needed here, but the first one is useful later. Together with $\|\ddot{q}\|_{L^{\infty}(\R)} \leq \Sigma$, it implies $\|\ddot{\tau}_{n}(\ell)\|_{L^{\infty}([y_{1},y_{2}])} \leq \Sigma + C_{M}(o(\epsilon) + \tfrac{1}{K})$. Consequently $|\dot{\tau}_{n}(\ell)(y_{2}) - \dot{\tau}_{n}(\ell)(y_{1})| \leq 2\Sigma$ if we choose $K$ large enough depending on $M,\Sigma$, and $\epsilon > 0$ small enough depending on $K,M,\Sigma$, as we assumed in Proposition \ref{t:construction}.

To prove \eqref{form77}, we will use Lemma \ref{lemma13} with $\{\varphi_{I}\}_{I \in \mathcal{I}} := \{\varphi_{I,\ell}\}_{I \in \mathcal{I(\ell)}} \cup \{\tfrac{1}{2}\varphi_{\ell}\} \cup \{\tfrac{1}{2}\varphi_{\ell}\}$,
\begin{displaymath} p_{I} \in \{\varphi_{I,\ell},\tau_{n - 1}(\ell^{\uparrow}),\tau_{n - 1}(\ell_{\downarrow})\} \quad \text{and} \quad q_{I} \equiv q. \end{displaymath}
We start by recapping from \eqref{form63} that $\tau_{n - 1}(\ell^{\uparrow})(y) \approx_{n - 1} q^{\uparrow}(y)$ and $\tau_{n - 1}(\ell_{\downarrow})(y) \approx_{n - 1} q{\downarrow}(y)$ for all $y \in [y_{1},y_{2}]$, where $q^{\uparrow}$ and $q_{\downarrow}$ are the $(\hat{Q},\ell^{\uparrow})$ and $(\hat{Q},\ell_{\downarrow})$-approximate quadrics, respectively. The quadrics $q,q^{\uparrow},q^{\downarrow}$ are all associated with the common $(\epsilon,H,\Sigma)$-vertical rectangle $\hat{Q}$, so they only differ by a constant. In particular, the first and second derivatives of $q,q^{\uparrow},q^{\downarrow}$ agree. Also, it is not hard to see that
\begin{displaymath} \|q - q^{\uparrow}\|_{L^{\infty}(\R)}, \|q - q_{\downarrow}\|_{L^{\infty}(\R)} \lesssim_{M} 2^{-n}, \end{displaymath}
because the graphs of $q,q^{\uparrow},q_{\downarrow}$ are $\Pi$-projections of horizontal lines on $\V_{\hat{Q}}$ which best approximate $f(\ell \cap H\hat{Q})$, $f(\ell^{\uparrow} \cap H\hat{Q})$, and $f(\ell_{\downarrow} \cap H\hat{Q})$ respectively. We will derive much more precise information at \eqref{form47a}, so we refer the reader there for more details. Consequently, for $\hat{\ell} \in \{\ell^{\uparrow},\ell_{\downarrow}\}$, and the appropriate choice of $\hat{q} \in \{q^{\uparrow},q_{\downarrow}\}$, we have
\begin{displaymath} \begin{cases} |\tau_{n - 1}(\hat{\ell})(y) - q(y)| \leq |\tau_{n - 1}(\hat{\ell})(y) - \hat{q}(y)| + \|\hat{q} - q\|_{L^{\infty}(\R)} \lesssim_{M} 2^{-n}, \\ |\dot{\tau}_{n - 1}(\hat{\ell})(y) - \dot{q}(y)| = |\dot{\tau}_{n - 1}(\hat{\ell})(y) - \dot{\hat{q}}(y)| = o(\epsilon) \cdot 2^{-n/2},\\ |\ddot{\tau}_{n - 1}(\hat{\ell})(y) - \ddot{q}(y)| = |\ddot{\tau}_{n - 1}(\hat{\ell})(y) - \ddot{\hat{q}}(y)| = o(\epsilon), \qquad \qquad y \in [y_{1},y_{2}]. \end{cases} \end{displaymath} 
With this information (and of course \eqref{form68}), and recalling the estimates \eqref{form70} for the first and second derivatives of the functions $\varphi_{I,\ell}$ and $\varphi_{\ell}$, we are in a position to apply Lemma \ref{lemma13} with constants $c_{1},c_{2} \sim K^{-1}$, $\epsilon_{1} \lesssim_{M} 1$, and $\epsilon_{2},\epsilon_{3} = o(\epsilon)$. The estimates \eqref{form77} follow immediately, and the proof of axiom \eqref{X1} is complete.

\subsubsection{Verying axioms \eqref{X2}-\eqref{X3}} Recall that axiom \eqref{X2} asks to prove that if $\ell_{1},\ell_{2} \in \mathcal{L}_{n}$ satisfy $\ell_{2} > \ell_{1}$, then 
\begin{equation}\label{form38} \theta (\ell_{2} - \ell_{1}) \leq \tau_{n}(\ell_{2})(y) - \tau_{n}(\ell_{1})(y) \leq \theta^{-1} (\ell_{2} - \ell_{1}), \qquad y \in \R. \end{equation} 
By trivial induction, it suffices to prove these inequalities for any pair of \textbf{consecutive} lines $\ell_{1},\ell_{2} \in \mathcal{L}_{n}$ with $\ell_{2} > \ell_{1}$. Therefore $\ell_{2} - \ell_{1} = 2^{-n}$ (this notation meant that $\pi_{2}(\ell_{2}) - \pi_{2}(\ell_{1}) = 2^{-n}$). So, fix consecutive lines $\ell_{1},\ell_{2} \in \mathcal{L}_{n}$ with $\ell_{2} > \ell_{1}$. Then, either 
\begin{equation}\label{form30} \ell_{1} \in \mathcal{L}_{n - 1} \quad \text{and} \quad \ell_{2} \in \mathcal{L}_{n} \, \setminus \, \mathcal{L}_{n - 1}, \end{equation}
or the same holds with the roles of $\ell_{1},\ell_{2}$ reversed. We assume with no loss of generality that \eqref{form30} holds. Under these assumptions, we will, in addition to \eqref{form38}, prove that
\begin{equation}\label{form76} |\dot{\tau}_{n}(\ell_{2})(y) - \dot{\tau}_{n}(\ell_{1})(y)| \leq \delta \cdot 2^{-n/2}, \qquad y \in \R. \end{equation}
This will establish axiom \eqref{X3}.

To get started, we introduce separate notation for the upper $\mathcal{L}_{n}$-neighbour of $\ell_{2}$:
\begin{displaymath} \ell_{3} := \ell_{2}^{\uparrow} \in \mathcal{L}_{n - 1}. \end{displaymath}
Thus, $\ell_{1} < \ell_{2} < \ell_{3}$, where $\ell_{j}$ is consecutive to $\ell_{j + 1}$ in $\mathcal{L}_{n}$, and $\ell_{1},\ell_{3}$ are neighbours in $\mathcal{L}_{n - 1}$. For $j \in \{1,2,3\}$, we abbreviate 
\begin{displaymath} \varphi_{I,j} := \varphi_{I,\ell_{j}}, \quad \varphi_{j} := \varphi_{\ell_{j}} = 1 - \sum_{I \in \mathcal{I}(\ell_{j})} \varphi_{I,j}, \quad \text{and} \quad q_{I,j} := q_{I,\ell_{j}}. \end{displaymath}
Now, to prove \eqref{form38} and \eqref{form76}, fix $y \in \R$. An easy case occurs if $y \in \R \, \setminus \, [U^{\sharp}(\ell_{1}) \cup U^{\sharp}(\ell_{2})]$. Then $\varphi_{I,j} \equiv 0$ and $\varphi_{j} \equiv 1$ for $j \in \{1,2\}$ and $I \in \mathcal{I}(\ell_{j})$, so
\begin{displaymath} \tau_{n}(\ell_{2})(y) = \tfrac{1}{2} \cdot [\tau_{n - 1}(\ell_{1}) + \tau_{n - 1}(\ell_{3})] \quad \text{and} \quad \tau_{n}(\ell_{1})(y) = \tau_{n - 1}(\ell_{1}) \end{displaymath} 
by the definitions \eqref{form28}-\eqref{form31}, and recalling \eqref{form30}. Using the inductive hypothesis \eqref{X2} for $\tau_{n - 1}(\ell_{3}) - \tau_{n - 1}(\ell_{1})$, we find
\begin{displaymath} \tau_{n}(\ell_{2})(y) - \tau_{n}(\ell_{1})(y) = \tfrac{1}{2} \cdot [\tau_{n - 1}(\ell_{3})(y) - \tau_{n - 1}(\ell_{1})(y)] \geq \tfrac{\theta}{2}(\ell_{3} - \ell_{1}) = \theta (\ell_{2} - \ell_{1}).\end{displaymath}
The upper bound $\tau_{n}(\ell_{2})(y) - \tau_{n}(\ell_{1})(y) \leq \theta^{-1}(\ell_{2} - \ell_{1})$ is proved in the same manner. Similarly, we use the inductive hypothesis \eqref{X3} for $\tau_{n - 1}$ to prove \eqref{form76}:
\begin{displaymath} |\dot{\tau}_{n}(\ell_{2})(y) - \dot{\tau}_{n}(\ell_{1})(y)| = \tfrac{1}{2} \cdot |\dot{\tau}_{n - 1}(\ell_{3})(y) - \dot{\tau}_{n - 1}(\ell_{1})(y)| \leq \tfrac{1}{2} \cdot \delta \cdot 2^{-(n - 1)/2} \leq \delta \cdot 2^{-n/2}. \end{displaymath}
We are now done in the case $y \in \R \, \setminus \, [U^{\sharp}(\ell_{1}) \cup U^{\sharp}(\ell_{2})]$.

Assume next that $y \in U^{\sharp}(\ell_{1}) \cup U^{\sharp}(\ell_{2})$. In this case, the inductive assumptions \eqref{X2}-\eqref{X3} on $\tau_{n - 1}(\ell_{1})$ and $\tau_{n - 1}(\ell_{3})$ will not be used to verify \eqref{form38} and \eqref{form76}. Instead, we will rely solely on \eqref{X5}. By definition of the sets $U^{\sharp}(\ell_{j})$, the following is true for at least one index $j \in \{1,2\}$: there exists $w \in \mathcal{C}_{m}(\ell_{j}) \cup \mathcal{C}_{m}(\ell_{j}^{\uparrow}) \cup \mathcal{C}_{m}(\ell_{j\downarrow})$, where $m = \ceil{n/2}$, such that
\begin{equation}\label{form55} |y - f_{2}(w)| \leq \kappa_{0}K 2^{-n/2}. \end{equation}
The possible lines $\ell_{j},\ell_{j}^{\uparrow},\ell_{j\downarrow}$ occurring here, for $j \in \{1,2\}$, are $\ell_{1\downarrow},\ell_{1},\ell_{2},\ell_{3}$, but to make the following discussion symmetric, we artificially add the possibility $w \in \mathcal{C}_{m}(\ell_{3}^{\uparrow})$. In other words, we only use the information
\begin{equation}\label{form32} w \in \mathcal{C}_{m}(\ell_{1\downarrow}) \cup \mathcal{C}_{m}(\ell_{1}) \cup \mathcal{C}_{m}(\ell_{2}) \cup \mathcal{C}_{m}(\ell_{3}) \cup \mathcal{C}_{m}(\ell_{3}^{\uparrow}), \end{equation}
see Figure \ref{fig4}.
\begin{figure}[h!]
\begin{center}
\begin{overpic}[scale = 0.61]{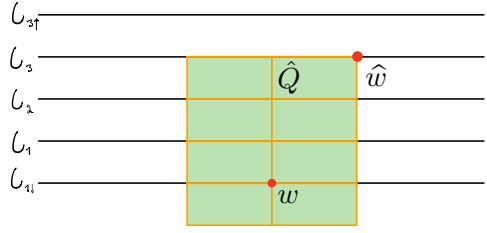}
\put(56,29.5){$\hat{Q}$}
\put(74,30){$\widehat{w}$}
\put(56,6){$w$}
\end{overpic}
\caption{A possible configuration of the points $w \in \mathcal{C}_{m}$, $\widehat{w} \in \mathcal{C}_{m - 1} \cap \mathcal{C}_{m}$ and the rectangle $\hat{Q} \in \mathcal{T}_{\ceil{(n - 1)/2}}$.}\label{fig4}
\end{center}
\end{figure}
Let $Q \in \mathcal{T}_{m}$ be a rectangle with corner $w$. Assumption \eqref{form32} enables us to use Corollary \ref{cor6} much like we did while verifying axiom \eqref{X1}. The conclusion is that there exists a corner $\widehat{w}$ of the dyadic parent of $Q$ which lies on $\ell_{1} \cup \ell_{3}$, and is also the corner of some rectangle $\hat{Q} \in \mathcal{T}_{\ceil{(n - 1)/2}}$ (which is either the dyadic parent or sibling of $Q$, depending on the parity of $n$, as we saw in the proof of Corollary \ref{cor6}). We write $\hat{\ell} \in \{\ell_{1},\ell_{3}\}$ for the line such that $\widehat{w} \in \hat{\ell}$.

Recall from \eqref{form55} that $|y - f_{2}(w)| \leq \kappa_{0}K2^{-n/2}$. Since $d_{\mathrm{par}}(w,\widehat{w}) \leq 100 \cdot 2^{-m} = 100 \cdot 2^{-n/2}$, we deduce from Lemma \ref{lemma11} that 
\begin{equation}\label{form33} y \in B(f_{2}(w),\kappa_{0}K2^{-n/2}) \subset B(f_{2}(\widehat{w}),K2^{-(n - 1)/2}). \end{equation} 
This inclusion enables us to apply the inductive hypothesis \eqref{X5} on $\tau_{n - 1}(\ell_{1})(y)$ and $\tau_{n - 1}(\ell_{3})(y)$. Namely, for both $\ell := \ell_{1}$ and $\ell := \ell_{3}$, the line $\hat{\ell} \in \{\ell_{1},\ell_{3}\}$ satisfies $\hat{\ell} \in \{\ell,\ell^{\uparrow},\ell_{\downarrow}\}$, where $\ell^{\uparrow},\ell_{\downarrow}$ refer to the generation $n - 1$ neighbours of $\ell$. It follows that 
\begin{displaymath} \widehat{w} \in \mathcal{C}_{\ceil{(n - 1)/2}}(\ell) \cup \mathcal{C}_{\ceil{(n - 1)/2}}(\ell^{\uparrow}) \cup  \mathcal{C}_{\ceil{(n - 1)/2}}(\ell_{\downarrow}), \qquad \ell \in \{\ell_{1},\ell_{3}\}.\end{displaymath}
Since also \eqref{form33} holds, we may infer inductively from axiom \eqref{X5} that 
\begin{equation}\label{form36} \tau_{n - 1}(\ell)(y) \approx_{n - 1} q_{\hat{Q},\ell}(y), \qquad \ell \in \{\ell_{1},\ell_{3}\}, \end{equation}
where $q_{\hat{Q},\ell}$ is the $(\hat{Q},\ell)$-approximate quadric. Let us, at this point, recall the definitions
\begin{equation}\label{form53} \tau_{n}(\ell_{1})(y) = \sum_{I \in \mathcal{I}(\ell_{1})} \varphi_{I,1}(y) \cdot q_{I,1}(y) + \varphi_{1}(y) \cdot \tau_{n - 1}(\ell_{1})(y)\end{equation}
and
\begin{equation}\label{form54} \tau_{n}(\ell_{2})(y) = \sum_{I \in \mathcal{I}(\ell_{2})} \varphi_{I,2}(y) \cdot q_{I,2}(y) + \tfrac{1}{2} \varphi_{2}(y) \cdot [\tau_{n - 1}(\ell_{1})(y) + \tau_{n - 1}(\ell_{3})(y)]. \end{equation} 
We just deduced in \eqref{form36} that $\tau_{n - 1}(\ell_{j})(y) \approx_{n - 1} q_{\hat{Q},\ell_{j}}(y)$ for $j \in \{1,3\}$. We would like to know, in addition, that whenever $y \in \spt \varphi_{I,j}$ and $j \in \{1,2\}$, then $q_{I,j}(y) \approx_{n} q_{\hat{Q},\ell_{j}}(y)$. This can be deduced from Proposition \ref{prop4}, essentially by the same argument we used at \eqref{form68}. We repeat the brief details.

Let $j \in \{1,2\}$, and let $I := I_{n}(w_{I}) \in \mathcal{I}(\ell_{j})$ be an interval such that
\begin{displaymath} w_{I} \in \mathcal{C}_{m}(\ell_{j}) \cup \mathcal{C}_{m}(\ell_{j}^{\uparrow}) \cup \mathcal{C}_{m}(\ell_{j\downarrow}) \subset \ell_{1\downarrow} \cup \ell_{1} \cup \ell_{2} \cup \ell_{3}, \end{displaymath}
and 
\begin{equation}\label{form73} y \in \spt \varphi_{I,j} \subset \kappa_{0} I = B(f_{2}(w_{I}),\kappa_{0}K2^{-n/2}) \subset B(f_{2}(w_{I}),100K2^{-n/2}). \end{equation}
Let $Q_{I} \in \mathcal{T}_{m}$ be the rectangle with corner $w_{I}$ such that $q_{I,j}$ is the $(Q_{I},\ell_{j})$-approximate quadric, and recall that $\hat{Q} \in \mathcal{T}_{\ceil{(n - 1)/2}}$ was a rectangle with corner $\widehat{w}$. Combining \eqref{form33} and \eqref{form73}, and noting that the lines containing $w_{I}$, $\widehat{w}$, and $\ell_{j}$, all lie in $\{\ell_{1\downarrow},\ldots,\ell_{3}\}$, we deduce from Proposition \ref{prop4} that 
\begin{equation}\label{form75} q_{I,j}(y) := q_{Q_{I},\ell_{j}}(y) \approx_{n} q_{\hat{Q},\ell_{j}}(y) \text{ for } j \in \{1,2\} \text{ and } I \in \mathcal{I}(\ell_{j}) \text{ with } y \in \spt \varphi_{I,j}. \end{equation}

Let us abbreviate $q_{j} := q_{\hat{Q},\ell_{j}}$ for $j \in \{1,2,3\}$. We are very close to completing the proof of \eqref{form76}, hence \eqref{X3}. Combining \eqref{form75} with \eqref{form36}-\eqref{form53}, it first follows that
\begin{equation}\label{form92} \tau_{n}(\ell_{1})(y) \approx_{n - 1} q_{1}(y). \end{equation}
(This uses Corollary \ref{cor7} in the same way as \eqref{form78}). Similarly, combining \eqref{form75} with \eqref{form36}, \eqref{form54}, one can deduce (using Lemma \ref{lemma13}) that
\begin{equation}\label{form93} \tau_{n}(\ell_{2})(y) \approx_{n - 1} \sum_{I \in \mathcal{I}(\ell_{2})} \varphi_{I,2}(y)q_{2}(y) + \tfrac{1}{2}\varphi_{2}(y) \cdot [q_{1}(y) + q_{3}(y)], \end{equation}
and further
\begin{equation}\label{form94} |\dot{\tau}_{n}(\ell_{2})(y) - \dot{q}_{2}(y)| \lesssim_{M} \left(\tfrac{1}{K} + o(\epsilon)\right) \cdot 2^{-n/2}. \end{equation}
This is proven in exactly the same way as the first assertion in \eqref{form77}, so we omit the details; the main point is, again, that the quadrics $q_{1},q_{2},q_{3}$ only differ by a constant $\lesssim_{M} 2^{-n}$. Combining \eqref{form92} and \eqref{form94}, we estimate
\begin{displaymath} |\dot{\tau}_{n}(\ell_{1})(y) - \dot{\tau}_{n}(\ell_{2})(y)| \lesssim_{M} |\dot{q}_{1}(y) - \dot{q}_{2}(y)| + \left(o(\epsilon) + \tfrac{1}{K}\right) \cdot 2^{-n/2} = \left(o(\epsilon) + \tfrac{1}{K}\right) \cdot 2^{-n/2}. \end{displaymath}
Now \eqref{form76} follows by choosing $K \geq 1$ small enough depending on $\delta,M$, and $\epsilon > 0$ small enough depending on $K,M,\Sigma$. This completes the proof of \eqref{X3}. 

We turn to proving \eqref{X2}. Using \eqref{form92}, and that $\sum_{I \in \mathcal{I}(\ell_{2})} \varphi_{I,2} + \varphi_{2} \equiv 1$, we expand
\begin{displaymath} \tau_{n}(\ell_{1})(y) \approx_{n - 1} q_{1}(y)\left[ \sum_{I \in \mathcal{I}(\ell_{2})} \varphi_{I,2}(y) + \varphi_{2}(y) \right] = \sum_{I \in \mathcal{I}(\ell_{2})} \varphi_{I,2}(y)q_{1}(y) + \varphi_{2}(y)q_{1}(y). \end{displaymath} 
To estimate the difference $\tau_{n}(\ell_{2})(y) - \tau_{n}(\ell_{1})(y)$ we subtract the approximation \eqref{form93} from the one above:
\begin{align} \tau_{n}(\ell_{2})(y) - \tau_{n}(\ell_{1})(y) & \approx_{n - 1} \sum_{I \in \mathcal{I}(\ell_{2})} \varphi_{I,2}(y)q_{2}(y) + \tfrac{1}{2}\varphi_{2}(y) \cdot [q_{1}(y) + q_{3}(y)] \notag\\
&\qquad \qquad - \sum_{I \in \mathcal{I}(\ell_{2})} \varphi_{I,2}(y)q_{1}(y) - \varphi_{2}(y)q_{1}(y) \notag\\
&\label{form56} = \tfrac{1}{2}\varphi_{2}(y) [q_{3}(y) - q_{1}(y)] + [q_{2}(y) - q_{1}(y)] \sum_{I \in \mathcal{I}(\ell_{2})} \varphi_{I,2}(y).  \end{align} 
To conclude the proof of \eqref{X2} from here, it suffices to show that
\begin{equation}\label{form37} q_{3}(y) - q_{1}(y) \sim_{N} 2^{-n} \sim_{N} q_{2}(y) - q_{1}(y), \end{equation}
recalling that $N \sim_{M,\Sigma} 1$. Indeed, plugging these estimates to \eqref{form56}, and using $\varphi_{2} + \sum_{I \in \mathcal{I}(\ell_{2})} \varphi_{I,2} \equiv 1$, we find that $\eqref{form56} \sim_{N} 2^{-n}$. Consequently, the computation preceding \eqref{form56}, and the definition of the "$\approx_{n}$" notation imply that $\tau_{n}(\ell_{2}) - \tau_{n}(\ell_{1})(y)$ is within additive error of $o(\epsilon) \cdot 2^{-n}$, from the expression at \eqref{form56}, which is $\sim_{N} 2^{-n}$. Choosing $\epsilon > 0$ small enough, depending only on $K,M,\Sigma$, it follows that $\tau_{n}(\ell_{2})(y) - \tau_{n}(\ell_{1})(y) \sim_{N} 2^{-n}$, as claimed in \eqref{X2}. This completes the proof of \eqref{X2} in the case $y \in U^{\sharp}(\ell_{1}) \cup U^{\sharp}(\ell_{2})$.  

It remains to establish \eqref{form37}. This argument is very similar to the one we already saw at the end of case $n = 0$. We first recall that the signature of $\mathcal{T}$, and hence $\hat{Q} \in \mathcal{T}$ is $+$, which meant that the $(N,\epsilon \ell(\hat{Q}))$-QIE
\begin{displaymath} F := F_{\hat{Q}} := \pi_{\V_{\hat{Q}}} \circ \iota_{\hat{Q}} = \psi_{\V_{\hat{Q}}} \circ \Pi \circ \iota_{\hat{Q}} \colon H\hat{Q} \to \W \end{displaymath}
is vertically $\epsilon'\ell(\hat{Q})$-increasing on the rectangle $B\hat{Q} \subset H\hat{Q}$, where $\epsilon' = 2N\epsilon$. This fact was established in Lemma \ref{lemma4}, assuming $H \geq A(1 + \Sigma)^{2}MB$. The quadric straightening map $\psi := \psi_{\mathbb{V}_{\hat{Q}}} \colon \W \to \W$ appearing in the definition of $F$ was introduced just above \eqref{form21}. It had the form
\begin{displaymath} \psi(y,t) = (y,t - \tfrac{1}{2}ay^{2} - by), \end{displaymath}
where $a$ and $b$ are the common coefficients of approximate $(\hat{Q},\ell)$-quadrics, all of which have the form $q_{\ell}(y) = \tfrac{1}{2}ay^{2} + by + c_{\ell}$. In particular, the quadrics $q_{1},q_{2},q_{3}$ appearing in \eqref{form37} can be written in this general form for certain coefficients $c_{1},c_{2},c_{3} \in \R$, and $q_{i} - q_{j} \equiv c_{i} - c_{j}$ for $i,j \in \{1,2,3\}$. So, proving \eqref{form37} is equivalent to showing that
\begin{equation}\label{form47a} c_{2} - c_{1} \sim_{N} \sim 2^{-n} \sim_{N} c_{3} - c_{1}. \end{equation}

To prove \eqref{form47a}, we pick (any) $3$ vertically aligned points 
\begin{displaymath} (y_{0},t_{1}),(y_{0},t_{2}),(y_{0},t_{3}) \in B\hat{Q} \quad \text{with} \quad (y_{0},t_{j}) \in \ell_{j}, \, j \in \{1,2,3\}. \end{displaymath}
This can be done for $B \geq 5$, since $\hat{Q}$ had a corner $\widehat{w} \in \ell_{1} \cup \ell_{3}$, and $\dist(\ell_{i},\ell_{j}) \leq 2\ell(\hat{Q})$ for $1 \leq i,j \leq 3$. Evidently $t_{j + 1} = t_{j} + 2^{-n}$, so in particular $\|t_{j + 1} - t_{j}\| = 2^{-n/2}$, for $j \in \{1,2\}$. Since $F$ is vertically $\epsilon'\ell(\hat{Q})$-increasing, in particular the map $t \mapsto \pi_{2}(F(y_{0},t))$ is $\epsilon' \ell(\hat{Q})$-increasing on $([t_{1},t_{2}],\|\cdot\|)$. Since $\|t_{j + 1} - t_{j}\| > \epsilon'\ell(\hat{Q})$ (recall from \eqref{form22} that $\epsilon \ll N^{-10}$), this implies
\begin{displaymath} c_{3} = \pi_{2}(F(y_{0},t_{3})) > c_{2} = \pi_{2}(F(y_{0},t_{2})) > c_{1} = \pi_{2}(F(y_{0},t_{1})). \end{displaymath}
Here we also used formula \eqref{form45}, which says that $(\pi_{2} \circ F)|_{\ell} \equiv c_{\ell}$. To establish the more quantitative estimates \eqref{form47a}, we use Lemma \ref{lemma6}, which implies that the map $t \mapsto \pi_{2}(F(y_{0},t))$ is an $(N,2\epsilon \ell(\hat{Q}))$-QIE on $([t_{1},t_{3}],\|\cdot\|)$. Hence,
\begin{align*} (2N)^{-1}2^{-n/2} & \leq N^{-1}2^{-n/2} - 2\epsilon \ell(\hat{Q}) \\
& \leq \|\pi_{2}(F(y_{0},t_{j + 1})) - \pi_{2}(F(y_{0},t_{j}))\| = \|c_{j + 1} - c_{j}\|\\
& \leq N2^{-n/2} + 2\epsilon \ell(\hat{Q}) \leq 2N \cdot 2^{-n/2}, \quad j \in \{1,2\}. \end{align*}
This completes the proof of \eqref{form47a}, and consequently the proofs of \eqref{form37} and \eqref{X2}.

\subsubsection{Verifying axiom \eqref{X5}} The definition of $\tau_{n}(\ell)$ has been set up so that axiom \eqref{X5} is easy to verify. Write $m := \ceil{n/2}$, $\ell \in \mathcal{L}_{n}$, and let $w \in \mathcal{C}_{m}(\ell) \cup \mathcal{C}_{m}(\ell^{\uparrow}) \cup \mathcal{C}_{m}(\ell_{\downarrow})$. Let $Q \in \mathcal{T}_{m}$ be a rectangle with corner $w$, and let $q := q_{Q,\ell}$ be the $(Q,\ell)$-approximate quadric. The claim is, then, that
\begin{equation}\label{form84} \tau_{n}(y) \approx_{n} q(y), \qquad y \in B(f_{2}(w),K2^{-n/2}) =: I_{0}. \end{equation}
Fix $y \in I_{0}$. Recall from the definition of the functions $\{\varphi_{I,\ell}\}_{I \in \mathcal{I}(\ell)}$ that
\begin{displaymath} \mathbf{1}_{U(\ell)} \leq \sum_{I \in \mathcal{I}(\ell)} \varphi_{I,\ell} \leq 1, \end{displaymath}
so consequently $\varphi_{\ell} = 1 - \sum_{I \in \mathcal{I}(\ell)} \varphi_{I,\ell}$ vanishes on $U(\ell)$. Since $I_{0} \subset U(\ell)$, we have
\begin{displaymath} \tau_{n}(\ell)(y) = \sum_{I \in \mathcal{I}(\ell)} \varphi_{I,\ell}(y) \cdot q_{I,\ell}(y), \qquad y \in I_{0}, \end{displaymath}
regardless of whether $\ell \in \mathcal{L}_{n - 1}$ or $\ell \in \mathcal{L}_{n} \, \setminus \, \mathcal{L}_{n - 1}$. Now, all that remains to be verified is:
\begin{equation}\label{form83} q_{I,\ell}(y) \approx_{n} q(y) \qquad \text{whenever} \quad I \in \mathcal{I}(\ell) \text{ and } y \in \spt \varphi_{I,\ell}. \end{equation} 
Indeed, once \eqref{form83} is settled, the approximation \eqref{form84} follows from Corollary \ref{cor7}.
 
The verification of \eqref{form83} is similar to arguments we have already recorded multiple times, but let us record them once more, since \eqref{X5} is perhaps the most fundamental axiom. Let $I \in \mathcal{I}(\ell)$ be such that $y \in \spt \varphi_{I,\ell}$. Thus, there exists $w_{I} \in \mathcal{C}_{m}(\ell) \cup \mathcal{C}_{m}(\ell^{\uparrow}) \cup \mathcal{C}_{m}(\ell_{\downarrow})$ such that 
\begin{displaymath} y \in B(f_{2}(w_{I}),\kappa_{0}K2^{-n/2}) \cap B(f_{2}(w),K2^{-n/2}). \end{displaymath} 
Let $Q_{I} \in \mathcal{T}_{m}$ be a rectangle with corner $w_{I}$ such that $q_{I,\ell} = q_{Q_{I},\ell}$ is the $(Q_{I},\ell)$-approximate quadric. Since $w,w_{I} \in \ell_{\downarrow} \cup \ell \cup \ell^{\uparrow}$, and the separation of these lines is bounded by $\leq 100 \cdot 2^{-n}$, Proposition \ref{prop4} is applicable and yields \eqref{form83} directly. This completes the verification of axiom \eqref{X5}.

\subsubsection{Verifying axiom \eqref{X4}} Let $\ell \in \mathcal{L}_{n - 1} \subset \mathcal{L}_{n}$. The claim is that
\begin{equation}\label{form80} |\tau_{n}(\ell)(y) - \tau_{n - 1}(\ell)(y)| \leq \delta \cdot 2^{-n} \quad \text{and} \quad |\dot{\tau}_{n}(\ell)(y) - \dot{\tau}_{n - 1}(\ell)(y)| \leq \delta \cdot 2^{-n/2}. \end{equation} 
Since $\ell \in \mathcal{L}_{n - 1}$, the definition of $\tau_{n}(\ell)$ is
\begin{equation}\label{form81} \tau_{n}(\ell)(y) = \sum_{I \in \mathcal{I}(\ell)} \varphi_{I,\ell}(y) \cdot q_{I,\ell}(y) + \varphi_{\ell}(y) \cdot \tau_{n - 1}(\ell)(y), \qquad y \in \R. \end{equation}
If $y \notin U^{\sharp}(\ell)$, then simply $\tau_{n}(\ell)(y) = \tau_{n - 1}(\ell)(y)$ and $\dot{\tau}_{n}(\ell)(y) = \dot{\tau}_{n - 1}(\ell)(y)$, and there is nothing to prove. So, let us assume that $y \in U^{\sharp}(\ell)$. In other words, with $m := \ceil{n/2}$, there exists $w \in \mathcal{C}_{m}(\ell) \cup \mathcal{C}_{m}(\ell^{\uparrow}) \cup \mathcal{C}_{m}(\ell_{\downarrow})$ such that $y \in B(f_{2}(w),\kappa_{0}K2^{-n/2})$. Let $Q \in \mathcal{T}_{m}$ be a rectangle with corner $w$. In this case, the verification of \eqref{form80} will only be based on the inductive information from axiom \eqref{X5}, which implies that $\tau_{n - 1}(\ell)$ is close to a certain quadric around $y$. The details are extremely similar to those already seen during the proof of axiom \eqref{X1} (more precisely the final case where $|y_{1} - y_{2}| \leq \tfrac{1}{5}K \cdot 2^{-n/2}$), so we just recall the skeleton of the argument: one begins by finding a corner point $\widehat{w} \in \mathcal{C}_{\ceil{(n - 1)/2}}$ on either $\ell$, or its $\mathcal{L}_{n - 1}$-neighbour, with the property 
\begin{equation}\label{form82} y \in B(f_{2}(\widehat{w}),K2^{-(n - 1)/2}). \end{equation} 
Then, one selects a rectangle $\hat{Q} \in \mathcal{T}_{\ceil{(n - 1)/2}}$ with corner $\widehat{w}$, and uses \eqref{form82} and \eqref{X5} inductively to show that $\tau_{n - 1}(\ell)(y) \approx_{n - 1} q(y)$, where $q := q_{\hat{Q},\ell}$ is the $(\hat{Q},\ell)$-approximate quadric; see \eqref{form62} for comparison. Next, one proves that all the quadrics $q_{I,\ell}$ appearing in \eqref{form81} with $y \in \spt \varphi_{I,\ell}$ must also satisfy $q_{I,\ell}(y) \approx_{n} q(y)$. For the details, see the proof of \eqref{form68}. From this, it follows from Corollary \ref{cor7} that $\tau_{n}(\ell)(y) \approx_{n - 1} q(y)$ (compare with \eqref{form78}), and finally $\tau_{n}(\ell)(y) \approx_{n - 1} \tau_{n - 1}(\ell)(y)$. This implies \eqref{form80} if $\epsilon > 0$ is chosen sufficiently small, depending only on $\delta,K,M,\Sigma$.

\medskip

The axioms \eqref{X1}-\eqref{X4} have been verified, and the proof of Proposition \ref{t:construction} is complete. Since the proof of Theorem \ref{main} had previously been reduced to establishing Proposition \ref{t:construction}, also the proof of Theorem \ref{main} is complete.
 


\appendix

\section{Proof of the quadric comparison lemma}\label{s:quadricComparison}

Here we prove Lemma \ref{quadricComparison}, whose statement we now recall:

\begin{lemma} Let $q \colon \R \to \R$ be a quadratic function, and let $I = [a,b] \subset \R$ be an interval. Write
\begin{displaymath} m := \max\{|q(a)|,|q(b)|\} \quad \text{and} \quad \dot{m} := \max\{|\dot{q}(a)|,|\dot{q}(b)|\}. \end{displaymath}
Then, for all $r \geq 1$ and $t \in rI$,
\begin{displaymath}\ |q(t)| \lesssim (m + |I|\dot{m})r^{2}, \quad |\dot{q}(t)| \lesssim (\dot{m} + \tfrac{m}{|I|})r \quad \text{and} \quad |\ddot{q}(t)| \lesssim \tfrac{m}{|I|^{2}} + \tfrac{\dot{m}}{|I|}.  \end{displaymath} 
\end{lemma}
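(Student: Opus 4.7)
The plan is to exploit the fact that for a quadratic function $q$, the second derivative is constant, and therefore the whole function is completely determined by a Taylor expansion at one endpoint of $I$, say $a$. So first I would estimate $\ddot{q}$ from the two slope bounds, then get $\dot{q}(t)$ and $q(t)$ on $rI$ by Taylor expansion at $a$, using that $|t-a|\lesssim r|I|$ for all $t\in rI$.

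Concretely, write $I=[a,b]$ and $\ell:=|I|$. Since $q$ is a quadratic, $\ddot{q}$ is a constant, and the mean value theorem (or a direct computation with $q(t)=\alpha t^2+\beta t+\gamma$) gives
\begin{displaymath}
\ddot{q}\equiv \frac{\dot{q}(b)-\dot{q}(a)}{\ell}, \qquad \text{hence}\qquad |\ddot{q}(t)|\leq \frac{2\dot{m}}{\ell}\leq 2\Big(\frac{\dot m}{\ell}+\frac{m}{\ell^2}\Big),
\end{displaymath}
which proves the third estimate. Next, since $rI$ is the concentric $r$-fold enlargement of $I$ and $a\in I$, any $t\in rI$ satisfies $|t-a|\leq \tfrac{(r+1)}{2}\ell\leq r\ell$. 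Expanding around $a$,
\begin{displaymath}
\dot{q}(t)=\dot{q}(a)+\ddot{q}\,(t-a),
\end{displaymath}
so $|\dot{q}(t)|\leq \dot m+(2\dot m/\ell)\cdot r\ell\lesssim r\,\dot m\leq r(\dot m+m/\ell)$, which is the second estimate. Finally, another Taylor expansion
\begin{displaymath}
q(t)=q(a)+\dot{q}(a)(t-a)+\tfrac{1}{2}\ddot{q}\,(t-a)^2
\end{displaymath}
combined with $|t-a|\leq r\ell$ yields $|q(t)|\leq m+\dot m\cdot r\ell+(\dot m/\ell)(r\ell)^2\lesssim (m+\ell\dot m)r^2$.

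There is no real obstacle here: the only "choice" is which endpoint and which three Taylor data to use, and taking $q(a),\dot{q}(a),\ddot{q}$ is the cleanest because $\ddot{q}$ is already pinned down by the slope bounds. The bounds in the statement are slightly suboptimal (one could drop the $m/\ell^2$ term in the $\ddot{q}$ bound and the $m/\ell$ term in the $\dot{q}$ bound), but they are written in a symmetric form so that they continue to hold even in degenerate regimes, and the above estimates obviously imply them.
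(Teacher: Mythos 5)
Your proof is correct and follows essentially the same route as the paper: Taylor-expand the quadratic at the left endpoint $a$ and control the constant second derivative by the endpoint data, using $|t-a|\lesssim r|I|$ on $rI$. The only (harmless) difference is that you pin down $\ddot q$ directly from the two endpoint slopes, $\ddot q = (\dot q(b)-\dot q(a))/|I|$, which gives a slightly sharper bound and lets you skip the paper's rescaling to $[0,1]$ and its bound $|\ddot q(0)|\leq 4m+2\dot m$.
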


\begin{proof} We first prove the case $[a,b] = [0,1]$. The quadric $q$ can be written as
\begin{displaymath} q(t) = q(0) + \dot{q}(0)t + \tfrac{1}{2}\ddot{q}(0)t^{2}, \end{displaymath}
where $|q(0)| \leq m$, $|\dot{q}(0)| \leq \dot{m}$, and
\begin{equation}\label{form69} |\ddot{q}(t)| \equiv |\ddot{q}(0)| \leq 2|q(1)| + 2|q(0)| + 2|\dot{q}(0)| \leq 4m + 2\dot{m}. \end{equation}
Consequently,
\begin{equation}\label{form40} |q(t)| \leq m + \dot{m}|t| + (4m + 2\dot{m})t^{2} \quad \text{and} \quad |\dot{q}(t)| \leq \dot{m} + (8m + 4\dot{m})|t|. \end{equation} 
Next, consider the general case, and define $\bar{q}(t) := q(t(b - a) + a)$. Then,
\begin{displaymath} \bar{m} := \max\{|\bar{q}(0)|,|\bar{q}(1)|\} = \max\{|q(a)|,|q(b)|\} = m, \end{displaymath}
and
\begin{displaymath} \dot{\bar{m}} := \max\{|\dot{\bar{q}}(0)|,|\dot{\bar{q}}(1)|\} = |I| \cdot \max\{|\dot{q}(a)|,|\dot{q}(b)|\} = |I|\dot{m}, \end{displaymath}
and $|\ddot{\bar{q}}(t)| \leq 4\bar{m} + 2\dot{\bar{m}} = 4m + 2|I|\dot{m}$ by \eqref{form69}. Using \eqref{form40}, we infer that for all $t \in rI$,
\begin{displaymath} |q(t)| = \left|\bar{q}\left(\tfrac{t - a}{b - a} \right) \right| \leq m + |I|\dot{m} \tfrac{|t - a|}{|b - a|} + (4m + 2|I|\dot{m})\left(\tfrac{t - a}{b - a}\right)^{2} \lesssim (m + |I|\dot{m})r^{2} \end{displaymath} 
and
\begin{displaymath} |\dot{q}(t)| = \tfrac{1}{|I|}\left|\dot{\bar{q}}\left( \tfrac{t - a}{b - a} \right) \right| \leq \dot{m} + \tfrac{1}{|I|}(8m + 4|I|\dot{m})\tfrac{|t - a|}{|b - a|} \lesssim (\dot{m} + \tfrac{m}{|I|})r \end{displaymath}
and
\begin{displaymath} |\ddot{q}(t)| = \tfrac{1}{|I|^{2}} \left|\ddot{\bar{q}}\left(\tfrac{t - a}{b - a} \right) \right| \lesssim \tfrac{m}{|I|^{2}} + \tfrac{\dot{m}}{|I|}, \end{displaymath} 
as claimed.
\end{proof}

\bibliographystyle{plain}
\bibliography{references}

\end{document}